 \def\dated#1{\def\thedate{#1}}
\newbox\Label%
\newdimen\high%
\newdimen\deep%
\newdimen\ul%
\newdimen\wdth
\def\ratchet#1#2{\ifnum#1<#2\global #1=#2\fi}%
\def\ifnextchar#1#2#3{\let\@tempe
#1\def\@tempa{#2}\def\@tempb{#3}\futurelet
    \@tempc\@ifnch}%
\def\@ifnch{\ifx \@tempc \@sptoken \let\@tempd\@xifnch
      \else \ifx \@tempc \@tempe\let\@tempd\@tempa\else\let\@tempd\@tempb\fi
      \fi \@tempd}%
\def\:{\let\@sptoken= } \:  
\def\:{\@xifnch} \expandafter\def\: {\futurelet\@tempc\@ifnch}%
\let\ifnextchar\@ifnextchar
\def\scalefactor#1{\ul=#1\ul \X@xbase=#1\X@xbase \Y@ybase=#1\Y@ybase}%
\def\fontscale#1{%
\if#1h\relax
\font\xydashfont=xydash10 scaled \magstephalf
\font\xyatipfont=xyatip10 scaled \magstephalf
\font\xybtipfont=xybtip10 scaled \magstephalf
\font\xybsqlfont=xybsql10 scaled \magstephalf
\font\xycircfont=xycirc10 scaled \magstephalf
\else
\font\xydashfont=xydash10 scaled \magstep#1%
\font\xyatipfont=xyatip10 scaled \magstep#1%
\font\xybtipfont=xybtip10 scaled \magstep#1%
\font\xybsqlfont=xybsql10 scaled \magstep#1%
\font\xycircfont=xycirc10 scaled \magstep#1%
\fi}
\let\bfig\xy%
\let\efig\endxy%
\def\car#1#2\nil{#1}%
\def\morphism{\ifnextchar({\morphismp}{\morphismp(0,0)}}%
\def\morphismp(#1){\ifnextchar|{\morphismpp(#1)}{\morphismpp(#1)|a|}}%
\def\morphismpp(#1)|#2|{\ifnextchar/{\morphismppp(#1)|#2|}%
    {\morphismppp(#1)|#2|/>/}}%
\def\morphismppp(#1)|#2|/#3/{%
    \ifnextchar<{\morphismpppp(#1)|#2|/#3/}%
    {\morphismpppp(#1)|#2|/#3/<\default,0>}}%
\def\morphismpppp(#1,#2)|#3|/#4/<#5,#6>[#7`#8;#9]{%
\xend#1\advance \xend by #5%
\yend#2\advance \yend by #6%
\domorphism(#1,#2)|#3|/#4/<#5,#6>[#7`#8;#9]}
\def\domorphism(#1,#2)|#3|/#4/<#5,#6>[#7`#8;#9]{%
\edef\next{#4}%
\ifx\next\empty\relax
 \POS(#1,#2)*+!!<0ex,.75ex>{#7}\ar@{} (\xend,\yend)*+!!<0ex,.75ex>{#8}%
\else \def\next{\car#4\nil}\fi
\if@\next\relax
 \if#3l%
  \ifnum #6>0%
   \POS(#1,#2)*+!!<0ex,.75ex>{#7}\ar#4^-{#9} (\xend,\yend)*+!!<0ex,.75ex>{#8}%
  \else%
   \POS(#1,#2)*+!!<0ex,.75ex>{#7}\ar#4_-{#9} (\xend,\yend)*+!!<0ex,.75ex>{#8}%
  \fi%
 \else \if#3m%
    \setbox0\hbox{$#9$}%
   \ifdim \wd0=0pt
     \POS(#1,#2)*+!!<0ex,.75ex>{#7}\ar#4 (\xend,\yend)*+!!<0ex,.75ex>{#8}%
   \else
     \setbox\Label=\hbox{\kern .5pt $\labelstyle #9$\kern .5pt}%
     \high=\ht\Label \advance\high by 2pt \ht\Label=\high%
     \deep=\dp\Label \advance\deep by 2pt \dp\Label=\deep%
     \POS(#1,#2)*+!!<0ex,.75ex>{#7}\ar#4|-{\box\Label} (\xend,\yend)*+!!<0ex,.75ex>{#8}%
   \fi
 \else \if#3r%
  \ifnum #6<0%
   \POS(#1,#2)*+!!<0ex,.75ex>{#7}\ar#4^-{#9} (\xend,\yend)*+!!<0ex,.75ex>{#8}%
  \else%
   \POS(#1,#2)*+!!<0ex,.75ex>{#7}\ar#4_-{#9} (\xend,\yend)*+!!<0ex,.75ex>{#8}%
  \fi%
 \else \if#3a%
  \ifnum #5>0%
   \POS(#1,#2)*+!!<0ex,.75ex>{#7}\ar#4^-{#9} (\xend,\yend)*+!!<0ex,.75ex>{#8}%
  \else%
   \POS(#1,#2)*+!!<0ex,.75ex>{#7}\ar#4_-{#9} (\xend,\yend)*+!!<0ex,.75ex>{#8}%
  \fi%
 \else \if#3b%
  \ifnum #5<0%
   \POS(#1,#2)*+!!<0ex,.75ex>{#7}\ar#4^-{#9} (\xend,\yend)*+!!<0ex,.75ex>{#8}%
  \else%
   \POS(#1,#2)*+!!<0ex,.75ex>{#7}\ar#4_-{#9} (\xend,\yend)*+!!<0ex,.75ex>{#8}%
  \fi%
 \else
   \POS(#1,#2)*+!!<0ex,.75ex>{#7}\ar#4 (\xend,\yend)*+!!<0ex,.75ex>{#8}%
 \fi\fi\fi\fi\fi%
\else%
\edef\next{#4}%
 \ifx\next\empty\relax
  \POS(#1,#2)*+!!<0ex,.75ex>{#7}; (\xend,\yend)*+!!<0ex,.75ex>{#8}%
 \else \edef\next{#4}%
  \ifx\next\empty\relax
   \POS(#1,#2)*+!!<0ex,.75ex>{#7}\ar (\xend,\yend)*+!!<0ex,.75ex>{#8}%
 \else \if#3l%
  \ifnum #6>0%
   \POS(#1,#2)*+!!<0ex,.75ex>{#7}\ar@{#4}^-{#9} (\xend,\yend)*+!!<0ex,.75ex>{#8}%
  \else%
   \POS(#1,#2)*+!!<0ex,.75ex>{#7}\ar@{#4}_-{#9} (\xend,\yend)*+!!<0ex,.75ex>{#8}%
  \fi%
 \else \if#3m%
    \setbox0\hbox{$#9$}%
   \ifdim \wd0=0pt
     \POS(#1,#2)*+!!<0ex,.75ex>{#7}\ar@{#4} (\xend,\yend)*+!!<0ex,.75ex>{#8}%
   \else
     \setbox\Label=\hbox{\kern .5pt $\labelstyle #9$\kern .5pt}%
     \high=\ht\Label \advance\high by 2pt \ht\Label=\high%
     \deep=\dp\Label \advance\deep by 2pt \dp\Label=\deep%
     \POS(#1,#2)*+!!<0ex,.75ex>{#7}\ar@{#4}|-{\box\Label} (\xend,\yend)*+!!<0ex,.75ex>{#8}%
   \fi
 \else \if#3r%
  \ifnum #6<0%
   \POS(#1,#2)*+!!<0ex,.75ex>{#7}\ar@{#4}^-{#9} (\xend,\yend)*+!!<0ex,.75ex>{#8}%
  \else%
   \POS(#1,#2)*+!!<0ex,.75ex>{#7}\ar@{#4}_-{#9} (\xend,\yend)*+!!<0ex,.75ex>{#8}%
  \fi%
 \else \if#3a%
  \ifnum #5>0%
   \POS(#1,#2)*+!!<0ex,.75ex>{#7}\ar@{#4}^-{#9} (\xend,\yend)*+!!<0ex,.75ex>{#8}%
  \else%
   \POS(#1,#2)*+!!<0ex,.75ex>{#7}\ar@{#4}_-{#9} (\xend,\yend)*+!!<0ex,.75ex>{#8}%
  \fi%
 \else \if#3b%
  \ifnum #5<0%
   \POS(#1,#2)*+!!<0ex,.75ex>{#7}\ar@{#4}^-{#9} (\xend,\yend)*+!!<0ex,.75ex>{#8}%
  \else%
   \POS(#1,#2)*+!!<0ex,.75ex>{#7}\ar@{#4}_-{#9} (\xend,\yend)*+!!<0ex,.75ex>{#8}%
  \fi%
 \else
   \POS(#1,#2)*+!!<0ex,.75ex>{#7}\ar@{#4} (\xend,\yend)*+!!<0ex,.75ex>{#8}%
 \fi\fi\fi\fi\fi\fi\fi%
\fi\ignorespaces}%
\def\squarepppp(#1,#2)|#3|/#4`#5`#6`#7/<#8>[#9]{%
\xpos#1\ypos#2%
\def\next|##1##2##3##4|{%
 \def\xa{##1}\def\xb{##2}\def\xc{##3}\def\xd{##4}\ignorespaces}%
\next|#3|%
\def\next<##1,##2>{\deltax=##1\deltay=##2\ignorespaces}%
\next<#8>%
\def\next[##1`##2`##3`##4;##5`##6`##7`##8]{%
    \def\nodea{##1}\def\nodeb{##2}\def\nodec{##3}\def\noded{##4}%
    \def\labela{##5}\def\labelb{##6}\def\labelc{##7}\def\labeld{##8}\ignorespaces}%
\next[#9]%
\morphism(\xpos,\ypos)|\xd|/{#7}/<\deltax,0>[\nodec`\noded;\labeld]%
\advance \ypos by \deltay%
\morphism(\xpos,\ypos)|\xb|/{#5}/<0,-\deltay>[\nodea`\nodec;\labelb]%
\morphism(\xpos,\ypos)|\xa|/{#4}/<\deltax,0>[\nodea`\nodeb;\labela]%
 \advance \xpos by \deltax%
\morphism(\xpos,\ypos)|\xc|/{#6}/<0,-\deltay>[\nodeb`\noded;\labelc]%
\ignorespaces}%
\def\square{\ifnextchar({\squarep}{\squarep(0,0)}}%
\def\squarep(#1){\ifnextchar|{\squarepp(#1)}{\squarepp(#1)|alrb|}}%
\def\squarepp(#1)|#2|{\ifnextchar/{\squareppp(#1)|#2|}%
    {\squareppp(#1)|#2|/>`>`>`>/}}%
\def\squareppp(#1)|#2|/#3`#4`#5`#6/{%
    \ifnextchar<{\squarepppp(#1)|#2|/#3`#4`#5`#6/}%
    {\squarepppp(#1)|#2|/#3`#4`#5`#6/<\default,\default>}}%
\def\ptrianglepppp(#1,#2)|#3|/#4`#5`#6/<#7>[#8]{%
\xpos#1\ypos#2%
\def\next|##1##2##3|{\def\xa{##1}\def\xb{##2}\def\xc{##3}}%
\next|#3|%
\def\next<##1,##2>{\deltax=##1\deltay=##2\ignorespaces}%
\next<#7>%
\def\next[##1`##2`##3;##4`##5`##6]{%
    \def\nodea{##1}\def\nodeb{##2}\def\nodec{##3}%
    \def\labela{##4}\def\labelb{##5}\def\labelc{##6}}%
\next[#8]%
\advance\ypos by \deltay%
\morphism(\xpos,\ypos)|\xa|/{#4}/<\deltax,0>[\nodea`\nodeb;\labela]%
\morphism(\xpos,\ypos)|\xb|/{#5}/<0,-\deltay>[\nodea`\nodec;\labelb]%
\advance\xpos by \deltax%
\morphism(\xpos,\ypos)|\xc|/{#6}/<-\deltax,-\deltay>[\nodeb`\nodec;\labelc]%
\ignorespaces}%
\def\qtrianglepppp(#1,#2)|#3|/#4`#5`#6/<#7>[#8]{%
\xpos#1\ypos#2%
\def\next|##1##2##3|{\def\xa{##1}\def\xb{##2}\def\xc{##3}}%
\next|#3|%
\def\next<##1,##2>{\deltax=##1\deltay=##2\ignorespaces}%
\next<#7>%
\def\next[##1`##2`##3;##4`##5`##6]{%
    \def\nodea{##1}\def\nodeb{##2}\def\nodec{##3}%
    \def\labela{##4}\def\labelb{##5}\def\labelc{##6}}%
\next[#8]%
\advance\ypos by \deltay%
\morphism(\xpos,\ypos)|\xa|/{#4}/<\deltax,0>[\nodea`\nodeb;\labela]%
\morphism(\xpos,\ypos)|\xb|/{#5}/<\deltax,-\deltay>[\nodea`\nodec;\labelb]%
\advance\xpos by \deltax%
\morphism(\xpos,\ypos)|\xc|/{#6}/<0,-\deltay>[\nodeb`\nodec;\labelc]%
\ignorespaces}%
\def\dtrianglepppp(#1,#2)|#3|/#4`#5`#6/<#7>[#8]{%
\xpos#1\ypos#2%
\def\next|##1##2##3|{\def\xa{##1}\def\xb{##2}\def\xc{##3}}%
\next|#3|%
\def\next<##1,##2>{\deltax=##1\deltay=##2\ignorespaces}%
\next<#7>%
\def\next[##1`##2`##3;##4`##5`##6]{%
    \def\nodea{##1}\def\nodeb{##2}\def\nodec{##3}%
    \def\labela{##4}\def\labelb{##5}\def\labelc{##6}}%
\next[#8]%
\morphism(\xpos,\ypos)|\xc|/{#6}/<\deltax,0>[\nodeb`\nodec;\labelc]%
\advance\ypos by \deltay\advance \xpos by \deltax%
\morphism(\xpos,\ypos)|\xa|/{#4}/<-\deltax,-\deltay>[\nodea`\nodeb;\labela]%
\morphism(\xpos,\ypos)|\xb|/{#5}/<0,-\deltay>[\nodea`\nodec;\labelb]%
\ignorespaces}%
\def\btrianglepppp(#1,#2)|#3|/#4`#5`#6/<#7>[#8]{%
\xpos#1\ypos#2%
\def\next|##1##2##3|{\def\xa{##1}\def\xb{##2}\def\xc{##3}}%
\next|#3|%
\def\next<##1,##2>{\deltax=##1\deltay=##2\ignorespaces}%
\next<#7>%
\def\next[##1`##2`##3;##4`##5`##6]{%
    \def\nodea{##1}\def\nodeb{##2}\def\nodec{##3}%
    \def\labela{##4}\def\labelb{##5}\def\labelc{##6}}%
\next[#8]%
\morphism(\xpos,\ypos)|\xc|/{#6}/<\deltax,0>[\nodeb`\nodec;\labelc]%
\advance\ypos by \deltay%
\morphism(\xpos,\ypos)|\xa|/{#4}/<0,-\deltay>[\nodea`\nodeb;\labela]%
\morphism(\xpos,\ypos)|\xb|/{#5}/<\deltax,-\deltay>[\nodea`\nodec;\labelb]%
\ignorespaces}%
\def\Atrianglepppp(#1,#2)|#3|/#4`#5`#6/<#7>[#8]{%
\xpos#1\ypos#2%
\def\next|##1##2##3|{\def\xa{##1}\def\xb{##2}\def\xc{##3}}%
\next|#3|%
\def\next<##1,##2>{\deltax=##1\deltay=##2\ignorespaces}%
\next<#7>%
\def\next[##1`##2`##3;##4`##5`##6]{%
    \def\nodea{##1}\def\nodeb{##2}\def\nodec{##3}%
    \def\labela{##4}\def\labelb{##5}\def\labelc{##6}}%
\next[#8]%
\multiply\deltax by 2%
\morphism(\xpos,\ypos)|\xc|/{#6}/<\deltax,0>[\nodeb`\nodec;\labelc]%
\divide\deltax by 2
\advance\ypos by \deltay\advance\xpos by \deltax%
\morphism(\xpos,\ypos)|\xa|/{#4}/<-\deltax,-\deltay>[\nodea`\nodeb;\labela]%
\morphism(\xpos,\ypos)|\xb|/{#5}/<\deltax,-\deltay>[\nodea`\nodec;\labelb]%
\ignorespaces}%
\def\Vtrianglepppp(#1,#2)|#3|/#4`#5`#6/<#7>[#8]{%
\xpos#1\ypos#2%
\def\next|##1##2##3|{\def\xa{##1}\def\xb{##2}\def\xc{##3}}%
\next|#3|%
\def\next<##1,##2>{\deltax=##1\deltay=##2\ignorespaces}%
\next<#7>%
\def\next[##1`##2`##3;##4`##5`##6]{%
    \def\nodea{##1}\def\nodeb{##2}\def\nodec{##3}%
    \def\labela{##4}\def\labelb{##5}\def\labelc{##6}}%
\next[#8]%
\advance\ypos by \deltay%
\morphism(\xpos,\ypos)|\xb|/{#5}/<\deltax,-\deltay>[\nodea`\nodec;\labelb]%
\multiply\deltax by 2%
\morphism(\xpos,\ypos)|\xa|/{#4}/<\deltax,0>[\nodea`\nodeb;\labela]%
\advance\xpos by \deltax \divide \deltax by 2
\morphism(\xpos,\ypos)|\xc|/{#6}/<-\deltax,-\deltay>[\nodeb`\nodec;\labelc]%
\ignorespaces}%
\def\Ctrianglepppp(#1,#2)|#3|/#4`#5`#6/<#7>[#8]{%
\xpos#1\ypos#2%
\def\next|##1##2##3|{\def\xa{##1}\def\xb{##2}\def\xc{##3}}%
\next|#3|%
\def\next<##1,##2>{\deltax=##1\deltay=##2\ignorespaces}%
\next<#7>%
\def\next[##1`##2`##3;##4`##5`##6]{%
    \def\nodea{##1}\def\nodeb{##2}\def\nodec{##3}%
    \def\labela{##4}\def\labelb{##5}\def\labelc{##6}}%
\next[#8]%
\advance \ypos by \deltay%
\morphism(\xpos,\ypos)|\xc|/{#6}/<\deltax,-\deltay>[\nodeb`\nodec;\labelc]%
\advance\ypos by \deltay \advance \xpos by \deltax%
\morphism(\xpos,\ypos)|\xa|/{#4}/<-\deltax,-\deltay>[\nodea`\nodeb;\labela]%
\multiply\deltay by 2%
\morphism(\xpos,\ypos)|\xb|/{#5}/<0,-\deltay>[\nodea`\nodec;\labelb]%
\ignorespaces}%
\def\Dtrianglepppp(#1,#2)|#3|/#4`#5`#6/<#7>[#8]{%
\xpos#1\ypos#2%
\def\next|##1##2##3|{\def\xa{##1}\def\xb{##2}\def\xc{##3}}%
\next|#3|%
\def\next<##1,##2>{\deltax=##1\deltay=##2\ignorespaces}%
\next<#7>%
\def\next[##1`##2`##3;##4`##5`##6]{%
    \def\nodea{##1}\def\nodeb{##2}\def\nodec{##3}%
    \def\labela{##4}\def\labelb{##5}\def\labelc{##6}}%
\next[#8]%
\advance\xpos by \deltax \advance\ypos by \deltay%
\morphism(\xpos,\ypos)|\xc|/{#6}/<-\deltax,-\deltay>[\nodeb`\nodec;\labelc]%
\advance\xpos by -\deltax \advance\ypos by \deltay%
\morphism(\xpos,\ypos)|\xb|/{#5}/<\deltax,-\deltay>[\nodea`\nodeb;\labelb]%
\multiply \deltay by 2%
\morphism(\xpos,\ypos)|\xa|/{#4}/<0,-\deltay>[\nodea`\nodec;\labela]%
\ignorespaces}%
\def\ptrianglep(#1){\ifnextchar|{\ptrianglepp(#1)}{\ptrianglepp(#1)|alr|}}%
\def\ptrianglepp(#1)|#2|{\ifnextchar/{\ptriangleppp(#1)|#2|}%
    {\ptriangleppp(#1)|#2|/>`>`>/}}%
\def\ptriangleppp(#1)|#2|/#3`#4`#5/{%
    \ifnextchar<{\ptrianglepppp(#1)|#2|/#3`#4`#5/}%
    {\ptrianglepppp(#1)|#2|/#3`#4`#5/<\default,\default>}}%
\def\qtrianglep(#1){\ifnextchar|{\qtrianglepp(#1)}{\qtrianglepp(#1)|alr|}}%
\def\qtrianglepp(#1)|#2|{\ifnextchar/{\qtriangleppp(#1)|#2|}%
    {\qtriangleppp(#1)|#2|/>`>`>/}}%
\def\qtriangleppp(#1)|#2|/#3`#4`#5/{%
    \ifnextchar<{\qtrianglepppp(#1)|#2|/#3`#4`#5/}%
    {\qtrianglepppp(#1)|#2|/#3`#4`#5/<\default,\default>}}%
\def\dtrianglep(#1){\ifnextchar|{\dtrianglepp(#1)}{\dtrianglepp(#1)|lrb|}}%
\def\dtrianglepp(#1)|#2|{\ifnextchar/{\dtriangleppp(#1)|#2|}%
    {\dtriangleppp(#1)|#2|/>`>`>/}}%
\def\dtriangleppp(#1)|#2|/#3`#4`#5/{%
    \ifnextchar<{\dtrianglepppp(#1)|#2|/#3`#4`#5/}%
    {\dtrianglepppp(#1)|#2|/#3`#4`#5/<\default,\default>}}%
\def\btrianglep(#1){\ifnextchar|{\btrianglepp(#1)}{\btrianglepp(#1)|lrb|}}%
\def\btrianglepp(#1)|#2|{\ifnextchar/{\btriangleppp(#1)|#2|}%
    {\btriangleppp(#1)|#2|/>`>`>/}}%
\def\btriangleppp(#1)|#2|/#3`#4`#5/{%
    \ifnextchar<{\btrianglepppp(#1)|#2|/#3`#4`#5/}%
    {\btrianglepppp(#1)|#2|/#3`#4`#5/<\default,\default>}}%
\def\Atrianglep(#1){\ifnextchar|{\Atrianglepp(#1)}{\Atrianglepp(#1)|lrb|}}%
\def\Atrianglepp(#1)|#2|{\ifnextchar/{\Atriangleppp(#1)|#2|}%
    {\Atriangleppp(#1)|#2|/>`>`>/}}%
\def\Atriangleppp(#1)|#2|/#3`#4`#5/{%
    \ifnextchar<{\Atrianglepppp(#1)|#2|/#3`#4`#5/}%
    {\Atrianglepppp(#1)|#2|/#3`#4`#5/<\default,\default>}}%
\def\Vtrianglep(#1){\ifnextchar|{\Vtrianglepp(#1)}{\Vtrianglepp(#1)|alb|}}%
\def\Vtrianglepp(#1)|#2|{\ifnextchar/{\Vtriangleppp(#1)|#2|}%
    {\Vtriangleppp(#1)|#2|/>`>`>/}}%
\def\Vtriangleppp(#1)|#2|/#3`#4`#5/{%
    \ifnextchar<{\Vtrianglepppp(#1)|#2|/#3`#4`#5/}%
    {\Vtrianglepppp(#1)|#2|/#3`#4`#5/<\default,\default>}}%
\def\Ctrianglep(#1){\ifnextchar|{\Ctrianglepp(#1)}{\Ctrianglepp(#1)|arb|}}%
\def\Ctrianglepp(#1)|#2|{\ifnextchar/{\Ctriangleppp(#1)|#2|}%
    {\Ctriangleppp(#1)|#2|/>`>`>/}}%
\def\Ctriangleppp(#1)|#2|/#3`#4`#5/{%
    \ifnextchar<{\Ctrianglepppp(#1)|#2|/#3`#4`#5/}%
    {\Ctrianglepppp(#1)|#2|/#3`#4`#5/<\default,\default>}}%
\def\Dtrianglep(#1){\ifnextchar|{\Dtrianglepp(#1)}{\Dtrianglepp(#1)|alb|}}%
\def\Dtrianglepp(#1)|#2|{\ifnextchar/{\Dtriangleppp(#1)|#2|}%
    {\Dtriangleppp(#1)|#2|/>`>`>/}}%
\def\Dtriangleppp(#1)|#2|/#3`#4`#5/{%
    \ifnextchar<{\Dtrianglepppp(#1)|#2|/#3`#4`#5/}%
    {\Dtrianglepppp(#1)|#2|/#3`#4`#5/<\default,\default>}}%
\def\Atrianglepairpppp(#1)|#2|/#3`#4`#5`#6`#7/<#8>[#9]{%
\def\next(##1,##2){\xpos##1\ypos##2}%
\next(#1)%
\def\next|##1##2##3##4##5|{\def\xa{##1}\def\xb{##2}%
\def\xc{##3}\def\xd{##4}\def\xe{##5}}%
\next|#2|%
\def\next<##1,##2>{\deltax=##1\deltay=##2\ignorespaces}%
\next<#8>%
\def\next[##1`##2`##3`##4;##5`##6`##7`##8`##9]{%
 \def\nodea{##1}\def\nodeb{##2}\def\nodec{##3}\def\noded{##4}%
 \def\labela{##5}\def\labelb{##6}\def\labelc{##7}\def\labeld{##8}\def\labele{##9}}%
\next[#9]%
\morphism(\xpos,\ypos)|\xd|/{#6}/<\deltax,0>[\nodeb`\nodec;\labeld]%
\advance\xpos by \deltax%
\morphism(\xpos,\ypos)|\xe|/{#7}/<\deltax,0>[\nodec`\noded;\labele]%
\advance\ypos by \deltay%
\morphism(\xpos,\ypos)|\xa|/{#3}/<-\deltax,-\deltay>[\nodea`\nodeb;\labela]%
\morphism(\xpos,\ypos)|\xb|/{#4}/<0,-\deltay>[\nodea`\nodec;\labelb]%
\morphism(\xpos,\ypos)|\xc|/{#5}/<\deltax,-\deltay>[\nodea`\noded;\labelc]%
\ignorespaces}%
\def\Vtrianglepairpppp(#1)|#2|/#3`#4`#5`#6`#7/<#8>[#9]{%
\def\next(##1,##2){\xpos##1\ypos##2}%
\next(#1)%
\def\next|##1##2##3##4##5|{\def\xa{##1}\def\xb{##2}%
\def\xc{##3}\def\xd{##4}\def\xe{##5}}%
\next|#2|%
\def\next<##1,##2>{\deltax=##1\deltay=##2\ignorespaces}%
\next<#8>%
\def\next[##1`##2`##3`##4;##5`##6`##7`##8`##9]{%
 \def\nodea{##1}\def\nodeb{##2}\def\nodec{##3}\def\noded{##4}%
 \def\labela{##5}\def\labelb{##6}\def\labelc{##7}\def\labeld{##8}\def\labele{##9}}%
\next[#9]%
\advance\ypos by \deltay%
\morphism(\xpos,\ypos)|\xa|/{#3}/<\deltax,0>[\nodea`\nodeb;\labela]%
\morphism(\xpos,\ypos)|\xc|/{#5}/<\deltax,-\deltay>[\nodea`\noded;\labelc]%
\advance\xpos by \deltax%
\morphism(\xpos,\ypos)|\xb|/{#4}/<\deltax,0>[\nodeb`\nodec;\labelb]%
\morphism(\xpos,\ypos)|\xd|/{#6}/<0,-\deltay>[\nodeb`\noded;\labeld]%
\advance\xpos by \deltax%
\morphism(\xpos,\ypos)|\xe|/{#7}/<-\deltax,-\deltay>[\nodec`\noded;\labele]%
\ignorespaces}%
\def\Ctrianglepairpppp(#1)|#2|/#3`#4`#5`#6`#7/<#8>[#9]{%
\def\next(##1,##2){\xpos##1\ypos##2}%
\next(#1)%
\def\next|##1##2##3##4##5|{\def\xa{##1}\def\xb{##2}%
\def\xc{##3}\def\xd{##4}\def\xe{##5}}%
\next|#2|%
\def\next<##1,##2>{\deltax=##1\deltay=##2\ignorespaces}%
\next<#8>%
\def\next[##1`##2`##3`##4;##5`##6`##7`##8`##9]{%
 \def\nodea{##1}\def\nodeb{##2}\def\nodec{##3}\def\noded{##4}%
 \def\labela{##5}\def\labelb{##6}\def\labelc{##7}\def\labeld{##8}\def\labele{##9}}%
\next[#9]%
\advance\ypos by \deltay%
\morphism(\xpos,\ypos)|\xe|/{#7}/<0,-\deltay>[\nodec`\noded;\labele]%
\advance\xpos by -\deltax%
\morphism(\xpos,\ypos)|\xc|/{#5}/<\deltax,0>[\nodeb`\nodec;\labelc]%
\morphism(\xpos,\ypos)|\xd|/{#6}/<\deltax,-\deltay>[\nodeb`\noded;\labeld]%
\advance\ypos by \deltay%
\advance\xpos by \deltax%
\morphism(\xpos,\ypos)|\xa|/{#3}/<-\deltax,-\deltay>[\nodea`\nodeb;\labela]%
\morphism(\xpos,\ypos)|\xb|/{#4}/<0,-\deltay>[\nodea`\nodec;\labelb]%
\ignorespaces}%
\def\Dtrianglepairpppp(#1)|#2|/#3`#4`#5`#6`#7/<#8>[#9]{%
\def\next(##1,##2){\xpos##1\ypos##2}%
\next(#1)%
\def\next|##1##2##3##4##5|{\def\xa{##1}\def\xb{##2}%
\def\xc{##3}\def\xd{##4}\def\xe{##5}}%
\next|#2|%
\def\next<##1,##2>{\deltax=##1\deltay=##2\ignorespaces}%
\next<#8>%
\def\next[##1`##2`##3`##4;##5`##6`##7`##8`##9]{%
 \def\nodea{##1}\def\nodeb{##2}\def\nodec{##3}\def\noded{##4}%
 \def\labela{##5}\def\labelb{##6}\def\labelc{##7}\def\labeld{##8}\def\labele{##9}}%
\next[#9]%
\advance\ypos by \deltay%
\morphism(\xpos,\ypos)|\xc|/{#5}/<\deltax,0>[\nodeb`\nodec;\labelc]%
\morphism(\xpos,\ypos)|\xd|/{#6}/<0,-\deltay>[\nodeb`\noded;\labeld]%
\advance\ypos by \deltay%
\morphism(\xpos,\ypos)|\xa|/{#3}/<0,-\deltay>[\nodea`\nodeb;\labela]%
\morphism(\xpos,\ypos)|\xb|/{#4}/<\deltax,-\deltay>[\nodea`\nodec;\labelb]%
\advance\ypos by -\deltay%
\advance\xpos by \deltax%
\morphism(\xpos,\ypos)|\xe|/{#7}/<-\deltax,-\deltay>[\nodec`\noded;\labele]%
\ignorespaces}%
\def\Atrianglepairp(#1){\ifnextchar|{\Atrianglepairpp(#1)}%
{\Atrianglepairpp(#1)|lmrbb|}}%
\def\Atrianglepairpp(#1)|#2|{\ifnextchar/{\Atrianglepairppp(#1)|#2|}%
    {\Atrianglepairppp(#1)|#2|/>`>`>`>`>/}}%
\def\Atrianglepairppp(#1)|#2|/#3`#4`#5`#6`#7/{%
    \ifnextchar<{\Atrianglepairpppp(#1)|#2|/#3`#4`#5`#6`#7/}%
    {\Atrianglepairpppp(#1)|#2|/#3`#4`#5`#6`#7/<\default,\default>}}%
\def\Vtrianglepairp(#1){\ifnextchar|{\Vtrianglepairpp(#1)}%
{\Vtrianglepairpp(#1)|aalmr|}}%
\def\Vtrianglepairpp(#1)|#2|{\ifnextchar/{\Vtrianglepairppp(#1)|#2|}%
    {\Vtrianglepairppp(#1)|#2|/>`>`>`>`>/}}%
\def\Vtrianglepairppp(#1)|#2|/#3`#4`#5`#6`#7/{%
    \ifnextchar<{\Vtrianglepairpppp(#1)|#2|/#3`#4`#5`#6`#7/}%
    {\Vtrianglepairpppp(#1)|#2|/#3`#4`#5`#6`#7/<\default,\default>}}%
\def\Ctrianglepairp(#1){\ifnextchar|{\Ctrianglepairpp(#1)}%
{\Ctrianglepairpp(#1)|lrmlr|}}%
\def\Ctrianglepairpp(#1)|#2|{\ifnextchar/{\Ctrianglepairppp(#1)|#2|}%
    {\Ctrianglepairppp(#1)|#2|/>`>`>`>`>/}}%
\def\Ctrianglepairppp(#1)|#2|/#3`#4`#5`#6`#7/{%
    \ifnextchar<{\Ctrianglepairpppp(#1)|#2|/#3`#4`#5`#6`#7/}%
    {\Ctrianglepairpppp(#1)|#2|/#3`#4`#5`#6`#7/<\default,\default>}}%
\def\Dtrianglepairp(#1){\ifnextchar|{\Dtrianglepairpp(#1)}%
{\Dtrianglepairpp(#1)|lrmlr|}}%
\def\Dtrianglepairpp(#1)|#2|{\ifnextchar/{\Dtrianglepairppp(#1)|#2|}%
    {\Dtrianglepairppp(#1)|#2|/>`>`>`>`>/}}%
\def\Dtrianglepairppp(#1)|#2|/#3`#4`#5`#6`#7/{%
    \ifnextchar<{\Dtrianglepairpppp(#1)|#2|/#3`#4`#5`#6`#7/}%
    {\Dtrianglepairpppp(#1)|#2|/#3`#4`#5`#6`#7/<\default,\default>}}%
\def\place(#1,#2)[#3]{\POS(#1,#2)*+!!<0ex,.75ex>{#3}\ignorespaces}%
\def\pullback#1]#2]{\square#1]\trident#2]\ignorespaces}%
\def\tridentppp|#1#2#3|/#4`#5`#6/<#7,#8>[#9]{%
\def\next[##1;##2`##3`##4]{\def\nodee{##1}\def\labele{##2}%
   \def\labelf{##3}\def\labelg{##4}}%
\next[#9]%
\advance \xpos by -\deltax%
\advance \xpos by -#7\advance \ypos by #8%
\advance\deltax by #7%
\morphism(\xpos,\ypos)|#1|/{#4}/<\deltax,-#8>[\nodee`\nodeb;\labele]%
\advance\deltax by -#7%
\morphism(\xpos,\ypos)|#2|/{#5}/<#7,-#8>[\nodee`\nodea;\labelf]%
\advance\deltay by #8%
\morphism(\xpos,\ypos)|#3|/{#6}/<#7,-\deltay>[\nodee`\nodec;\labelg]%
\ignorespaces}%
\def\trident{\ifnextchar|{\tridentp}{\tridentp|amb|}}%
\def\tridentp|#1|{\ifnextchar/{\tridentpp|#1|}{\tridentpp|#1|/{>}`{>}`{>}/}}%
\def\tridentpp|#1|/#2/{\ifnextchar<{\tridentppp|#1|/#2/}%
  {\tridentppp|#1|/#2/<500,500>}}%
\def\setmorphismwidth#1#2#3#4{%
 \setbox0=\hbox{$#1{\labelstyle#3#3}#2$}#4=\wd0%
 \divide #4 by 2 \divide #4 by \ul%
 \advance #4 by 350 \ratchet{#4}{500}}%
\def\setSquarewidth[#1`#2`#3`#4;#5`#6`#7`#8]{%
 \setmorphismwidth{#1}{#2}{#5}{\topw}%
 \setmorphismwidth{#3}{#4}{#8}{\botw}%
\ratchet{\topw}{\botw}}%
\def\Squarepppp(#1)|#2|/#3/<#4>[#5]{%
 \setSquarewidth[#5]%
 \squarepppp(#1)|#2|/#3/<\topw,#4>[#5]%
\ignorespaces}%
\def\Squarep(#1){\ifnextchar|{\Squarepp(#1)}{\Squarepp(#1)|alrb|}}%
\def\Squarepp(#1)|#2|{\ifnextchar/{\Squareppp(#1)|#2|}%
    {\Squareppp(#1)|#2|/>`>`>`>/}}%
\def\Squareppp(#1)|#2|/#3`#4`#5`#6/{%
    \ifnextchar<{\Squarepppp(#1)|#2|/#3`#4`#5`#6/}%
    {\Squarepppp(#1)|#2|/#3`#4`#5`#6/<\default>}}%
\def\hSquarespppp(#1,#2)|#3|/#4/<#5>[#6;#7]{%
\Xpos=#1\Ypos=#2%
\def\next|##1##2##3##4##5##6##7|{%
 \def\Xa{##1}\def\Xb{##2}\def\Xc{##3}\def\Xd{##4}%
 \def\Xe{##5}\def\Xf{##6}\def\Xg{##7}}%
\next|#3|%
\deltaY=#5%
\def\next[##1`##2`##3`##4`##5`##6]{%
 \def\Nodea{##1}\def\Nodeb{##2}\def\Nodec{##3}%
 \def\Noded{##4}\def\Nodee{##5}\def\Nodef{##6}}%
\next[#6]%
\def\next[##1`##2`##3`##4`##5`##6`##7]{%
 \def\Labela{##1}\def\Labelb{##2}\def\Labelc{##3}\def\Labeld{##4}%
 \def\Labele{##5}\def\Labelf{##6}\def\Labelg{##7}}%
\next[#7]%
\dohSquares/#4/}%
\def\dohSquares/#1`#2`#3`#4`#5`#6`#7/{%
\Squarepppp(\Xpos,\Ypos)|\Xa\Xc\Xd\Xf|/#1`#3`#4`#6/<\deltaY>%
 [\Nodea`\Nodeb`\Noded`\Nodee;\Labela`\Labelc`\Labeld`\Labelf]%
 \advance \Xpos by \topw
\Squarepppp(\Xpos,\Ypos)|\Xb\Xd\Xe\Xg|/#2``#5`#7/<\deltaY>%
[\Nodeb`\Nodec`\Nodee`\Nodef;\Labelb``\Labele`\Labelg]%
\ignorespaces}%
\def\hSquaresp(#1){\ifnextchar|{\hSquarespp(#1)}{\hSquarespp%
(#1)|aalmrbb|}}%
\def\hSquarespp(#1)|#2|{\ifnextchar/{\hSquaresppp(#1)|#2|}%
    {\hSquaresppp(#1)|#2|/>`>`>`>`>`>`>/}}%
\def\hSquaresppp(#1)|#2|/#3/{%
    \ifnextchar<{\hSquarespppp(#1)|#2|/#3/}%
    {\hSquarespppp(#1)|#2|/#3/<\default>}}%
\def\vSquarespppp(#1,#2)|#3|/#4/<#5,#6>[#7;#8]{%
\Xpos=#1\Ypos=#2%
\def\next|##1##2##3##4##5##6##7|{%
 \def\Xa{##1}\def\Xb{##2}\def\Xc{##3}\def\Xd{##4}%
 \def\Xe{##5}\def\Xf{##6}\def\Xg{##7}}%
\next|#3|%
\deltaX=#5%
\deltaY=#6%
\def\next[##1`##2`##3`##4`##5`##6]{%
 \def\Nodea{##1}\def\Nodeb{##2}\def\Nodec{##3}%
 \def\Noded{##4}\def\Nodee{##5}\def\Nodef{##6}}%
\next[#7]%
\def\next[##1`##2`##3`##4`##5`##6`##7]{%
 \def\Labela{##1}\def\Labelb{##2}\def\Labelc{##3}\def\Labeld{##4}%
 \def\Labele{##5}\def\Labelf{##6}\def\Labelg{##7}}%
\next[#8]%
\dovSquares/#4/\ignorespaces}%
\def\dovSquares/#1`#2`#3`#4`#5`#6`#7/{%
\setmorphismwidth{\Nodea}{\Nodeb}{\Labela}{\topw}%
\setmorphismwidth{\Nodec}{\Noded}{\Labeld}{\botw}%
\ratchet{\topw}{\botw}%
\setmorphismwidth{\Nodee}{\Nodef}{\Labelg}{\botw}%
\ratchet{\topw}{\botw}%
\square(\Xpos,\Ypos)|\Xd\Xe\Xf\Xg|/`#5`#6`#7/<\topw,\deltaX>%
 [\Nodec`\Noded`\Nodee`\Nodef;`\Labele`\Labelf`\Labelg]%
\advance \Ypos by \deltaX%
\square(\Xpos,\Ypos)|\Xa\Xb\Xc\Xd|/#1`#2`#3`#4/<\topw,\deltaY>%
 [\Nodea`\Nodeb`\Nodec`\Noded;\Labela`\Labelb`\Labelc`\Labeld]%
}%
\def\vSquaresp(#1){\ifnextchar|{\vSquarespp(#1)}{\vSquarespp%
(#1)|alrmlrb|}}%
\def\vSquarespp(#1)|#2|{\ifnextchar/{\vSquaresppp(#1)|#2|}%
    {\vSquaresppp(#1)|#2|/>`>`>`>`>`>`>/}}%
\def\vSquaresppp(#1)|#2|/#3/{%
    \ifnextchar<{\vSquarespppp(#1)|#2|/#3/}%
    {\vSquarespppp(#1)|#2|/#3/<\default,\default>}}%
\def\osquarepppp(#1)|#2|/#3`#4`#5`#6/<#7>[#8]{\squarepppp%
 (#1)|#2|/#3`#4`#5`#6/<#7>[#8]%
 \let\Nodea\nodea\let\Nodeb\nodeb%
\let\Nodec\nodec\let\Noded\noded\Xpos=\xpos\Ypos=\ypos%
\deltaX=\deltax \deltaY=\deltay \isquare}
\def\osquarep(#1){\ifnextchar|{\osquarepp(#1)}{\osquarepp(#1)|alrb|}}%
\def\osquarepp(#1)|#2|{\ifnextchar/{\osquareppp(#1)|#2|}%
    {\osquareppp(#1)|#2|/>`>`>`>/}}%
\def\osquareppp(#1)|#2|/#3`#4`#5`#6/{%
    \ifnextchar<{\osquarepppp(#1)|#2|/#3`#4`#5`#6/}%
    {\osquarepppp(#1)|#2|/#3`#4`#5`#6/<1500,1500>}}%
\def\isquarepppp(#1)|#2|/#3`#4`#5`#6/<#7>[#8]{%
 \squarepppp(#1)|#2|/#3`#4`#5`#6/<#7>[#8]%
\ifnextchar|{\cubep}{\cubep|mmmm|}}%
\def\cubep|#1|{\ifnextchar/{\cubepp|#1|}{\cubepp|#1|/>`>`>`>/}}%
\def\isquare{\ifnextchar({\isquarep}{\isquarep(\default,\default)}}%
\def\isquarep(#1){\ifnextchar|{\isquarepp(#1)}{\isquarepp(#1)|alrb|}}
\def\isquarepp(#1)|#2|{\ifnextchar/{\isquareppp(#1)|#2|}%
    {\isquareppp(#1)|#2|/>`>`>`>/}}%
\def\isquareppp(#1)|#2|/#3`#4`#5`#6/{%
    \ifnextchar<{\isquarepppp(#1)|#2|/#3`#4`#5`#6/}%
    {\isquarepppp(#1)|#2|/#3`#4`#5`#6/<500,500>}}%
\def\cubepp|#1#2#3#4|/#5`#6`#7`#8/[#9]{%
\def\next[##1`##2`##3`##4]{\gdef\Labela{##1}%
\gdef\Labelb{##2}\gdef\Labelc{##3}\gdef\Labeld{##4}}\next[#9]%
\xend\xpos \yend\ypos
\Xend\xend\advance\Xend by -\Xpos
\Yend\yend\advance\Yend by -\Ypos
\domorphism(\Xpos,\Ypos)|#2|/#6/<\Xend,\Yend>[\Nodeb`\nodeb;\Labelb]%
\advance\Xpos by-\deltaX
\advance\xend by-\deltax
\Xend\xend\advance\Xend by -\Xpos
\domorphism(\Xpos,\Ypos)|#1|/#5/<\Xend,\Yend>[\Nodea`\nodea;\Labela]%
\advance\Ypos by-\deltaY
\advance\yend by-\deltay
\Yend\yend\advance\Yend by -\Ypos
\domorphism(\Xpos,\Ypos)|#3|/#7/<\Xend,\Yend>[\Nodec`\nodec;\Labelc]%
\advance\Xpos by\deltaX
\advance\xend by\deltax
\Xend\xend\advance\Xend by -\Xpos
\domorphism(\Xpos,\Ypos)|#4|/#8/<\Xend,\Yend>[\Noded`\noded;\Labeld]%
\ignorespaces}
\def\setwdth#1#2{\setbox0\hbox{$\labelstyle#1$}\wdth=\wd0
\setbox0\hbox{$\labelstyle#2$}\ifnum\wdth<\wd0 \wdth=\wd0 \fi}
\def\topppp/#1/<#2>^#3_#4{\allowbreak\mathrel{%
\ifnum#2=0
   \setwdth{#3}{#4}\deltax=\wdth \divide \deltax by \ul
   \advance \deltax by \defaultmargin  \ratchet{\deltax}{200}%
\else \deltax #2
\fi
\xy\ar@{#1}^{#3}_{#4}(\deltax,0) \endxy
\ignorespaces}}
\def\toppp/#1/<#2>^#3{\ifnextchar_{\topppp/#1/<#2>^{#3}}{\topppp/#1/<#2>^{#3}_{}}}
\def\topp/#1/<#2>{\ifnextchar^{\toppp/#1/<#2>}{\toppp/#1/<#2>^{}}}
\def\top/#1/{\ifnextchar<{\topp/#1/}{\topp/#1/<0>}}
\def\twopppp/#1`#2/<#3>^#4_#5{\allowbreak\mathrel{%
\ifnum0=#3
  \setwdth{#4}{#5}\deltax=\wdth \divide \deltax by \ul \advance \deltax
  by \defaultmargin \ratchet{\deltax}{200}%
\else \deltax#3 \fi
\xy\ar@{#1}@<2.5pt>^{#4}(\deltax,0)%
\ar@{#2}@<-2.5pt>_{#5}(\deltax,0)\endxy\ignorespaces}}
\def\twoppp/#1`#2/<#3>^#4{\ifnextchar_{\twopppp/#1`#2/<#3>^{#4}}%
  {\twopppp/#1`#2/<#3>^{#4}_{}}}
\def\twopp/#1`#2/<#3>{\ifnextchar^{\twoppp/#1`#2/<#3>}{\twoppp/#1`#2/<#3>^{}}}
\def\twop/#1`#2/{\ifnextchar<{\twopp/#1`#2/}{\twopp/#1`#2/<0>}}
\def\threeppppp/#1`#2`#3/<#4>^#5|#6_#7{\allowbreak\mathrel{%
\ifnum0=#4
\setbox0\hbox{$\labelstyle#5$}\wdth=\wd0
\setbox0\hbox{$\labelstyle#6$}\ifnum\wdth<\wd0 \wdth=\wd0 \fi
\setbox0\hbox{$\labelstyle#7$}\ifnum\wdth<\wd0 \wdth=\wd0 \fi
\deltax=\wdth \divide \deltax by \ul \advance \deltax by
\defaultmargin \ratchet{\deltax}{300}%
\else\deltax#4 \fi
    \xy \ifnum\wd0=0 \ar@{#2}(\deltax,0)
    \else \ar@{#2}|{#6}(\deltax,0)\fi
\ar@{#1}@<4.5pt>^{#5}(\deltax,0)
\ar@{#3}@<-4.5pt>_{#7}(\deltax,0)\endxy\ignorespaces}}
\def\threepppp/#1`#2`#3/<#4>^#5|#6{\ifnextchar_{\threeppppp
  /#1`#2`#3/<#4>^{#5}|{#6}}{\threeppppp/#1`#2`#3/<#4>^{#5}|{#6}_{}}}
\def\threeppp/#1`#2`#3/<#4>^#5{\ifnextchar|{\threepppp
  /#1`#2`#3/<#4>^{#5}}{\threepppp/#1`#2`#3/<#4>^{#5}|{}}}
\def\threepp/#1`#2`#3/<#4>{\ifnextchar^{\threeppp/#1`#2`#3/<#4>}%
  {\threeppp/#1`#2`#3/<#4>^{}}}
\def\threep/#1`#2`#3/{\ifnextchar<{\threepp/#1`#2`#3/}%
  {\threepp/#1`#2`#3/<0>}}
\def\twoar(#1,#2){{%
 \scalefactor{0.1}
 \deltax#1\deltay#2%
 \deltaX=\ifnum\deltax<0-\fi\deltax
 \deltaY=\ifnum\deltay<0-\fi\deltay
 \Xend\deltax \multiply \Xend by \deltax
 \Yend\deltay \multiply \Yend by \deltay
 \advance\Xend by \Yend \multiply \Xend by 3
 \ifnum \deltaX > \deltaY
    \multiply \deltaX by 3 \advance \deltaX by \deltaY
 \else
    \multiply \deltaY by 3 \advance \deltaX by \deltaY
 \fi
 \multiply\deltax by 500
 \multiply\deltay by 500
 \xpos\deltax \multiply \xpos by 3 \divide\xpos by \deltaX
 \Xpos\deltax \multiply \Xpos by \deltaX \divide \Xpos by \Xend
 \advance \xpos by \Xpos
 \ypos\deltay \multiply \ypos by 3 \divide\ypos by \deltaX
 \Ypos\deltay \multiply \Ypos by \deltaX \divide \Ypos by \Xend
 \advance \ypos by \Ypos
 \xy \ar@{=>}(\xpos,\ypos) \endxy
}\ignorespaces}
\def\iiixiiipppppp(#1,#2)|#3|/#4/<#5>#6<#7>[#8;#9]{%
 \xpos#1\ypos#2\relax
 \def\next|##1##2##3##4##5##6##7|{\def\xa{##1}\def\xb{##2}%
 \def\xc{##3}\def\xd{##4}\def\xe{##5}\def\xf{##6}\nextt|##7|}%
 \def\nextt|##1##2##3##4##5##6|{\def\xg{##1}\def\xh{##2}%
 \def\xi{##3}\def\xj{##4}\def\xk{##5}\def\xl{##6}}%
 \next|#3|%
 \def\next<##1,##2>{\deltax##1\deltay##2}%
 \next<#5>%
 \def\next<##1,##2>{\deltaX##1\deltaY##2}%
 \next<#7>%
 \def\next##1{\topw##1\relax
 \ifodd\topw \def\zl{}\else\def\zl{\relax}\fi \divide\topw by 2
 \ifodd\topw \def\zk{}\else\def\zk{\relax}\fi \divide\topw by 2
 \ifodd\topw \def\zj{}\else\def\zj{\relax}\fi \divide\topw by 2
 \ifodd\topw \def\zi{}\else\def\zi{\relax}\fi \divide\topw by 2
 \ifodd\topw \def\zh{}\else\def\zh{\relax}\fi \divide\topw by 2
 \ifodd\topw \def\zg{}\else\def\zg{\relax}\fi \divide\topw by 2
 \ifodd\topw \def\zf{}\else\def\zf{\relax}\fi \divide\topw by 2
 \ifodd\topw \def\ze{}\else\def\ze{\relax}\fi \divide\topw by 2
 \ifodd\topw \def\zd{}\else\def\zd{\relax}\fi \divide\topw by 2
 \ifodd\topw \def\zc{}\else\def\zc{\relax}\fi \divide\topw by 2
 \ifodd\topw \def\zb{}\else\def\zb{\relax}\fi \divide\topw by 2
 \ifodd\topw \def\za{}\else\def\za{\relax}\fi}%
 \next{#6}%
 \def\next[##1`##2`##3`##4`##5`##6`##7`##8`##9]{%
 \def\nodea{##1}\def\nodeb{##2}\def\nodec{##3}%
 \def\noded{##4}\def\nodee{##5}\def\nodef{##6}%
 \def\nodeg{##7}\def\nodeh{##8}\def\nodei{##9}}%
 \next[#8]%
 \def\next[##1`##2`##3`##4`##5`##6`##7]{%
 \def\labela{##1}\def\labelb{##2}\def\labelc{##3}%
 \def\labeld{##4}\def\labele{##5}\def\labelf{##6}\nextt[##7]}%
 \def\nextt[##1`##2`##3`##4`##5`##6]{%
 \def\labelg{##1}\def\labelh{##2}\def\labeli{##3}%
 \def\labelj{##4}\def\labelk{##5}\def\labell{##6}}%
 \next[#9]%
 \def\next/##1`##2`##3`##4`##5`##6`##7/{%
\morphism(\xpos,\ypos)|\xe|/{##5}/<\deltax,0>[\nodeg`\nodeh;\labele]%
 \ifx\zi\empty\relax \morphism(\xpos,\ypos)||/<-/<-\deltaX,0>[\nodeg`0;]\fi
 \ifx\zd\empty\relax \morphism(\xpos,\ypos)||<0,-\deltaY>[\nodeg`0;]\fi
 \advance\xpos by \deltax
 \morphism(\xpos,\ypos)|\xf|/{##6}/<\deltax,0>[\nodeh`\nodei;\labelf]%
 \ifx\ze\empty\relax \morphism(\xpos,\ypos)||<0,-\deltaY>[\nodeh`0;]\fi
 \advance\xpos by \deltax
 \ifx\zf\empty\relax \morphism(\xpos,\ypos)||<0,-\deltaY>[\nodei`0;]\fi
 \ifx\zl\empty\relax \morphism(\xpos,\ypos)||<\deltaX,0>[\nodei`0;]\fi
 \advance\ypos by \deltay
 \ifx\zk\empty\relax \morphism(\xpos,\ypos)||<\deltaX,0>[\nodef`0;]\fi
 \advance\xpos by -\deltax
 \morphism(\xpos,\ypos)|\xd|/{##4}/<\deltax,0>[\nodee`\nodef;\labeld]%
 \advance\xpos by -\deltax
 \morphism(\xpos,\ypos)|\xc|/{##3}/<\deltax,0>[\noded`\nodee;\labelc]%
 \ifx\zh\empty\relax \morphism(\xpos,\ypos)||/<-/<-\deltaX,0>[\noded`0;]\fi
 \advance\ypos by \deltay
 \morphism(\xpos,\ypos)|\xa|/{##1}/<\deltax,0>[\nodea`\nodeb;\labela]%
 \ifx\zg\empty\relax \morphism(\xpos,\ypos)||/<-/<-\deltaX,0>[\nodea`0;]\fi
 \ifx\za\empty\relax \morphism(\xpos,\ypos)||/<-/<0,\deltaY>[\nodea`0;]\fi
 \advance\xpos by \deltax
 \morphism(\xpos,\ypos)|\xb|/{##2}/<\deltax,0>[\nodeb`\nodec;\labelb]%
 \ifx\zb\empty\relax \morphism(\xpos,\ypos)||/<-/<0,\deltaY>[\nodeb`0;]\fi
 \advance\xpos by \deltax
 \ifx\zc\empty\relax \morphism(\xpos,\ypos)||/<-/<0,\deltaY>[\nodec`0;]\fi
 \ifx\zj\empty\relax \morphism(\xpos,\ypos)||<\deltaX,0>[\nodec`0;]\fi
 \nextt/##7/}%
 \def\nextt/##1`##2`##3`##4`##5`##6/{%
 \morphism(\xpos,\ypos)|\xi|/{##3}/<0,-\deltay>[\nodec`\nodef;\labeli]%
 \advance\xpos by -\deltax
 \morphism(\xpos,\ypos)|\xh|/{##2}/<0,-\deltay>[\nodeb`\nodee;\labelh]%
 \advance\xpos by -\deltax
 \morphism(\xpos,\ypos)|\xg|/{##1}/<0,-\deltay>[\nodea`\noded;\labelg]%
 \advance\ypos by -\deltay
 \morphism(\xpos,\ypos)|\xj|/{##4}/<0,-\deltay>[\noded`\nodeg;\labelj]%
 \advance\xpos by \deltax
 \morphism(\xpos,\ypos)|\xk|/{##5}/<0,-\deltay>[\nodee`\nodeh;\labelk]%
 \advance\xpos by \deltax
 \morphism(\xpos,\ypos)|\xl|/{##6}/<0,-\deltay>[\nodef`\nodei;\labell]}%
 \next/#4/\ignorespaces}
\def\iiixiiip(#1){\ifnextchar|{\iiixiiipp(#1)}%
  {\iiixiiipp(#1)|aammbblmrlmr|}}%
\def\iiixiiipp(#1)|#2|{\ifnextchar/{\iiixiiippp(#1)|#2|}%
    {\iiixiiippp(#1)|#2|/>`>`>`>`>`>`>`>`>`>`>`>/}}%
\def\iiixiiippp(#1)|#2|/#3/{%
    \ifnextchar<{\iiixiiipppp(#1)|#2|/#3/}%
    {\iiixiiipppp(#1)|#2|/#3/<\default,\default>}}%
\def\iiixiiipppp(#1)|#2|/#3/<#4>{\ifnextchar[{\iiixiiippppp(#1)|#2|/#3/%
   <#4>0<0,0>}{\iiixiiippppp(#1)|#2|/#3/<#4>}}%
\def\iiixiiippppp(#1)|#2|/#3/<#4>#5{\ifnextchar<%
   {\iiixiiipppppp(#1)|#2|/#3/<#4>{#5}}%
   {\iiixiiipppppp(#1)|#2|/#3/<#4>{#5}<400,400>}}%
\def\iiixiipppppp(#1,#2)|#3|/#4/<#5>#6<#7>[#8;#9]{%
 \xpos#1\ypos#2\relax
 \def\next|##1##2##3##4##5##6##7|{\def\xa{##1}\def\xb{##2}%
 \def\xc{##3}\def\xd{##4}\def\xe{##5}\def\xf{##6}\def\xg{##7}}%
 \next|#3|%
 \def\next<##1,##2>{\deltax##1\deltay##2}%
 \next<#5>%
 \deltaX#7
 \topw#6
 \def\next{%
 \ifodd\topw \def\zd{}\else\def\zd{\relax}\fi \divide\topw by 2
 \ifodd\topw \def\zc{}\else\def\zc{\relax}\fi \divide\topw by 2
 \ifodd\topw \def\zb{}\else\def\zb{\relax}\fi \divide\topw by 2
 \ifodd\topw \def\za{}\else\def\za{\relax}\fi}%
 \next
 \def\next[##1`##2`##3`##4`##5`##6]{%
 \def\nodea{##1}\def\nodeb{##2}\def\nodec{##3}%
 \def\noded{##4}\def\nodee{##5}\def\nodef{##6}}%
 \next[#8]%
 \def\next[##1`##2`##3`##4`##5`##6`##7]{%
 \def\labela{##1}\def\labelb{##2}\def\labelc{##3}%
 \def\labeld{##4}\def\labele{##5}\def\labelf{##6}\def\labelg{##7}}%
 \next[#9]%
 \def\next/##1`##2`##3`##4`##5`##6`##7/{%
 \ifx\zc\empty\relax\morphism(\xpos,\ypos)<\deltaX,0>[0`\noded;]\fi
 \advance\xpos by\deltaX
 \morphism(\xpos,\ypos)|\xc|/##3/<\deltax,0>[\noded`\nodee;\labelc]%
 \advance\xpos by \deltax
 \morphism(\xpos,\ypos)|\xd|/##4/<\deltax,0>[\nodee`\nodef;\labeld]%
 \advance\xpos by \deltax
 \ifx\zd\empty\relax  \morphism(\xpos,\ypos)<\deltaX,0>[\nodef`0;]\fi
 \advance\xpos by -\deltaX  \advance\xpos by -\deltax
 \advance\xpos by -\deltax  \advance\ypos by \deltay
 \ifx\za\empty\relax\morphism(\xpos,\ypos)<\deltaX,0>[0`\nodea;]\fi
 \advance\xpos by\deltaX
 \morphism(\xpos,\ypos)|\xa|/##1/<\deltax,0>[\nodea`\nodeb;\labela]%
 \morphism(\xpos,\ypos)|\xe|/##5/<0,-\deltay>[\nodea`\noded;\labele]%
 \advance\xpos by \deltax
 \morphism(\xpos,\ypos)|\xb|/##2/<\deltax,0>[\nodeb`\nodec;\labelb]%
 \morphism(\xpos,\ypos)|\xf|/##6/<0,-\deltay>[\nodeb`\nodee;\labelf]%
 \advance\xpos by \deltax
 \morphism(\xpos,\ypos)|\xg|/##7/<0,-\deltay>[\nodec`\nodef;\labelg]%
 \ifx\zb\empty\relax \morphism(\xpos,\ypos)<\deltaX,0>[\nodec`0;]\fi}%
 \next/#4/\ignorespaces}
\def\iiixiip(#1){\ifnextchar|{\iiixiipp(#1)}%
  {\iiixiipp(#1)|aabblmr|}}%
\def\iiixiipp(#1)|#2|{\ifnextchar/{\iiixiippp(#1)|#2|}%
    {\iiixiippp(#1)|#2|/>`>`>`>`>`>`>/}}%
\def\iiixiippp(#1)|#2|/#3/{%
    \ifnextchar<{\iiixiipppp(#1)|#2|/#3/}%
    {\iiixiipppp(#1)|#2|/#3/<\default,\default>}}%
\def\iiixiipppp(#1)|#2|/#3/<#4>{\ifnextchar[{\iiixiippppp(#1)|#2|/#3/%
   <#4>{0}<0>}{\iiixiippppp(#1)|#2|/#3/<#4>}}%
\def\iiixiippppp(#1)|#2|/#3/<#4>#5{\ifnextchar<%
   {\iiixiipppppp(#1)|#2|/#3/<#4>{#5}}%
   {\iiixiipppppp(#1)|#2|/#3/<#4>{#5}<0>}}%
\newtheorem{teo}{Theorem}[section]
\newtheorem{prop}[teo]{Proposition}
\newtheorem{defin}[teo]{Definition}
\newtheorem{lemma}[teo]{Lemma}
\newtheorem{corol}[teo]{Corollary}
\newtheorem{state}[teo]{Statement}
\newcommand{\ds}{\displaystyle}
\newtheoremstyle{obs}
  {3pt}
  {3pt}
  {}
  {}
  {\bfseries}
  {.}
  {.5em}
  {}
\theoremstyle{obs}
\newtheorem{remark}[teo]{Remark}
\newcommand{\comments}{\noindent \textbf{Comments: }}
\newcommand{\comment}{\noindent \textbf{Comment: }}
\newcommand{\note}{\noindent \textbf{Note: }}
\def\tabaddress#1{{\small\it\begin{tabular}[t]{c}#1
\\[1.2ex]\end{tabular}}}
\title{GEOMETRIC APPROACH TO PONTRYAGIN'S MAXIMUM PRINCIPLE}
\author{\sc M. Barbero-Li\~n\'an, M. C. Mu\~noz-Lecanda \thanks{{\bf e}\textendash{}{\it mail}:
  mbarbero@ma4.upc.edu, matmcml@ma4.upc.edu}
\\
 \tabaddress{Departamento de Matem\'atica Aplicada IV\\
  Edificio C\textendash{}3, Campus Norte UPC.
  C/ Jordi Girona 1. E\textendash{}08034 Barcelona. Spain \\
  Phone numbers: $+34934015992$, $+34934015959$. Fax number:
  $+34934015981$.
  }}
\date{\today}
\begin{document}
\maketitle \thispagestyle{empty} \setcounter{page}{0}

\begin{abstract}
Since the second half of the 20th century, Pontryagin's
Maximum Principle has been widely discussed and used as a
method to solve optimal control problems in medicine,
robotics, finance, engineering, astronomy. Here, we focus
on the proof and on the understanding of this Principle,
using as much geometric ideas and geometric tools as
possible. This approach provides a better and clearer
understanding of the Principle and, in particular, of the
role of the abnormal extremals. These extremals are
interesting because they do not depend on the cost
function, but only on the control system. Moreover, they
were discarded as solutions until the nineties, when
examples of strict abnormal optimal curves were found. In
order to give a detailed exposition of the proof, the paper
is mostly self\textendash{}contained, which forces us to
consider different areas in mathematics such as algebra,
analysis, geometry.
\end{abstract}
\bigskip \bigskip
{\bf Key words}: {\sl Pontryagin's Maximum Principle,
perturbation vectors, tangent perturbation cones, optimal
control problems. }

\bigskip \bigskip

\vbox{\raggedleft AMS s.\,c.\,(2000): 34A12, 49J15, 49J30,
49K05, 49K15, 
93C15. }\null

\clearpage \thispagestyle{empty} \setcounter{page}{0}
 \tableofcontents

\thispagestyle{empty}
\setcounter{page}{1}
\section{Introduction}

The importance of Pontryagin's Maximum Principle as a method to find
solutions to optimal control problems is the main justification for
this work. The use and the comprehension of this Principle does not
always gather together. The understanding of this Maximum Principle
never finishes as shows the continuous wide number of references in
this topic
\cite{2004Agrachev,1966Athans,2003Bloch,2005BonnardCaillau,2003BonnardChyba,
2005BulloLewis,1997Jurdjevic,2003Bavophd,2003Bavo,
67LeeMarkus,P62,S98Free,2000Sussmann,2005Sussmann} and references
therein. We try to contribute to this process through a differential
geometric approach.

When we can interfere in the evolution  of a dynamical
system, we deal with a control system; that is, a
differential equation depending on parameters, which are
called controls. The way we interfere in the control system
consists of changing the controls arbitrarily. In optimal
control problems, the controls are chosen such that the
integral of a given cost function is minimized. That
functional to be minimized can correspond with the time,
the energy, the length of a path or other magnitude related
to the system.

In general, to find a solution to an optimal control
problem is not straightforward. A valuable tactic to deal
with these problems is to restrict the candidates to be
solution through necessary conditions for optimality, such
as those given by Pontryagin's Maximum Principle. This
technique is used in a wide range of disciplines, as for
instance engineering
\cite{2005ChybaEt,2003ChybaEt,ApplOptControl,LewisOptControl},
aerospace \cite{ExampleCursoDavid}, robotics
\cite{1995NumericalTimeOptArmRobot,1999TimeOptRobot,RoboticsMurray},
medicine \cite{Biology}, economics
\cite{1970ApplOptControl,Salesman}, traffic flow
\cite{NetworkTrafic}. Nevertheless, it is worth remarking
that the Maximum Principle does not give sufficient
conditions to compute an optimal trajectory; it only
provides necessary conditions. Thus only candidates to be
optimal trajectories are found, called extremals. To
determine if they are optimal or not, other results related
to the existence of solutions for these problems are
needed. See
\cite{2004Agrachev,1966Athans,1962Filippov,67LeeMarkus} for
more details.

In 1958 the International Congress of Mathematicians was held in
Edinburgh, Scotland, where for the first time L. S. Pontryagin
talked publicly about the Maximum Principle. This Principle was
developed by a research group on automatic control created by
Pontryagin in the fifties. He was engaged in applied mathematics by
his friend A. Andronov and because scientists in the Steklov
Mathematical Institute were asked to carry out applied research,
especially in the field of aircraft dynamics.

At the same time, in the regular seminars on automatic
control in the Institute of Automatics and Telemechanics,
A. Feldbaum introduced Pontryagin and his collegues to the
time\textendash{}optimization problem. This allowed them to
study how to find the best way of piloting an aircraft in
order to defeat a zenith fire point in the shortest time as
a time\textendash{}optimization problem.

Since the equations for modelling the aircraft's problem are
nonlinear and the control of the rear end of the aircraft runs over
a bounded subset, it was necessary to reformulate the calculus of
variations known at that time. Taking into account ideas suggested
by E. J. McShane in \cite{39McShane}, Pontryagin and his
collaborators managed to state and prove the Maximum Principle,
which was published in Russian in 1961 and translated into English
\cite{P62} the following year. See \cite{Boltyanski} for more
historical remarks.

Initially the approach to optimal control problems was from
the point of view of the differential equations
\cite{1966Athans, 67LeeMarkus,P62,84Zeidler}, but later the
approach was from the differential geometry
\cite{2004Agrachev,2007BressanBook,1997Jurdjevic,S98Free}.
Furthermore, the Maximum Principle is being modified to
study stochastic control systems
\cite{84StochasticPMP,1986StochPMPBook} and discrete
control systems
\cite{2008ChybaIntegrator,BlochDiscretePMP,1967DiscretePMP}.
Lately, the Skinner\textendash{}Rusk formulation
\cite{SR-83} has been applied to study optimal control
problem for non\textendash{}autonomous control systems,
obtaining again the necessary conditions of Pontryagin's
Maximum Principle, as long as the differentiability with
respect to controls is assumed \cite{BEMMR-2007}. This
formulation is suitable to deal with implicit optimal
control problems that come up in engineering problems
described by the descriptor systems \cite{muller,M-1999}.
This Principle also admits a presymplectic formalism that
gives weaker necessary conditions for optimality
\cite{Lisboa,MarinaIbortPanorama,2003BcnControlSim}.

Therefore, it is concluded that Pontryagin's Maximum
Principle has had and still has a great impact in optimal
control theory. The references mentioned show that the
research is still active as for the understanding and also
for the applications of the Maximum Principle.

A symplectic Hamiltonian formalism to optimal control
problems is provided by the necessary conditions stated in
Pontryagin's Maximum Principle. The solutions to the
problem are in the phase space manifold of the system, but
the Maximum Principle relates solutions to a lift to the
cotangent bundle of that manifold. Thus, in order to find
candidates to be optimal solutions, not only the controls
but also the momenta must be chosen appropriately so that
the necessary conditions in the Maximum Principle are
fulfilled. These conditions are, in fact,
first\textendash{}order necessary conditions and they are
not always enough to determine the evolution of all the
degrees of freedom in the problem. That is why sometimes it
is necessary to use the high order Maximum Principle
\cite{98Bianchini,KawskiSurvey,Knobloch,1977Krener}. But,
even when we succeed in finding the controls and the
momenta in such a way that Hamilton's equations can be
integrated to obtain a trajectory on the manifold, the
controls and the momenta are not necessarily unique. In
other words, different controls and different momenta can
give the same trajectory on the manifold, although the
necessary conditions in the Maximum Principle will be
satisfied in different ways. The momenta and the controls
determine different kinds of trajectories, which can be
abnormal, normal, strict abnormal, strict normal and
singular. We point out that these different kinds of
extremals do not provide a partition of the set of
trajectories in the manifold, because it may happen that a
trajectory admits more than one lift to the momenta space
so that the trajectory is in two different categories.

For years, abnormal extremals were discarded because it was
thought that they could not be optimal
\cite{90Hamenstadt,86Strichartz}. The idea was that
abnormal extremals were isolated curves and thus it was
impossible to consider any variation of these curves.
However, in \cite{94Montgomery} it is proved that there
exist abnormal minimizers by giving an example in
subRiemannian geometry. Furthermore, in \cite{96LS} the
strict abnormal minimizers are characterized in a general
way, studying the length\textendash{}minimizing problem in
subRiemannian geometry when there are only two controls. To
be more precise, a large enough set with abnormal extremals
is given and it contains strict abnormal curves that are
locally optimal for the considered
control\textendash{}linear system. Here began a new
interest in the abnormal extremals
\cite{AgrachevAbnLagrange,AgrachevMorseInd,AZ,
2006BonnardTrelat,2003Bavophd,2003Bavo}. What makes these
extremals more special is that the abnormality does not
depend on the cost function. Hence, the abnormal extremals
can be determined exclusively using the geometry of the
control system. Thus abnormality and controllability must
be closely related. In fact, in order to have abnormal
minimizers, the system cannot be controllable. In control
theory, controllability is still one of the properties
under active research
\cite{AgrachevQuestionControl,MTNSCesar} and the same
happens with abnormality in optimal control theory.
Moreover, the controllability is related with the reachable
set. Thus, as first pointed out in
\cite{2005BulloLewis,2003Bavo}, the geometry of the
reachable set also helps to characterize the abnormal
extremals.

On the other hand, the cost function is essential to prove that
abnormal extremals are abnormal minimizers, as pointed out in
\S\ref{symplectic}. That is why the existence or
non\textendash{}existence of abnormal minimizers is only known for
specific control problems, mainly control\textendash{}linear and
control\textendash{}affine systems with
control\textendash{}quadratic cost functions or for
time\textendash{}optimal control problems
\cite{95AgrachevSarychevStrongMin,AZ,2003BonnardChyba,
ChitourJeanTrelatDiffGeom2006,ChitourJeanTrelat2006,Zelenko2Distr}.

How the necessary conditions of Pontryagin's Maximum
Principle are satisfied determines the kind of extremals
obtained, in particular, the abnormal ones. That is why the
thorough proof of the Maximum Principle given here gives
insights into the geometric understanding of the
abnormality. Any chance we have along the report to make a
comment about abnormality will be made because that might
help to characterize strict abnormality in the future.

In this paper, we go through the entire proof of
Pontryagin's Maximum Principle translating it into a
geometric framework, but preserving the outline of the
original proof. All details have been carefully proved,
making us to go into the details of concepts such as
time\textendash{}dependent variational equations and their
properties, separation conditions given by hyperplanes and
convexity. All this is included as appendices in order not
to disturb the continuous evolution of the concepts here
given. Nevertheless, we assume some knowledge in
diffe\-ren\-tial geometry, such as the core chapters of
\cite{2003Lee}, differential equations
\cite{2004Canizo,55Coddington,1998Hairer}, and convexity
\cite{2001Bertsekas,98Rockafellar}.

The control systems in this report are given by a vector
field along a projection, that is defined in \S
\ref{general} together with its properties. In the heart of
the report there are two big parts corresponding with two
different statements of Pontryagin's Maximum Principle. In
\S \ref{PMPfijo} and \S \ref{proofPMPfixed}, it is studied
the optimal control problem with both the time interval and
the endpoints given. If the final time is not given and the
endpoints are not fixed but they must be in specific
submanifolds, then the problem is studied in \S
\ref{proofFPMP} and \S \ref{SproofFPMP}. These four
sections have been written in an analogous way. First of
all, two different but equivalent statements of the optimal
control problems are given. The so\textendash{}called
extended system is the useful one in \S \ref{proofPMPfixed}
and \S \ref{SproofFPMP} because the functional to be
minimized is included as a new coordinate of the system.
The last subsection in \S \ref{PMPfijo} and \S
\ref{proofFPMP} explains the associated Hamiltonian problem
that leads to the statements of Maximum Principle. In this
way, the proof is just in \S \ref{proofPMPfixed} and \S
\ref{SproofFPMP}.

One part of the proof of Pontryagin's Maximum Principle
consists of perturbing the given optimal curve, therefore
we introduce in \S \ref{perturbedconesfixed} and \S
\ref{perturbedconesnonfixed} how this curve can be
perturbed depending on the known data. Above all, it is
important the complete proof of Proposition \ref{lemma2},
although known, to our knowledge, there is not a
self\textendash{}contained proof of it in the literature.

The appendices contain essential results for the core of
the report and also some explanation to make clear some
well\textendash{}known ideas related to
time\textendash{}dependent vector fields in Appendix
\ref{variationalsection}, the reachable set and the tangent
perturbation cone in Appendix \ref{approxreachable}.

The study of the time\textendash{}dependent variational
equations treated in Appendix \ref{variationalsection}
gives a clear picture of the flows of the complete lift and
of the cotangent lift of a time\textendash{}dependent
vector field via Propositions \ref{XT},
\ref{propgeomeaning}, \ref{cotangent}, \ref{constant}.
These results although known, to our knowledge, have not
appeared in the literature.

Appendix \ref{approxreachable} devotes to the careful study of the
connection between the reachable set and the tangent perturbation
cone, because the proof of Pontryagin's Maximum Principle suggests
that all the perturbation vectors generate a linear approximation of
the reachable set in some sense. That sense will become clear in
Proposition \ref{sumflows}, which proves a result assumed as true in
the literature.

To summarize the main contributions of the paper are:
\begin{itemize}
\item The proof of Proposition \ref{lemma2}, that is useful to prove Pontryagin's Maximum
Principle. All the proofs of this Proposition in the
literature, to our knowledge, are not written carefully
enough. This Proposition is adapted for Pontryagin's
Maximum Principle without fixing the final time in
Proposition \ref{lema2time}.
\item The complete proof of
Pontryagin's Maximum Principle in a symplectic framework as
in \cite{S98Free}, but here we include all the necessary
results and the analytical reasoning, which has been
sketched in great detail.
\item The highlight of the  properties concerning the abnormal extremals that can be deduced from the classical
result in \cite{P62}.
\item An analytic result necessary in Pontryagin's Maximum
Principle, which is proved in Proposition \ref{lowerew}.
This result is used in \cite{P62}, but without proving it.
\item The intrinsic study of the flows of the complete lift and
the cotangent lift of a time\textendash{}de\-pen\-dent
vector field in Appendix \ref{variationalsection},
including the proofs of Propositions \ref{XT},
\ref{propgeomeaning}, \ref{cotangent} and \ref{constant}.
\item The geometric understanding of the interpretation of the
tangent perturbation cone as linear approximation of the
reachable set in Appendix \ref{approxreachable}, including
the proof of Proposition \ref{sumflows}.
\end{itemize}

As for the future and actual research line, we point out that all
the effort to elaborate this work is being used to enlighten the
research, from a geometric point of view, on abnormal and strict
abnormal extremals in optimal control problems in general
\cite{2007MiguelMaria}, and for mechanical systems \cite{Lisboa}.
The study of strict abnormal minimizers impose us to consider
different cost functions, because only the property of being an
abnormal extremal depends exclusively on the geometry of the control
system. That makes the problem of searching for strict abnormal
minimizers much harder and the possible forthcoming results will be
valid only for determined optimal control problems.

To conclude this introduction, we remark that Pontryagin's Maximum
Principle provides first\textendash{}order necessary conditions for
optimality. These conditions are not always enough to determine the
controls for abnormal and singular extremals, then high order
Maximum Principle is necessary \cite{1977Krener}. The Maximum
Principle works with linear approximation of the trajectories,
whereas in the high order Maximum Principle high order perturbations
must be considered
\cite{98Bianchini,BS93,KawskiSurvey,Knobloch,1977Krener}. The way to
construct the proof is the same as in Pontryagin's Maximum
Principle, but now the tangent perturbation cones are bigger since
not only linear approximation of the trajectories are considered. In
the same way we have provided a geometric meaning to most of the
elements in Pontryagin's Maximum Principle, we expect to give a
geometric version of high order Maximum Principle suggested by
\cite{1977Krener}, focusing on abnormality.

 The origin of this report was a series of seminars and
talks with Professor Andrew D. Lewis during his stay in our
Department on sabbatical during the first term of 2005. We
tried to understand the details of the proof as a way to
work on some aspects of controllability and accesibility of
control systems with a cost function,
\cite{2005BulloLewis,1997Jurdjevic}, and where abnormal
solutions are in the accesibility sets.

In the sequel, unless otherwise stated, all the manifolds
are real, second countable and ${\cal C}^{\infty}$ and the
maps are assumed to be ${\cal C}^{\infty}$. Sum over
repeated indices is understood.

\section{General setting}\label{general}
From the differential geometric viewpoint a control system
is understood as a vector field depending on parameters.
Properties about how the integral curves of differential
equations depending on parameters evolve are explained in
\cite{2004Canizo,55Coddington,1998Hairer,93Michor} and used
in \S \ref{perturbedconesfixed} and \S
\ref{perturbedconesnonfixed}.

Let $M$ be a differentiable manifold of dimension $m$ and
$U$ be a set in $\mathbb{R}^k$. Consider the trivial
Euclidean bundle $\pi\colon M \times U \rightarrow M$.

\begin{defin}\label{DefVfprojection} A \textbf{vector field $X$ on $M$ along the projection $\pi$} is a mapping
$X\colon M \times U \rightarrow TM$ such that $X$ is
continuous on $M\times U$, continuously differentiable on
$M$ for every $u\in U$ and $\tau_M \circ X =\pi$, where
$\tau_M \colon TM \rightarrow M$ is the canonical tangent
projection.
\end{defin}
The set of vector fields along the projection $\pi$ is
denoted by $\mathfrak{X}(\pi)$. If $(V,x^i)$ is a local
chart at $x$ in $M$, then locally a vector field $X$ along
the projection is given by $f^i
\partial / \partial x^i$, where $f^i$ are functions defined on $V\times U$.

Let $I=[a,b] \subset \mathbb{R}$ be a closed interval, $(\gamma,u)
\colon I \rightarrow M \times U$ is an integral curve of $X$ if
$\dot{\gamma}(t)=X(\gamma(t),u(t))$. All these elements come
together in Diagram (\ref{Diagram 1}).
\begin{equation}\label{Diagram 1}\xymatrix{ &TM\ar[d]^{\txt{\small{$\tau_M$}}}\\
M\times U \ar[ur] ^{\txt{\small{$X$}}}\ar[r]^{\txt{\small{$\pi$}}}   &M\\
I\ar[u]^{\txt{\small{$(\gamma,u)$}}}\ar[ur]^{\txt{\small{$
\gamma$}}}&}\end{equation} In other words, $X$ is a vector
field depending on parameters in $U$. In this work, the
parameters are called \textit{controls} and are assumed to
be measurable mappings $u\colon I \rightarrow U$ such that
${\rm Im} \ u$ is bounded. Given the parameter $u$, we have
a time\textendash{}dependent vector field on $M$,
\begin{equation}\label{NotVFproj} \begin{array}{rcl} X^{\{u\}}
\colon I \times M &\longrightarrow &TM \\
(t,x) & \longmapsto & X^{\{u\}}(t,x)=X(x,u(t)).
\end{array}\end{equation}
For an integral curve $(\gamma,u)$ of $X$, it is said that $\gamma$
is an integral curve of $X^{\{u\}}$, as shown in the following
commutative diagram:
\begin{equation}\label{tdiagram}\bfig\xymatrix{I\times M \ar[r]^{\txt{\small{$X^{\{u\}}$}}}& TM
\\I \ar[u]^{\txt{\small{$( \gamma,\, {\rm Id})$}}}
\ar[r]_{\txt{\small{$(\gamma,u)$}}}
\ar[ur]^{\txt{\small{$\dot{\gamma}$}}} & M \times U
\ar[u]^{\txt{\small{$X$}}}} \efig \end{equation} That is,
$X^{\{u\}} \circ (\gamma, {\rm Id})=\dot{\gamma}=X\circ
(\gamma,u)$.

 A differentiable
time\textendash{}dependent vector field $X$ has associated
the \textit{time dependent flow} or \textit{evolution
operator of $X$} defined as
$$\begin{array}{rcl}
\Phi^X \colon  &I\times I\times M &\longrightarrow M\\ &
(t,s,x)&\longmapsto \Phi^X(t,s,x)=\Phi^X_{(s,x)}(t)
\end{array}$$
where $\Phi^X_{(s,x)}$ is the integral curve of $X$ with initial
condition $x$ at time $s$. See Appendix \ref{ApTimevf} for more
details. Moreover, the evolution operator defines a diffeomorphism
on $M$ that is used in the following section $\Phi^X_{(t,s)} \colon
M \rightarrow M$, $x \mapsto \Phi^X_{(t,s)}(x)=\Phi^X_{(s,x)}(t)$.

As the controls $u\colon I \rightarrow U$ are measurable and
bounded, the vector fields $X^{\{u\}}$ are measurable on $t$, and
for a fixed $t$, they are differentiable on $M$. Hence, the notion
of Carath\'eodory vector fields must be considered
\cite{2004Canizo,55Coddington} from now on. Then, we only consider
absolutely continuous curves $\gamma \colon I \rightarrow M$ to be
\textit{ge\-ne\-ra\-li\-zed integral curves} of the vector field
$X^{\{u\}}$; that is, they only satisfy $\dot{\gamma}=X\circ
(\gamma,u)$ at points where $\gamma$ is derivable, which happens
almost everywhere. The existence and uniqueness of these integral
curves are guaranteed once the parameter is fixed because of the
theorems of existence and uniqueness of differential equations
depending on parameters. For more details about absolute continuity,
see Appendix A and \cite{2004Canizo,55Coddington,65Varberg}.

\section{Pontryagin's Maximum Principle for fixed time and fixed endpoints}\label{PMPfijo}
We particularize the general setting described in \S
\ref{general} for optimal control theory. To make clear we
are in a specific case the manifold is denoted by $Q$,
instead of $M$.

\subsection{Statement of optimal control problem and
notation}\label{PCOfijo}

Let $Q$ be a differentiable manifold of dimension $m$ and $U\subset
\mathbb{R}^k$ a subset. Let us consider the trivial Euclidean bundle
$\pi\colon Q \times U \rightarrow Q$.


Let $X$ be a vector field along the projection $\pi\colon Q
\times U \rightarrow Q$ as in Definition
\ref{DefVfprojection}. If $(V,x^i)$ is a local chart at a
point in $Q$, the local expression of the vector field is
$X=f^i {\partial}/{\partial x^i}$ where $f^i$ are functions
defined on $V\times U$.

Let $I\subset \mathbb{R}$ be an interval and $(\gamma, u) \colon I
\rightarrow Q \times U$ be a curve. Given $F \colon Q\times U
\rightarrow \mathbb{R}$, let us consider the functional
$${\cal S}[\gamma,u]=\int_I F(\gamma,u)\, dt$$
defined on curves $(\gamma, u)$ with a compact interval as
domain. The function $F \colon Q \times U \rightarrow
\mathbb{R}$ is continuous on $Q \times U$ and continuously
diffe\-rentia\-ble with respect to $Q$ on $Q \times U$.

\begin{state} \textbf{(Optimal Control Problem, $OCP$)}
Given the elements $Q$, $U$, $X$, $F$, $I=[a,b]$ and the endpoint
conditions $x_a$, $x_b\in Q$, consider the following problem.

Find $(\gamma^*,u^*)$ such that
\begin{itemize}
\item[(1)] endpoint conditions: $\gamma^*(a)=x_a$, $\gamma^*(b)=x_b$,
\item[(2)] $\gamma^*$ is an integral curve of $X^{\{u^*\}}$: $\dot{\gamma^*}(t)=X(\gamma^*(t), u^*(t))$, $t\in I$, and
\item[(3)] minimal condition: ${\cal S}[\gamma^*,u^*]$ is minimum over all curves $(\gamma,u)$
satisfying  $(1)$ and $(2)$.
\end{itemize}
\end{state}

The tuple $(Q,U,X,F,I,x_a,x_b)$ denotes the \textit{optimal control
problem}. The function $F$ is called the \textit{cost function} of
the problem. The mappings $u \colon I \rightarrow U$ are called
\textit{controls}.

\comments
\begin{enumerate}
\item The curves considered in the previous statement satisfy the same
properties as the generalized integral curves of vector
fields along a projection described in \S \ref{general}.
That is, $\gamma$ is absolutely continuous and the controls
$u$ are measurable and bounded.
\item Locally, condition $(2)$ is equivalent to the fact that the curve $(\gamma^*,u^*)$
satisfies the differential equation $\dot{x}^i=f^i$.
\end{enumerate}

\subsection{The extended problem}\label{PMPEfijo}

Taking into account the elements defining the optimal control
problem and their pro\-per\-ties, we state an equivalent problem.

Given the $OCP$ $(Q,U,X,F,I,x_a,x_b)$, let us consider
$\widehat{Q}=\mathbb{R} \times Q$ and the trivial Euclidean
bundle $\widehat{\pi}\colon \widehat{Q} \times U
\rightarrow \widehat{Q}$.

Let $\widehat{X}$ be the following vector field along the
projection $\widehat{\pi}\colon \widehat{Q} \times U
\rightarrow \widehat{Q}$: $$\widehat{X}(x^0,x,u)=F(x,u)
{\partial}/{\partial x^0}|_{(x^0,x,u)} + X(x,u),$$ where
$x^0$ is the natural coordinate on $\mathbb{R}$. According
to Equation (\ref{NotVFproj}), this vector field can be
rewritten as $\widehat{X}^{\{u\}}$.

Given a curve $(\widehat{\gamma},u)=((x^0 \circ
\widehat{\gamma}, \gamma), u) \colon I \rightarrow
\widehat{Q} \times U$ such that $\widehat{\gamma}$ is
absolutely continuous and $u$ is measurable and bounded,
the previous elements come together in the following
diagram:
$$\bfig\xymatrix{ & T\widehat{Q}\ar[d]^{\txt{\small{$\tau_{\widehat{Q}}$}}}\\
\widehat{Q}\times U
\ar[ur]^{\txt{\small{$\widehat{X}$}}}\ar[r]^{\txt{\small{$\widehat{\pi}$}}}
&\widehat{Q}\ar[d]^{\txt{\small{$\pi_2$}}}\\
I\ar[u]^{\txt{\small{$(\widehat{\gamma},u)$}}}\ar[ur]^{\txt{\small{$\widehat{\gamma}$}}}
\ar[r]^{\txt{\small{$\gamma$}}}&Q}\efig$$ where $\pi_2$ is
the projection of $\widehat{Q}$ onto $Q$.

\begin{state}\label{stateEOCP}\textbf{(Extended Optimal Control Problem, $\mathbf{\widehat{OCP}}$)}
Given the above\textendash{}mentioned $OCP$
$(Q,U,X,F,I,x_a,x_b)$, $\widehat{Q}$ and $\widehat{X}$,
consider the fo\-llo\-wing problem.

Find $(\widehat{\gamma}^*,u^*)$ such that
\begin{itemize}
\item[(1)] endpoint conditions: $\widehat{\gamma}^*(a)=(0,x_a)$, $\gamma^*(b)=x_b$,
\item[(2)] $\widehat{\gamma}^*$ is an integral curve of $\widehat{X}^{\{u^*\}}$: $\dot{\widehat{\gamma}}^*(t)=\widehat{X}(\widehat{\gamma}^*(t),u^*(t))$, $t\in I$, and
\item[(3)] minimal condition: $\gamma^{*^0}(b)$ is minimum over all curves $(\widehat{\gamma},u)$
satisfying  $(1)$ and $(2)$.
\end{itemize}
\end{state}

The tuple $(\widehat{Q},U,\widehat{X},I,x_a,x_b)$ denotes the
\textit{extended optimal control problem}.

\begin{enumerate}
\item The functional $\gamma^{*^0}(b)$ to be minimized in the $\widehat{OCP}$ is equal
to the functional defined in the $OCP$. That is to say, we have
$$\widehat{\cal S}[\widehat{\gamma},u]=\gamma^0(b)=\int_a^b F(\gamma,u) dt={\cal S}[\gamma,u]$$
for curves $(\widehat{\gamma},u)$.
\item Locally, the condition (2) is equivalent to the fact that the curve $(\widehat{\gamma}^*,u^*)$
satisfies the differential equations $\dot{x}^0=F$, $\dot{x}^i=f^i$.
\end{enumerate}

The elements in the problem $(\widehat{M},U,\widehat{X},I,x_a,x_b)$
satisfy properties analogous to the ones fulfilled by the elements
in the problem $(M,U,X,F,I,x_a,x_b)$, but for different spaces; see
\S \ref{general}, \S \ref{PCOfijo} for more details about the
properties.

\subsection{Perturbation and associated
cones}\label{perturbedconesfixed} The following
constructions can be defined for any vector field depending
on parameters\textemdash{}see \S
\ref{general}\textemdash{}in particular, for those vector
fields defining a control system. In order not to make the
notation harder, we will construct everything on $M$, but
the same can be done on $\widehat{M}$ or on any other
convenient manifold, as for instance the tangent bundle
$TQ$ for the mechanical case.

\subsubsection{Elementary perturbation vectors: class I}
\label{elementarypertvectors} Now we study how integral
curves of the time\textendash{}dependent vector field
$X^{\{u\}}\colon M\times I \rightarrow TM$, introduced in
\S \ref{general}, change when the control $u$ is perturbed
in a small interval.

In the sequel, a measurable and bounded control $\; u \colon I=[a,b]
\rightarrow U$ and an absolutely continuous integral curve $ \;
\gamma\colon I \rightarrow M$ of $X^{\{u\}}$ are given. Let
$\pi_1=\{t_1,l_1,u_1\}$, where $t_1$ is a Lebesgue time in $(a,b)$
always for the $X\circ (\gamma,u)$\textemdash{}i.e. it satisfies
Equation (\ref{eqLebesgue})\textemdash{}$l_1\in \mathbb{R}^+$,
$u_1\in U$. From now on, to simplify, $t_1$ is called just a
Lebesgue time. For every $s\in \mathbb{R}^+$ small enough such that
$a<t_1-l_1 s$, consider $u[\pi_1^s]\colon I \rightarrow U$ defined
by
$$u[\pi_1^s](t) = \left\{\begin{array}{ll}  u_1, & t\in [t_1-l_1 s, t_1], \\
u(t), & {\rm elsewhere}. \end{array} \right. $$

\begin{defin} The function $u[\pi_1^s]$ is
called an \textbf{elementary perturbation of $u$ specified
by the data $\pi_1=\{t_1,l_1,u_1\}$}. It is also called a
\textbf{needle\textendash{}like variation}.
\end{defin}

Associated to $u[\pi_1^s]$, consider the mapping $\gamma
[\pi_1^s] \colon I \rightarrow M$, the generalized integral
curve of $X^{\{u[\pi_1^s]\}}$ with initial condition
$(a,\gamma(a))$.

Given $\epsilon>0$, define the map
$$\begin{array}{rcl} \varphi_{\pi_1} \colon
I\times [0,\epsilon] &\longrightarrow & M\\
( \; t \;, \; s  \;)\;\;&\longmapsto &
\varphi_{\pi_1}(t,s)= \gamma[\pi_1^s](t)
\end{array}$$

For every $t\in I$, $\varphi_{\pi_1}^{t}\colon [0,\epsilon]
\rightarrow M$ is given by $\varphi_{\pi_1}^{t}(s)=
\varphi_{\pi_1}(t,s)$.

As the controls are assumed to be measurable and bounded,
it makes sense to define the distance between two controls
$u, \overline{u}\colon I \rightarrow U$  as follows
$$d(u,\overline{u})=\int_I\left\|u(t)-\overline{u}(t)\right\| \ dt$$
where $\| \cdot \|$ is the usual norm in $\mathbb{R}^k$.
Here, a bounded control $u\colon I \rightarrow U$ means
that there exists a compact set in $U$ that contains ${\rm
Im}\ u$. The control $u[\pi_1^s]$ depends continuously on
the parameters $s$ and $\pi_1=\{t_1,l_1,u_1\}$; that is,
given $\epsilon>0$ there exists $\delta>0$ such that if
$\left| t_1-t_2\right|<\delta$, $\left|
l_1-l_2\right|<\delta$, $\left\| u_1-u_2\right\|<\delta$,
$\left| s_1-s_2\right|<\delta$, then $d(u[\pi_1^{s_1}],
u[\pi_2^{s_2}])<\epsilon$.

Hence the curve $\varphi_{\pi_1}^t$ depends continuously on $s$ and
$\pi_1=\{t_1,l_1,u_1\}$, then it converges uniformly to $\gamma$ as
$s$ tends to $0$. See \cite{2004Canizo,55Coddington} for more
details of the differential equations depending continuously on
parameters.

Let us prove that the curve $\varphi_{\pi_1}^{t_1}$ has a
tangent vector at $s=0$.
Let $u[\pi_1^s]$ be an elementary perturbation of $u$
specified by $\pi_1=\{t_1,l_1,u_1\}$ and consider the curve
$\varphi_{\pi_1}^{t_1}\colon [0,\epsilon]\rightarrow M$,
$\varphi_{\pi_1}^{t_1}(s)=\gamma[\pi_1^s](t_1)$.
\begin{prop}\label{tangentperturb} If $t_1$ is a Lebesgue time, then
the curve $\varphi_{\pi_1}^{t_1}\colon [0,\epsilon]\rightarrow M$
is differentiable at $s=0$. Its tangent vector is
$\left[X(\gamma(t_1), u_1)-X(\gamma(t_1), u(t_1))\right]\,
l_1$.
\end{prop}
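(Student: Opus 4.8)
The plan is to compute the derivative of $s \mapsto \varphi_{\pi_1}^{t_1}(s) = \gamma[\pi_1^s](t_1)$ at $s = 0$ by carefully tracking how the perturbed curve deviates from $\gamma$ on the small interval $[t_1 - l_1 s, t_1]$, where the control is switched to the constant $u_1$. Working in a local chart $(V, x^i)$ around $\gamma(t_1)$ (which is legitimate since $\varphi_{\pi_1}^t$ converges uniformly to $\gamma$, so for $s$ small enough the perturbed curve stays in $V$ near $t_1$), the curve $\gamma[\pi_1^s]$ agrees with $\gamma$ on $[a, t_1 - l_1 s]$ because up to that time the control $u[\pi_1^s]$ coincides with $u$ and the solutions of the Carath\'eodory ODE with the same initial condition are unique. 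Hence $\gamma[\pi_1^s](t_1 - l_1 s) = \gamma(t_1 - l_1 s)$, and on $[t_1 - l_1 s, t_1]$ the perturbed curve solves $\dot{x} = f(x, u_1)$.

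First I would write, in the chart, the integral form
$$
\gamma[\pi_1^s](t_1) = \gamma(t_1 - l_1 s) + \int_{t_1 - l_1 s}^{t_1} f\bigl(\gamma[\pi_1^s](r), u_1\bigr)\, dr .
$$
I want to expand each term to first order in $s$. For the boundary term, since $t_1$ is a Lebesgue time for $X \circ (\gamma, u)$, i.e.\ $\gamma$ satisfies the defining Lebesgue condition (\ref{eqLebesgue}), we have
$$
\gamma(t_1 - l_1 s) = \gamma(t_1) - \int_{t_1 - l_1 s}^{t_1} f\bigl(\gamma(r), u(r)\bigr)\, dr ,
$$
and the Lebesgue-point property gives $\frac{1}{l_1 s}\int_{t_1 - l_1 s}^{t_1} f(\gamma(r), u(r))\, dr \to f(\gamma(t_1), u(t_1))$ as $s \to 0$; thus this contributes $-\, l_1 s\, f(\gamma(t_1), u(t_1)) + o(s)$. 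For the integral term, the integrand is continuous in its arguments and $\gamma[\pi_1^s](r) \to \gamma(t_1)$ uniformly for $r$ in the shrinking interval, so $\int_{t_1 - l_1 s}^{t_1} f(\gamma[\pi_1^s](r), u_1)\, dr = l_1 s\, f(\gamma(t_1), u_1) + o(s)$. Combining, $\gamma[\pi_1^s](t_1) = \gamma(t_1) + l_1 s\,\bigl[f(\gamma(t_1), u_1) - f(\gamma(t_1), u(t_1))\bigr] + o(s)$, which is exactly the asserted tangent vector $\bigl[X(\gamma(t_1), u_1) - X(\gamma(t_1), u(t_1))\bigr]\, l_1$ upon reading $f^i \partial/\partial x^i = X$.

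The main obstacle I expect is controlling the error term $o(s)$ rigorously rather than heuristically, on two fronts: justifying that $\gamma[\pi_1^s](r)$ stays uniformly close to $\gamma(t_1)$ on $[t_1 - l_1 s, t_1]$ with an error that is itself $o(1)$ (this uses the uniform convergence $\varphi_{\pi_1}^t \to \gamma$ already established, plus continuity of $f$ and boundedness of the control image on a compact set, so that $|\dot{\gamma}[\pi_1^s]|$ is bounded on that interval uniformly in $s$), and handling the fact that $f$ is only assumed $C^1$ in the $Q$-variables and merely continuous jointly — so no Taylor expansion in $r$ is available for the perturbed integrand, only a continuity/mean-value estimate. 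The Lebesgue-time hypothesis is precisely what rescues the boundary term, since $r \mapsto f(\gamma(r), u(r))$ is only measurable and bounded, not continuous, and $t_1$ being a Lebesgue point of this map is exactly the statement that the average over $[t_1 - l_1 s, t_1]$ converges to the value at $t_1$. I would also remark that the result is chart-independent because differentiability of a curve at a point and the identification of its tangent vector are intrinsic notions, so the local computation suffices.
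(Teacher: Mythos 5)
Your proof is correct and follows essentially the same route as the paper: after reducing to a local chart, both arguments express $\gamma[\pi_1^s](t_1)-\gamma(t_1)$ as the difference of two integrals over $[t_1-l_1 s,\,t_1]$, handle the reference-trajectory integral via the Lebesgue-point property (\ref{eqLebesgue}), and handle the perturbed-curve integral by continuity of $f$ together with the uniform convergence $\gamma[\pi_1^s]\to\gamma$. The only cosmetic difference is that the paper phrases the limit as a derivation applied to coordinate functions $x^i$ and subtracts the two integral identities starting from $a$, whereas you peel off the unperturbed part at $t_1-l_1 s$ directly; your remarks about keeping the error uniform and about the role of the Lebesgue hypothesis make the bookkeeping, if anything, slightly more explicit than the paper's.
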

\begin{proof}
It is enough to prove that for every differentiable
function $g\colon M\rightarrow \mathbb{R}$, there exists
$$A=\lim_{s \rightarrow 0} \, \frac{g(\varphi_{\pi_1}^{t_1}(s))
-g(\varphi_{\pi_1}^{t_1}(0))}{s}.$$ As this is a derivation
on the functions defined on a neighbourhood of
$\gamma(t_1)$, it is enough to prove the proposition for
the coordinate functions $x^i$ of a local chart at
$\gamma(t_1)$. Thus take $g=x^i$,
\begin{eqnarray*}
A&=&\lim_{s \rightarrow 0} \frac{(x^i \circ
\varphi_{\pi_1}^{t_1})(s)- (x^i \circ
\varphi_{\pi_1}^{t_1})(0)}{s} =\lim_{s \rightarrow 0}
\frac{( x^i \circ \gamma[\pi_1^s])(t_1)-(x^i
\circ\gamma)(t_1)}{s}\\&=&\lim_{s \rightarrow 0}
\frac{\gamma^i[\pi_1^s](t_1)-\gamma^i(t_1)}{s}.
\end{eqnarray*}
As
$\gamma$ is an absolutely continuous integral curve of
$X^{\{u\}}$ , $\dot{\gamma}(t)=X(\gamma(t),u(t))$ at every
Lebesgue time. Then integrating
$$\gamma^i(t_1)-\gamma^i(a)=\int^{t_1}_af^i(\gamma(t),u(t))dt$$
and similarly for $\gamma[\pi_1^s]$ and $u[\pi_1^s]$.
Observe that $\gamma[\pi_1^s](t)=\gamma(t)$ and
$u[\pi_1^s](t)=u(t)$ for $t\in[a,t_1-l_1 s)$. Then,
\begin{eqnarray*}
A&=&\lim_{s \rightarrow 0}
\frac{\int^{t_1}_{a}f^i(\gamma[\pi_1^s](t),
u[\pi_1^s](t))dt-\int^{t_1}_{a}f^i(\gamma(t), u(t))dt}{s}
\\ &=&\lim_{s \rightarrow 0} \frac{\int^{t_1}_{t_1-l_1
s}f^i(\gamma[\pi_1^s](t), u_1)dt-\int^{t_1}_{t_1-l_1
s}f^i(\gamma(t), u(t))dt}{s}.\end{eqnarray*} As $t_1$ is a
Lebesgue time, we use Equation (\ref{eqLebesgue}):
\[\int_{t -h}^t X(\gamma(s),u(s))ds=hX(\gamma(t),u(t))+
o(h)\] in such a way that \begin{eqnarray*} A&=&\lim_{s
\rightarrow 0} \frac{f^i(\gamma[\pi_1^s](t_1), u_1)l_1s
-f^i(\gamma(t_1), u(t_1))l_1 s +o(s)}{s}\\&=&\lim_{s
\rightarrow 0}[f^i(\gamma[\pi_1^s](t_1),
u_1)-f^i(\gamma(t_1), u(t_1))] \, l_1.
\end{eqnarray*} As $f^i$ is
continuous on $M$, we have \begin{eqnarray*} A&=&\lim_{s
\rightarrow 0}[f^i(\gamma[\pi_1^s](t_1),
u_1)-f^i(\gamma(t_1), u(t_1))] \, l_1= [f^i(\lim_{s
\rightarrow 0} \gamma[\pi_1^s](t_1), u_1)-f^i(\gamma(t_1),
u(t_1))] \, l_1\\&=& [f^i(\gamma(t_1),
u_1)-f^i(\gamma(t_1), u(t_1))]\, l_1
=\left[\left(X(\gamma(t_1),u_1)-X(\gamma(t_1),u(t_1))
\right) \ l_1\right](x^i).\end{eqnarray*}
\end{proof}

\begin{defin}\label{classI} The tangent vector $v[\pi_1]=\left(X(\gamma(t_1),u_1)-X(\gamma(t_1),u(t_1)) \right) \
l_1\in T_{\gamma(t_1)}M$ is the \textbf{elementary perturbation
vector associated to the perturbation data $\pi_1=\{t_1,l_1,u_1\}$}.
It is also called a \textbf{perturbation vector of class I}.
\end{defin}
\comments \begin{itemize}
\item[\textbf{(a)}] The previous proof shows the importance of defining
perturbations only at Lebesgue times, otherwise the
elementary perturbation vectors may not exist.

 \item[\textbf{(b)}] Observe that if we
change $\pi_1=\{t_1,l_1,u_1\}$ for $\pi_2=\{t_1,l_2,u_1\}$, then
$v[\pi_1]=(l_1/l_2) \ v[\pi_2]$. If $v[\pi_1]$ is a perturbation
vector of class I and $\lambda\in \mathbb{R}^+$, then $\lambda \
v[\pi_1]$ is also a perturbation vector of class I with perturbation
data $\{t_1,\lambda \ l_1, u_1\}$.

\item[\textbf{(c)}] We write ${\rm L}(w)g$ for the derivative of the function $g$
in the direction given by the vector $w\in T_xM$. Due to Proposition
\ref{tangentperturb}, for every differentiable function $g\colon M
\rightarrow \mathbb{R}$ we have
\begin{equation*}
\frac{g(\varphi_{\pi_1}^{t_1}(s))-g(\gamma(t_1))-s\ {\rm
L}(v[\pi_1])g}{s} \underset{s\rightarrow 0} \longrightarrow 0.
\end{equation*}
Hence
\begin{equation*}
g\left(\varphi_{\pi_1}^{t_1}(s)\right)=g\left(\gamma(t_1)\right)+s\,
{\rm L}(v[\pi_1])g+ o(s).
\end{equation*}
If $(x^i)$ are local coordinates of a chart at
$\gamma(t_1)$,
\begin{equation*}
x^i\left(\varphi_{\pi_1}^{t_1}(s)\right)=x^i\left(\gamma(t_1)\right)+s\,
v[\pi_1]^i+ o(s).
\end{equation*}
That is,
\begin{equation*}
\left(\varphi_{\pi_1}^{t_1}\right)^i(s)=\gamma^i(t_1)+s\,
v[\pi_1]^i+ o(s).
\end{equation*}
Now, if we identify the open set of the local chart and the
tangent space to $M$ at $\gamma(t_1)$ with the same space
$\mathbb{R}^m$, we write the following linear approximation
\begin{equation}\label{linearapprox}
\varphi_{\pi_1}^{t_1}(s)=\gamma(t_1)+s\, v[\pi_1]+ o(s).
\end{equation}
\end{itemize}

The initial condition for the velocity given by the
elementary perturbation vector evolves along the reference
trajectory $\gamma$ through the integral curves of the
complete lift $\left(X^T\right)^{\{u\}}$ of $X^{\{u\}}$, as
explained in Appendix \ref{completelift}. Note that
$\varphi^t_{\pi_1}(s)=\Phi^{X^{\{u\}}}_{(t,t_1)}\left(\varphi^{t_1}_{\pi_1}(s)\right)$
for $t\geq t_1$ because of the definition of
$\varphi_{\pi_1}$ and $u[\pi_1^s]$.

\begin{prop}\label{tangentperturb2} Let $V[\pi_1]\colon [t_1,b] \rightarrow TM$ be the integral
curve of the complete lift $\left(X^T\right)^{\{u\}}$ of
$X^{\{u\}}$ with initial condition
$(t_1,(\gamma(t_1),v[\pi_1]))$.
 For every Lebesgue time $t\in(t_1,b]$,  $V[\pi_1](t)$ is
the tangent vector to the curve $\varphi_{\pi_1}^t\colon
[0,\epsilon]\rightarrow M$ at $s=0$.
\end{prop}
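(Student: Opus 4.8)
The plan is to show that the tangent vector at $s=0$ of the perturbed curve $\varphi_{\pi_1}^t$ is carried along $\gamma$ for $t > t_1$ by exactly the flow of the complete lift $(X^T)^{\{u\}}$, which is the geometric content of the statement. The starting point is the key observation already recorded in the excerpt: since the perturbation $u[\pi_1^s]$ agrees with $u$ on $[t_1, b]$, the perturbed trajectory beyond $t_1$ is obtained from its value at $t_1$ by the \emph{unperturbed} evolution operator, i.e.\ $\varphi_{\pi_1}^t(s) = \Phi^{X^{\{u\}}}_{(t,t_1)}\bigl(\varphi_{\pi_1}^{t_1}(s)\bigr)$ for $t \ge t_1$. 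Therefore $\varphi_{\pi_1}^t = \Phi^{X^{\{u\}}}_{(t,t_1)} \circ \varphi_{\pi_1}^{t_1}$ as curves in $s$, and differentiating this composition at $s=0$ via the chain rule gives the tangent vector at time $t$ as the image of $v[\pi_1]$ under the tangent map $T\Phi^{X^{\{u\}}}_{(t,t_1)}$.

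The second step is to identify $T\Phi^{X^{\{u\}}}_{(t,t_1)}$ applied to a tangent vector with the flow of the complete lift. This is precisely the content of the material on time-dependent variational equations in Appendix \ref{variationalsection} (and the characterization of $X^T$ in Appendix \ref{completelift}): the flow $\Phi^{(X^T)^{\{u\}}}$ of the complete lift on $TM$ is the tangent lift of the flow $\Phi^{X^{\{u\}}}$ on $M$, i.e.\ $\Phi^{(X^T)^{\{u\}}}_{(t,t_1)} = T\Phi^{X^{\{u\}}}_{(t,t_1)}$. Hence $V[\pi_1](t) = \Phi^{(X^T)^{\{u\}}}_{(t,t_1)}(\gamma(t_1), v[\pi_1]) = T\Phi^{X^{\{u\}}}_{(t,t_1)}(v[\pi_1])$, which matches the expression obtained in the first step, giving the claimed equality $V[\pi_1](t) = \dfrac{d}{ds}\Big|_{s=0}\varphi_{\pi_1}^t(s)$.

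The delicate point — the main obstacle — is the differentiability justification for the chain rule at $s=0$, since the perturbed curves $\gamma[\pi_1^s]$ are only generalized (Carathéodory) integral curves and the differentiable dependence on $s$ is not immediate. One cannot simply invoke smooth dependence on initial conditions for a smooth ODE. The clean way around this is to avoid differentiating the composition blindly and instead argue through test functions, exactly as in Proposition \ref{tangentperturb}: for a differentiable $g\colon M \to \mathbb{R}$ and $t \ge t_1$ a Lebesgue time, write $g(\varphi_{\pi_1}^t(s)) = (g\circ \Phi^{X^{\{u\}}}_{(t,t_1)})(\varphi_{\pi_1}^{t_1}(s))$, apply the linear approximation \eqref{linearapprox} for $\varphi_{\pi_1}^{t_1}$ at $s=0$ together with differentiability of the smooth map $\Phi^{X^{\{u\}}}_{(t,t_1)}$ (which \emph{is} a genuine diffeomorphism of $M$ since the control is fixed on $[t_1,t]$), and conclude
\[
g\bigl(\varphi_{\pi_1}^t(s)\bigr) = g(\gamma(t)) + s\, \mathrm{L}\!\left(T\Phi^{X^{\{u\}}}_{(t,t_1)}(v[\pi_1])\right) g + o(s).
\]
Then invoke the appendix results to rewrite $T\Phi^{X^{\{u\}}}_{(t,t_1)}(v[\pi_1])$ as $V[\pi_1](t)$, the integral curve of $(X^T)^{\{u\}}$ through $(\gamma(t_1), v[\pi_1])$. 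I would also remark that the Lebesgue-time hypothesis on $t$ is what guarantees, via uniform convergence of $\varphi_{\pi_1}^{t_1}$ to $\gamma$ as $s\to 0$ and continuity, that the remainder is genuinely $o(s)$; this parallels comment \textbf{(a)} after Definition \ref{classI}.
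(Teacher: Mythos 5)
Your argument is correct and is essentially the paper's intended proof: the paper gives only a one-line justification ("follows from Proposition \ref{XT} and the definition of the curves considered"), and you have simply unpacked exactly that — the identity $\varphi_{\pi_1}^t = \Phi^{X^{\{u\}}}_{(t,t_1)}\circ\varphi_{\pi_1}^{t_1}$, the chain rule at $s=0$ (justified via the test-function device from Proposition \ref{tangentperturb} plus smooth dependence of the Carathéodory flow on the space variable), and the identification of $T\Phi^{X^{\{u\}}}_{(t,t_1)}$ with the evolution operator of $(X^T)^{\{u\}}$ from Proposition \ref{XT}. One small imprecision: the $o(s)$ remainder comes from composing the linear approximation of $\varphi_{\pi_1}^{t_1}$ (which uses that $t_1$ is a Lebesgue time) with the $C^1$ map $\Phi^{X^{\{u\}}}_{(t,t_1)}$, not from any Lebesgue-time property of $t$ itself, so your closing remark attributes the remainder estimate to the wrong hypothesis.
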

\begin{proof}
The proof follows from Proposition \ref{XT} and the
definition of the curves considered.
\end{proof}

\subsubsection{Perturbation vectors of class II}

The control can be perturbed twice instead of only once, in
fact it may be modified a finite number of times. If $t_2$
is a Lebesgue time greater than $t_1$ and we perturb the
control with $\pi_1=\{t_1,l_1,u_1\}$ and
$\pi_2=\{t_2,l_2,u_2\}$, then we obtain the perturbation
data $\pi_{12}= \{(t_1,t_2),(l_1,l_2),(u_1,u_2)\}$, which
is given by
$$u[\pi_{12}^s](t)=\left\{ \begin{array}{ll}u_1, &
t\in [t_1-l_1 s, t_1], \\
u_2, & t \in [t_2-l_2 s,t_2], \\
u(t), & {\rm elsewhere}
\end{array}\right.$$
for every $s\in \mathbb{R}^+$ small enough such that $[t_1-l_1 s,
t_1] \cap [t_2-l_2 s,t_2]= \emptyset$. Then
$\gamma[\pi_{12}^s]\colon I \longrightarrow M$ is the generalized
integral curve of $X^{\{u[\pi_{12}^s]\}}$ with initial condition
$(a,\gamma(a))$. Observe that $\gamma[\pi_{12}^0](t)=\gamma(t)$.
Consider the curve $\varphi_{\pi_{12}}^{t_2} \colon
[0,\epsilon]\rightarrow M$ given by $\varphi_{\pi_{12}}^{t_2}
(s)=\gamma[\pi_{12}^s](t_2)$.

\begin{prop}\label{12} Let $t_1$, $t_2$ be Lebesgue times such that $t_1<t_2$.
The vector tangent to $\varphi_{\pi_{12}}^{t_2}
\colon [0,\epsilon]\rightarrow M$ at $s=0$ is $v[\pi_2]+
V[\pi_1](t_2)$, where $V[\pi_1]\colon [t_1,b]\rightarrow
TM$ is the generalized integral curve of
$\left(X^T\right)^{\{u\}}$ with initial condition
$(t_1,(\gamma(t_1),v[\pi_1]))$.
\end{prop}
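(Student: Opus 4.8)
The plan is to reduce Proposition \ref{12} to the two tools already established for a single perturbation: the computation of an elementary perturbation vector at a Lebesgue time (Proposition \ref{tangentperturb}) and the propagation of such a vector along $\gamma$ by the flow of the complete lift (Propositions \ref{tangentperturb2} and \ref{XT}). The key observation is that, for $s$ small enough, the two perturbation intervals $[t_1-l_1 s,t_1]$ and $[t_2-l_2 s,t_2]$ are disjoint, so the perturbed control $u[\pi_{12}^s]$ agrees with the single perturbation $u[\pi_1^s]$ on the whole interval $[a,t_2-l_2 s)$, and in particular on $[a,t_1]$. Hence $\gamma[\pi_{12}^s]$ and $\gamma[\pi_1^s]$ solve the same Carath\'eodory equation with the same initial condition on $[a,t_2-l_2 s)$, so they coincide there; at $t=t_2-l_2 s$ they share the value $\varphi^{t_2-l_2 s}_{\pi_1}(s)$.

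First I would split the evolution of $\varphi^{t_2}_{\pi_{12}}(s)=\gamma[\pi_{12}^s](t_2)$ into two stages. On $[t_1,t_2-l_2 s]$ the curve $\gamma[\pi_{12}^s]$ coincides with $\gamma[\pi_1^s]$, whose tangent vector at $s=0$, for a point evaluated at a Lebesgue time, is given by Proposition \ref{tangentperturb2}: it is $V[\pi_1](t)$, the integral curve of $(X^T)^{\{u\}}$ with initial condition $(t_1,(\gamma(t_1),v[\pi_1]))$. On the short interval $[t_2-l_2 s,t_2]$ the control equals the constant $u_2$, exactly as in a single elementary perturbation specified by $\pi_2=\{t_2,l_2,u_2\}$, but now starting from the moving initial point $\varphi^{t_2-l_2 s}_{\pi_1}(s)$ rather than from $\gamma(t_2-l_2 s)$. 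I would then redo the integral estimate from the proof of Proposition \ref{tangentperturb}, writing
$$\gamma^i[\pi_{12}^s](t_2)-\gamma^i[\pi_{12}^s](t_2-l_2 s)=\int_{t_2-l_2 s}^{t_2} f^i(\gamma[\pi_{12}^s](t),u_2)\,dt,$$
using that $t_2$ is a Lebesgue time together with Equation (\ref{eqLebesgue}) and continuity of $f^i$, to get that this contributes $s\,v[\pi_2]^i+o(s)$. Combining with the linear approximation on the first stage, $\gamma^i[\pi_{12}^s](t_2-l_2 s)=\gamma^i(t_2)+s\,V[\pi_1](t_2)^i+o(s)$ (here one also uses that $l_2 s\to 0$ so the difference between evaluating the class-I propagated vector at $t_2-l_2 s$ and at $t_2$ is absorbed into $o(s)$, by continuity of the integral curve $V[\pi_1]$), yields
$$\gamma^i[\pi_{12}^s](t_2)=\gamma^i(t_2)+s\big(v[\pi_2]^i+V[\pi_1](t_2)^i\big)+o(s),$$
which is exactly the claimed tangent vector.

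The main obstacle I anticipate is bookkeeping the two sources of $o(s)$ error and making the ``moving base point'' argument rigorous: one must be careful that in the second-stage integral the integrand is $f^i$ evaluated along $\gamma[\pi_{12}^s]$ (not along $\gamma$), so applying the Lebesgue-time estimate requires knowing that $\gamma[\pi_{12}^s](t)\to\gamma(t)$ uniformly near $t_2$ as $s\to 0$ — which follows from continuous dependence on parameters, exactly as invoked for $\varphi_{\pi_1}$ in \S\ref{perturbedconesfixed} — and then invoking continuity of $f^i$ to replace the argument by $\gamma(t_2)$ in the limit, as in the last display of the proof of Proposition \ref{tangentperturb}. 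Once uniform convergence is in hand, the rest is the same $o(s)$ arithmetic already carried out there, applied twice and added. Alternatively, one could phrase the whole argument intrinsically via the evolution operator, writing $\varphi^{t_2}_{\pi_{12}}(s)=\Phi^{X^{\{u[\pi_{12}^s]\}}}_{(t_2,\,t_2-l_2 s)}\big(\varphi^{t_2-l_2 s}_{\pi_1}(s)\big)$ and differentiating the composition at $s=0$ using Proposition \ref{XT}; this is cleaner but still rests on the same Lebesgue-time estimate for the inner short-interval factor.
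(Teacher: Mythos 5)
Your overall plan is reasonable, but the two ``stage'' estimates you state are each wrong by a term of size $O(s)$, and this is not a technicality: it is the central bookkeeping of the proof. Splitting the evolution at the moving time $t_2-l_2 s$ (rather than at $t_2$) introduces the drift of the reference curve $\gamma$ over $[t_2-l_2 s,\,t_2]$, and that drift is of order $s$, not $o(s)$. Concretely, $\gamma^i(t_2-l_2 s)-\gamma^i(t_2)=-l_2 s\,f^i(\gamma(t_2),u(t_2))+o(s)$ by the Lebesgue-time property at $t_2$, so your first-stage claim should read
\begin{equation*}
\gamma^i[\pi_{12}^s](t_2-l_2 s)=\gamma^i(t_2)+s\,V[\pi_1]^i(t_2)-l_2 s\,f^i(\gamma(t_2),u(t_2))+o(s),
\end{equation*}
not $\gamma^i(t_2)+s\,V[\pi_1]^i(t_2)+o(s)$. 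The continuity of $V[\pi_1]$ does let you replace $V[\pi_1](t_2-l_2 s)$ by $V[\pi_1](t_2)$ at cost $o(s)$, but that is not the culprit; the culprit is the displacement of the \emph{base} curve $\gamma$ itself. Symmetrically, your second-stage integral gives only
\begin{equation*}
\int_{t_2-l_2 s}^{t_2} f^i\bigl(\gamma[\pi_{12}^s](t),u_2\bigr)\,dt = l_2 s\,f^i(\gamma(t_2),u_2)+o(s),
\end{equation*}
not $s\,v[\pi_2]^i+o(s)$: the elementary perturbation vector is by definition the \emph{difference} $l_2[X(\gamma(t_2),u_2)-X(\gamma(t_2),u(t_2))]$, and the proof of Proposition \ref{tangentperturb} obtains it as a difference of two integrals, whereas you wrote only one. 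The two order-$s$ discrepancies cancel when you add the stages, so your final display happens to be correct, but the intermediate steps as written are not.

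The paper sidesteps this entirely by comparing curves at the \emph{same} time $t_2$, introducing $\gamma[\pi_1^s]$ as an intermediate reference:
\begin{equation*}
\gamma^i[\pi_{12}^s](t_2)-\gamma^i(t_2)=\bigl(\gamma^i[\pi_{12}^s](t_2)-\gamma^i[\pi_1^s](t_2)\bigr)+\bigl(\gamma^i[\pi_1^s](t_2)-\gamma^i(t_2)\bigr).
\end{equation*}
The second bracket is $s\,V[\pi_1]^i(t_2)+o(s)$ by Proposition \ref{tangentperturb2}; the first is precisely an elementary perturbation of the reference curve $\gamma[\pi_1^s]$ by the single datum $\pi_2$, so Proposition \ref{tangentperturb} gives $s\,v[\pi_2]^{\gamma[\pi_1^s]}+o(s)$, and letting $s\to 0$ sends $v[\pi_2]^{\gamma[\pi_1^s]}\to v[\pi_2]$. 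Since both brackets are evaluated at the common time $t_2$, no spurious drift term appears and no cancellation is needed. If you wish to keep your temporal split at $t_2-l_2 s$, you must either carry the two cancelling drift terms explicitly, or mimic Proposition \ref{tangentperturb} by subtracting $\int_{t_2-l_2 s}^{t_2}f^i(\gamma(t),u(t))\,dt$ from the second-stage integral and compensating for it in the first stage.
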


\begin{proof}
Here we perturb the control first with $\pi_1$ along
$\gamma$ and we obtain $u[\pi_1^s]$. Then we perturb this
last control with the other perturbation data, $\pi_2$,
along $\gamma[\pi_1^s]$. Then the superindeces of the
tangent vectors denote the curve along which the
perturbation is made. As in the proof of Proposition
\ref{tangentperturb}, \begin{eqnarray*} A&=&\lim_{s
\rightarrow 0} \frac{(x^i \circ
\varphi_{\pi_{12}}^{t_2})(s)-(x^i
\circ\varphi_{\pi_{12}}^{t_2})(0)}{s}=\lim_{s \rightarrow
0} \frac{(x^i \circ \gamma[\pi_{12}^s])(t_2)-(x^i
\circ\gamma)(t_2)}{s}\\&=& \lim_{s \rightarrow 0}
\frac{\gamma^i[\pi_{12}^s](t_2)-\gamma^i(t_2)}{s}=\lim_{s
\rightarrow 0}
\left(\frac{\gamma^i[\pi_{12}^s](t_2)-\gamma^i[\pi_1^s](t_2)}{s}
+ \frac{\gamma^i[\pi_1^s](t_2)-\gamma^i(t_2)}{s} \right)\
.\end{eqnarray*} We understand $\gamma[\pi_{12}^s]$ as the
result of perturbing $\gamma[\pi_1^s]$ with $\pi_2$, and
use the linear approximation in Equation
(\ref{linearapprox}) for $\gamma[\pi_{12}^s](t_2)$ and
$\gamma[\pi_1^s](t_2)$ according to Proposition
\ref{tangentperturb}.
\begin{equation*}
\varphi_{\pi_{12}}^{t_2}(s)=\gamma[\pi_{12}^s](t_2)=\gamma[\pi_1^s](t_2)+s\,
v[\pi_2]^{\gamma[\pi_1^s]}+ o(s),
\end{equation*}
\begin{equation*}
\gamma[\pi_1^s](t_2)=\gamma(t_2)+s\,
V[\pi_1]^{\gamma}(t_2)+ o(s).
\end{equation*}
Then
$$A=\lim_{s \rightarrow 0} \left( \frac{s
(v[\pi_2]^{\gamma[\pi_1^s]})^i}{s} + \frac{s
(V[\pi_1]^{\gamma})^i(t_2)}{s} \right)=\lim_{s \rightarrow 0} \left(
(v[\pi_2]^{\gamma[\pi_1^s]})^i+ (V[\pi_1]^{\gamma})^i(t_2)
\right).$$ As $\gamma[\pi_1^s]$ depends on $s$ and $s$ tends to $0$,
$A={\rm L}\left(v[\pi_2]^{\gamma}+
V[\pi_1]^{\gamma}(t_2)\right)x^i$.
\end{proof}
Considering identifications similar to the ones used to write
Equation $(\ref{linearapprox})$, we have
\begin{equation*}
\varphi_{\pi_{12}}^{t_2}(s)=\gamma(t_2)+s v[\pi_2]+ s
V[\pi_1](t_2) +o(s).
\end{equation*}

Now we define how the control changes when it is perturbed
twice at the same time. If $t_1$ is a Lebesgue time,
$\pi'_1=\{t_1,l'_1,u'_1\}$ and
$\pi''_1=\{t_1,l''_1,u''_1\}$ are perturbation data, then
$\pi_{11}=\{(t_1,t_1),(l'_1,l''_1),(u_1',u_1'')\}$ is a
perturbation data given by
$$u[\pi_{11}^s](t)=\left\{ \begin{array}{ll}u_1', &
t\in [t_1-(l'_1+l''_1) s, t_1-l''_1 s], \\
u''_1, & t \in [t_1-l''_1 s,t_1], \\
u(t), & {\rm elsewhere}.
\end{array}\right.$$
for every $s\in \mathbb{R}^+$ small enough such that
$a<t_1-(l'_1+l''_1) s$. Then $\gamma[\pi_{11}^s]\colon I
\longrightarrow M$ is the generalized integral curve of
$X^{\{u[\pi_{11}^s]\}}$ with initial condition $(a,\gamma(a))$.
Observe that $\gamma[\pi_{11}^0](t)=\gamma(t)$. Consider the curve
$\varphi_{\pi_{11}}^{t_1} \colon [0,\epsilon]\rightarrow M$, defined
by $\varphi_{\pi_{11}}^{t_1} (s)=\gamma[\pi_{11}^s](t_1)$.

\begin{prop}\label{11} Let $t_1$ be a Lebesgue time. The vector tangent to $\varphi_{\pi_{11}}^{t_1}
\colon [0,\epsilon]\rightarrow M$ at $s=0$ is $v[\pi'_1]+
v[\pi''_1]$, where $v[\pi'_1]$ and $v[\pi''_1]$ are the
perturbation vectors of class I associated to $\pi'_1$ and
$\pi''_1$, respectively.
\end{prop}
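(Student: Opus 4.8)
The plan is to follow the pattern of the proof of Proposition \ref{tangentperturb}: a tangent vector is a derivation on functions near $\gamma(t_1)$, so it suffices to fix a local chart $(x^i)$ at $\gamma(t_1)$ and prove that for each coordinate function $x^i$ the limit
$$A=\lim_{s\to 0}\frac{\gamma^i[\pi_{11}^s](t_1)-\gamma^i(t_1)}{s}$$
exists and equals $v[\pi'_1]^i+v[\pi''_1]^i$. As before, $\gamma[\pi_{11}^s]$ and $\gamma$ are generalized integral curves sharing the initial condition $(a,\gamma(a))$ and coinciding on $[a,t_1-(l'_1+l''_1)s)$, so integrating $\dot x^i=f^i$ gives
$$\gamma^i[\pi_{11}^s](t_1)-\gamma^i(t_1)=\int_{t_1-(l'_1+l''_1)s}^{t_1}f^i(\gamma[\pi_{11}^s](t),u[\pi_{11}^s](t))\,dt-\int_{t_1-(l'_1+l''_1)s}^{t_1}f^i(\gamma(t),u(t))\,dt.$$

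I would then split the first integral at $t_1-l''_1 s$ following the definition of $u[\pi_{11}^s]$: the control is $u'_1$ on $[t_1-(l'_1+l''_1)s,\,t_1-l''_1 s]$ and $u''_1$ on $[t_1-l''_1 s,\,t_1]$. On each of these intervals $\gamma[\pi_{11}^s]$ is an honest integral curve of the fixed vector field $X(\cdot,u'_1)$, resp. $X(\cdot,u''_1)$; because $\gamma[\pi_{11}^s]\to\gamma$ uniformly as $s\to 0$ --- continuous dependence on parameters, using $d(u[\pi_{11}^s],u)\le C(l'_1+l''_1)s\to 0$ for a suitable constant $C$ --- and each of the two intervals shrinks to $\{t_1\}$, the continuity of $f^i$ gives $f^i(\gamma[\pi_{11}^s](t),u'_1)=f^i(\gamma(t_1),u'_1)+o(1)$ uniformly in $t$ over the first interval, and analogously over the second. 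Integrating over intervals of lengths $l'_1 s$ and $l''_1 s$ yields $f^i(\gamma(t_1),u'_1)\,l'_1 s+o(s)$ and $f^i(\gamma(t_1),u''_1)\,l''_1 s+o(s)$. Since $t_1$ is a Lebesgue time, Equation (\ref{eqLebesgue}) applied with $h=(l'_1+l''_1)s$ turns the unperturbed integral into $f^i(\gamma(t_1),u(t_1))\,(l'_1+l''_1)s+o(s)$.

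Dividing by $s$ and letting $s\to 0$ then gives
$$A=l'_1\bigl(f^i(\gamma(t_1),u'_1)-f^i(\gamma(t_1),u(t_1))\bigr)+l''_1\bigl(f^i(\gamma(t_1),u''_1)-f^i(\gamma(t_1),u(t_1))\bigr),$$
which by Definition \ref{classI} is precisely $v[\pi'_1]^i+v[\pi''_1]^i$. As this holds for every coordinate function, the tangent vector of $\varphi_{\pi_{11}}^{t_1}$ at $s=0$ is $v[\pi'_1]+v[\pi''_1]$.

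I expect the step needing the most care to be the uniform estimate $f^i(\gamma[\pi_{11}^s](t),u'_1)=f^i(\gamma(t_1),u'_1)+o(1)$ on the shrinking interval --- the point glossed over in the single-perturbation proof. It rests on combining the uniform convergence $\gamma[\pi_{11}^s]\to\gamma$, the continuity of $\gamma$ at $t_1$, a uniform bound on $X(\cdot,u'_1)$ near $\gamma(t_1)$ (so the interior of each perturbed arc stays close to its right endpoint, which lies close to $\gamma(t_1)$), and the uniform continuity of $f^i$ on a compact neighbourhood of $\gamma(t_1)$. The rest is the one-perturbation computation of Proposition \ref{tangentperturb}, now run on two adjacent shrinking subintervals instead of one.
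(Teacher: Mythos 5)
Your proof is correct and follows essentially the same route as the paper: integrate the governing ODEs over $[t_1-(l'_1+l''_1)s,\,t_1]$, split the perturbed integral at $t_1-l''_1 s$ according to the definition of $u[\pi_{11}^s]$, estimate the two constant-control pieces by uniform convergence and continuity, and estimate the unperturbed integral by the Lebesgue-point property. Your bookkeeping is in fact slightly cleaner than the paper's, because you invoke the Lebesgue-point property only at the fixed time $t_1$ for the full unperturbed integral $\int_{t_1-(l'_1+l''_1)s}^{t_1}f^i(\gamma(t),u(t))\,dt$, whereas the paper implicitly applies the estimate at the $s$-dependent intermediate time $t_1-l''_1 s$, which is not guaranteed to be a Lebesgue time and forces an appeal to continuity of $u$ that is not actually available.
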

\begin{proof}
As in the proof of Proposition \ref{tangentperturb}
$$A=\lim_{s \rightarrow 0} \frac{(x^i \circ
\varphi_{\pi_{11}}^{t_1})(s)- (x^i \circ
\varphi_{\pi_{11}}^{t_1})(0)}{s} =\lim_{s \rightarrow 0}
\frac{\gamma^i[\pi_{11}^s](t_1)-\gamma^i(t_1)}{s}. $$ As
$\gamma$ is an absolutely continuous integral curve of
$X^{\{u\}}$, $\dot{\gamma}(t)=X(\gamma(t),u(t))$ at every
Lebesgue time. Then, we integrate
\[\gamma^i(t_1)-\gamma^i(a)=\int^{t_1}_af^i(\gamma(t),u(t))dt\]
and similarly for $\gamma[\pi_{11}^s]$ and $u[\pi_{11}^s]$.
Observe that $\gamma[\pi_{11}^s](t)=\gamma(t)$ and
$u[\pi_{11}^s](t)=u(t)$ for $t\in[a,t_1-(l'_1+l''_1) s)$.
Then,
\begin{eqnarray*}
A&=&\lim_{s \rightarrow 0}
\frac{\int^{t_1}_{a}f^i(\gamma[\pi_{11}^s](t),
u[\pi_{11}^s](t))dt-\int^{t_1}_{a}f^i(\gamma(t),
u(t))dt}{s} \\ &=& \lim_{s \rightarrow 0}
\frac{\int^{t_1}_{t_1-(l'_1+l''_1)
s}f^i(\gamma[\pi_{11}^s](t),
u[\pi_{11}^s](t))dt-\int^{t_1}_{t_1-(l'_1+l''_1)
s}f^i(\gamma(t), u(t))dt}{s}\\&=&\lim_{s \rightarrow
0}\left( \frac{\int^{t_1-l''_1 s}_{t_1-(l'_1+l''_1)
s}\left(f^i(\gamma[\pi_1^{'s}](t),
u_1')-f^i(\gamma(t),u(t))\right)dt}{s}\right. \\
&+&\left.\frac{\int^{t_1}_{t_1-l''_1
s}\left[f^i(\gamma[\pi_{11}^s](t),
u_1^{''})-f^i(\gamma(t),u(t))\right]dt}{s}\right).
\end{eqnarray*} As
$t_1$ and $t_1-l''_1 s$ are Lebesgue times and
$a<t_1-(l'_1+l''_1)s$ for a small enough $s$, Equation
(\ref{eqLebesgue}) is used. Now we have
\begin{eqnarray*}
A&=&\lim_{s \rightarrow 0}\left\{
\frac{f^i(\gamma[\pi_1^{'s}](t_1-l''_1 s), u'_1)l'_1s
-f^i(\gamma(t_1-l''_1 s), u(t_1-l''_1 s))l'_1
s}{s}\right.\\&+&\left.\frac{ f^i(\gamma[\pi_{11}^s](t_1),
u''_1)l''_1s -f^i(\gamma(t_1), u(t_1))l''_1 s}{s}\right\}\\
&=&\lim_{s \rightarrow
0}\left([f^i(\gamma[\pi_1^{'s}](t_1-l''_1 s), u'_1)
-f^i(\gamma(t_1-l''_1 s), u(t_1-l''_1 s))] \,
l'_1\right.\\&+&\left. (f^i(\gamma[\pi_{11}^s](t_1),
u''_1)-f^i(\gamma(t_1), u(t_1))) \, l''_1\right).
\end{eqnarray*} As $f^i$
is continuous on $M\times U$, we have
\begin{eqnarray*}A&=&\left(f^i\left(\lim_{s \rightarrow
0}\gamma[\pi_1^{'s}](t_1-l''_1 s), u'_1\right)
-f^i\left(\lim_{s \rightarrow 0}\gamma(t_1-l''_1 s),
\lim_{s \rightarrow 0}u(t_1-l''_1 s)\right)\right) \,
l'_1\\&+& \left(f^i\left(\lim_{s \rightarrow
0}\gamma[\pi_{11}^s](t_1),
u''_1\right)-f^i\left(\gamma(t_1), u(t_1)\right)\right) \,
l''_1= [f^i(\gamma(t_1), u'_1)-f^i(\gamma(t_1), u(t_1))]\,
l'_1\\&+&(f^i(\gamma(t_1), u''_1)-f^i(\gamma(t_1),
u(t_1)))\, l''_1 ={\rm
L}\left(v[\pi_1']+v[\pi_1'']\right)(x^i).
\end{eqnarray*}
\end{proof}
Analogous to the linear approximation $(\ref{linearapprox})$, we
have
\begin{equation*}
\varphi_{\pi_{11}}^{t_1}(s)=\gamma(t_1)+s v[\pi'_1]+ s
v[\pi''_1]+o(s).
\end{equation*}

If we perturb the control $r$ times,
$\pi=\{\pi_1,\ldots,\pi_r\}$, with $a<t_1\leq \ldots \leq
t_r<b$, then $\gamma[\pi^s](t)$ is the gene\-ra\-lized
integral curve of $X^{\{u[\pi^s]\}}$ with initial condition
$(a,\gamma(a))$. Consider the curve $ \varphi_{\pi}^t\colon
[0,\epsilon]\rightarrow M$ for $t\in [t_r,b]$ given by $
\varphi_{\pi}^t(s)=\gamma[\pi^s](t)$.
\begin{corol}\label{tangentperturbn}
For $t\in[t_r,b]$, the vector tangent to the curve $\;
\varphi_{\pi}^t \colon \; [0,\epsilon]\rightarrow M\;$ at $\; s=0 \;
$ is $V[\pi_1](t)+\ldots+V[\pi_r](t)$, where $V[\pi_i]\colon
[t_i,b]\rightarrow TM$ is the generalized integral curve of
$\left(X^T\right)^{\{u\}}$ with initial condition
$(t_i,(\gamma(t_i),v[\pi_i]))$ for $i=1,\ldots,r$.
\end{corol}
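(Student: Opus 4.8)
The plan is to prove Corollary~\ref{tangentperturbn} by induction on the number $r$ of perturbations, using Propositions~\ref{tangentperturb}, \ref{tangentperturb2}, \ref{12} and \ref{11} as the base machinery and the concatenation structure of the needle--like variations. The case $r=1$ is exactly Proposition~\ref{tangentperturb2}: the tangent vector to $\varphi_{\pi_1}^{t}$ at $s=0$ is $V[\pi_1](t)$ for every Lebesgue time $t\in(t_1,b]$. The case of two perturbations at distinct times is Proposition~\ref{12}, and the case of two perturbations at the same time is Proposition~\ref{11}; together these furnish both the ``propagate along $\gamma$ by the complete lift'' step and the ``add contributions at a common time'' step that the induction will need.

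For the inductive step, suppose the result holds for $r-1$ perturbations. First I would treat $\gamma[\pi^s]$ as the curve obtained by first applying $\pi'=\{\pi_1,\ldots,\pi_{r-1}\}$ to $\gamma$ and then applying the last perturbation $\pi_r$ to the resulting curve $\gamma[\pi'^s]$, exactly as in the proof of Proposition~\ref{12}. Splitting the difference quotient for $\gamma^i[\pi^s](t)$ at $t\in[t_r,b]$ as
\begin{equation*}
\frac{\gamma^i[\pi^s](t)-\gamma^i(t)}{s}=\frac{\gamma^i[\pi^s](t)-\gamma^i[\pi'^s](t)}{s}+\frac{\gamma^i[\pi'^s](t)-\gamma^i(t)}{s},
\end{equation*}
the second term tends to $(V[\pi_1](t)+\cdots+V[\pi_{r-1}](t))^i$ by the induction hypothesis (valid since $t\ge t_r\ge t_{r-1}$). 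For the first term I would argue as in Proposition~\ref{12}: the perturbation $\pi_r$ applied to $\gamma[\pi'^s]$ produces, at time $t_r$, an increment $s\,v[\pi_r]^{\gamma[\pi'^s]}+o(s)$ which converges to $v[\pi_r]$ by continuity of $f^i$ and uniform convergence $\gamma[\pi'^s]\to\gamma$; then this initial velocity is transported from $t_r$ to $t$ by the flow of the complete lift $(X^T)^{\{u\}}$ via Proposition~\ref{tangentperturb2}/Proposition~\ref{XT}, giving $V[\pi_r](t)$. Finally, since all the $V[\pi_i]$ satisfy the same linear (time--dependent variational) equation $\dot V=(X^T)^{\{u\}}(t,V)$, which is linear on fibres, the sum $V[\pi_1](t)+\cdots+V[\pi_r](t)$ is again a solution, and linearity of the complete--lift flow lets the transported contributions add coordinatewise.

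One subtlety must be handled with care: when several of the $t_i$ coincide, the intervals $[t_i-l_i s,t_i]$ must be taken pairwise disjoint for small $s$ by ordering them (stacking them as in the construction of $\pi_{11}$), so that the definition of $u[\pi^s]$ makes sense; at such a coincident Lebesgue time the two--perturbation--at--one--time computation of Proposition~\ref{11} shows the contributions $v[\pi_i]$ simply add before being propagated, which is consistent with the claimed formula. The main obstacle I expect is purely bookkeeping rather than conceptual: making the nested linear approximations rigorous, i.e.\ controlling the $o(s)$ terms uniformly when one perturbed curve is itself the base for the next perturbation, and ensuring that the limit $\lim_{s\to 0}v[\pi_r]^{\gamma[\pi'^s]}=v[\pi_r]$ is justified by the continuous dependence of solutions of Carath\'eodory differential equations on parameters (cf.\ \cite{2004Canizo,55Coddington}) together with continuity of $f^i$ on $M\times U$. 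Once these estimates are organized exactly as in Propositions~\ref{tangentperturb}, \ref{12} and \ref{11}, the induction closes and the corollary follows.
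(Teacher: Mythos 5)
Your proposal follows essentially the same route as the paper, which simply remarks that the corollary follows by induction using Propositions \ref{tangentperturb}, \ref{12} and \ref{11}. Your fleshed-out inductive step—peeling off $\pi_r$, splitting the difference quotient as in Proposition \ref{12}, invoking Proposition \ref{11} for coincident Lebesgue times, and appealing to linearity of the complete-lift flow to transport and add contributions—is exactly the argument the paper's terse proof is gesturing at.
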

This corollary may be easily proved by induction using
Propositions \ref{tangentperturb}, \ref{12}, \ref{11},
where all the possibilities of combination of perturbation
data have been studied. If $w$ is the vector tangent to $
\varphi_{\pi}^t$ at $s=0$,  the perturbation data will be
denoted by $\pi_w$. Bearing in mind the different
combination of vectors in Definition \ref{conicconvex}, we
have the following definition.
\begin{defin}
The conic non\textendash{}negative combinations of
perturbation vectors of class I and displacements by the
flow of $X^{\{u\}}$ of perturbation vectors of class I are
called \textbf{perturbation vectors of class II}.
\end{defin}

\subsubsection{Perturbation cones} Considering all the
elementary perturbation vectors, we define a closed convex
cone at every time containing at least  all displacements
of these vectors. To transport all the elementary
perturbation vectors, the pushforward of the flow of the
vector field $X^{\{u\}}$ is used. See Appendix
\ref{variationalsection}. Observe that the second comment
after Definition \ref{classI} guarantees that the set of
elementary perturbation vectors is a cone.

\begin{defin}\label{tangent} For $t\in (a,b]$, the \textbf{tangent perturbation cone} $K_t$
is the smallest closed convex cone in $T_{\gamma(t)}M$ that
contains all the displacements by the flow of $X^{\{u\}}$
of all the elementary perturbations vectors from all
Lebesgue times $\tau$ smaller than $t$:
\begin{equation*}
K_t=\overline{{\rm conv}\left(\bigcup_{\substack{ a<\tau\leq t \\
\tau \textrm{ is a Lebesgue time}}}
(\Phi_{(t,\tau)}^{X^{\{u\}}})_* ({\cal V}_{\tau})\right)},
\end{equation*}
where ${\cal V}_{\tau}$ denotes the set of elementary
perturbation vectors at $\tau$ and ${\rm conv}(A)$ means
the convex hull of the set $A$.
\end{defin}
To prove the following statement, we use results in
Appendices \ref{hyperplane} and \ref{Brouwer}; precisely
Proposition \ref{convexhull}, \ref{convex} and Corollary
\ref{scholium}.



\begin{prop} \label{lemma2} Let $t \in (a,b]$. If $v$ is a nonzero vector
in the interior of $K_t$, then there exists $\epsilon>0$ such that
for every $s\in(0,\epsilon)$ there are $s'>0$ and a perturbation of
the control $u[\pi^s]$ such that $\gamma[\pi^s](t)=\gamma(t)+s' v$.
\end{prop}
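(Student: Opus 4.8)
The plan is to exploit the fact that a vector in the interior of the cone $K_t$ can be written as a strict conic combination of elementary perturbation vectors (transported to $\gamma(t)$ by the flow), and then to realize that combination by a multi-needle perturbation whose tangent vector, by Corollary \ref{tangentperturbn}, is exactly a sum of the $V[\pi_i](t)$. More precisely, since $v$ is interior to $K_t$, by the Carathéodory-type results (Proposition \ref{convexhull}) it lies in the interior of the convex hull of finitely many transported elementary perturbation vectors $w_1,\dots,w_N$, each of the form $w_i = (\Phi^{X^{\{u\}}}_{(t,\tau_i)})_*(v[\pi_i])$ with $\tau_i < t$ a Lebesgue time. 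By Proposition \ref{tangentperturb2} (equivalently Corollary \ref{tangentperturbn}), $w_i = V[\pi_i](t)$, the value at $t$ of the integral curve of the complete lift started from $v[\pi_i]$ at $\tau_i$. Using the rescaling property (comment (b) after Definition \ref{classI}), each $w_i$ can be multiplied by any positive scalar by rescaling $l_i$, so we may take $m+1$ perturbation data whose transported vectors $w_0,\dots,w_m$ affinely span a small simplex around $v$.

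First I would make precise the $o(s)$ control. By Corollary \ref{tangentperturbn}, for a perturbation $\pi(\lambda)$ built from data $\{\tau_i, \lambda_i l_i, u_i\}$ with barycentric-type weights $\lambda = (\lambda_0,\dots,\lambda_m)$ in a compact simplex, one has
\[
\gamma[\pi(\lambda)^s](t) = \gamma(t) + s\sum_{i=0}^m \lambda_i w_i + o(s),
\]
and the key point is that the remainder is uniform in $\lambda$ over the compact simplex: this follows from the continuous dependence of $\varphi_\pi^t$ on the perturbation data (established just before Proposition \ref{tangentperturb}) together with the differentiability estimates in Propositions \ref{tangentperturb}, \ref{12}, \ref{11}. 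Dividing by $s$, define for fixed small $s>0$ the map
\[
\Psi_s(\lambda) = \frac{1}{s}\Big(\gamma[\pi(\lambda)^s](t) - \gamma(t)\Big),
\]
which converges, uniformly on the simplex, to the affine map $\lambda \mapsto \sum_i \lambda_i w_i$ as $s\to 0$. Since $v$ is in the interior of the image of that affine map (the simplex $\operatorname{conv}(w_0,\dots,w_m)$), a topological degree / Brouwer fixed-point argument — this is exactly what Corollary \ref{scholium} and Proposition \ref{convex} are set up to give — shows that for all $s$ smaller than some $\epsilon$, the point $v$ lies in the image of $\Psi_s$: there is $\lambda(s)$ in the simplex with $\Psi_s(\lambda(s)) = v$, i.e. $\gamma[\pi(\lambda(s))^s](t) = \gamma(t) + s\,v$.

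This already gives the statement with $s' = s$, but to match the assertion literally (allowing $s' > 0$ possibly different from $s$, and ensuring the perturbation data are admissible — the needle intervals disjoint and inside $(a,t)$) I would be slightly more careful: rescale so that $v$ is deep in the interior, take $s'$ proportional to $s$ with the constant absorbing the rescaling, and note that for $s$ small enough all the needle intervals $[\tau_i - \lambda_i l_i s, \tau_i]$ are disjoint and contained in $(a,b)$, so $u[\pi(\lambda(s))^s]$ is a genuine perturbation of the type defined in \S\ref{perturbedconesfixed}. The main obstacle is precisely the uniformity of the $o(s)$ estimate in the weight parameter $\lambda$ (and in the auxiliary perturbation data): the pointwise differentiability in Propositions \ref{tangentperturb}--\ref{11} must be upgraded to a uniform-on-compacts statement before the Brouwer argument can be applied. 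Once that is in hand, the fixed-point step is the routine invocation of the appendix results on separation and Brouwer's theorem.
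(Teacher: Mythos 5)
Your plan is essentially the paper's proof: decompose $v$ via Propositions \ref{convex} and \ref{convexhull} as a conic combination of transported elementary perturbation vectors, build a continuous family of multi-needle perturbations whose endpoints admit a uniform linear approximation, and invoke Corollary \ref{scholium} to force the perturbed endpoint map to hit the ray through $v$. The only substantive difference is bookkeeping: the paper chooses a small circular cone around $v$, projects along the hyperplane orthogonal to $v$ via the diffeomorphism $g$, and applies Brouwer on an $(m-1)$-dimensional ball with the radial scale $s'$ left free, whereas you apply Brouwer on the full $m$-dimensional simplex of barycentric weights to get $s'=s$; and the uniformity in the auxiliary parameter of the $o(s)$ estimate that you flag as the main obstacle is precisely the point the paper also handles tersely, by asserting joint continuity of the remainder function $\Theta(s,r)=o_r(s)/s$ at $s=0$.
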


\begin{proof}
As $v$ is interior to $K_t$, by Proposition \ref{convex}, item
$(d)$, $v$ is in the interior of the cone  \[{\cal C}={\rm
conv}\left(\bigcup_{\substack{ a<\tau\leq t \\ \tau \textrm{ is a
Lebesgue time}}}\left(\Phi^{X^{\{u\}}}_{(t,\tau)}\right)_*{\cal
V}_{\tau}\right),\]  where ${\cal V}_{\tau}$ is the cone of
perturbation vectors of class I at time $\tau$. Hence, $v$ can be
expressed as a convex finite combination of perturbation vectors of
class I by Proposition \ref{convexhull}.

Let $(W,x^i)$ be a local chart of $M$ at $\gamma(t)$. We
suppose that the image of the local chart and $W$ are
identified locally with an open set of $\mathbb{R}^m$.
Through the local chart we also identify $T_{\gamma(t)}M$
with $\mathbb{R}^m$.
We consider the affine hyperplane $\Pi$ orthogonal to $v$
at the endpoint of the vector $v$ and identify $\Pi$ with
$\mathbb{R}^{m-1}$.

A ``closed" cone denotes a closed cone without the vertex.
Observe that such a cone is not closed, that is why we use
the inverted commas. We can choose a ``closed" convex cone
$\widetilde{{\cal C}}$ contained in the interior of ${\cal
C}$ such that $v$ lies in the interior of $\widetilde{{\cal
C}}$ and $\langle w, v \rangle
>0$ for every $w\in \widetilde{{\cal C}}$. For example, we can
consider a circular cone with axis $v$ satisfying the two
previous conditions, as assumed from now on. Hence
$$\Pi\cap \widetilde{{\cal C}}=v+\overline{B(0,R)},$$
where $\overline{B(0,R)}$ is the closure of an open ball in
the subspace orthogonal to $v$, denoted by $v^{\perp}$. For
$r\in v^{\perp}$, we will write $r$ instead of $0v+r$ as a
vector in $\mathbb{R}^m$.

Let us construct a diffeomorphism from the cone
$\widetilde{{\cal C}}$ to a cylinder of $\mathbb{R}^m$. If
$w\in \widetilde{{\cal C}}$, the orthogonal decomposition
of $w$ induced by $v$ and $v^{\perp}$ is
$$w=\frac{\langle w,v\rangle}{\|v\|}\frac{v}{\|v\|}+\left(w-\frac{\langle
w,v\rangle}{\langle v,v\rangle}v\right)=\frac{\langle
w,v\rangle}{\langle v,v\rangle} \left[v+\left(\frac{\langle
v,v\rangle}{\langle w,v\rangle}w-v\right)\right].$$ Observe that
$\frac{\langle v,v\rangle}{\langle w,v\rangle}w-v$ is a vector in
$\overline{B(0,R)}\subset v^{\perp}$. Considering the ``closed" cone
$\widetilde{{\cal C}}$ without the vertex, we have the map
\begin{eqnarray*}
\begin{array}{rcl}g \colon  \widetilde{{\cal C}} & \longrightarrow & \mathbb{R}^+ \times \overline{B(0,R)}
\\
w& \longmapsto & \left(\ds{\frac{\langle w,v\rangle}{\langle
v,v\rangle}, \frac{\langle v,v\rangle}{\langle
w,v\rangle}w-v}\right)=(s,r), \\
\end{array}
\end{eqnarray*}
that is a ${\cal C}^{\infty}$ diffeomorphism with inverse given by
\[
\begin{array}{rccl}g^{-1} \colon & \mathbb{R}^+ \times
\overline{B(0,R)}
& \longrightarrow &  \widetilde{{\cal C}}\\
&(s,r) & \longmapsto &
s(v+r)=w.\\
\end{array}
\]
Note that $g$ and $g^{-1}$ can be extended to an open cone,
without the vertex, containing $\widetilde{{\cal C}}$, so
the condition that $g$ is diffeomorphism is clear.

If we truncate $\widetilde{{\cal C}}$ by the affine hyperplane
$\Pi$, we obtain a bounded convex set $\widetilde{{\cal C}}_v$. The
restriction of $g$ to $\widetilde{{\cal C}}_v$ is $g_v \colon
\widetilde{{\cal C}}_v \rightarrow  (0,1] \times \overline{B(0,R)}$,
that is also a ${\cal C}^{\infty}$ diffeomorphism with inverse
$g_v^{-1} \colon  (0,1]\times \overline{B(0,R)} \rightarrow
\widetilde{{\cal C}}_v$.

If $r\in \overline{B(0,R)}$, then $w_0=v+r$ is interior to ${\cal
C}$. Hence, associated to $w_0$ we have a perturbation $\pi_{w_0}$
of the control $u$. Let $\gamma[\pi^s_{w_0}]\colon I \rightarrow M$
be the generalized integral curve of $X^{\{u[\pi_{w_0}^s]\}}$ with
initial condition $(a,\gamma(a))$ and consider the map
\begin{eqnarray*}
\begin{array}{rccl}\Gamma \colon  & [0,1]\times
\overline{B(0,R)} & \longrightarrow & M\\
&(s,r)& \longmapsto &
\Gamma(s,r)=\gamma[\pi^s_{w_0}](t)\\
&(0,r) & \longmapsto & \Gamma(0,r)=\gamma(t),
\end{array}
\end{eqnarray*}
which is continuous because $\gamma[\pi^s_{w_0}](t)$
depends continuously on $s$ and $\pi^s_{w_0}$ and
\[\lim_{(s,r)\rightarrow(0,r_0)}
\Gamma(s,r)=\gamma(t)=\Gamma(0,r_0).\] Hence, for every
$\epsilon>0$, there exist $\delta_1,\delta_2>0$ such that
if $|s|<\delta_1$ and $\|r\|<\delta_2$, then
$\|\Gamma(s,r)-\Gamma(0,0)\|=\|\gamma[\pi^s_{w_0}](t)-\gamma(t)\|<\epsilon$.

Taking $\epsilon$ such that $B(\gamma(t),\epsilon)$ is
contained in $W$, there exist $\delta_1,\delta_2>0$ such
that if $|s|<\delta_1$ and $\|r\|<\delta_2$, then
$\gamma[\pi^s_{w_0}](t)\in W$.

We consider now the map
\begin{eqnarray*}
\begin{array}{rccl}\Delta \colon  & [0,\delta_1]\times
\overline{B(0,\delta_2)} & \longrightarrow & T_{\gamma(t)}M\simeq \mathbb{R}^m\\
&(s,r)& \longmapsto &
\Delta(s,r)=\gamma[\pi^s_{w_0}](t)-\gamma(t)\\
&(0,r) & \longmapsto &
\Delta(0,r)=0\\
\end{array}
\end{eqnarray*}
that is continuous because
$\lim_{(s,r)\rightarrow(0,r_0)}\Delta(s,r)=0=\Delta(0,r_0)$.
Remember that we have identified $W$ with $\mathbb{R}^m$
via the local chart. With this in mind and using Equation
(\ref{linearapprox}), we can write
$$\gamma[\pi^s_{w_0}](t)-\gamma(t)=s(v+r)+o_r(s),$$
where $o_r(s)\in \mathbb{R}^m$.

We are going to show that, taking $(s,r)$ in an adequate
subset, $\Delta(s,r)$ lies in the interior of the cone
$\widetilde{{\cal C}}$.

Take a section of the cone through a plane containing $v$
and $w$, and compute the distance from the endpoint of $w$
to the boundary of the cone $\widetilde{{\cal C}}$. This is
given by
$$\frac{s\ (R-\|r\|)}{\sqrt{1+\left(\frac{R}{\|v\|}\right)^2}}.$$
This is the maximum value for the radius of an open ball
centered at the endpoint of $s(v+r)$ to be contained in
$\widetilde{{\cal C}}$.

Define the function
\begin{eqnarray*}
\begin{array}{rccl}\Theta \colon & [0,\delta_1] \times \overline{B(0,\delta_2)}  & \longrightarrow & \mathbb{R}^m\\
&(s,r) & \longmapsto &
\left(\gamma[\pi^s_{w_0}](t)-\gamma(t)-s(v+r)\right)/s \ = \ o_r(s)/s\\
&(0,r) & \longmapsto & 0.\\
\end{array}
\end{eqnarray*}
which is continuous because $\lim_{(s,r)\rightarrow(0,r_0)}
\Theta(s,r)=0=\Theta(0,r_0)$. Take
$$\epsilon=\frac{R-\delta_2}{\sqrt{1+\left(\frac{R}{\|v\|}\right)^2}},$$
then there exist $\overline{\delta}_1$,
$\overline{\delta}_2>0$ such that, if
$|s|<\overline{\delta}_1$ and $\|r\|<\overline{\delta}_2$,
then $\left\| \Theta(s,r)
\right\|=\left\|o_r(s)/s\right\|<\epsilon$.

If $(s,r)\in (0,\overline{\delta}_1)\times
\overline{B(0,\overline{\delta}_2)}$, then
$$\|\Delta(s,r)-s(v+r)\|=\|s(v+r)+o_r(s)-s(v+r)\|=\|o_r(s)\|\leq
s \epsilon <s
\frac{R-\|r\|}{\sqrt{1+\left(\frac{R}{\|v\|}\right)^2}}
$$ since $\|r\|\leq \overline{\delta}_2<\delta_2<R$. Thus we conclude that $\Delta(s,r)=s(v+r)+o_r(s)$
is in the interior of
the cone $\widetilde{\cal C}$ for every $(s,r)\in
(0,\overline{\delta}_1)\times
\overline{B(0,\overline{\delta}_2)}$.

Now, for $s\in (0,\overline{\delta}_1)$, we define the continuous
mapping
\begin{eqnarray}\label{Gs}
\begin{array}{rcl}G_s \colon  \overline{B(0,\overline{\delta}_2)} & \longrightarrow & \overline{B(0,R)} \subset
\mathbb{R}^{m-1} \\
r& \longmapsto &
G_s(r)=\left(\pi_2 \circ g \circ \Delta  \right)(s,r),\\
\end{array}
\end{eqnarray}
where $\pi_2 \colon \mathbb{R}^+\times \overline{B(0,R)}
\rightarrow \overline{B(0,R)}$, $\pi_2(s,r)=r$. Observe
that for $r_0\in \overline{B(0,\overline{\delta}_2)}$ we
have
$$\lim_{(s,r)\rightarrow (0,r_0)} G_s(r)= \lim_{(s,r)\rightarrow (0,r_0)} \left[ \frac{\langle
v,v\rangle}{s\langle v,v\rangle+\langle o(s),v\rangle}
(s(v+r)+ o(s))-v\right]=r$$ and
\begin{equation}\label{gDelta}(g\circ
\Delta)(s,r)=g(\gamma[\pi^s_{w_0}](t)-\gamma(t))=g(s(v+r)+o_r(s))=(s',r').
\end{equation}
Suppose that there exists $r\in\overline{B(0,R)}$ such that
$G_s(r)=0$. Then applying $g^{-1}$ to (\ref{gDelta}), we
have
\begin{equation}
\label{Delta}
\Delta(s,r)=\gamma[\pi^s_{w_0}](t)-\gamma(t)=g^{-1}(s',0)=s'v.\end{equation}
Hence, to conclude the proof we need to show that there
exists $r$ with $G_s(r)=0$ for $s$ small enough. To apply
Corollary \ref{scholium}, there must exist $r'\in
B(0,\overline{\delta}_2)$ such that $\|G_s(r)-r\|<\|r-r'\|$
for every $r \in
\partial \left(\overline{B(0,\overline{\delta}_2)}\right)$. We will
show that the condition is fulfilled for $r'=0$.

Consider the mapping
\begin{eqnarray*}
\begin{array}{rccl}{\cal G} \colon  & [0,\overline{\delta}_1]\times
\overline{B(0,\overline{\delta}_2)} & \longrightarrow &
\overline{B(0,R)} \subset
\mathbb{R}^{m-1} \\
&(s,r)& \longmapsto &
{\cal G}(s,r)=G_s(r)-r\\
&(0, r) & \longmapsto & {\cal G}(0,r)=0.\\
\end{array}
\end{eqnarray*}
For $r_0\in \overline{B(0,\overline{\delta}_2)}$, we have
$\lim_{(s,r)\rightarrow (0,r_0)} {\cal G}(s,r)=
\lim_{(s,r)\rightarrow (0,r_0)} G_s(r)-r=0$. Thus ${\cal G}$ is
continuous.

Given $r_0\in
\partial \left(\overline{B(0,\overline{\delta}_2)}\right)$, take
$\epsilon=\overline{\delta}_2/2$, then there exist
$\delta_0(0,r_0)$, $\delta_1(0,r_0)>0$ such that if
$|s|<\delta_0(0,r_0)$ and $\|r-r_0\|<\delta_1(0,r_0)$, then
$\|{\cal G}(s,r)-{\cal G}(0,r_0)\|<\overline{\delta}_2/2$.
Hence $\left\{B(r_0,\delta_1(0,r_0)) \, | \, r_0\in
\partial \left(\overline{B(0,\overline{\delta}_2)}\right)
\right\}$ is an open covering of the boundary of
$\overline{B(0,\overline{\delta}_2)}$; $\partial
\overline{B(0,\overline{\delta}_2)}$. As this is a compact
set, there exists a finite subcovering,
$$\{B(r_1,\delta_1(0,r_1)), \ldots, B(r_l,\delta(0,r_l))\}.$$
Take $\delta$ as the minimum of
$\{\delta_0(0,r_1),\ldots,\delta_0(0,r_l)\}$. Let us see
that, for every $(s,r)\in [0,\delta]\times
\partial \overline{B(0,\overline{\delta}_2)}$,
$\|G_s(r)-r\|<\|r\|$. As $r$ is in an open set of the
finite subcovering,
$$\|{\cal G}(s,r)\|=\|G_s(r)-r\|<\frac{\overline{\delta}_2}{2}<\overline{\delta}_2=\|r\|.$$
Hence, using Corollary \ref{scholium}, for every $s\in
(0,\delta)$, the set
$G_s(\overline{B(0,\overline{\delta}_2)})$ covers the
origin; that is, there exists $r\in
\overline{B(0,\overline{\delta}_2)}$ such that
$$G_s(r)=(\pi_2\circ g \circ \Delta)(s,r)=0.$$
Then, because of the definition of the mapping $G_s$ in
Equation (\ref{Gs}) and Equations (\ref{gDelta}) and
(\ref{Delta}), there exists $s'\in \mathbb{R}^+$ such that
$$ \gamma[\pi_{w_0}^s](t)= \gamma(t)+s'v.$$ To finish the proof we only need to take
$\pi^s=\pi^s_{w_0}$. In other words, we have a trajectory coming
from a perturbation of the control that meets the ray generated by
$v$, as wanted.
\end{proof}

\subsection{Pontryagin's Maximum Principle in the symplectic formalism for the optimal control
problem}\label{symplectic}

In this section, the $OCP$ is transformed into a Hamiltonian problem
that will allow us to state Pontryagin's Maximum Principle.

Given the $OCP$ $(Q,U,X,F,I,x_a,x_b)$ and the
$\widehat{OCP}$ $(\widehat{Q},U,\widehat{X},I,x_a,x_b)$,
let us consider the cotangent bundle $T^{\ast} \widehat{Q}$
with its natural symplectic structure that will be denoted
by $\omega$. If
$(\widehat{x},\widehat{p})=(x^0,x,p_0,p)=(x^0, x^1, \ldots
, x^m,p_0, p_1, \ldots , p_m)$ are local natural
coordinates on $T^* \widehat{Q}$, the form $\omega$ has as
its local expression $\omega=dx^0\wedge dp_0+dx^i\wedge
dp_i$.

For each $u \in U$, $H^u \colon T^* \widehat{Q} \rightarrow
\mathbb{R}$ is the Hamiltonian function defined by
$$H^u(\widehat{p})=H(\widehat{p},u)=
\langle \widehat{p},\widehat{X}(\widehat{x},u) \rangle =p_0
F(x,u) +\sum_{i=1}^m p_i f^i(x,u),$$ where $\widehat{p}\in
T^*_{\widehat{x}}\widehat{Q}$. The tuple $(T^* \widehat{Q},
\omega, H^u)$ is a Hamiltonian system. Using the notation
in (\ref{NotVFproj}), the associated Hamiltonian vector
field $Y^{\{u\}}$ satisfies the equation
$$i(Y^{\{u\}}) \omega =dH^u.$$
Thus  we get a family of Hamiltonian systems parameterized
 by $u$,
$H \colon T^* \widehat{Q} \times U \rightarrow \mathbb{R}$, and the
associated Hamiltonian vector field $Y\colon T^*\widehat{Q} \times U
\rightarrow T(T^*\widehat{Q})$ which is a vector field along the
projection $\widehat{\pi}_1 \colon T^*\widehat{Q} \times U
\rightarrow T^* \widehat{Q}$. Its local expression is
\[Y(\widehat{p},u)=\left(F(x,u) \frac{\partial}{\partial x^0}
+f^i(x,u)\frac{\partial}{\partial x^i} +0 \frac{\partial}{\partial
p_0} +\left(-p_0 \frac{\partial F}{\partial x^i}(x,u)- p_j
\frac{\partial f^j}{\partial x^i}(x,u)\right)
\frac{\partial}{\partial
p_i}\right)_{(\widehat{x},\widehat{p},u)}.\] It should be noted that
$Y=\widehat{X}^{T^*}$ is the cotangent lift of $\widehat{X}$. See
Appendix \ref{colift} for definition and properties of the cotangent
lift.

Given a curve $(\widehat{\lambda}, u) \colon I \rightarrow
T^{\ast}\widehat{Q} \times U$ such that it is absolutely continuous
on $T^{\ast}\widehat{Q}$, it is measurable and bounded on $U$, and
$\widehat{\gamma}=\pi_{\widehat{Q}} \circ \widehat{\lambda}$; if
$\pi_{\widehat{Q}} \colon T^*\widehat{Q} \rightarrow \widehat{Q}$ is
the natural projection, the previous elements come together in the
following diagram:
$$\bfig\xymatrix{ & & T(T^*\widehat{Q})\ar[d]^{\txt{\small{$\tau_{T^*\widehat{Q}}$}}}\\
\mathbb{R} & T^*\widehat{Q}\times U
\ar[l]_{\txt{\small{$H$}}}
\ar[ur]^{\txt{\small{$\widehat{X}^{T^*}$}}}\ar[r]^{\txt{\small{$\widehat{\pi}_1$}}}
&T^*\widehat{Q}\ar[d]^{\txt{\small{$\pi_{\widehat{Q}}$}}}\\
& I
\ar[u]^{\txt{\small{$(\widehat{\sigma},u)$}}}\ar[ur]^{\txt{\small{$\widehat{\sigma}$}}}
\ar[r]^{\txt{\small{$\widehat{\gamma}$}}}\ar[dr]^{\txt{\small{$\gamma$}}}&
\widehat{Q}\ar[d]^{\txt{\small{$\pi_2$}}}\\ & &Q }\efig$$

\begin{state}\textbf{(Hamiltonian Problem, $HP$)}
Given the $OCP$ $(Q,U,X,F,I,x_a,x_b)$, and the equivalent
$\widehat{OCP}$ $(\widehat{Q},U,\widehat{X},I,x_a,x_b)$, consider
the following problem.

Find $(\widehat{\sigma}^*, u^*)$ such that

\begin{itemize}

\item[(1)] $\widehat{\gamma}^*(a)=
(0,x_a)$ and $\gamma^*(b)= x_b$, if
$\widehat{\gamma}^*=\pi_{\widehat{Q}} \circ \widehat{\sigma}^*$,
$\gamma^*=\pi_2 \circ \widehat{\gamma}^* $.
\item[(2)]
$\dot{\widehat{\sigma}^*}(t)=\widehat{X}^{T^*}(\widehat{\sigma}^*(t),u^*(t))$,
$t \in I$.
\end{itemize}
\end{state}

The tuple $(T^*\widehat{Q}, U, \widehat{X}^{T^*}, I, x_a, x_b)$
denotes the \textit{Hamiltonian problem} as it has just been defined
and the elements satisfy the same properties as in \S \ref{general}.

 \comments The Hamiltonian problem satisfies analogous
conditions to those satisfied by the $OCP$ and the
$\widehat{OCP}$ defined in \S \ref{PCOfijo} and \S
\ref{PMPEfijo} respectively.
\begin{enumerate}
\item Given  $(\widehat{\sigma},u)$,  the
function  $u \colon I \rightarrow U$ allows  us to
 construct  a
time\textendash{}dependent vector field  on
$T^*\widehat{Q}$, $(\widehat{X}^{T^*})^{\{u\}} \colon
T^*\widehat{Q}\times I \rightarrow T(T^*\widehat{Q})$,
defined by  $$(\widehat{X}^{T^*})^{\{u\}} (\widehat{x},
\widehat{p}, t)=\widehat{X}^{T^*}(\widehat{x},\widehat{p},
u(t)).$$ Condition $(2)$ shows that $\widehat{\sigma}^*$ is
an integral curve of $(\widehat{X}^{T^*})^{\{u^*\}}$.
\item The vector field $(\widehat{X}^{T^*})^{\{u\}}$ is
$\pi_{\widehat{Q}}$\textendash{}projectable and projects
onto $\widehat{X}^{\{u\}}$. Thus if $\widehat{\sigma}$ is
an integral curve of $(\widehat{X}^{T^*})^{\{u\}}$,
$\widehat{\gamma}=\pi_{\widehat{Q}} \circ \widehat{\sigma}$
is an integral curve of $\widehat{X}^{\{u\}}$.


\item Locally, conditions (1) and (2) are equivalent to the fact that the curve $(\widehat{\sigma},u)$
satisfies the Hamilton equations of the system $(T^*\widehat{M},
\omega, H^u)$,
\begin{eqnarray} \dot{x}^0 &=& \frac{\partial
H^u}{\partial p_0}= F \nonumber\\
\dot{x}^i &=& \frac{\partial H^u}{\partial p_i}= f^i \nonumber\\
\label{dotp0}\dot{p}_0&=&-\frac{\partial H^u}{\partial x^0}= 0 \Rightarrow p_0= {\rm ct}\\
\label{dotp}\dot{p}_i&=&-\frac{\partial H^u}{\partial
x^i}=-p_0 \frac{\partial F}{\partial x^i}- p_j
\frac{\partial f^j}{\partial x^i},
\end{eqnarray}
and satisfies the conditions $\widehat{\gamma}(a)=
(0,x_a)$, $\gamma(b)= x_b$.

In the literature of optimal control, the system of
differential equations given by Equations (\ref{dotp0}),
(\ref{dotp}) is called the \textit{adjoint system}. In
differential geometry, the adjoint system is the
differential equations satisfied by the fiber coordinates
of an integral curve of the cotangent lift of a vector
field on $Q$. See Appendix \ref{colift} for more details.

Note that there is no initial condition for
$\widehat{p}=(p_0, p_1, \ldots, p_m)$, hence HP is not a
Cauchy problem.
\end{enumerate}

\comment So far we have considered a fixed control $u\in U$.
Therefore we have been working with a family of Hamiltonian systems
on the manifold $(T^*\widehat{Q},\omega)$ given by the Hamiltonians
$\{ H^u | \, u\in U\}$.

Given $u\colon I \rightarrow U$, then we consider the Hamiltonian
$H^{u(t)}$. The equation of the Hamiltonian vector field for the
Hamiltonian system $(T^*\widehat{Q},\omega, H^{u(t)})$ is
$$i(Y^{\{u(t)\}}) \omega =d_{\widehat{Q}} H^{u(t)},$$
where $d_{\widehat{Q}}$ is the exterior differential on the
manifold $T^*\widehat{Q}$. Observe that we have studied the
system defined by $(T^*\widehat{Q},\omega, H^{u(t)})$ as an
autonomous system by fixing the time $t$. The Hamiltonian
vector field obtained $Y^{\{u(t)\}}$ is a
time\textendash{}dependent vector field whose integral
curves satisfy the equation
\begin{equation}\label{Hamiltonianvf}
\dot{\widehat{\sigma}}(t)=Y^{\{u(t)\}}
(\widehat{\sigma}(t)), \quad t\in I.
\end{equation}
Observe that $Y^{\{u(t)\}}=(\widehat{X}^{T^*})^{\{u(t)\}}$.

Now we are ready to state Pontryagin's Maximum Principle
that provides the necessary conditions, which are in
general not sufficient, to find solutions of the optimal
control problem.

\begin{teo}\label{PMP} \textbf{ (Pontryagin's Maximum Principle, PMP)}
\\
If $(\widehat{\gamma}^*,u^*)\colon I \rightarrow \widehat{Q}\times
U$ is a solution of the extended optimal control problem, Statement
\ref{stateEOCP}, such that $\widehat{\gamma}^*$ is absolutely
continuous and $u^*$ is measurable and bounded, then there exists
$(\widehat{\sigma}^*, u^*)\colon I \rightarrow T^*\widehat{Q} \times
U$ such that:
\begin{enumerate}
\item it is a solution of the Hamiltonian problem,
that is, it satisfies Equation (\ref{Hamiltonianvf}) and the initial
conditions $\widehat{\gamma}^*(a)=(0,x_a)$ and $\gamma^*(b)=x_b$, if
$\gamma^*=\pi_2 \circ \widehat{\gamma}^*$;
\item $\widehat{\gamma}^*=\pi_{\widehat{Q}} \circ \widehat{\sigma}^*$;
\item \begin{itemize}
\item[(a)] $H(\widehat{\sigma}^*(t),u^*(t))= \sup_{u \in U} H(\widehat{\sigma}^*(t),
u)$ almost everywhere;
\item[(b)] $\sup_{u \in U} H(\widehat{\sigma}^*(t),
u)$ is constant everywhere;
\item[(c)] $\widehat{\sigma}^*(t)\neq 0 \in
T^{\ast}_{\widehat{\gamma}^*(t)} \widehat{Q}$ for each
$t\in [a,b]$;
\item[(d)] $\sigma_0^*(t)$ is constant and $\sigma_0^*(t)\leq 0$.
\end{itemize}
\end{enumerate}
\end{teo}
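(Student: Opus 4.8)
The plan is to follow the classical Pontryagin scheme, translated into the geometric language built up in the previous sections. Let $(\widehat{\gamma}^*, u^*)$ be the given optimal solution of the extended problem on $\widehat{Q} = \mathbb{R} \times Q$, and let $b$ be the final time. First I would construct, at the final time $t=b$, the tangent perturbation cone $K_b \subset T_{\widehat{\gamma}^*(b)}\widehat{Q}$ associated to the reference curve $\widehat{\gamma}^*$ and control $u^*$, using Definition \ref{tangent}; by Corollary \ref{tangentperturbn} this cone consists of conic-convex combinations of perturbation vectors transported along $\widehat{\gamma}^*$ by the flow of $\widehat{X}^{\{u^*\}}$. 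The key separation step is to show that the ray $\{-\lambda\,\partial/\partial x^0 : \lambda > 0\}$ (the direction of strict decrease of the cost coordinate $x^0$) does not meet the interior of $K_b$. This is precisely where optimality enters: if that ray met $\operatorname{int} K_b$, then by Proposition \ref{lemma2} there would be a perturbed control $u[\pi^s]$ with $\gamma[\pi^s](b) = x_b$ (the $Q$-components unchanged) but $x^0$-component strictly smaller than $\gamma^{*0}(b)$, contradicting condition (3) of Statement \ref{stateEOCP}. The main obstacle in making this work is handling the endpoint constraint $\gamma^*(b)=x_b$ on the $Q$-factor simultaneously with the decrease in the $x^0$-direction: one must work with the perturbation cone in $\widehat{Q}$ and argue that, since the last $m$ coordinates are pinned, the relevant separation is between the cone and the closed negative $x^0$-axis. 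A clean way to do this is to intersect $K_b$ with the subspace $\{x^0 = \text{arbitrary}, x^1 = \dots = x^m = 0\}$ direction, or more robustly to note that if $K_b$ were all of $T_{\widehat{\gamma}^*(b)}\widehat{Q}$ one gets an immediate contradiction, and otherwise $K_b$ is a proper closed convex cone.

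Next I would invoke a supporting hyperplane: since $K_b$ is a closed convex cone not equal to the whole space and not containing the vector $-\partial/\partial x^0$ in its interior (indeed, by the argument above, $-\partial/\partial x^0 \notin \operatorname{int} K_b$), there exists a nonzero covector $\widehat{\xi} \in T^*_{\widehat{\gamma}^*(b)}\widehat{Q}$ with $\langle \widehat{\xi}, w\rangle \le 0$ for all $w \in K_b$ and $\langle \widehat{\xi}, -\partial/\partial x^0\rangle \ge 0$, i.e. $\xi_0 \le 0$. This uses the separation results cited in the excerpt (Propositions \ref{convexhull}, \ref{convex}, and the hyperplane appendix). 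I would then \emph{define} $\widehat{\sigma}^*$ by transporting $\widehat{\xi}$ backwards along the reference trajectory via the cotangent lift: set $\widehat{\sigma}^*(t) = (\Phi^{(\widehat{X}^{T^*})^{\{u^*\}}}_{(t,b)})(\widehat{\xi})$, equivalently the solution of the adjoint system (Equations (\ref{dotp0})--(\ref{dotp})) with terminal value $\widehat{\xi}$ at $t=b$. By construction this satisfies (1) and (2) of the theorem: it is an integral curve of $(\widehat{X}^{T^*})^{\{u^*\}}$ projecting onto $\widehat{\gamma}^*$, and the endpoint conditions on $\widehat{\gamma}^*$ hold because we only changed the momenta. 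Since $p_0$ is constant along solutions (Equation (\ref{dotp0})) and the cotangent-lift flow is a linear isomorphism on fibers, $\widehat{\sigma}^*(t) \ne 0$ for all $t$ — this is (c) — and $\sigma_0^*(t) = \xi_0 \le 0$ is constant, which is (d).

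It remains to extract the maximality condition (a) and the constancy (b). For (a): fix a Lebesgue time $\tau \in (a,b)$ and an arbitrary control value $u_1 \in U$. The elementary perturbation vector $v[\pi_1] = (\widehat{X}(\widehat{\gamma}^*(\tau), u_1) - \widehat{X}(\widehat{\gamma}^*(\tau), u^*(\tau)))\, l_1$ lies in $\mathcal{V}_\tau$, so its pushforward $(\Phi^{\widehat{X}^{\{u^*\}}}_{(b,\tau)})_* v[\pi_1]$ lies in $K_b$; hence $\langle \widehat{\xi}, (\Phi_{(b,\tau)})_* v[\pi_1]\rangle \le 0$. Pulling $\widehat{\xi}$ back to $\widehat{\sigma}^*(\tau)$ — which is exactly what the cotangent-lift transport does, preserving the natural pairing (Appendix \ref{colift}, Proposition \ref{cotangent}) — this reads $\langle \widehat{\sigma}^*(\tau), \widehat{X}(\widehat{\gamma}^*(\tau), u_1) - \widehat{X}(\widehat{\gamma}^*(\tau), u^*(\tau))\rangle \le 0$, i.e. $H(\widehat{\sigma}^*(\tau), u_1) \le H(\widehat{\sigma}^*(\tau), u^*(\tau))$. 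Since $u_1 \in U$ and $\tau$ (a Lebesgue time) are arbitrary and Lebesgue times are of full measure, (a) follows. For (b): the function $t \mapsto H(\widehat{\sigma}^*(t), u^*(t))$ is, by (a) together with the envelope/measurable-selection structure, equal almost everywhere to $t \mapsto \mathcal{H}(t) := \sup_{u}H(\widehat{\sigma}^*(t),u)$; one shows $\mathcal{H}$ is absolutely continuous with $\dot{\mathcal{H}} = 0$ a.e. by differentiating along Hamilton's equations — at a point where the sup is attained, $\partial H/\partial x = -\dot p$ and $\partial H/\partial p = \dot x$ make the total derivative cancel, and the $t$-dependence enters only through $u^*(t)$ at which $\partial H / \partial u$-type terms vanish at the maximum (handled via the one-sided Lebesgue-time estimates already used in Propositions \ref{tangentperturb} and \ref{11}). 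Hence $\mathcal{H}$ is constant. I expect the delicate points to be: (i) the rigorous separation argument at $t=b$ reconciling the pinned $Q$-endpoint with the free decrease in $x^0$; and (ii) the proof that $\mathcal{H}$ is genuinely constant rather than merely a.e. constant, which needs the absolute-continuity argument rather than a naive derivative computation.
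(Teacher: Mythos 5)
Your overall strategy is the same as the paper's: construct the tangent perturbation cone $\widehat K_b$ at the final point, use optimality plus Proposition \ref{lemma2} to show $(-1,\mathbf{0})$ is not interior to $\widehat K_b$, obtain a separating covector $\widehat\sigma_b$, propagate it backward via the cotangent lift, and read off (a), (c), (d) from the invariance of the natural pairing (Proposition \ref{constant}), the linearity of the lifted flow, and the sign of $\langle\widehat\sigma_b,(-1,\mathbf 0)\rangle$. Parts (1), (2), (a), (c), (d) are handled essentially as the paper does.

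Where your proposal has a genuine gap is part (b). You assert that $\mathcal{H}(t)=\sup_{u\in U}H(\widehat\sigma^*(t),u)$ ``is absolutely continuous'' and has zero derivative a.e., concluding it is constant, but you never justify the absolute continuity. The problem is that $U$ need not be compact (and $H(\beta,\cdot)$ need not even attain its supremum on $U$), so the envelope function $\mathcal H\circ\widehat\sigma^*$ is a priori only lower semicontinuous, not Lipschitz. The paper circumvents this by introducing the \emph{restricted} envelope ${\cal M}_D(\beta)=\sup_{u\in D}H(\beta,u)$ over a compact set $D\supset\operatorname{Im}u^*$. Since $D$ is compact, ${\cal M}_D$ \emph{does} attain its sup, is locally Lipschitz on the compact set $\operatorname{Im}\widehat\sigma^*$, and ${\cal M}_D\circ\widehat\sigma^*$ is absolutely continuous. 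One then shows ${\cal M}_D\le{\cal M}$ everywhere, ${\cal M}_D={\cal M}$ a.e.\ by (3a), and upgrades to \emph{everywhere} equality using Proposition \ref{lowerew} (continuity plus lower semicontinuity plus a.e.\ equality forces equality everywhere) — a result the paper explicitly proves because it is assumed without proof in the classical reference. You never mention lower semicontinuity of $\mathcal H\circ\widehat\sigma^*$, the compactification by $D$, or Proposition \ref{lowerew}, and your proposal to show $\dot{\mathcal H}=0$ by arguing that ``$\partial H/\partial u$-type terms vanish at the maximum'' is problematic: the sup can be attained on $\partial U$ (or not attained at all over $U$), so stationarity in $u$ is not available. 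The paper's actual derivative computation avoids this entirely by using the one-sided inequality ${\cal M}_D(\widehat\sigma^*(t))\ge H(\widehat\sigma^*(t),u^*(t_0))$ with $u^*(t_0)$ frozen, and comparing with Hamilton's equations only on that fixed-control Hamiltonian. Your parenthetical remark acknowledging that constancy ``needs the absolute-continuity argument'' shows you sense the danger, but as written the argument does not close.

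A smaller point: your discussion of ``intersecting $K_b$ with the $x^0$-axis subspace'' to handle the pinned endpoint is unnecessary and somewhat off-target. Proposition \ref{lemma2} already produces a perturbed trajectory landing exactly at $\widehat\gamma^*(b)+s'(-1,\mathbf 0)$, whose $Q$-component is $x_b$ unchanged and whose $x^0$-component is strictly smaller; that alone is the contradiction. No further decomposition is required.
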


 \comments
\begin{enumerate}
\item There exists an abuse of notation between $\widehat{\sigma}(t)\in T^*\widehat{M}$ and
$\widehat{\sigma}(t)\in T^*_{\widehat{\gamma}^*(t)}\widehat{M}$. We
assume that the meaning of $\widehat{\sigma}$ in each situation will
be clear from the context.
\item Condition $(2)$ is immediately satisfied because
$\widehat{\sigma}^*$ is a covector along
$\widehat{\gamma}^*$.
\item Conditions $(3a)$ and $(3b)$ imply that the
Hamiltonian function is constant almost everywhere for $t\in [a,b]$.
\item In item $(3a)$, if $U$ is a closed set, then the maximum of the Hamiltonian
over the controls is considered instead of the the supremum
over the controls. But in condition $(3b)$ we can always
consider the maximum, instead of the supremum, because item
$(3a)$ guarantees that the supremum of the Hamiltonian is
reached in the optimal curve. Thus, from now on and
according to the classical literature, we refer to the
assertion $(3a)$ as the condition of maximization of the
Hamiltonian over the controls.
\item Condition $(3c)$ implies that $\sigma^*_0(t)\neq 0$ or $\sigma^*(t)\neq 0 \in
T^*_{\gamma^*(t)}M$ for each $t\in [a,b]$. Locally the condition
$(3c)$ states that for each $t\in [a,b]$ there exists a coordinate
of $\widehat{\sigma}^*(t)$ nonzero, $(p_i\circ
\widehat{\sigma}^*)(t)=\sigma^*_i(t)\neq 0$.
\item From the Hamilton's equations of the system $(T^*\widehat{M}, \omega,
H^{u(t)})$, it is concluded that $\sigma_0$ is constant along the
integral curves of $(\widehat{X}^{T^*})^{\{u(t)\}}$, since
$\dot{p}_0=0$. Hence the first result in $(3d)$ is immediate for
every integral curve of $(\widehat{X}^{T^*})^{\{u(t)\}}$. As
$\sigma_0^*$ is constant, $\widehat{\sigma}^*$ may be normalized
without loss of generality. Thus it is assumed that either
$\sigma^*_0=0$ or $\sigma^*_0=-1$ because of the second result in
$(3d)$.
\item Pontryagin's Maximum Principle only guarantees that
given a solution of $\widehat{OCP}$ there exists a solution
of $HP$. Hence, in principle, both problems are not
e\-qui\-va\-lent.
\end{enumerate}

Observe that Maximum Principle guarantees the existence of a
covector along the optimal curve, but it does not say anything about
the uniqueness of the covector. Indeed, this covector may not be
unique. Depending on the covector we associate with the optimal
curves, different kind of curves can be defined.
\begin{defin}  \label{definextremal}
A curve $(\widehat{\gamma}, u)\colon [a,b] \rightarrow
\widehat{Q} \times U$ for $\widehat{OCP}$ is
\begin{enumerate}
\item an \textbf{extremal} if there exist $\widehat{\sigma}\colon [a,b] \rightarrow
T^*\widehat{Q}$ such that $\widehat{\gamma}=\pi_{T\widehat{Q}} \circ
\widehat{\sigma}$ and $(\widehat{\sigma}, u)$ satisfies the
necessary conditions of PMP;
\item a \textbf{normal extremal} if it
is an extremal with $\sigma_0=-1$ and $\widehat{\sigma}$ is called a
\textbf{normal lift or momenta};
\item an \textbf{abnormal extremal} if it is an extremal with
$\sigma_0=0$ and $\widehat{\sigma}$ is called an \textbf{abnormal
lift or momenta};
\item a \textbf{strictly abnormal
extremal} if it is not a normal extremal, but it is
abnormal;
\item a \textbf{strictly normal
extremal} if it is not a abnormal extremal, but it is
normal.
\end{enumerate}
\end{defin}

In \cite{2004Agrachev,1996Troutman} there are some examples of
optimal control problems whose solutions are searched using
Pontryagin's Maximum Principle.

Observe that if $\widehat{\gamma}\colon I \rightarrow \widehat{Q}$
is an integral curve of a vector field, there always exists a lift
of $\widehat{\gamma}$ to a curve $\widehat{\sigma} \colon I
\rightarrow T^*\widehat{Q}$, given an initial condition for the
cofibers, which is an integral curve of the cotangent lift of the
given vector field on $\widehat{Q}$. Analogously, if the system is
given by a vector field along the projection $\widehat{\pi}\colon
\widehat{Q}\times U \rightarrow \widehat{Q}$.

Therefore, the items 1 and 2 in Theorem \ref{PMP} do not provide any
information related with the optimality. They only ask for the
fulfilment of a final condition in the integral curve. The
accomplishment of this depends on the accessibility of the problem,
see \cite{2005BulloAndrewBook,VanderSchaft}.

The real contribution of PMP is the third item related with the
optimality through the maximization of the Hamiltonian, that will be
only satisfied if the initial conditions for the fibers are chosen
suitably. This is the key element of the proof of Pontryagin's
Maximum Principle. In other words, we can always find a cotangent
lift of an integral curve such that conditions 1 and 2 are satisfied
under the assumption of accessibility, but it is not guaranteed the
fulfilment of conditions in assertion 3 in Theorem \ref{PMP}.

If we write the Hamiltonian function for the abnormal and the normal
case, the difference is that the cost function does not play any
role in the Hamiltonian for abnormal extremals. That is why it is
said the abnormal extremals only depend on the geometry of the
control system. But to determine the optimality of the abnormal
extremals, the cost function is essential. In fact, for the same
control system different optimal control problems can be stated
depending on the cost function, in such a way that the abnormal
extremals are minimizers only for some of the problems.

To conclude, the strict abnormality characterizes the abnormal
extremals that are not normal. An extremal is not normal when there
does not exist any covector that satisfies Hamilton's equations for
normality. Thus it is necessary to know the cost function in order
to prove that there exists only one kind of lift.

\section{Proof of Pontryagin's Maximum Principle for fixed time
and fixed endpoints}\label{proofPMPfixed}

To prove Pontryagin's Maximum Principle it is necessary to use
analytic results about absolute continuity and lower semicontinuity
for real functions, and properties of convex cones. For the details
see Appendix \ref{analytic} and Appendix \ref{hyperplane} and
references therein. The reader is referred to \S
\ref{perturbedconesfixed} for results on perturbations of a
trajectory in a control system.

In the literature of optimal control, the proof of the Maximum
Principle has been discussed taking into account varying hypotheses,
\cite{2004Agrachev,1966Athans,2005BonnardCaillau,1997Jurdjevic,S98Free,2000Sussmann,2005Sussmann}.
Most authors believe and justify that the origin of this Principle
is the calculus of variations, see \cite{84Zeidler} for instance.

\begin{proof}\textit{(Theorem \ref{PMP}: Pontryagin's Maximum Principle, PMP)}\\
\textbf{1.} As $(\widehat{\gamma}^*,u^*)$ is a solution of
$\widehat{OCP}$, if $\tau$ is in $[a,b]$, for every initial
condition $\widehat{\sigma}_{\tau}$ in
$T^*_{\widehat{\gamma}^*(\tau)}\widehat{Q}$, we have a solution of
$HP$, $(\widehat{\gamma}^*,\widehat{\sigma})\colon [a,b] \rightarrow
T^*\widehat{Q}$, satisfying that initial condition. The covector
$\widehat{\sigma}_{\tau}$ must be chosen conveniently so that the
remaining conditions of the PMP are satisfied.

According to \S \ref{perturbedconesfixed}, we construct the tangent
perturbation cone $\widehat{K}_b$ in $T_{\widehat{\gamma}^*(b)}
\widehat{Q}$ that contains all tangent vectors associated with
perturbations of the trajectory $\widehat{\gamma}^*$ corresponding
to variations of $u^*$; see Definition \ref{tangent}.

Let us consider the vector
$(-1,\mathbf{0})_{\widehat{\gamma}^*(b)}\in
T_{\widehat{\gamma}^*(b)}\widehat{Q}$, where the zero in bold
emphasizes that 0 is a vector in $T_{\gamma^*(b)}M$. The vector
$(-1,\mathbf{0})$ has the following properties:
\begin{enumerate}
\item the variation of $x^0(t)=\int^t_aF(\gamma^*(s),u^*(s))ds$ along $(-1,\mathbf{0})$ is negative;
\item it is not interior to $\widehat{K}_b$.

Let us prove the second assertion. Take a local chart at
$\widehat{\gamma}^*(b)$ and work on the image of the local
chart, in $\mathbb{R}^{m+1}$, without changing the
notation.

If $(-1,\mathbf{0})_{\widehat{\gamma}^*(b)}$ was interior to
$\widehat{K}_b$, by Proposition \ref{lemma2} there would exist a
po\-si\-ti\-ve number $\epsilon$, such that for every
$s\in(0,\epsilon)$, there would exist a positive number $s'$, close
to $s$, and a perturbation of the control $u[\pi^s]$ such that
$$\widehat{\gamma}[\pi^s](b)=(\gamma^0[\pi^s](b),\gamma[\pi^s](b))=\widehat{\gamma}^*(b)+s'(-1,\mathbf{0}).$$
For this perturbed trajectory we have
$$\gamma^0[\pi^s](b)<\gamma^{*^0}(b) \quad {\rm and} \quad \gamma[\pi^s](b)=
\gamma^*(b).$$ Hence there would be a trajectory,
$\widehat{\gamma}[\pi^s]$, from $\gamma^*(a)$ to
$\gamma^*(b)$ with less cost than $\widehat{\gamma}^*$.
Hence $\widehat{\gamma}^*$ would not be optimal. In other
words, $(-1,\mathbf{0})_{\widehat{\gamma}^*(b)}$ is the
direction of decreasing of the functional to be minimized
in the extended optimal control problem.


\end{enumerate}
The second property implies that $\widehat{K}_b$ cannot be equal to
$T_{\widehat{\gamma}^*(b)}\widehat{Q}$. As
$(-1,\mathbf{0})_{\widehat{\gamma}^*(b)}$ is not interior to
$\widehat{K}_b$, there exist a separating hyperplane of
$\widehat{K}_b$ and $(-1,\mathbf{0})_{\widehat{\gamma}^*(b)}$ by
Proposition \ref{propseparated}; that is, there exists a nonzero
covector determining a separating hyperplane. Let
$\widehat{\sigma}_b \in T_{\widehat{\gamma}^*(b)}^*{\widehat{Q}}$ be
nonzero such that $\ker \widehat{\sigma}_b$ is a separating
hyperplane satisfying
\begin{equation*}\begin{array}{l}\langle \widehat{\sigma}_b,
(-1,\mathbf{0}) \rangle \geq 0, \\ \langle \widehat{\sigma}_b,
\widehat{v}_b \rangle \leq 0 \quad \forall \, \widehat{v}_b\in
\widehat{K}_b.\end{array}
\end{equation*}
Observe that if $\widehat{\sigma}_b=0\in
T_{\widehat{\gamma}^*(b)}^*{\widehat{Q}}$, $\ker \widehat{\sigma}_b$
does not determine a hyperplane, but the whole space
$T_{\widehat{\gamma}^*(b)}^*{\widehat{Q}}$.

Given the initial condition $\widehat{\sigma}_b\in
T_{\widehat{\gamma}^*(b)}^*{\widehat{Q}}$, there exists only one
integral curve $\widehat{\sigma}^*$ of
$(\widehat{X}^{T^*})^{\{u^*\}}$ such that
$\widehat{\sigma}^*(b)=\widehat{\sigma}_b$.
Hence $(\widehat{\sigma}^*,u^*)$ is a solution of $HP$.\\

\textbf{2.} Obviously, by construction,
$\widehat{\gamma}^*=\pi_{\widehat{Q}} \circ
\widehat{\sigma}^*$.

Now we prove that $\widehat{\sigma}^*$, the solution of
$HP$,
satisfies the remaining conditions of the PMP.\\

\textbf{(3a)} $H(\widehat{\sigma}^*(t),u^*(t))= \sup_{u \in U}
H(\widehat{\sigma}^*(t), u)$ almost everywhere.

We are going to prove the statement for every Lebesgue time, hence
it will be true almost everywhere, see Appendix \ref{analytic} for
more details. Suppose that there exists a control
$\widetilde{u}\colon I \rightarrow \overline{U}$ and a Lebesgue time
$t_1$ such that $u^*$ does not give the supremum of the Hamiltonian
at $t_1$; that is,
$$H(\widehat{\sigma}^*(t_1),\widetilde{u}(t_1)) > H(\widehat{\sigma}^*(t_1),u^*(t_1)).$$
As $H(\widehat{p},u)= \langle
\widehat{p},\widehat{X}(\widehat{x},u) \rangle$,
\[ \langle
\widehat{\sigma}^*(t_1),\widehat{X}(\widehat{\gamma}^*(t_1),
\widetilde{u}(t_1))- \widehat{X}(\widehat{\gamma}^*(t_1),u^*(t_1))
\rangle
>0;\]
that is, $\langle \widehat{\sigma}^*(t_1),\widehat{v}[\pi_1]
\rangle>0$ where $\widehat{v}[\pi_1]=
\widehat{X}(\widehat{\gamma}^*(t_1), \widetilde{u}(t_1))-
\widehat{X}(\widehat{\gamma}^*(t_1),u^*(t_1))$ in $\widehat{K}_{t_1}
\subset T_{\widehat{\gamma}^*(t_1)}\widehat{Q}$ is the elementary
perturbation vector associated with the perturbation data $\pi_1=
\{t_1,1,\widetilde{u}(t_1)\}$ by Proposition \ref{tangentperturb}.

Let $\widehat{V}[\pi_1]\colon [t_1,b] \rightarrow T\widehat{Q}$ be
the integral curve of $(\widehat{X}^T)^{\{u^*\}}$ with
$(t_1,\widehat{\gamma}^*(t_1),\widehat{v}[\pi_1])$ as initial
condition. For $ \widehat{\sigma}^*$, solution of $HP$, the
continuous function $ \langle \widehat{\sigma}^*, \widehat{V}[\pi_1]
\rangle \colon [t_1,b] \rightarrow \mathbb{R}$ is constant
everywhere by Proposition \ref{constant}. Hence $\langle
\widehat{\sigma}^*(t_1),\widehat{v}[\pi_1] \rangle>0$ implies that
$\langle \widehat{\sigma}_b,\widehat{V}[\pi_1](b) \rangle>0$, which
is a contradiction with $\langle \widehat{\sigma}_b, \widehat{v}_b
\rangle \leq 0$ for every $\widehat{v}_b \in \widehat{K}_b$, since
$\widehat{V}[\pi_1](b) \in \widehat{K}_b$.

Therefore, $$H(\widehat{\sigma}^*(t), u^*(t))=\sup_{u \in U}
H(\widehat{\sigma}^*(t),u)$$ at every Lebesgue time
on $[a,b]$, so almost everywhere.\\


\textbf{(3b)} $\sup_{u \in U} H(\widehat{\sigma}^*(t), u)$ is
constant everywhere.

In fact, because of $(3a)$ we know that the supremum is achieved
along the optimal curve, so at the end we are going to prove that
$\max_{u \in U} H(\widehat{\lambda}^*(t), u)$ is constant
everywhere. To simplify the notation we define the function
$$\begin{array}{rcl} {\cal M} \circ \widehat{\sigma}^* \colon
I & \longrightarrow & \mathbb{R} \\
t & \longmapsto & {\cal M}( \widehat{\sigma}^*(t))=\sup_{u \in U}
H(\widehat{\sigma}^*(t), u).
\end{array}$$

In order to prove $(3b)$, it is enough to see that ${\cal
M}(\widehat{\sigma}^*(t))$ is constant everywhere.

First let us see that ${\cal M}\circ \widehat{\sigma}^*$ is lower
semicontinuous on $I$. See Appendix \ref{analytic} for details of
this property. As ${\cal M} (\widehat{\sigma}^*(t))$ is the supremum
of the Hamiltonian function with respect to control, for every
$\epsilon
>0$, there exists a control $u_{\cal M}\colon I \rightarrow U$ such that
\begin{equation}\label{defmax}
H(\widehat{\sigma}^*(t),u_{\cal M}(t))\geq {\cal M}
(\widehat{\sigma}^*(t))-\frac{\epsilon}{2}
\end{equation}
everywhere.

For each constant control $\widetilde{u} \in U$,
$H^{\widetilde{u}}\circ
\widehat{\sigma}^*=H(\widehat{\sigma}^*,\widetilde{u})\colon
I \rightarrow \mathbb{R}$ is continuous on $I$. Hence for
every $t_0\in I$ and $\epsilon>0$, there exists $\delta>0$
such that $\mid t- t_0\mid<\delta$, we have
$$ \mid H^{\widetilde{u}}(\widehat{\sigma}^*(t))
-H^{\widetilde{u}}(\widehat{\sigma}^*(t_0))\mid
<\frac{\epsilon}{2}.$$

If $\widetilde{u}=u_{\cal M}(t_0)$, then using the
continuity of $H^{\widetilde{u}}\circ \widehat{\sigma}^*$
$${\cal M}(\widehat{\sigma}^*(t))=\sup_{u\in U} H(\widehat{\sigma}^*(t),u)
\geq H(\widehat{\sigma}^*(t),u_{\cal M}(t_0)) \geq $$
$$\geq H(\widehat{\sigma}^*(t_0),u_{\cal M}(t_0))-\frac{\epsilon}{2} \geq {\cal M}(\widehat{\sigma}^*(t_0))-\epsilon.$$
The last inequality is true by evaluating Equation (\ref{defmax}) at
$t_0$. Hence ${\cal M}\circ \widehat{\sigma}^*$ is lower
semicontinuous at every $t_0\in I$; that is, ${\cal M}\circ
\widehat{\sigma}^*$ is lower semicontinuous on $I$.

The control $u^*$ is bounded, that means ${\rm Im}\, u^*$ is
contained in a compact set $D \subset U$. Let us define a new
function
$$\begin{array}{rcl} {\cal M}_D \colon T^*\widehat{Q}
& \longrightarrow & \mathbb{R} \\
\beta & \longmapsto & {\cal M}_D(\beta)=\sup_{\widetilde{u}\in D}
H(\beta,\widetilde{u}).
\end{array}$$
As $H(\beta,\, \cdot \,) \colon D
 \rightarrow \mathbb{R}$, $\widetilde{u} \mapsto H(\beta,\widetilde{u})$
 is continuous by hypothesis and $D$ is compact, for every $\beta \in T^*\widehat{Q}$ there
exists a control $\widetilde{w}_{\beta}$ that gives us the
maximum of $H(\beta,\widetilde{u})$
\begin{equation}\label{Hmaximum} {\cal M}_D(\beta)=\sup_{\widetilde{u} \in
D} H(\beta, \widetilde{u})=H(\beta, \widetilde{w}_{\beta}).
\end{equation}
Hence ${\cal M}_D$ is well\textendash{}defined on
$T^*\widehat{Q}$. The following sketch explains in a
compact way the necessary steps to prove that ${\cal M}
\circ \sigma^* $ is constant everywhere. In this sketch,
the figures in bold refer to statements which are going to
be proved in the next paragraphs and a.c. stands for
absolutely continuous and a.e. for almost everywhere.
$$\begin{array}{l}\left.\begin{array}{l} \hspace{6.2cm} {\cal M} \circ \widehat{\sigma}^* \, {\rm is} \; {\rm
lower \; semicontinuous \; on \;} I
\\H^{\widetilde{u}} \in {\cal C}^{1}(T^*\widehat{Q}) \\ \hspace{1cm}
\Downarrow_{\mathbf{1}}\\ H^{\widetilde{u}} \, {\rm is} \; {\rm
locally} \; {\rm Lipschitz} \;\forall \; \widetilde{u} \in D
\\ \hspace{1cm} \Downarrow_{\mathbf{2}} \\ \left.\begin{array}{l} {\cal M}_D \, {\rm is} \;
{\rm locally} \ {\rm Lipschitz}\ {\rm on} \ {\rm Im}
\widehat{\sigma}^*
\\ \\ \widehat{\sigma}^* \, {\rm is} \; {\rm a.c.}
\end{array}\right\}
\Rightarrow^{\mathbf{3}} {\cal M}_D \circ \widehat{\sigma}^* \, {\rm
is} \; {\rm a.c.}
\Rightarrow {\cal M}_D \circ \widehat{\sigma}^* \, {\rm is} \; {\rm continuous} \\
\hspace{6.2cm} {}^{\mathbf{4}} \, {\cal M}_D(\widehat{\sigma}^*(t))
\leq {\cal M}(\widehat{\sigma}^*(t)),  \; \forall
\; t\in [a,b]\\
\hspace{6.2cm} {}^{\mathbf{5}} \, {\cal
M}_D(\widehat{\sigma}^*(t))={\cal M}(\widehat{\sigma}^*(t)) \, {\rm
a.e.}
\end{array} \right\} \Rightarrow^{\mathbf{6}} \\ \hspace{6.2cm}
\left.\begin{array}{l}\overline{\hspace{0.28cm} {\cal M}_D \circ
\widehat{\sigma}^* \, {\rm is} \; {\rm a.c.} \hspace{2.2cm}}\\
{}^{\mathbf{7}} \, {\cal M}_D \circ \widehat{\sigma}^* \,
{\rm has} \; {\rm zero \; derivative} \hspace{2.7cm}
\end{array}\right\} \Rightarrow^{\mathbf{8}} \end{array}$$
\newline
$$
\begin{array}{l} \left.\begin{array}{l} \Rightarrow^{\mathbf{6}} \,({\rm A. \, 15})
\, {\cal M}_D(\widehat{\sigma}^*(t))={\cal M}(\widehat{\sigma}^*(t)) \; \forall \; t\in [a,b] \\
\Rightarrow^{\mathbf{8}} \, ({\rm A. \,16}) \, {\cal
M}_D(\widehat{\sigma}^*(t)) \, {\rm is} \; {\rm constant} \; \forall
\; t\in [a,b]
\end{array}\right\}\Rightarrow^{\mathbf{9}} {\cal M} (\widehat{\sigma}^* (t))
\, {\rm is} \; {\rm constant} \; \forall \; t\in [a,b]
\end{array}
$$

\textbf{1.} $H^{\widetilde{u}} \in {\cal
C}^1(T^*\widehat{Q}) \Rightarrow H^{\widetilde{u}}$ is
locally Lipschitz $\forall \, \widetilde{u}\in D$.

The Lipschitzian property applies to functions defined on a
metric space. As the property we want to prove is local, we
define the distance on a local chart as is explained in
Appendix \ref{analytic}. For every $\beta \in
T^*\widehat{Q}$, let $(V_{\beta},\phi)$ be a local chart
centered at $\beta$ such that $\phi(\beta)=0$ and
$\phi(V_{\beta})=B$, where $B$ is an open ball centered at
$0\in \mathbb{R}^{2m+2}$. If $\beta_1$ and $\beta_2$ are in
$V_{\beta}$, define
$d_{\phi}(\beta_1,\beta_2)=d(\phi(\beta_1),\phi(\beta_2))$
where $d$ is the Euclidean distance in $\mathbb{R}^{2m+2}$.

For every $\beta$ in $T^*\widehat{Q}$, we get an open
neighbourhood $V_{\beta}$ using the local chart
$(V_{\beta},\phi)$. As $H^{\widetilde{u}}$ is ${\cal
C}^1(T^*\widehat{Q})$ and $\widetilde{u}$ lies in the
compact set $D$, by the Mean Value Theorem for every
$\beta$ in $T^*\widehat{Q}$ there exists an open
neighbourhood $V_{\beta}$ such that
$|H^{\widetilde{u}}(\beta_1)-H^{\widetilde{u}}(\beta_2)|<
K_{\beta}d_{\phi}(\beta_1,\beta_2)$ where $K_{\beta}$ does
not depend on the control $\widetilde{u}$. Thus
$H^{\widetilde{u}}$ is locally Lipschitz on
$T^*\widehat{Q}$. Moreover, the Lipschitz constant and the
open neighbourhood $V_{\beta}$ do not depend on the control
since $\widetilde{u}$ is in a compact set.

\textbf{2.} $H^{\widetilde{u}}$ is locally Lipschitz
$\forall \, \widetilde{u}\in D$ $\Rightarrow {\cal M}_D $
is locally Lipschitz on ${\rm Im} \, \widehat{\sigma}^*$.

Let $\beta$ be in ${\rm Im} \, \widehat{\sigma}^*$, there exists an
open convex neighbourhood $V_{\beta}$ such that
\[|H^{\widetilde{u}}(\beta_1)-H^{\widetilde{u}}(\beta_2)|<K_{\beta}
d(\beta_1,\beta_2)\] for every $\widetilde{u}$ in $D$ and $\beta_1$,
$\beta_2$ in $V_{\beta}$. If $\widetilde{w}_1$, $\widetilde{w}_2$
are the controls in $D$ maximizing $H(\beta_1, \widetilde{u})$ and
$H(\beta_2,\widetilde{u})$, respectively, then
\begin{equation*}
H(\beta_1,\widetilde{w}_2)\leq H(\beta_1,\widetilde{w}_1),
\end{equation*}
\begin{equation*}
H(\beta_2,\widetilde{w}_1) \leq H(\beta_2,\widetilde{w}_2).
\end{equation*}
Moreover, $H^{\widetilde{w}_1}$ and $H^{\widetilde{w}_2}$
are Lipschitz on $V_{\beta}$ since the Lipschitz constant
and the neighbourhood is independent of the control. Then
using the last inequalities
\begin{eqnarray*}
-K_{\beta}d(\beta_1,\beta_2)& \leq &
H^{\widetilde{w}_2}(\beta_1)-H^{\widetilde{w}_2}(\beta_2) \leq
H^{\widetilde{w}_1}(\beta_1)-H^{\widetilde{w}_2}(\beta_2) \\ &\leq &
H^{\widetilde{w}_1}(\beta_1)-H^{\widetilde{w}_1}(\beta_2)\leq
K_{\beta}d(\beta_1,\beta_2).
\end{eqnarray*} Observe that by Equation
(\ref{Hmaximum}),
$H^{\widetilde{w}_1}(\beta_1)-H^{\widetilde{w}_2}(\beta_2)= {\cal
M}_D(\beta_1)-{\cal M}_D(\beta_2)$. Hence
\begin{equation}\label{mlocLips}
\left|{\cal M}_D(\beta_1)-{\cal M}_D(\beta_2) \right|\leq
K_{\beta}d(\beta_1,\beta_2), \quad \forall \; \beta_1, \beta_2 \in
V_{\beta};
\end{equation}
that is, ${\cal M}_D$ is locally Lipschitz on $ {\rm Im} \,
\widehat{\sigma}^*$. As $\widehat{\sigma}^*$ is absolutely
continuous,  $ {\rm Im} \, \widehat{\sigma}^*$ is compact. Thus we
may choose a Lipschitz constant independent of the point $\beta$.
Hence
$$\left|{\cal
M}_D(\beta_1)-{\cal M}_D(\beta_2) \right|\leq K d(\beta_1,\beta_2),
\quad \forall \; \beta_1, \beta_2 \in V_{\beta}.$$

\textbf{3.} ${\cal M}_D$ is locally Lipschitz on ${\rm Im}
\, \widehat{\sigma}^*$ and $\widehat{\sigma}^*$ is
absolutely continuous $\Rightarrow {\cal M}_D\circ
\widehat{\sigma}^*\colon I\rightarrow \mathbb{R}$ is
absolutely continuous $\Rightarrow {\cal M}_D\circ
\widehat{\sigma}^*\colon I\rightarrow \mathbb{R}$ is
continuous.

For every $t\in I$, let us consider the neighbourhood
$V_{\widehat{\sigma}^*(t)}$ where Equation (\ref{mlocLips})
is satisfied. As ${\rm Im} \, \widehat{\sigma}^*$ is a
compact set, \vspace{-0.3cm}
\begin{itemize}
\item there exists a finite open subcovering
$V_{\widehat{\sigma}^*(t_1)},\ldots,V_{\widehat{\sigma}^*(t_r)}$
of $\{V_{\widehat{\sigma}^*(t)} \ ; \ t\in I\}$, and
\item there exists a
Lebesgue number $l$ of the subcovering; that is, for every two
points in an open ball of diameter $l$ there exists an open set of
the finite subcovering containing both points.
\end{itemize}
\vspace{-0.3cm} For the Lebesgue number $l$, by the uniform
continuity of $\widehat{\sigma}^*$, there exists a
$\delta_l>0$ such that for each $t_1$, $t_2$ in $I$ with
$|t_2-t_1|<\delta_l$, then
$d(\widehat{\sigma}^*(t_2),\widehat{\sigma}^*(t_1))<l$.
Thus there exists an open set of the finite subcovering
containing $\widehat{\sigma}^*(t_1)$ and
$\widehat{\sigma}^*(t_2)$.

On the other hand, taken $\epsilon>0$ the absolutely
continuity of $\widehat{\sigma}^*$ determines a
$\delta_{\epsilon}>0$.

To prove the absolute continuity of ${\cal M}_D\circ
\widehat{\sigma}^*$, take $\delta={\rm min}
\{\delta_l,\delta_{\epsilon}\}$. Then, for every finite
number of nonoverlapping subintervals $(t_{i_1},t_{i_2})$
of $I$ such that $\sum^n_{i=1}|t_{i_2}-t_{i_1}|<\delta$,
\begin{eqnarray*}\sum_{i=1}^n \left|{\cal
M}_D(\widehat{\sigma}^*(t_{i_2}))-{\cal
M}_D(\widehat{\sigma}^*(t_{i_1})) \right| \leq \sum_{i=1}^n
K
d(\widehat{\sigma}^*(t_{i_2}),\widehat{\sigma}^*(t_{i_1}))
\leq K \epsilon.
\end{eqnarray*}
In the first step we use that $\delta<\delta_l$ to
guarantee that $\widehat{\sigma}^*(t_{i_2})$ and
$\widehat{\sigma}^*(t_{i_1})$ are contained in the same
open set of the finite subcovering of ${\rm Im}\,
\widehat{\sigma}^*$. That allows us to use the property of
being locally Lipschitzian. Secondly, we use that
$\delta<\delta_{\epsilon}$ to apply the absolute continuity
of $\widehat{\sigma}^*$.

As ${\cal M}_D\circ \widehat{\sigma}^*$  is absolutely continuous on
$I$, ${\cal M}_D \circ \widehat{\sigma}^*$ is continuous on $I$.

\textbf{4.} $ {\cal M}_D(\widehat{\sigma}^*(t)) \leq {\cal
M}(\widehat{\sigma}^*(t))$ everywhere.

Observe that
$$ {\cal
M}_D(\widehat{\sigma}^*(t))=\sup_{u \in D}
H(\widehat{\sigma}^*(t),u) \leq \sup_{u \in U}
H(\widehat{\sigma}^*(t),u)={\cal
M}(\widehat{\sigma}^*(t)),$$ for each $t\in I$.\\

\textbf{5.} ${\cal M}(\widehat{\sigma}^*(t))={\cal
M}_D(\widehat{\sigma}^*(t))$ almost everywhere.

For each $t\in I$ there exists a control $w(t)$ maximizing
$H(\widehat{\sigma}^*(t),u)$ over the controls in $D$ because of
condition $(3a)$,
$${\cal
M}_D(\widehat{\sigma}^*(t))= \sup_{u \in D} H(\widehat{\sigma}^*(t),
u)=H(\widehat{\sigma}^*(t), w(t)).$$ As $u^*(t) \in D$ for each
$t\in I$,
$$\sup_{u \in D} H(\widehat{\sigma}^*(t),
u)=\sup_{u \in U} H(\widehat{\sigma}^*(t),u)={\cal
M}(\widehat{\sigma}^*(t))=H(\widehat{\sigma}^*(t), u^*(t))$$ almost
everywhere by (3a). Thus ${\cal M}(\widehat{\sigma}^*(t))={\cal
M}_D(\widehat{\sigma}^*(t))$ a.e..

\textbf{6.} Applying Proposition \ref{lowerew}, we have
${\cal M}_D(\widehat{\sigma}^*(t))={\cal
M}(\widehat{\sigma}^*(t))$ everywhere on $I$, because
${\cal M}_D \circ \widehat{\sigma}^*$ is continuous on $I$,
${\cal M}\circ \widehat{\sigma}^*$ is lower semicontinuous,
${\cal M}_D(\widehat{\sigma}^*(t))\leq{\cal
M}(\widehat{\sigma}^*(t))$ everywhere and ${\cal
M}_D(\widehat{\sigma}^*(t))={\cal
M}(\widehat{\sigma}^*(t))$ almost everywhere.

\textbf{7.} ${\cal M}_D \circ \widehat{\sigma}^*$ has zero
derivative.

As ${\cal M}_D \circ \widehat{\sigma}^*$ is absolutely
continuous on $I$, by Corollary \ref{aederivative} it has a
derivative almost everywhere. As the intersection of two
sets of full measure is not empty, see Appendix
\ref{analytic}, there exists a $t_0 \in I$ such that ${\cal
M}_D \circ \widehat{\sigma}^*$ is derivable at $t_0$ and
${\cal
M}_D(\widehat{\sigma}^*(t_0))=H(\widehat{\sigma}^*(t_0),u^*(t_0))
$. For each $t \neq t_0$, by the definition of ${\cal
M}_D$, we have
$${\cal
M}_D(\widehat{\sigma}^*(t))=\sup_{u \in D} H(\widehat{\sigma}^*(t),
u)\geq H(\widehat{\sigma}^*(t),u^*(t_0))
$$ because $u^*(t_0) \in D$. Thus
$ {\cal M}_D(\widehat{\sigma}^*(t))-{\cal
M}_D(\widehat{\sigma}^*(t_0)) \geq
H(\widehat{\sigma}^*(t),u^*(t_0))-H(\widehat{\sigma}^*(t_0),u^*(t_0))$.

If $t-t_0 >0$,
\begin{equation*}\frac{{\cal
M}_D(\widehat{\sigma}^*(t))-{\cal
M}_D(\widehat{\sigma}^*(t_0))}{t-t_0} \geq
\frac{H(\widehat{\sigma}^*(t),u^*(t_0))-H(\widehat{\sigma}^*(t_0),u^*(t_0))}
{t-t_0}.
\end{equation*}
Let us compute the right derivative of ${\cal M}_D \circ
\widehat{\sigma}^*$ at $t_0$ \begin{eqnarray*}
\left.\frac{d ({\cal M}_D \circ
\widehat{\sigma}^*)}{dt}\right|_{t=t_0^+}&=&\lim_{t\rightarrow
t_0^+}\frac{{\cal M}_D(\widehat{\sigma}^*(t))-{\cal
M}_D(\widehat{\sigma}^*(t_0))}{t-t_0}\geq\lim_{t\rightarrow
t_0^+}\frac{H^{u^*(t_0)}(\widehat{\sigma}^*(t))-H^{u^*(t_0)}(\widehat{\sigma}^*(t_0))}{t-t_0}\\
&=&{\rm
L}_{\widehat{X}^{T^{*\{u^*(t_0)\}}}_{\widehat{\sigma}^*(t_0)}}
H^{u^*(t_0)}=0
\end{eqnarray*}
since 
$i\left(\widehat{X}^{T^{*\{u^*(t_0)\}}}_{\widehat{\sigma}^*(t_0)}\right)
\omega
=\left(dH^{u^*(t_0)}\right)_{\widehat{\sigma}^*(t_0)}$.

Similarly, if $t-t_0<0$,
$$\left.\frac{d ({\cal
M}_D \circ \widehat{\sigma}^*)}{dt}\right|_{t=t_0^-} \leq 0.$$ Hence
the derivative of ${\cal M}_D \circ \widehat{\sigma}^*$ is zero
almost everywhere.

\textbf{8.} Applying Theorem \ref{constew}, ${\cal
M}_D\circ \widehat{\sigma}^*$ is constant everywhere,
because ${\cal M}_D\circ \widehat{\sigma}^*$ is absolutely
continuous.

\textbf{9.} As ${\cal M}_D(\widehat{\sigma}^*(t))$ and
${\cal M}(\widehat{\sigma}^*(t))$ coincide everywhere,
${\cal M}\circ \widehat{\sigma}^*$ is constant everywhere
on
$I$.\\


\textbf{(3c)} $\widehat{\sigma}^*(t)\neq 0 \in
T^{\ast}_{\widehat{\gamma}^*(t)} \widehat{Q}$ for each $t\in [a,b]$.

Let us suppose that there exists $\tau \in [a,b]$ such that
$\widehat{\sigma}^*(\tau)=0\in
T^*_{\widehat{\gamma}^*(\tau)}\widehat{Q}$. As $\widehat{\sigma}^*$
is a generalized integral curve of $(\widehat{X}^{T^*})^{\{u^*\}}$,
a linear vector field over $\widehat{X}$, we have
$\widehat{\sigma}^*(t)=0$ for each $t\in [a,b]$. As there exists at
least a time such that $\widehat{\sigma}^*(\tau)\neq 0$, we arrive
at a contradiction. Hence $\widehat{\sigma}^*(t)\neq 0$ for each
$t\in [a,b]$.

\textbf{(3d)} $\sigma_0^*(t)$ is constant, $\sigma_0^*(t)\leq 0$.

From the equations satisfied by the generalized integral curves of
$(\widehat{X}^{T^*})^{\{u^*\}}$, we have $p_0$ is constant. It was
seen that $\langle \widehat{\sigma}_b,(-1,\mathbf{0}) \rangle \geq
0$ is equivalent to $(p_0\circ
\widehat{\sigma}^*)(b)=\sigma_0(b)\leq 0$. Hence $\sigma_0\leq 0$
for each $t\in [a,b]$.
\end{proof}

\comment As $\widehat{\sigma}_b$ is determined up to multiply by a
positive real number, we may assume that $\sigma_0 \in \{-1, 0 \}$.

The way in which perturbations have been used in this proof gives
some clue concerning the fact that the tangent perturbation cone is
understood as an approximation of the reachable set defined in
Appendix \ref{approxreachable}. A precise meaning of this
approximation is explained in Appendix \ref{approxreachable}.

The covector in the proof has been chosen such that
\begin{eqnarray*} &&\langle \widehat{\sigma}_b,
(-1,\mathbf{0}) \rangle \geq 0, \\  &&\langle \widehat{\sigma}_b,
\widehat{v}_b \rangle \leq 0 \quad \forall \, \widehat{v}_b\in
\widehat{K}_b.
\end{eqnarray*}
In the abnormal case $\sigma_0=0$ and the first inequality is
satisfied with equality. Thus the covector is contained in the
separating hyperplane. It would be interesting to determine
geometrically what else must happen in order to have abnormal
minimizers.

\section{Pontryagin's Maximum Principle for nonfixed time and nonfixed
endpoints}\label{proofFPMP}

Now, that Pontryagin's Maximum Principle has been proved for time
and endpoints fixed, let us state the different problems related to
Pontryagin's Maximum Principle with nonfixed time and nonfixed
endpoints.

\subsection{Statement of the optimal control
problem with time and endpoints nonfixed} We consider the elements
$Q$, $U$, $X$, $F$, ${\cal S}$ and $\pi_2$ with the same properties
as in \S \ref{general}, \S \ref{PCOfijo}. Let $S_a$ and $S_f$ be
submanifolds of $Q$.

\begin{state} \textbf{(Free Optimal Control Problem,
$FOCP$)} Given the elements $Q$, $U$, $X$, $F$, and the
disjoint submanifolds of $Q$, $S_a$ and $S_f$, consider the
following problem.

Find $b\in \mathbb{R}$ and $(\gamma^*,u^*)\colon
[a,b]\rightarrow Q\times U$ such that
\begin{itemize}
\item[(1)] endpoint conditions: $\gamma^*(a) \in S_a$, $\gamma^*(b) \in S_f$,
\item[(2)] $\gamma^*$ is an integral curve of $X^{\{u^*\}}$: $\dot{\gamma^*}=X^{\{u^*\}} \circ (\gamma^*, {\rm id})$,
and
\item[(3)] minimal condition: ${\cal S}[\gamma^*,u^*]=\int^b_aF(\gamma^*(t),u^*(t))dt$ is minimum over all curves $(\gamma,u)$
satisfying $(1)$ and $(2)$.
\end{itemize}
\end{state}

The tuple $(Q,U,X,F,S_a,S_f)$ denotes the \textit{free optimal
control problem}.

\begin{state}\label{stateEFOCP} \textbf{(Extended Free Optimal Control Problem, $\mathbf{\widehat{FOCP}}$)}
Given the $FOCP$, $(Q,U,X,F,S_a,S_f)$, and the elements
$\widehat{Q}$ and $\widehat{X}$ defined in \S \ref{PMPEfijo},
consider the following problem.

Find $b\in \mathbb{R}$ and $(\widehat{\gamma}^*,u^*)\colon
[a,b] \rightarrow \widehat{Q}\times U$, with
$\gamma^*=\pi_2 \circ \widehat{\gamma}^*$, such that

\begin{itemize}
\item[(1)] endpoint conditions: $\widehat{\gamma}^*(a)\in \{0\} \times S_a$, $\gamma^*(b) \in
S_f$,
\item[(2)] $\widehat{\gamma}^*$ is an integral curve of $\widehat{X}^{\{u^*\}}$:
$\dot{\widehat{\gamma}^*}=\widehat{X}^{\{u^*\}} \circ (\widehat{\gamma}^*, {\rm id})
$, and
\item[(3)] minimal condition: $\gamma^{*^0}(b)$ is minimum over all curves $(\widehat{\gamma},u)$
satisfying $(1)$ and $(2)$.
\end{itemize}
\end{state}

The tuple $(\widehat{Q},U,\widehat{X},S_a,S_f)$ denotes the
\textit{extended free optimal control problem}.
\subsection{Perturbation of the time and the
endpoints}\label{perturbedconesnonfixed} In this case of nonfixed
time and nonfixed endpoint optimal control problems, we not only
modify the control as explained in \S \ref{perturbedconesfixed}, but
also modify the final time and the endpoint conditions. As was
mentioned in \S \ref{perturbedconesfixed}, the following
constructions obtained from perturbing the final time and the
endpoint conditions are also general for any vector field depending
on parameters.
\subsubsection{Time perturbation vectors and associated
cones}\label{applied}

We study how to perturb the interval of definition of the
control taking advantage of the fact that the final time is
another unknown for the free optimal control problems.

Let $X$ be a vector field on $M$ along the projection $\pi\colon
M\times U \rightarrow M$, $I\subset \mathbb{R}$ be a closed interval
and $(\gamma,u)\colon I=[a,b]\rightarrow M\times U$ a curve such
that $\gamma$ is an integral curve of $X^{\{u\}}$.

Let $\pi_{\pm}=\{\tau,l_{\tau},\delta \tau,u_{\tau}\}$, where $\tau$
is a Lebesgue time in $(a,b)$ for $X\circ (\gamma,u)$, $l_{\tau}\in
\mathbb{R}^+ \cup \{0\}$, $\delta \tau \in \mathbb{R}$, $u_{\tau}\in
U$. For every $s\in \mathbb{R}^+$ small enough such that
$a<\tau-(l_{\tau}-\delta \tau) s$, consider $u[\pi_{\pm}^s]\colon
[a,b+\delta \tau s] \rightarrow U$ defined by
$$u[\pi_{\pm}^s](t) = \left\{\begin{array}{ll} u(t), & t\in [a,\tau-(l_{\tau}-\delta \tau) s], \\
u_{\tau}, & t\in
\left(\right.\tau-(l_{\tau}-\delta \tau) s, \tau+\delta \tau s\left.\right], \\
u(t), & t\in \left(\right.\tau+\delta \tau s,b+\delta \tau
s\left.\right], \end{array} \right. $$ if $\delta \tau <0$,
and by
$$u[\pi_{\pm}^s](t) = \left\{\begin{array}{ll}  u(t), & t\in [a, \tau-(l_{\tau}-\delta \tau)s], \\
u_{\tau}, & t\in
\left(\right.\tau-(l_{\tau}-\delta \tau) s, \tau+\delta \tau s\left.\right], \\
u(t-\delta \tau s), & t\in \left(\right.\tau+\delta \tau s,
b+\delta \tau s\left.\right], \end{array} \right. $$ if
$\delta \tau\geq 0$.
\begin{defin} The function $u[\pi_{\pm}^s]$ is
called a \textbf{perturbation of $u$ specified by the data
$\pi_{\pm}=\{\tau,l_{\tau},\delta \tau,u_{\tau}\}$}.
\end{defin}

Associated to $u[\pi_{\pm}^s]$ we consider the mapping
$\gamma [\pi_{\pm}^s] \colon [a,b+\delta \tau s]
\rightarrow M$, the gene\-ra\-li\-zed integral curve of
$X^{\{u[\pi_{\pm}^s]\}}$ with initial condition
$(a,\gamma(a))$.

Given $\epsilon>0$, define
$$\begin{array}{rcl} \varphi_{\pi_{\pm}} \colon
[\tau,b]\times [0,\epsilon] &\longrightarrow & M\\
(t,s)&\longmapsto & \varphi_{\pi_{\pm}}(t,s)=
\gamma[\pi_{\pm}^s](t+\delta \tau s)
\end{array}$$

For every $t\in [\tau,b] $, $\varphi_{\pi_{\pm}}^{t}\colon
[0,\epsilon] \rightarrow M$ is given by
$\varphi_{\pi_{\pm}}^{t}(s)= \varphi_{\pi_{\pm}}(t,s)$.

As explained in \S \ref{perturbedconesfixed}, the control
$u[\pi_{\pm}^s]$ depends continuously on the parameters $s$ and
$\pi_{\pm}=\{\tau,l_{\tau},\delta \tau,u_{\tau}\}$. Hence the curve
$\varphi_{\pi_{\pm}}^t$ depends continuously on $s$ and
$\pi_{\pm}=\{\tau,l_{\tau},\delta \tau,u_{\tau}\}$, then it
converges uniformly to $\gamma$ as $s$ tends to $0$. See
\cite{2004Canizo,55Coddington} for more details of the differential
equations depending continuously on parameters.

Let us prove that the curve $\varphi_{\pi_{\pm}}^{\tau}$ has a
tangent vector at $s=0$; cf. Proposition \ref{tangentperturb}.
\begin{prop}\label{tangentperturbtime} Let $\tau$ be a Lebesgue time. If $u[\pi_{\pm}^s]$
is the perturbation of the control $u$ specified by the data
$\pi_{\pm}=\{\tau,l_{\tau},\delta\tau,u_{\tau}\}$ such that $\tau+s
\delta\tau$ is a Lebesgue time, then the curve
$\varphi_{\pi_{\pm}}^{\tau}\colon [0,\epsilon]\rightarrow M$ is
differentiable at $s=0$. Its tangent vector is
\[X(\gamma(\tau),u(\tau))\, \delta \tau + \left[X(\gamma(\tau),
u_{\tau})-X(\gamma(\tau), u(\tau))\right]\, l_{\tau}.\]
\end{prop}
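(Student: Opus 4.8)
The plan is to imitate the proof of Proposition \ref{tangentperturb} almost verbatim, reducing the claim to the computation of a single limit. As in that proof, it suffices to show that for every coordinate function $x^i$ of a local chart at $\gamma(\tau)$ the limit
\[
A=\lim_{s\to 0}\frac{(x^i\circ\varphi_{\pi_{\pm}}^{\tau})(s)-(x^i\circ\varphi_{\pi_{\pm}}^{\tau})(0)}{s}
=\lim_{s\to 0}\frac{\gamma^i[\pi_{\pm}^s](\tau+\delta\tau\, s)-\gamma^i(\tau)}{s}
\]
exists and equals the $i$-th component of the asserted vector. First I would handle the two regimes $\delta\tau<0$ and $\delta\tau\ge 0$ together by noting that in both cases $u[\pi_{\pm}^s](t)=u(t)$ and $\gamma[\pi_{\pm}^s](t)=\gamma(t)$ for $t\le\tau-(l_{\tau}-\delta\tau)s$, while on the middle interval $\bigl(\tau-(l_{\tau}-\delta\tau)s,\ \tau+\delta\tau\, s\bigr]$ the control equals the constant $u_{\tau}$. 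Integrating $\dot\gamma^i=f^i(\gamma,u)$ along the reference curve and along the perturbed curve and subtracting, the contributions over $[a,\tau-(l_{\tau}-\delta\tau)s]$ cancel, so
\[
\gamma^i[\pi_{\pm}^s](\tau+\delta\tau\, s)-\gamma^i(\tau)=\int_{\tau-(l_{\tau}-\delta\tau)s}^{\tau+\delta\tau\, s} f^i\bigl(\gamma[\pi_{\pm}^s](t),u_{\tau}\bigr)\,dt-\int_{\tau}^{\tau+\delta\tau\, s} f^i(\gamma(t),u(t))\,dt+\bigl(\gamma^i(\tau+\delta\tau\, s)-\gamma^i(\tau)\bigr)-\bigl(\gamma^i(\tau+\delta\tau\, s)-\gamma^i(\tau)\bigr),
\]
so after rearranging one gets a clean expression as the difference of an integral of $f^i(\cdot,u_{\tau})$ over an interval of length $l_{\tau}s$ (namely $[\tau-(l_{\tau}-\delta\tau)s,\ \tau+\delta\tau\, s]$) and an integral of $f^i(\gamma(t),u(t))$ over $[\tau-l_{\tau}s+\delta\tau s,\ \tau]$ together with $[\tau,\tau+\delta\tau s]$; the key point is that $\tau+\delta\tau\, s$ is assumed to be a Lebesgue time.

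Next I would apply the Lebesgue-time estimate (Equation (\ref{eqLebesgue})) twice. Because $\tau$ is a Lebesgue time for $X\circ(\gamma,u)$, we have $\int_{\tau}^{\tau+\delta\tau\, s} f^i(\gamma(t),u(t))\,dt=f^i(\gamma(\tau),u(\tau))\,\delta\tau\, s+o(s)$ when $\delta\tau\ge 0$ (and the analogous signed statement when $\delta\tau<0$, using that $-\delta\tau s>0$). Because $\tau+\delta\tau\, s$ is a Lebesgue time, $\int$ of $f^i(\gamma[\pi_{\pm}^s](t),u_{\tau})$ over the length-$l_\tau s$ interval ending at $\tau+\delta\tau s$ equals $f^i\bigl(\gamma[\pi_{\pm}^s](\tau+\delta\tau s),u_{\tau}\bigr)\, l_{\tau}s+o(s)$, and the corresponding integral of $f^i(\gamma(t),u(t))$ equals $f^i(\gamma(\tau+\delta\tau s),u(\tau+\delta\tau s))\, l_{\tau}s+o(s)$ — here one must be slightly careful that the relevant interval of integration for the reference curve, once the $\delta\tau s$ contributions are recombined, has length $l_{\tau}s$ and ends near $\tau$. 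Dividing by $s$, passing to the limit, and using the uniform convergence $\gamma[\pi_{\pm}^s]\to\gamma$ together with continuity of $f^i$ on $M\times U$, all the evaluation points collapse to $\gamma(\tau)$, yielding
\[
A=f^i(\gamma(\tau),u(\tau))\,\delta\tau+\bigl[f^i(\gamma(\tau),u_{\tau})-f^i(\gamma(\tau),u(\tau))\bigr]\, l_{\tau},
\]
which is exactly the $i$-th component of $X(\gamma(\tau),u(\tau))\,\delta\tau+\bigl[X(\gamma(\tau),u_{\tau})-X(\gamma(\tau),u(\tau))\bigr]\, l_{\tau}$.

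The main obstacle I anticipate is bookkeeping the interval endpoints correctly in the two cases $\delta\tau<0$ and $\delta\tau\ge 0$: the definition of $u[\pi_{\pm}^s]$ differs (a time reparametrization $u(t-\delta\tau s)$ appears on the tail when $\delta\tau\ge 0$), and one must check that this tail reparametrization does not contribute to the limit because we evaluate at $t=\tau+\delta\tau\, s$, the right end of the middle interval, not beyond it — so the tail is irrelevant for $\varphi_{\pi_{\pm}}^{\tau}$ and only matters for $\varphi_{\pi_{\pm}}^{t}$ with $t>\tau$. I would present the argument for, say, $\delta\tau\ge 0$ in full and remark that the case $\delta\tau<0$ is entirely analogous with the obvious sign adjustments, exactly as the paper does elsewhere. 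A secondary subtlety is justifying that $f^i(\gamma[\pi_{\pm}^s](\tau+\delta\tau s),u_\tau)$ is well defined and stays in the chart domain for small $s$, which follows from the uniform convergence of $\varphi_{\pi_{\pm}}^{t}$ to $\gamma$ already established before the statement.
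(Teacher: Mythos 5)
Your plan and the ingredients are the same as the paper's: reduce to the coordinate functions $x^i$, integrate $\dot\gamma^i=f^i(\gamma,u)$ for both the reference and perturbed curves, cancel the common part over $[a,\tau-(l_\tau-\delta\tau)s]$, apply the Lebesgue--point estimate $(\ref{eqLebesgue})$ at $\tau$ and at $\tau+\delta\tau\,s$, and pass to the limit using continuity of $f^i$ and the uniform convergence of $\gamma[\pi_\pm^s]$ to $\gamma$. The paper additionally splits into three ordering cases ($0\le\delta\tau\le l_\tau$, $\delta\tau<0$, $0<l_\tau<\delta\tau$), while your decomposition is written so as to work uniformly once integrals over reversed intervals are interpreted with sign; that is a legitimate simplification.

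However, the displayed identity in your argument is false as written. The lower limit of the second integral should be $\tau-(l_\tau-\delta\tau)s$, not $\tau$, and the final two terms $\bigl(\gamma^i(\tau+\delta\tau s)-\gamma^i(\tau)\bigr)-\bigl(\gamma^i(\tau+\delta\tau s)-\gamma^i(\tau)\bigr)$ are identically zero, so they repair nothing. What you want is
\[
\gamma^i[\pi_\pm^s](\tau+\delta\tau s)-\gamma^i(\tau)=\int_{\tau-(l_\tau-\delta\tau)s}^{\tau+\delta\tau s}\Bigl[f^i\bigl(\gamma[\pi_\pm^s](t),u_\tau\bigr)-f^i\bigl(\gamma(t),u(t)\bigr)\Bigr]dt+\int_{\tau}^{\tau+\delta\tau s} f^i\bigl(\gamma(t),u(t)\bigr)\,dt,
\]
which is the same as the paper's
\(\int_{\tau-(l_\tau-\delta\tau)s}^{\tau+\delta\tau s}f^i(\gamma[\pi_\pm^s],u_\tau)\,dt-\int_{\tau-(l_\tau-\delta\tau)s}^{\tau}f^i(\gamma,u)\,dt\).
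Your subsequent prose estimates (Lebesgue at $\tau+\delta\tau s$ for both curves over the length-$l_\tau s$ interval, Lebesgue at $\tau$ over the length-$\delta\tau\,s$ tail) are then exactly what is needed and do produce
\(A=f^i(\gamma(\tau),u(\tau))\,\delta\tau+[f^i(\gamma(\tau),u_\tau)-f^i(\gamma(\tau),u(\tau))]\,l_\tau\); but the verbal phrase ``an integral of $f^i(\gamma(t),u(t))$ over $[\tau-l_\tau s+\delta\tau s,\,\tau]$ together with $[\tau,\tau+\delta\tau s]$'' also conflates the two intervals and needs to be stated as a signed split, not a union. So the method is correct and matches the paper, but the bookkeeping in the displayed line and the sentence after it must be repaired before this is a proof.
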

\begin{proof}
As in the proof of Proposition \ref{tangentperturb}, we compute the
limit
$$A=\lim_{s \rightarrow 0} \frac{(x^i \circ
\varphi_{\pi_{\pm}}^{\tau})(s)- (x^i \circ
\varphi_{\pi_{\pm}}^{\tau})(0)}{s} =\lim_{s \rightarrow 0}
\frac{\gamma^i[\pi_{\pm}^s](\tau+\delta \tau s)-\gamma^i(\tau)}{s}$$
As $\gamma$ is an absolutely continuous integral curve of
$X^{\{u\}}$ , $\dot{\gamma}(t)=X(\gamma(t),u(t))$ at every Lebesgue
time. Then by integration
$$\gamma^i(\tau)-\gamma^i(a)=\int^{\tau}_af^i(\gamma(t),u(t))dt$$
and similarly for $\gamma[\pi_{\pm}^s]$ and
$u[\pi_{\pm}^s]$. Observe that
$\gamma[\pi_{\pm}^s](t)=\gamma(t)$ and
$u[\pi_{\pm}^s](t)=u(t)$ for $t\in[a,\tau -(l_{\tau}-\delta
\tau) s]$.

 Here, we should consider three different
possibilities
\begin{itemize}
\item if $0\leq \delta \tau \leq l_{\tau}$, then $\tau-(l_{\tau}-\delta \tau)s<\tau<\tau+\delta
\tau s$;
\item if $\delta \tau < 0$, then $ \tau-(l_{\tau}-\delta \tau)s<\tau+\delta \tau
s<\tau$;
\item if $0<l_{\tau}<\delta \tau$, then
$\tau < \tau -(l_{\tau}- \delta \tau )s < \tau + \delta
\tau s$.
\end{itemize}
We prove the proposition for the first case and the other cases
follow analogously. \begin{eqnarray*}A&=&\lim_{s \rightarrow 0}
\frac{\int^{\tau+\delta \tau s}_{a}f^i(\gamma[\pi_{\pm}^s](t),
u[\pi_{\pm}^s](t))dt-\int^{\tau}_{a}f^i(\gamma(t), u(t))dt}{s}\\
&=& \lim_{s \rightarrow 0} \frac{\int^{\tau+\delta \tau
s}_{\tau-(l_{\tau}-\delta \tau) s}f^i(\gamma[\pi_{\pm}^s](t),
u_{\tau})dt-\int^{\tau}_{\tau-(l_{\tau}-\delta \tau)
s}f^i(\gamma(t), u(t))dt}{s}.\end{eqnarray*} We need $\tau+\delta
\tau s$ to be a Lebesgue time in order to use Equation
(\ref{eqLebesgue}).
\begin{eqnarray*}
A&=&\lim_{s \rightarrow 0} \frac{f^i(\gamma[\pi_{\pm}^s](\tau+\delta
\tau s), u_{\tau})l_{\tau}s -f^i(\gamma(\tau), u(\tau))
(l_{\tau}-\delta \tau) s +o(s)}{s}\\&=& \lim_{s \rightarrow 0}
f^i(\gamma[\pi_{\pm}^s](\tau+\delta \tau s), u_{\tau})l_{\tau}
-f^i(\gamma(\tau), u(\tau)) (l_{\tau}-\delta \tau).\end{eqnarray*}
As $f^i$ is continuous on $M$, we have
\begin{eqnarray*}
A&=&[f^i(\gamma(\tau), u_{\tau})-f^i(\gamma(\tau), u(\tau))]\,
l_{\tau}+ f^i(\gamma(\tau), u(\tau)) \ \delta \tau\\ & =&{\rm
L}(\left[X(\gamma(\tau),u(\tau)) \ \delta
\tau+\left(X(\gamma(\tau),u_{\tau})-X(\gamma(\tau),u(\tau)) \right)
\ l_{\tau}\right])(x^i).\end{eqnarray*}
\end{proof}

\begin{defin}\label{classItime} The tangent vector \[v[\pi_{\pm}]=X(\gamma(\tau),u(\tau)) \
\delta \tau+\left[X(\gamma(\tau),u_{\tau})-X(\gamma(\tau),u(\tau))
\right] \ l_{\tau}\] is the \textbf{perturbation vector associated
to the perturbation data $\pi_{\pm}=\{\tau,l_{\tau},\delta
\tau,u_{\tau}\}$}.
\end{defin}

If we disturb the control $r$ times at $r$ different Lebesgue times
as in \S \ref{elementarypertvectors} and also the domain of the
curve $(\gamma,u)$ as just described, that is,
$\pi=\{\pi_1,\ldots,\pi_r,\pi_{\pm}\}$, with $a<t_1\leq \ldots \leq
t_r \leq \tau <b$, then $\gamma[\pi^s]$ is the gene\-ra\-lized
integral curve of $X^{\{u[\pi^s]\}}$ with initial condition
$(a,\gamma(a))$. Consider the curve $ \varphi_{\pi}^t\colon
[0,\epsilon]\rightarrow M$ for $t\in [\tau,b]$ given by $
\varphi_{\pi}^t(s)=\gamma[\pi^s](t+\delta \tau s)$.
\begin{corol}
The vector tangent to the curve $\varphi_{\pi_{\pm}}^t
\colon [0,\epsilon]\rightarrow M$ at $s=0$ is
$X(\gamma(t),u(t)) \ \delta
\tau+V[\pi_1](t)+\ldots+V[\pi_n](t)$, where $V[\pi_i]\colon
[t_i,b]\rightarrow TM$ is the generalized integral curve of
$\left(X^T\right)^{\{u\}}$ with initial condition
$(t_i,(\gamma(t_i),v[\pi_i]))$.
\end{corol}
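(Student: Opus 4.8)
The plan is to iterate the arguments already used for the perturbation vectors of class II, separating the contribution of the $r$ needle--like variations from that of the time perturbation $\pi_{\pm}$, exactly as in the proof of Proposition \ref{12}. Set $\pi'=\{\pi_1,\ldots,\pi_r\}$. Since $a<t_1\leq\ldots\leq t_r\leq\tau<b$, we may regard $\gamma[\pi^s]$ as obtained by first perturbing $\gamma$ with $\pi'$ along $\gamma$, producing $\gamma[{\pi'}^{s}]$, and then perturbing this curve with the time--perturbation data $\pi_{\pm}$. Working in a local chart $(W,x^i)$ at $\gamma(t)$ and writing, as in the proof of Proposition \ref{12},
\begin{equation*}
\frac{\gamma^i[\pi^s](t+\delta\tau s)-\gamma^i(t)}{s}
=\frac{\gamma^i[\pi^s](t+\delta\tau s)-\gamma^i[{\pi'}^{s}](t)}{s}
+\frac{\gamma^i[{\pi'}^{s}](t)-\gamma^i(t)}{s},
\end{equation*}
the second summand tends, by Corollary \ref{tangentperturbn} and because $t\in[t_r,b]$, to the $i$--th component of $V[\pi_1](t)+\ldots+V[\pi_r](t)$.

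For the first summand I would apply Proposition \ref{tangentperturbtime}, but now along the curve $\gamma[{\pi'}^{s}]$ instead of along $\gamma$. By continuous dependence of the generalized integral curves on $s$ and on the perturbation data (\S\ref{perturbedconesfixed}), $\gamma[{\pi'}^{s}]$ converges uniformly to $\gamma$ as $s\to 0$, so the relevant base points converge to $\gamma(\tau)$; hence the limit of the first summand is the $i$--th component of the vector $v[\pi_{\pm}]$ of Definition \ref{classItime}, transported from time $\tau$ to time $t$ by the flow of the complete lift $\left(X^T\right)^{\{u\}}$. Here one uses, as in Proposition \ref{tangentperturb2}, that the initial velocity produced at time $\tau$ evolves along the integral curves of $\left(X^T\right)^{\{u\}}$; combining this with the linear approximation (\ref{linearapprox}) for $\gamma[{\pi'}^{s}]$ at $t$ yields the decomposition. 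The drift part $X(\gamma(\tau),u(\tau))\,\delta\tau$ of $v[\pi_{\pm}]$ transports to $X(\gamma(t),u(t))\,\delta\tau$, since by Proposition \ref{XT} the velocity curve $t\mapsto\dot\gamma(t)=X(\gamma(t),u(t))$ is itself an integral curve of $\left(X^T\right)^{\{u\}}$; the needle part $l_{\tau}\bigl[X(\gamma(\tau),u_{\tau})-X(\gamma(\tau),u(\tau))\bigr]$ transports to the corresponding $V$--term attached to $\pi_{\pm}$ (which is absent when $l_{\tau}=0$). Adding the two limits gives that the tangent vector to $\varphi_{\pi}^t$ at $s=0$ is $X(\gamma(t),u(t))\,\delta\tau+V[\pi_1](t)+\ldots+V[\pi_n](t)$, as claimed.

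The main obstacle is bookkeeping rather than any new idea: one must track the reparametrization $t\mapsto t+\delta\tau s$ simultaneously with the needle variations, check that the Lebesgue--time hypotheses of Propositions \ref{tangentperturb} and \ref{tangentperturbtime} remain in force (in particular that $\tau+\delta\tau s$ is a Lebesgue time, and distinguish the three sign cases for $\delta\tau$ exactly as in Proposition \ref{tangentperturbtime}), and verify that the error terms $o(s)$ produced at the intermediate steps are genuinely $o(s)$ when read off in the fixed chart at $\gamma(t)$. As in Propositions \ref{12} and \ref{11}, all the required estimates reduce to those already established there together with continuous dependence on parameters, so a clean induction on the number $r$ of needle variations — with the time perturbation handled at the last step by Proposition \ref{tangentperturbtime} — completes the proof.
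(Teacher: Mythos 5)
Your reduction---split off the needle data $\pi'=\{\pi_1,\ldots,\pi_r\}$, apply Corollary \ref{tangentperturbn} to the second difference quotient, and Proposition \ref{tangentperturbtime} together with transport by the flow to the first---is the route the paper intends when it cites Propositions \ref{tangentperturb2}, \ref{tangentperturbtime}, Corollary \ref{tangentperturbn} and Appendix \ref{variationalsection}.

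The one step that is more than bookkeeping is your assertion that the drift part transports to $X(\gamma(t),u(t))\,\delta\tau$ ``since by Proposition \ref{XT} the velocity curve $t\mapsto X(\gamma(t),u(t))$ is itself an integral curve of $(X^T)^{\{u\}}$.'' Proposition \ref{XT} does not give this: it says the evolution operator of $(X^T)^{\{u\}}$ is $(t,\tau,(x,v))\mapsto\bigl(\Phi^{X^{\{u\}}}_{(t,\tau)}(x),\,T_x\Phi^{X^{\{u\}}}_{(t,\tau)}v\bigr)$, so for $t\mapsto X(\gamma(t),u(t))$ to be the integral curve through $X(\gamma(\tau),u(\tau))$ it would have to solve the variational equation $\dot v^i=\tfrac{\partial X^i}{\partial x^j}(\gamma(t),u(t))\,v^j$, and for a genuinely time\textendash{}dependent $X^{\{u\}}$ it does not. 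Concretely, with $X(x,u)=u$ on $\mathbb{R}$ one has $T_x\Phi^{X^{\{u\}}}_{(t,\tau)}=\mathrm{id}$, so the transported drift is $u(\tau)\,\delta\tau$, while $X(\gamma(t),u(t))\,\delta\tau=u(t)\,\delta\tau$; these differ unless $u$ is constant on $[\tau,t]$, and for a merely measurable control the candidate curve $t\mapsto X(\gamma(t),u(t))$ need not even be absolutely continuous. What your computation actually produces at $t$ is $\bigl(\Phi^{X^{\{u\}}}_{(t,\tau)}\bigr)_*\bigl(X(\gamma(\tau),u(\tau))\bigr)\,\delta\tau+V[\pi_1](t)+\cdots+V[\pi_r](t)$, together with the transported needle part of $\pi_{\pm}$ when $l_\tau>0$. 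The paper asserts the same identity without proof inside the proof of Proposition \ref{inclusion}, so the gap is inherited from the source; but attributing it to Proposition \ref{XT} is a non sequitur, and you should either flag the missing argument or state the Corollary with the transported drift vector.
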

This corollary may be proved taking into account Propositions
\ref{tangentperturb2} and \ref{tangentperturbtime}, Corollary
\ref{tangentperturbn} and Appendix \ref{variationalsection}.

Now, at a Lebesgue time $t\in(a,b)$, we construct a new cone that
contains the perturbation vectors in Definition \ref{tangent} and
$\pm X(\gamma(t), u(t))$.

\begin{defin} \label{time} The \textbf{time perturbation cone}
$K_t^{\pm}$ at every Lebesgue time $t$ is the smallest closed cone
in the tangent space at $\gamma(t)$ containing $K_t$ and $\pm
X(\gamma(t), u(t))$,
$$K_t^{\pm}=\overline{{\rm conv}\left(\{\pm \alpha X(\gamma(t), u(t)) \, | \, \alpha \in \mathbb{R}\} \bigcup
\left( \bigcup_{\substack{a<\tau\leq t \\ \tau \textrm{ is a
Lebesgue time}}} \left(\Phi^{X^{\{u\}}}_{(t,\tau)}\right)_*{\cal
V}_{\tau}\right)\right)},
$$
where ${\cal V}_{\tau}$ denotes the set of elementary perturbation
vectors at $\tau$, see Definition \ref{tangent}.
\end{defin}
Enlarging the cone $K_{\tau}$ to $K_{\tau}^{\pm}$ allows us to
introduce time variations.
\begin{prop}\label{inclusion} If $t_2$ is a Lebesgue time greater than
$t_1$, then \[\left(\Phi^{X^{\{u\}}}_{(t_2,t_1)}\right)_*
K^{\pm}_{t_1} \subset K^{\pm}_{t_2}.\]
\end{prop}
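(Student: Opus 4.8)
The statement asserts that the pushforward by the flow $\Phi^{X^{\{u\}}}_{(t_2,t_1)}$ maps the time perturbation cone $K^{\pm}_{t_1}$ into $K^{\pm}_{t_2}$. Since $K^{\pm}_{t_2}$ is by definition a closed convex cone and the pushforward $\left(\Phi^{X^{\{u\}}}_{(t_2,t_1)}\right)_*$ is a linear isomorphism between the tangent spaces, it suffices to show that the image of a generating set of $K^{\pm}_{t_1}$ lands in $K^{\pm}_{t_2}$; the closed convex conical hull is then automatically preserved, because the image of a closed convex cone under a linear iso is a closed convex cone, and the smallest such cone containing the image of the generators is contained in $K^{\pm}_{t_2}$. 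So the work reduces to checking the two types of generators separately.

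\emph{First generator type: the transported elementary perturbation vectors.} For a Lebesgue time $\tau \le t_1$ and $v \in \mathcal{V}_\tau$, the vector $\left(\Phi^{X^{\{u\}}}_{(t_1,\tau)}\right)_* v$ is a generator of $K_{t_1} \subset K^{\pm}_{t_1}$. Applying $\left(\Phi^{X^{\{u\}}}_{(t_2,t_1)}\right)_*$ and using the composition (cocycle) property of the evolution operator, $\Phi^{X^{\{u\}}}_{(t_2,t_1)} \circ \Phi^{X^{\{u\}}}_{(t_1,\tau)} = \Phi^{X^{\{u\}}}_{(t_2,\tau)}$ — see Appendix \ref{variationalsection} — we get $\left(\Phi^{X^{\{u\}}}_{(t_2,\tau)}\right)_* v$, which is precisely a generator of $K_{t_2}$, hence lies in $K^{\pm}_{t_2}$. (This is really the same argument that underlies the nesting $\left(\Phi^{X^{\{u\}}}_{(t_2,t_1)}\right)_* K_{t_1} \subset K_{t_2}$ for the ordinary tangent perturbation cones.)

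\emph{Second generator type: the time direction.} The remaining generators of $K^{\pm}_{t_1}$ are $\pm \alpha X(\gamma(t_1),u(t_1))$ for $\alpha \in \mathbb{R}$, i.e. $\pm X(\gamma(t_1),u(t_1))$ up to positive scaling. The key fact is that the tangent vector $\dot{\gamma}(t_1) = X(\gamma(t_1),u(t_1))$ to the reference curve is carried by the flow of $X^{\{u\}}$ to the tangent vector $\dot\gamma(t_2) = X(\gamma(t_2),u(t_2))$ at the later Lebesgue time: this is exactly the statement that $\gamma$ is an integral curve of $X^{\{u\}}$ and that the pushforward of a velocity vector of an integral curve along the flow is again the velocity vector, which is the content of Propositions \ref{XT} and \ref{propgeomeaning} in Appendix \ref{variationalsection} (the tangent/velocity vector is the value at the later time of the integral curve of the complete lift with that initial condition, and for the reference velocity this integral curve is $t \mapsto \dot\gamma(t)$). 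Hence $\left(\Phi^{X^{\{u\}}}_{(t_2,t_1)}\right)_* \left(\pm X(\gamma(t_1),u(t_1))\right) = \pm X(\gamma(t_2),u(t_2))$, which is a generator of $K^{\pm}_{t_2}$.

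\emph{Conclusion and the main obstacle.} Combining the two cases, every generator of $K^{\pm}_{t_1}$ is mapped into $K^{\pm}_{t_2}$; since $\left(\Phi^{X^{\{u\}}}_{(t_2,t_1)}\right)_*$ is linear and a homeomorphism, it maps the closed convex conical hull of the generators into the closed convex conical hull of their images, whence $\left(\Phi^{X^{\{u\}}}_{(t_2,t_1)}\right)_* K^{\pm}_{t_1} \subset K^{\pm}_{t_2}$. The only genuinely delicate point — which I would make sure to state carefully rather than wave at — is the handling of closures: one must invoke that a linear isomorphism between finite-dimensional vector spaces is a homeomorphism, so that it commutes with closure, and that it maps convex cones to convex cones; both are standard but deserve an explicit mention so the step from "generators map in" to "the whole cone maps in" is airtight. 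Everything else is the cocycle identity for the evolution operator plus the defining property of an integral curve, so there is no real computational burden.
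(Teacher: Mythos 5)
Your proof is correct and rests on the same two essential ingredients as the paper's: the cocycle identity $\Phi^{X^{\{u\}}}_{(t_2,t_1)}\circ\Phi^{X^{\{u\}}}_{(t_1,\tau)}=\Phi^{X^{\{u\}}}_{(t_2,\tau)}$ for the generators coming from ${\cal V}_\tau$, and the variational identity $\left(\Phi^{X^{\{u\}}}_{(t_2,t_1)}\right)_*X(\gamma(t_1),u(t_1))=X(\gamma(t_2),u(t_2))$ for the time direction. The difference is how you pass from generators to the full cone. The paper handles the closed convex hull by hand: it takes $v$ interior to $K^{\pm}_{t_1}$, invokes Proposition \ref{convex}(d) and Proposition \ref{convexhull} to write $v$ as a finite conic combination $v=\delta t_1\,X(\gamma(t_1),u(t_1))+\sum_i l_i V[\pi_i](t_1)$ and pushes that forward; then it treats boundary points by taking a sequence of interior points converging to $v$ and using continuity of the flow together with the closedness of $K^{\pm}_{t_2}$, with a parenthetical remark about the relative topology when the interior is empty. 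You instead observe once and for all that the pushforward is a linear isomorphism of finite-dimensional tangent spaces, hence a homeomorphism, so it commutes with both $\overline{(\,\cdot\,)}$ and ${\rm conv}(\,\cdot\,)$; this collapses the interior/boundary/empty-interior case split and the appeal to Propositions \ref{convex} and \ref{convexhull} into the single statement that the image of a generating set lies in $K^{\pm}_{t_2}$. Your route is shorter and avoids the approximation argument; the paper's route is more explicit about which finite combination witnesses membership, which matches the style used elsewhere (e.g.\ in the proof of Proposition \ref{lemma2}). Either is acceptable; if you adopt yours, do spell out, as you already flag, that the linear isomorphism commutes with closure and convex hull, since that is precisely what replaces the paper's two-case argument.
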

\begin{proof}
We have $$ K_{t_1}^{\pm}=\overline{{\rm conv}\left(\{\pm \alpha
X(\gamma(t_1), u(t_1)) \, | \, \alpha \in
\mathbb{R}\} \bigcup \left( \bigcup_{\substack{a<\tau\leq t_1 \\
\tau \textrm{ is a Lebesgue time}}}
\left(\Phi^{X^{\{u\}}}_{(t_1,\tau)}\right)_*{\cal
V}_{\tau}\right)\right)}.
$$
Just for simplicity we use ${\cal C}^{\pm}_{t_1}$ to denote
$${\rm conv}\left(\{\pm
\alpha X(\gamma(t_1), u(t_1)) \, | \, \alpha \in
\mathbb{R}\} \bigcup \left( \bigcup_{\substack{a<\tau\leq t_1 \\
\tau \textrm{ is a Lebesgue time}}}
\left(\Phi^{X^{\{u\}}}_{(t_1,\tau)}\right)_*{\cal
V}_{\tau}\right)\right).$$
\begin{itemize}
\item[1.] The set ${\cal C}^{\pm}_{t_1}$ being convex, if
$v$ is interior to $K^{\pm}_{t_1}$, then $v$ is interior to ${\cal
C}_{t_1}^{\pm}$ by Proposition \ref{convex}, item (d). Hence, by
Proposition \ref{convexhull}
$$v=\delta t_1 X(\gamma(t_1),u(t_1))+\sum_{i=1}^r l_iV[\pi_i](t_1),$$
where every $V[\pi_i](t_1)$ is the transported perturbation vector
$v[\pi_i]$ of class I  from $t_i$ to $t_1$ by the flow of
$X^{\{u\}}$. By definition of the cone and the linearity of the
flow, $\left(\Phi^{X^{\{u\}}}_{(t_2,t_1)}\right)_*v $ is in
$K^{\pm}_{t_2}$, since
$\left(\Phi^{X^{\{u\}}}_{(t_2,t_1)}\right)_*\left(X(\gamma(t_1),u(t_1))\right)=
X(\gamma(t_2),u(t_2))$, because both sides of the equality are the
unique solutions of the variational equation along $\gamma$
associated with $X^{\{u\}}$ with initial condition $(t_1,
X(\gamma(t_1),u(t_1))$. See Appendix \ref{completelift} for more
details.

\item[2.] If $v$ is in the boundary of $K^{\pm}_{t_1}$,
then there exists a sequence of vectors $(v_ j)_{j\in \mathbb{N}}$
in the interior of $K^{\pm}_{t_1}$ such that
$$\lim_{j\rightarrow \infty}v_j = v.$$ Due to the continuity of the flow
$$\lim_{j\rightarrow \infty}\left(\Phi^{X^{\{u\}}}_{(t_2,t_1)}\right)_*v_j
=\left(\Phi^{X^{\{u\}}}_{(t_2,t_1)}\right)_*v.$$ All the elements of
the convergent sequence are in
the closed cone $K^{\pm}_{t_2}$, hence the limit
$\left(\Phi^{X^{\{u\}}}_{(t_2,t_1)}\right)_*v$ is also in
$K^{\pm}_{t_2}$.

If the interior of $K^{\pm}_{t_1}$ is empty, we consider the
relative topology and the reasoning follows as before. See Appendix
\ref{hyperplane} for details.
\end{itemize}
\end{proof}

For the time perturbation cone $K^{\pm}_{\tau}$ and the
corresponding perturbation vectors, it can be proved properties
analogous to the ones stated in Propositions \ref{tangentperturb},
\ref{tangentperturb2}, \ref{12} and \ref{11}.

\begin{prop}\label{lema2time}  Let $t\in(a,b)$ be a Lebesgue time.
If $v$ is a nonzero vector interior to $K^{\pm}_t$, then there
exists $\epsilon>0$ such that for every $s\in(0,\epsilon)$ there are
$s'>0$ and a perturbation of the control $u[\pi^s_{\pm}]$ such that
$\gamma[\pi^s_{\pm}](t+s\delta t)=\gamma(t)+s'v$.
\end{prop}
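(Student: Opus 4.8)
The plan is to run essentially the same argument as in the proof of Proposition \ref{lemma2}, the only structural novelty being that the relevant cone now also contains the time–variation directions $\pm X(\gamma(t),u(t))$ and that the perturbed trajectory is evaluated at the shifted time $t+s\delta t$ instead of at $t$. First I would extract the analytic content of being an interior point. Since $v$ lies in the interior of $K^{\pm}_t$, Proposition \ref{convex}, item $(d)$, puts $v$ in the interior of the (non‑closed) cone
\[
{\cal C}^{\pm}_t={\rm conv}\left(\{\pm\alpha X(\gamma(t),u(t))\,|\,\alpha\in\mathbb{R}\}\cup\bigcup_{\substack{a<\tau\leq t\\ \tau \textrm{ is a Lebesgue time}}}\left(\Phi^{X^{\{u\}}}_{(t,\tau)}\right)_*{\cal V}_{\tau}\right),
\]
and then Proposition \ref{convexhull} lets me write $v$ as a finite conic combination
\[
v=\delta t\,X(\gamma(t),u(t))+\sum_{i=1}^{r}l_i\,V[\pi_i](t),
\]
with $\delta t\in\mathbb{R}$, $l_i\geq 0$, where each $V[\pi_i](t)$ is a perturbation vector of class I carried from its Lebesgue time $t_i$ to $t$ along $\gamma$. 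By the Corollary following Definition \ref{classItime}, this $v$ is exactly the tangent vector at $s=0$ of the curve $\varphi^{t}_{\pi_{\pm}}(s)=\gamma[\pi^s_{\pm}](t+s\delta t)$ associated with the combined perturbation data $\pi_{\pm}=\{\pi_1,\dots,\pi_r,\pi_{\pm}\}$ in which the time–shift parameter is taken to be $\delta\tau=\delta t$ at some Lebesgue time $\tau$ with $t_r\leq\tau<t$. Hence, in a chart around $\gamma(t)$, $\varphi^{t}_{\pi_{\pm}}(s)=\gamma(t)+s\,v+o(s)$, the analogue of Equation (\ref{linearapprox}).

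With this linear approximation in hand, I would transcribe the topological part of the proof of Proposition \ref{lemma2} with $M$, $\gamma(t)$, $v$ and the cone ${\cal C}={\cal C}^{\pm}_t$: choose a circular ``closed'' cone $\widetilde{{\cal C}}$ with axis $v$, contained in the interior of ${\cal C}^{\pm}_t$ and with $v$ interior to it; build the ${\cal C}^{\infty}$ diffeomorphism $g\colon\widetilde{{\cal C}}\to\mathbb{R}^{+}\times\overline{B(0,R)}$ onto a cylinder; observe that for each $r\in\overline{B(0,R)}$ the vector $w_0=v+r$ is still interior to ${\cal C}^{\pm}_t$, hence carries a perturbation $\pi_{w_0,\pm}$ of $u$ with $\gamma[\pi^s_{w_0,\pm}](t+s\delta t)=\gamma(t)+s(v+r)+o_r(s)$; set $\Gamma(s,r)=\gamma[\pi^s_{w_0,\pm}](t+s\delta t)$, $\Delta(s,r)=\Gamma(s,r)-\gamma(t)$ and $\Theta(s,r)=o_r(s)/s$, all continuous by continuous dependence on the defining data; check, via the elementary geometry of the circular cone, that $\Delta(s,r)=s(v+r)+o_r(s)$ remains in the interior of $\widetilde{{\cal C}}$ for $(s,r)$ in a sufficiently small box; define $G_s(r)=(\pi_2\circ g\circ\Delta)(s,r)$ and apply Corollary \ref{scholium} to the continuous map ${\cal G}(s,r)=G_s(r)-r$ — using that $G_s(r)\to r$ as $s\to 0$, that $\partial\overline{B(0,\overline{\delta}_2)}$ is compact, and uniform continuity to obtain $\|G_s(r)-r\|<\|r\|$ on that sphere for $s$ small. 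This produces, for each small $s$, an $r$ with $G_s(r)=0$; applying $g^{-1}$ to $g(\Delta(s,r))=(s',0)$ gives $\Delta(s,r)=s'v$, that is $\gamma[\pi^s_{w_0,\pm}](t+s\delta t)=\gamma(t)+s'v$, and taking $\pi^s_{\pm}=\pi^s_{w_0,\pm}$ concludes.

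The step requiring the most care — though it is bookkeeping rather than a genuine obstacle — is tracking the time shift $t\mapsto t+s\delta t$ throughout. One must ensure that for $s$ small all the perturbed curves $\gamma[\pi^s_{w_0,\pm}]$ are defined on a common interval containing $t+s\delta t$, that $s\mapsto\gamma[\pi^s_{w_0,\pm}](t+s\delta t)$ depends continuously on $s$ and on the perturbation data (which follows from the continuous dependence results of \cite{2004Canizo,55Coddington} together with the fact that $t$ is interior to $[a,b]$), and that the linear approximation at the shifted evaluation point is legitimate — this is precisely the content of Proposition \ref{tangentperturbtime} and of the Corollary following Definition \ref{classItime}, whose Lebesgue–time hypotheses on $t_1,\dots,t_r,\tau$ are harmless since almost every time is a Lebesgue time. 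Once these points are secured, the remainder of the argument is word for word that of Proposition \ref{lemma2}, since Corollary \ref{scholium} is applied to exactly the same kind of map $G_s$.
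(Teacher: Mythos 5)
Your proposal is correct and follows essentially the same route as the paper: the paper's own proof of Proposition~\ref{lema2time} simply states that one repeats the argument of Proposition~\ref{lemma2} with $\Gamma(s,r)=\gamma[\pi^s_{w_0}](t)$ replaced by $\Gamma(s,r)=\gamma[\pi^s_{w_0}](t+s\delta t)$ and with $T_{\gamma(t+s\delta t)}M$ identified with $\mathbb{R}^m$ through the chart at $\gamma(t)$. You have spelled out exactly those steps, including the correct conic decomposition of $v$ via Propositions~\ref{convex} and~\ref{convexhull}, the linear approximation at the shifted evaluation time via Proposition~\ref{tangentperturbtime} and its corollary, and the Brouwer-type argument via Corollary~\ref{scholium}; your extra remarks on common domains and continuous dependence are legitimate bookkeeping that the paper leaves implicit.
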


\begin{proof} The proof follows the same line as the proof of Proposition
\ref{lemma2}, but now the tangent space to $M$ at $\gamma(t+s\delta
t)$ is also identified with $\mathbb{R}^m$ through the local chart
of $M$ at $\gamma(t)$.

We use the same functions as in the proof of Proposition
\ref{lemma2}, but keeping in mind that
$\Gamma(s,r)=\gamma[\pi^s_{w_0}](t)$ is replaced by
$\Gamma(s,r)=\gamma[\pi^s_{w_0}](t+s\delta t)$.
\end{proof}

\subsubsection{Perturbing the endpoint conditions}

Now we consider that the endpoint conditions for the integral curves
of $X^{\{u\}}$ varies on submanifolds of $M$. Let $S_a$ be a
submanifold of $M$ and $\gamma(a)$ in $S_a$; consider the integral
curve $\gamma\colon I\rightarrow M$ of $X^{\{u\}}$ with initial
condition $(a,\gamma(a))$.

We consider the curve $\gamma[\pi^s_{\pm}]$ obtained from a
time perturbation of the control $u$ associated with a
vector in the time perturbation cone. The initial condition
is disturbed along a curve $\delta\colon
[0,\epsilon]\rightarrow S_a$ with initial tangent vector
$v_a$ in $T_{\gamma(a)}S_a$ and $\delta(0)=\gamma(a)$.
Taking into account Appendix \ref{geometric meaning}, \S
\ref{elementarypertvectors} and considering that
$T_{\gamma(a)}S_a$ and an open set at $\delta(a)$ are
identified with $\mathbb{R}^m$, the integral curve
$\gamma_{\delta(s)}[\pi^s_{\pm}]\colon I \rightarrow M$ of
$X^{\{u[\pi^s_{\pm}]\}}$ with initial condition
$(a,\delta(s))$ can be written as
$$\gamma_{\delta(s)}[\pi^s_{\pm}](t)=\gamma(t)+s
\left(\Phi^{X^{\{u\}}}_{(t,a)}\right)_*v_a+s v[\pi_{\pm}](t)+o(s).$$
We define a cone that includes the time perturbation vectors, the
elementary perturbation vectors and the vectors coming from changing
the initial condition on $S_a$ along different curves $\delta\colon
[0,\epsilon]\rightarrow S_a$ through $\gamma(a)$ and contained in
$S_a$.

\begin{defin}\label{initialtimecone} Let $t$ be a Lebesgue time. The cone ${\cal K}_t$ is
the smallest closed and convex cone containing the time perturbation
cone at time $t$ and the transported of the tangent space to $S_a$
from $a$ to $t$ through the flow of $X^{\{u\}}$.
$${\cal K}_t= \overline{{\rm
conv}(K^{\pm}_{t} \bigcup
(\Phi^{X^{\{u\}}}_{(t,a)})_*(T_{\gamma(a)}S_a))}$$
\end{defin}

\begin{prop}\label{initialtimelemma2}  Let $t$ be a Lebesgue time in $(a,b)$ and $S\subset M$ be a
submanifold with boundary. Suppose that $\gamma(t)$ is on
the boundary of $S$. Let $T$ be the half\textendash{}plane
tangent to $S$ at $\gamma(t)$. If ${\cal K}_t$ and $T$ are
not separated, then there exists a perturbation of the
control $u[\pi^s_{\pm}]$ and $x_a\in S_a$ such that the
integral curve $\gamma_{x_a}[\pi^s_{\pm}]$ of
$X^{\{u[\pi^s_{\pm}]\}}$ with initial condition $(a,x_a)$
meets $S$ at a point in the relative interior of $S$.
\end{prop}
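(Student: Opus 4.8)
The plan is to mimic the structure of the proof of Proposition \ref{lemma2} and of its time-perturbed analogue Proposition \ref{lema2time}, replacing the single ray through $v$ by a whole affine piece of the tangent half-plane $T$, and then to invoke a topological (Brouwer-type) argument exactly as before. First I would use the separation hypothesis in its positive form: since ${\cal K}_t$ and the half-plane $T$ tangent to $S$ at $\gamma(t)$ are \emph{not} separated, by the hyperplane results in Appendix \ref{hyperplane} there is a vector $v$ lying simultaneously in the interior of ${\cal K}_t$ and in the relative interior of $T$ (i.e. pointing strictly into $S$ from its boundary). Because $v$ is interior to ${\cal K}_t=\overline{\operatorname{conv}(K^\pm_t\cup(\Phi^{X^{\{u\}}}_{(t,a)})_*T_{\gamma(a)}S_a)}$, Propositions \ref{convexhull} and \ref{convex} let me write $v$ as a finite convex combination of transported elementary perturbation vectors, of $\pm X(\gamma(t),u(t))$, and of a transported tangent vector $(\Phi^{X^{\{u\}}}_{(t,a)})_*v_a$ with $v_a\in T_{\gamma(a)}S_a$; equivalently, $v$ is realized by a combined perturbation data $\pi^s_\pm$ together with a curve $\delta\colon[0,\epsilon]\to S_a$ through $\gamma(a)$ with $\dot\delta(0)=v_a$.

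Next, using the linear-approximation formula established just before Definition \ref{initialtimecone},
$$\gamma_{\delta(s)}[\pi^s_\pm](t)=\gamma(t)+s\,(\Phi^{X^{\{u\}}}_{(t,a)})_*v_a+s\,v[\pi_\pm](t)+o(s),$$
I would set up, in a local chart at $\gamma(t)$ identified with $\mathbb R^m$, a family of trajectory endpoints $\Gamma(s,r)=\gamma_{\delta_r(s)}[\pi^s_{\pm,r}](t+s\,\delta t)$ depending on an auxiliary parameter $r$ ranging over a small closed ball in $v^\perp$, chosen (just as in Proposition \ref{lemma2}) so that $w_0=v+r$ is still interior to ${\cal K}_t$ and hence realizable. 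The displacement $\Delta(s,r)=\Gamma(s,r)-\gamma(t)=s(v+r)+o_r(s)$ then, for $s$ small and $r$ in a slightly smaller ball, stays inside a fixed ``closed'' circular subcone $\widetilde{\cal K}$ of the interior of ${\cal K}_t$ with axis $v$; this is the same $o_r(s)/s$ estimate used in Proposition \ref{lemma2}. Composing with the diffeomorphism $g$ onto a cylinder and projecting to the $v^\perp$ factor gives continuous maps $G_s\colon\overline{B(0,\bar\delta_2)}\to\overline{B(0,R)}$ with $G_s(r)\to r$ as $s\to0$, so by Corollary \ref{scholium} there is, for each small $s>0$, some $r$ with $G_s(r)=0$, i.e. $\Delta(s,r)=s'v$ for some $s'>0$. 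Thus the perturbed trajectory's endpoint is $\gamma(t)+s'v$, which lies on the ray interior to $S$; since $v$ points strictly into $S$ from the boundary point $\gamma(t)$ and $S$ is a submanifold with boundary, for $s'$ small this point is in the relative interior of $S$, with initial condition $x_a=\delta(s)\in S_a$, which is what we want.

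The main obstacle I anticipate is bookkeeping rather than conceptual: one must check that perturbing the initial condition along $\delta$, perturbing the final time, and making several needle-like variations all commute to first order, so that the composite displacement really is the \emph{sum} $s(\Phi^{X^{\{u\}}}_{(t,a)})_*v_a+s\,v[\pi_\pm](t)+o(s)$ uniformly in the auxiliary parameter $r$ — this is the content flagged before Definition \ref{initialtimecone} and in the remark after Proposition \ref{lema2time}, and it relies on Appendix \ref{variationalsection} (continuity and linearity of the flow of the complete lift) together with the continuous dependence of $\gamma_{\delta(s)}[\pi^s_\pm]$ on all parameters. A secondary point requiring care is the passage from ``$v$ interior to ${\cal K}_t$ and in the relative interior of $T$'' to the geometric conclusion that the endpoint $\gamma(t)+s'v$ actually enters the relative interior of $S$ (and not merely the tangent half-plane $T$): here one uses that $S$ is a submanifold with boundary and that, in suitable coordinates flattening $\partial S$ near $\gamma(t)$, a vector strictly interior to the tangent half-plane integrates to a short curve lying in $\operatorname{int} S$; the $o(s')$ error from the linear approximation is absorbed because $v$ is in the \emph{interior}. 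Once these two points are in place, the Brouwer/scholium step goes through verbatim as in Proposition \ref{lemma2}.
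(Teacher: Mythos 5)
Your approach tries to transplant the Brouwer/cylinder argument of Proposition \ref{lemma2} verbatim: find a $v$ common to both cones, set up an $(m-1)$\textendash{}parameter family $G_s(r)$ on a ball in $v^{\perp}$, and use Corollary \ref{scholium} to force the perturbed endpoint onto the ray through $v$. The hidden obstruction is that this construction needs $v$ to lie in the genuine \emph{interior} of ${\cal K}_t$ (an open ball $v+B(0,\delta_2)\subset{\cal K}_t$ is used so that every $w_0=v+r$ is realizable, and so that the $o_r(s)$ error in the endpoint, which may point in \emph{any} direction of $T_{\gamma(t)}M$, can be absorbed back into the cone). But the non\textendash{}separation hypothesis only guarantees a $v$ in the \emph{relative} interior of ${\cal K}_t$; the cone ${\cal K}_t$ may perfectly well be lower\textendash{}dimensional (and the theorem's proof in \S\ref{SproofFPMP} needs the proposition precisely in such degenerate situations, since it argues by contradiction about the cones ${\cal K}_b$ and ${\cal J}_b$ without knowing their dimensions). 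When $\operatorname{span}{\cal K}_t\neq T_{\gamma(t)}M$, the perturbed endpoints $\gamma[\pi^s_{w_0}](t)=\gamma(t)+sw_0+o(s)$ drift off the affine slice $\gamma(t)+\operatorname{span}{\cal K}_t$ by $o(s)$, so the $(m-1)$\textendash{}dimensional ball in $v^{\perp}\cap\operatorname{span}{\cal K}_t$ no longer gives enough freedom to push the endpoint onto a one\textendash{}dimensional ray, and the scholium argument breaks down.

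What makes the paper's proof go through is exactly what your proposal does not invoke: Corollary \ref{splitSussmann}, which upgrades ``not separated'' to the decomposition $T_{\gamma(t)}M={\cal K}_t-T$. The proof then builds an $m$\textendash{}dimensional parameter ball via the affine\textendash{}simplex trick ($e_0=-(e_1+\cdots+e_m)$ and the positive barycentric coordinates $b^i(w)$), splits each $e_i=e_i^{\cal K}-e_i^T$ with $e_i^{\cal K}$ in the relative interior of ${\cal K}_t$ and $e_i^T$ in the relative interior of $T$, and applies Brouwer to the map $G_s(w)=\bigl(\gamma[\pi^s_{F_1(w)}](t)-\gamma[\pi^s_{F_2(w)}](t)\bigr)/s\to F_1(w)-F_2(w)=w$. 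This matches a point on the perturbed trajectory against a point in $S$ directly, uses the full $m$ degrees of freedom to absorb the $o(s)$ error in every direction of $T_{\gamma(t)}M$, and needs no full\textendash{}dimensionality of ${\cal K}_t$ or of $T$. So the missing idea in your sketch is the two\textendash{}sided matching enabled by $T_{\gamma(t)}M={\cal K}_t-T$; without it the reduction to Proposition \ref{lemma2} is not merely bookkeeping but fails in the degenerate cases the theorem relies on.
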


\begin{proof}
As ${\cal K}_t$ and $T$ are not separated, by Proposition
\ref{propseparated} there no exists any hyperplane
containing both and there is a vector $v$ in the relative
interior of both ${\cal K}_t$ and $T$. By Corollary
\ref{splitSussmann}, if ${\cal K}_t$ and $T$ are not
separated,
$$T_{\gamma(t)}M={\cal K}_t- T.$$
See Appendix \ref{hyperplane} for the notation and properties. If
$V$ is an open set of a local chart at $\gamma(t)$, we identify $V$
with $\mathbb{R}^m$ and also the tangent space at $\gamma(t)$,
$T_{\gamma(t)}M$, in the same sense defined for Equation
(\ref{linearapprox}). Let us consider an orthonormal basis in
$T_{\gamma(t)}M$, $\{e_1, \ldots, e_m\}$. If we take
$e_0=-(e_1+\ldots+e_m)$, the vector $0\in T_{\gamma(t)}M$ is
expressed as an affine combination of $e_0, e_1, \ldots, e_m$:
$$0=\frac{1}{m+1}e_0+\ldots+\frac{1}{m+1}e_m.$$
Each $w$ in $T_{\gamma(t)}M$ is written uniquely as
$$w=a^1e_1+\ldots + a^me_m$$ and as an affine combination
of $e_0, e_1, \ldots, e_m$:
$$w=\sum^m_{i=0}b^i(w)e_i=re_0+\sum^m_{i=0}(r+a^i)e_i\quad {\rm with} \quad
r=\frac{1-\sum^m_{i=1}a^i}{m+1}.$$ Hence, we define the continuous
mapping
$$\begin{array}{rcl} {\cal G}\colon  T_{\gamma(t)}M & \longrightarrow & \mathbb{R}^{m+1} \\
w&\longmapsto & (b^0(w),b^1(w),\ldots, b^m(w)).
\end{array}$$
As $b^i(0)>0$ for every $i=0,\ldots,m$, there exists an open ball
$B(0,r)$ centered at $0$ with radius $r$ such that for every $w\in
B(0,r)$, $b^i(w)>0$ for $i=0,\ldots,m$. Now we consider the
restriction of ${\cal G}$ to the closed ball $\overline{B(0,r)}$,
${\cal G}_{\overline{B(0,r)}}\colon \overline{B(0,r)} \rightarrow
[0,1]^{m+1}$.
Choose vectors $e_i^{\cal K}\in {\cal K}_t$ and $e_i^T\in T$ such
that
$$e_i=e_i^{\cal K}-e_i^T.$$ As $v$ lies in the
relative interior of both convex sets, $e_i=(e_i^{\cal
K}+v)-(e_i^T+v)=e_i^{\cal K}-e_i^T$. Then $e_i^{\cal K}$ and $e_i^T$
can be chosen in the relative interior of ${\cal K}$ and $T$,
respectively, because $v$ is in the relative interior of both. For
any $w\in \overline{B(0,r)}$,
$$w=\sum^m_{i=0}b^i(w)e_i=\sum^m_{i=0}b^i(w)\left(e_i^{\cal K}-e_i^T\right).$$
Then we can define
$$\begin{array}{rcl} {F_1}\colon  \overline{B(0,r)} & \longrightarrow & {\cal K}_t \\
w&\longmapsto & F_1(w)=\sum^m_{i=0}b^i(w)e_i^{\cal K},
\end{array} $$
$$\begin{array}{rcl} {F_2}\colon \overline{B(0,r)} & \longrightarrow & T \\
w&\longmapsto & F_2(w)=\sum^m_{i=0}b^i(w)e_i^T,
\end{array} $$
and let us consider the mapping
$$\begin{array}{rcl} G\colon \mathbb{R}\times \overline{B(0,r)} & \longrightarrow & \mathbb{R}^m\\
(s,w) & \longmapsto &
(\gamma[\pi^s_{F_1(w)}](t)-\gamma[\pi^s_{F_2(w)}](t))/s,
\end{array}$$
where $\gamma[\pi^s_{F_1(w)}]$ is the perturbation curve associated
to $\pi^s_{F_1(w)}$ and
$\gamma[\pi^s_{F_2(w)}](t)=\gamma(t)+sF_2(w)$ is the straight line
through $\gamma(t)$ with tangent vector $F_2(w)$.
As the perturbation vectors are in the relative interior of the
convex cones, we use the linear approximation in
(\ref{linearapprox}) in such a way that $G(s,w)=F_1(w)-F_2(w)+o(1)$.
Hence
$$\lim_{s\rightarrow  0}G(s,w)=F_1(w)-F_2(w)=w,$$
Hence, for any positive number $\epsilon$, there exists
$s_0>0$ such that if $s<s_0$ then $\|G(s,w)-w\|<\epsilon$.
Take $\epsilon<r$, then
$$\|G(s,w)-w\|<\epsilon < r=\|w\|$$
for every $w$ in the boundary of $\overline{B(0,r)}$. Thus the map
$G_s\colon \overline{B(0,r)}\rightarrow \mathbb{R}^m$,
$G_s(w)=G(s,w)$, satisfies the hypotheses of Corollary
\ref{scholium} for the point $0$ in $B(0,r)$. Hence, the point $0$
is in the image of $\overline{B(0,r)}$ through $G_s$ and there
exists $w$ such that $G_s(w)=0$; that is,
$$\gamma[\pi^s_{F_1(w)}](t)=\gamma[\pi^s_{F_2(w)}](t).$$ Therefore, there
exists a perturbation of the control such that the associated
trajectory meets $S$ in an interior point since $F_2(w)$ lies in the
relative interior of $T$.
\end{proof}

\subsection{Pontryagin's Maximum Principle with time and endpoints nonfixed}

Bearing in mind the symplectic formalism introduced in \S
\ref{symplectic}, we define the corresponding Hamiltonian
Problem when the time and the endpoints are nonfixed.

\begin{state}\textbf{(Free Hamiltonian Problem, $FHP$)}
Given the $FOCP$, $(Q,U,X,F,S_a,S_f)$, and the equivalent
$\widehat{FOCP}$, $(\widehat{Q},U,\widehat{X},S_a,S_f)$, consider
the following problem.

Find $b\in \mathbb{R}$
 and $(\widehat{\sigma},u)\colon [a,b]
\rightarrow T^*\widehat{Q}\times U$, with
$\widehat{\gamma}=\pi_{\widehat{Q}} \circ \widehat{\sigma}$ and
$\gamma=\pi_2 \circ \widehat{\gamma}$, such that
\begin{itemize}
\item[(1)] $\widehat{\gamma}(a) \in \{ 0 \} \times S_a
$, $\gamma(b)\in S_f$, and
\item[(2)] $\dot{\widehat{\sigma}}=(\widehat{X}^{T^*})^{\{u\}} \circ (\widehat{\sigma}, {\rm id})$.
\end{itemize}
\end{state}
The tuple $(T^*\widehat{Q}, U, \widehat{X}^{T^*}, S_a, S_f)$ denotes
the \textit{free Hamiltonian problem}.

\comments
\begin{enumerate}
\item The minimum of the interval of definition of the curves
is $a$, but the maximum is not fixed.
\item The curves $\gamma$,
$\widehat{\gamma}$ and $\widehat{\sigma}$ are assumed to be
absolutely continuous. So they are genera\-li\-zed integral
curves of $X^{\{u\}}$, $\widehat{X}^{\{u\}}$ and
$(\widehat{X}^{T^*})^{\{u\}}$, respectively, in the sense
defined in \S \ref{general}.
\end{enumerate}

Now, we are ready to state the Free Pontryagin's Maximum
Principle that provides the necessary conditions, but in
general not sufficient, for finding solutions of the free
optimal control problem.

\begin{teo} \textbf{(Free Pontryagin's Maximum
Principle, FPMP)}\label{PMfreevariable}
\\
If $(\widehat{\gamma}^*,u^*)\colon [a,b] \rightarrow \widehat{Q}
\times U$ is a solution of the extended free optimal control
problem, Statement \ref{stateEFOCP}, then there exists
$(\widehat{\sigma}^*, u^*)\colon [a,b] \rightarrow T^*\widehat{Q}
\times U$ such that:
\begin{enumerate}
\item it is a solution of the associated free Hamiltonian problem;
\item $\widehat{\gamma}^*=\pi_{\widehat{Q}} \circ \widehat{\sigma}^*
$;
\item \begin{itemize}
\item[(a)] $H(\widehat{\sigma}^*(t),u^*(t))= \sup_{u \in U} H(\widehat{\sigma}^*(t),
u)$ almost everywhere;
\item[(b)] $\sup_{u \in U} H(\widehat{\sigma}^*(t),
u)= 0$ everywhere;
\item[(c)] $\widehat{\sigma}^*(t)\neq 0 \in
T^*_{\widehat{\gamma}^*(t)} \widehat{Q}$ for each  $t \in [a,b]$;
\item[(d)] $\sigma^*_0(t)$ is constant, $\sigma^*_0(t) \leq 0$;
\item[(e)] transversality conditions:$\sigma^*(a) \in {\rm ann} \,
T_{\gamma^*(a)}S_a$ and $\sigma^*(b) \in {\rm ann} \,
T_{\gamma^*(b)}S_f$.
\end{itemize}
\end{enumerate}
\end{teo}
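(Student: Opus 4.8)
The plan is to run the argument of Theorem \ref{PMP} on the extended system $\widehat X$, but with the tangent perturbation cone $\widehat K_b$ replaced by the larger cone $\widehat{\mathcal K}_b$ of Definition \ref{initialtimecone} (built from the time perturbation cone $\widehat K_b^{\pm}$ of Definition \ref{time} together with the transported tangent space of $\{0\}\times S_a$), and with the single ``bad direction'' $(-1,\mathbf 0)$ replaced by a whole target half-plane at the final point. First I would fix a chart around $\widehat\gamma^*(b)$ and introduce the submanifold with boundary $S=(-\infty,\gamma^{*0}(b)]\times S_f\subset\widehat Q$, whose boundary passes through $\widehat\gamma^*(b)$; its tangent half-plane there is $T=\{(v^0,v)\,:\,v^0\le 0,\ v\in T_{\gamma^*(b)}S_f\}$. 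Meeting the relative interior of $S$ means reaching a point $(x^0,x)$ with $x^0<\gamma^{*0}(b)$ and $x\in S_f$, i.e.\ a competitor for $\widehat{FOCP}$ with strictly smaller cost. Hence optimality of $(\widehat\gamma^*,u^*)$ together with the contrapositive of Proposition \ref{initialtimelemma2} forces $\widehat{\mathcal K}_b$ and $T$ to be separated. (One technical point: Proposition \ref{initialtimelemma2} is stated for a Lebesgue time interior to the domain, so to apply it at the free final time $b$ one first extends $u^*$ arbitrarily past $b$, which makes $b$ an interior Lebesgue time and only enlarges $\widehat{\mathcal K}_b$; alternatively one argues by a limiting process.)

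By the separation (Proposition \ref{propseparated}) there is a nonzero covector $\widehat\sigma_b\in T^*_{\widehat\gamma^*(b)}\widehat Q$ with $\langle\widehat\sigma_b,w\rangle\le 0$ for all $w\in\widehat{\mathcal K}_b$ and $\langle\widehat\sigma_b,w\rangle\ge 0$ for all $w\in T$. From $T$ I extract two facts: since $T$ contains the linear subspace $\{0\}\times T_{\gamma^*(b)}S_f$ (hence also its opposite), $\widehat\sigma_b$ annihilates it, so $\sigma^*(b)\in{\rm ann}\,T_{\gamma^*(b)}S_f$; and since $(-1,\mathbf 0)\in T$, we get $\sigma_0^*(b)\le 0$. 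From $\widehat{\mathcal K}_b$ I extract two more: it contains $\pm\widehat X(\widehat\gamma^*(b),u^*(b))$ (a piece of $\widehat K_b^{\pm}\subset\widehat{\mathcal K}_b$), so $\langle\widehat\sigma_b,\widehat X(\widehat\gamma^*(b),u^*(b))\rangle=0$, i.e.\ $H(\widehat\sigma^*(b),u^*(b))=0$; and it contains the linear subspace $(\Phi^{X^{\{u^*\}}}_{(b,a)})_*(\{0\}\times T_{\gamma^*(a)}S_a)$, which $\widehat\sigma_b$ therefore also annihilates.

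Next I extend $\widehat\sigma_b$ backwards: let $\widehat\sigma^*\colon[a,b]\to T^*\widehat Q$ be the unique generalized integral curve of $(\widehat X^{T^*})^{\{u^*\}}$ with $\widehat\sigma^*(b)=\widehat\sigma_b$. Items 1 and 2 hold by construction ($\widehat\gamma^*=\pi_{\widehat Q}\circ\widehat\sigma^*$, and the endpoint conditions are inherited from the $\widehat{FOCP}$ solution). For transversality at $a$: for each $w\in\{0\}\times T_{\gamma^*(a)}S_a$, the complete-lift integral curve $\widehat V$ with $\widehat V(a)=w$ satisfies $\widehat V(b)=(\Phi^{X^{\{u^*\}}}_{(b,a)})_*w$ by Proposition \ref{XT}, and $t\mapsto\langle\widehat\sigma^*(t),\widehat V(t)\rangle$ is constant by Proposition \ref{constant}; being $0$ at $b$ it is $0$ at $a$, whence $\sigma^*(a)\in{\rm ann}\,T_{\gamma^*(a)}S_a$, giving (3e). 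Condition (3a) is proved verbatim as in Theorem \ref{PMP}: if at some Lebesgue time $t_1$ a control value $\widetilde u$ gave a strictly larger value of $H$, the associated elementary perturbation vector, transported to $b$, would lie in $\widehat K_b\subset\widehat{\mathcal K}_b$ and pair positively with $\widehat\sigma_b$, contradicting $\langle\widehat\sigma_b,\cdot\rangle\le 0$ on $\widehat{\mathcal K}_b$. Condition (3c) follows since $\widehat\sigma^*$ is an integral curve of the linear vector field $(\widehat X^{T^*})^{\{u^*\}}$ and $\widehat\sigma_b\ne 0$; the two halves of (3d) follow from $\dot p_0=0$ and from $\sigma_0^*(b)\le 0$. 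Finally, the argument of steps 1--9 of the proof of Theorem \ref{PMP} shows $t\mapsto\sup_{u\in U}H(\widehat\sigma^*(t),u)$ is constant on $[a,b]$; combined with $H(\widehat\sigma^*(b),u^*(b))=0$ and (3a) this constant is $0$, which is (3b).

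The main obstacle is the first paragraph: setting up $S$ as a genuine submanifold with boundary through $\widehat\gamma^*(b)$ whose relative interior exactly encodes ``strictly cheaper trajectory ending on $S_f$'', correctly handling the free final time so that Proposition \ref{initialtimelemma2} can be invoked at $b$, and then being careful enough in the bookkeeping to read off transversality at \emph{both} endpoints, the vanishing of the Hamiltonian, and the sign of $\sigma_0^*$ from the \emph{one} covector $\widehat\sigma_b$ and the single chain of inclusions $\widehat K_b\subset\widehat K_b^{\pm}\subset\widehat{\mathcal K}_b$.
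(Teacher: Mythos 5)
Your proposal follows essentially the same route as the paper: separate the cone built from time perturbations and $S_a$-variations from the terminal half-plane built from $(-1,\mathbf 0)$ and $T_{\gamma^*(b)}S_f$, invoke Proposition \ref{initialtimelemma2} for that separation, propagate the resulting covector backward by $(\widehat X^{T^*})^{\{u^*\}}$, and read off (3a)--(3e) from a single chain of inclusions. The paper first exhibits a covector separating $(-1,\mathbf 0)$ from the limit cone $\widehat K^\pm_b$ and then replaces it by the one separating ${\cal K}_b$ from ${\cal J}_b$; you simply do the second step directly, which is a cosmetic tightening, not a different argument. Your ${\cal J}_b$ is the paper's $T$, and your remark that Proposition \ref{initialtimelemma2} is stated for interior Lebesgue times while both you and the paper want to apply it at the free endpoint $b$ is a legitimate technical point the paper silently passes over; extending $u^*$ past $b$ (or a limiting argument) is the right repair.

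The one place your write-up is weaker than the paper's is (3b). You argue: $\sup_u H$ is constant (by steps 1--9 of Theorem \ref{PMP}), $H(\widehat\sigma^*(b),u^*(b))=0$ from the separation, and (3a) then forces the constant to be $0$. But (3a) holds only almost everywhere, so from $H(\widehat\sigma^*(b),u^*(b))=0$ alone you can conclude only $\sup_u H(\widehat\sigma^*(b),u)\ge 0$; the terminal time $b$ need not be one of the full-measure set of times at which (3a) holds, so this does not pin the constant to $0$. The paper's route (and the easy fix for yours) is to run the pairing-constancy argument at an arbitrary interior Lebesgue time $\tau$: the vectors $\pm\widehat X(\widehat\gamma^*(\tau),u^*(\tau))$ lie in $\widehat K^\pm_\tau$, their pushforwards to $b$ lie in ${\cal K}_b$, so the separation and Proposition \ref{constant} give $\langle\widehat\sigma^*(\tau),\pm\widehat X(\widehat\gamma^*(\tau),u^*(\tau))\rangle\le 0$, hence $H(\widehat\sigma^*(\tau),u^*(\tau))=0$ at every Lebesgue time, which combined with (3a) and the constancy of $\sup_u H$ yields $\sup_u H\equiv 0$.
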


Observe that once we have the optimal solution of the
$\widehat{FOCP}$, the final time and the endpoint conditions are
known and fixed. We would like to apply just Theorem \ref{PMP} in
order to prove Theorem \ref{PMfreevariable}. However, this is not
possible because the freedom to chose the final time and the
endpoint conditions, only restricted to submanifolds, in Statement
\ref{stateEFOCP} is used in the proof to consider variations of the
optimal curve that are slightly different from the variations used
in the case of fixed time, see \S \ref{perturbedconesfixed} and \S
\ref{perturbedconesnonfixed} to compare them.

Apart from the transversality conditions, the main difference
between FPMP and PMP is the fact that the domain of the curves in
the optimal control problems is unknown. That introduces a new
necessary condition: the supremum of the Hamiltonian must be zero,
not just constant. Then, from $(3a)$ and $(3b)$ it may be concluded
that the Hamiltonian is zero almost everywhere. For instance, in the
time optimal problems the Hamiltonian along extremals must be zero.

There are different statements of Pontryagin's Maximum
Principle. In \S \ref{symplectic} we have considered the
statement of PMP for a fixed\textendash{}time problem
without transversality conditions to simplify the proof,
although it may be stated the PMP for the
fixed\textendash{}time problem with variable endpoints
where the transversality conditions appear. There also
exists the PMP for the free time problem with the
degenerate case that the submanifolds are only a point,
then the Theorem is the following one.
\begin{teo} \textbf{(Free Pontryagin's Maximum
Principle without variable endpoints)}
\\
If $(\widehat{\gamma}^*,u^*)\colon [a,b] \rightarrow \widehat{Q}
\times U$ is a solution of the extended free optimal control
problem, Statement \ref{stateEFOCP} with $S_a=\{x_a\}$ and
$S_f=\{x_f\}$, then there exists $(\widehat{\sigma}^*, u^*)\colon
[a,b] \rightarrow T^*\widehat{Q} \times U$ such that:
\begin{enumerate}
\item it is a solution of the associated free Hamiltonian problem;
\item $\widehat{\gamma}^*=\pi_{\widehat{Q}} \circ \widehat{\sigma}^*
$;
\item \begin{itemize}
\item[(a)] $H\left(\widehat{\sigma}^*(t),u^*(t)\right)= \sup_{u \in U} H\left(\widehat{\sigma}^*(t),
u\right)$ almost everywhere;
\item[(b)] $\sup_{u \in U} H\left(\widehat{\sigma}^*(t),
u\right)= 0$ everywhere;
\item[(c)] $\widehat{\sigma}^*(t)\neq 0 \in
T^*_{\widehat{\gamma}^*(t)} \widehat{Q}$ for each  $t \in [a,b]$;
\item[(d)] $\sigma^*_0(t)$ is constant, $\sigma^*_0(t) \leq 0$;
\end{itemize}
\end{enumerate}
\end{teo}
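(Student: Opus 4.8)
The plan is to follow the proof of Theorem \ref{PMP} almost verbatim, replacing the tangent perturbation cone $\widehat{K}_b$ by the larger time perturbation cone $\widehat{K}_b^{\pm}$ of Definition \ref{time}, built for the extended system $\widehat{X}$ on $\widehat{Q}$. First I would observe that once a solution $(\widehat{\gamma}^*,u^*)$ of the $\widehat{FOCP}$ has been produced, its final time $b$ and its endpoints are fixed, so I may construct $\widehat{K}_b^{\pm}\subset T_{\widehat{\gamma}^*(b)}\widehat{Q}$ and argue that the vector $(-1,\mathbf{0})_{\widehat{\gamma}^*(b)}$ is not interior to it. Indeed, if it were, Proposition \ref{lema2time} would give, for small $s>0$, a number $s'>0$ and a perturbation $u[\pi^s_{\pm}]$ defined on $[a,b+s\,\delta t]$ with $\widehat{\gamma}[\pi^s_{\pm}](b+s\,\delta t)=\widehat{\gamma}^*(b)+s'(-1,\mathbf{0})$; this perturbed curve keeps the initial condition $(0,x_a)$, still ends at $x_f$, but has strictly smaller cost $\gamma^0[\pi^s_{\pm}](b+s\,\delta t)=\gamma^{*^0}(b)-s'$, contradicting optimality.

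Then, exactly as in the proof of Theorem \ref{PMP}: since $(-1,\mathbf{0})$ is not interior to $\widehat{K}_b^{\pm}$, Proposition \ref{propseparated} produces a nonzero covector $\widehat{\sigma}_b\in T^*_{\widehat{\gamma}^*(b)}\widehat{Q}$ with $\langle\widehat{\sigma}_b,(-1,\mathbf{0})\rangle\geq 0$ and $\langle\widehat{\sigma}_b,\widehat{v}_b\rangle\leq 0$ for every $\widehat{v}_b\in\widehat{K}_b^{\pm}$; integrating $(\widehat{X}^{T^*})^{\{u^*\}}$ backwards from $\widehat{\sigma}_b$ yields $\widehat{\sigma}^*$, which gives items $(1)$ and $(2)$. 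Condition $(3c)$ follows from the linearity of the cotangent lift over $\widehat{X}$, the first part of $(3d)$ from $\dot p_0=0$, and $\sigma_0^*(b)\leq 0$ from $\langle\widehat{\sigma}_b,(-1,\mathbf{0})\rangle\geq 0$, just as before. For $(3a)$ I would repeat verbatim the contradiction argument of Theorem \ref{PMP}: a Lebesgue time $t_1$ at which some control $\widetilde{u}$ beats $u^*$ produces an elementary perturbation vector $\widehat{v}[\pi_1]\in\widehat{K}_{t_1}\subset\widehat{K}_{t_1}^{\pm}$ with $\langle\widehat{\sigma}^*(t_1),\widehat{v}[\pi_1]\rangle>0$; transporting by the complete lift and using Proposition \ref{constant} contradicts $\langle\widehat{\sigma}_b,\widehat{v}_b\rangle\leq 0$ on $\widehat{K}_b^{\pm}$, since $\widehat{K}_b\subset\widehat{K}_b^{\pm}$. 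The constancy of $\sup_{u\in U}H(\widehat{\sigma}^*(t),u)$ is obtained by the same lower–semicontinuity/absolute–continuity argument as in the fixed–time proof, which is insensitive to the enlargement of the cone.

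The one genuinely new point is $(3b)$, that the constant value is $0$, and this is where the extra generators $\pm X(\gamma^*(t),u^*(t))$ of the time cone enter. For a Lebesgue time $\tau\in(a,b)$ the vector $\pm\widehat{X}(\widehat{\gamma}^*(\tau),u^*(\tau))$ lies in $\widehat{K}_\tau^{\pm}$, so by Proposition \ref{inclusion} its transport $\pm\bigl(\Phi^{\widehat{X}^{\{u^*\}}}_{(b,\tau)}\bigr)_*\widehat{X}(\widehat{\gamma}^*(\tau),u^*(\tau))$ lies in $\widehat{K}_b^{\pm}$; since both signs pair non-positively with $\widehat{\sigma}_b$, the pairing vanishes. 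Letting $\widehat{V}$ be the integral curve of the complete lift $(\widehat{X}^T)^{\{u^*\}}$ over $\widehat{\gamma}^*$ with $\widehat{V}(\tau)=\widehat{X}(\widehat{\gamma}^*(\tau),u^*(\tau))$, so that $\widehat{V}(b)=\bigl(\Phi^{\widehat{X}^{\{u^*\}}}_{(b,\tau)}\bigr)_*\widehat{X}(\widehat{\gamma}^*(\tau),u^*(\tau))$, Proposition \ref{constant} gives that $\langle\widehat{\sigma}^*,\widehat{V}\rangle$ is constant on $[\tau,b]$; hence $H(\widehat{\sigma}^*(\tau),u^*(\tau))=\langle\widehat{\sigma}^*(\tau),\widehat{V}(\tau)\rangle=\langle\widehat{\sigma}_b,\widehat{V}(b)\rangle=0$. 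Thus $H(\widehat{\sigma}^*(\cdot),u^*(\cdot))=0$ at every Lebesgue time, so almost everywhere, and combining this with $(3a)$ and the already–established constancy of $\sup_{u\in U}H(\widehat{\sigma}^*(t),u)$ yields $\sup_{u\in U}H(\widehat{\sigma}^*(t),u)=0$ everywhere.

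I expect the main obstacle to be bookkeeping rather than conceptual: one must check that the optimality/contradiction step survives the variation of the final time (the competitor trajectory lives on the moving interval $[a,b+s\,\delta t]$ yet still matches the fixed endpoints), and one must be slightly careful in $(3b)$ that $b$ itself need not be a Lebesgue time — which is precisely why the argument is run through the vectors transported from interior Lebesgue times $\tau<b$ rather than by evaluating $\widehat{X}$ at $b$.
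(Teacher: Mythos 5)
Your proof is correct and is essentially the paper's own proof of the general Free PMP (Theorem \ref{PMfreevariable}) specialized to the degenerate case $S_a=\{x_a\}$, $S_f=\{x_f\}$; the paper itself does not write out a separate argument for this statement but simply observes after it that it is Theorem \ref{PMfreevariable} with the (now vacuous) transversality conditions deleted. The specialization you perform is the correct one: with point submanifolds, $T_{x_a}S_a$ and $T_{x_f}S_f$ are trivial, so the paper's cones ${\cal K}_b$ and ${\cal J}_b$ collapse to $\widehat{K}^{\pm}_b$ and the ray through $(-1,\mathbf{0})$ respectively, and the Proposition \ref{initialtimelemma2}/transversality machinery reduces precisely to the separation of $(-1,\mathbf{0})$ from $\widehat{K}^{\pm}_b$ that you invoke. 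Two small remarks. First, since $b$ need not be a Lebesgue time and Proposition \ref{lema2time} is stated only for Lebesgue times in $(a,b)$, the object denoted $\widehat{K}^{\pm}_b$ must be taken to be the limit cone $\overline{\bigcup_{\tau}\bigl(\Phi^{\widehat{X}^{\{u^*\}}}_{(b,\tau)}\bigr)_*\widehat{K}^{\pm}_{\tau}}$ used in the paper's proof of Theorem \ref{PMfreevariable}, so that interiority of $(-1,\mathbf{0})$ can be pushed back to a genuine Lebesgue time $\tau<b$ before applying Proposition \ref{lema2time}; you flag this as bookkeeping, and indeed it is, but it affects the separation step and not only $(3b)$. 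Second, your treatment of $(3b)$, transporting $\pm\widehat{X}(\widehat{\gamma}^*(\tau),u^*(\tau))$ forward from an interior Lebesgue time $\tau$ and pulling the pairing back by Proposition \ref{constant}, is a little cleaner than the paper's own step, which initializes the transported vector at $b$ itself; your version avoids both the Lebesgue-time issue at $b$ and any reliance on the identification $\widehat{V}(t)=\widehat{X}(\widehat{\gamma}^*(t),u^*(t))$ at intermediate times.
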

The only difference with Theorem \ref{PMfreevariable} is that the
transversality conditions do not appear.

\section{Proof of Pontryagin's Maximum Principle for
nonfixed time and nonfixed endpoints}\label{SproofFPMP}

In the proof of Theorem \ref{PMfreevariable} we use notions about
perturbations of the trajectories of a system introduced in \S
\ref{perturbedconesnonfixed}, but they are slightly different from
the perturbations in \S \ref{perturbedconesfixed} used to prove
Theorem \ref{PMP}.

\begin{proof}\textit{(Theorem \ref{PMfreevariable}: Free Pontryagin's Maximum Principle, FPMP)}

Given a solution of the $\widehat{FOCP}$, we only need an
appropriate initial condition in the fibers of
$\pi_{\widehat{Q}}\colon T^*\widehat{Q} \rightarrow
\widehat{Q}$ to find a solution of the $FHP$, because this
initial condition is not given in the hypotheses of the
Free Pontryagin's Maximum Principle. It is not possible to
use Theorem \ref{PMP} directly because the perturbation
cones are not the same. Indeed, we need to consider changes
in the interval of definition of the curves. These changes
imply the inclusion of
${\pm}\widehat{X}(\widehat{\gamma}^*(t_1), u^*(t_1))$ in
the perturbation cone at time $t_1$. All the times
considered in this proof are Lebesgue times for the vector
field giving the optimal curve.

By Proposition \ref{inclusion}, for $t_2>t_1$,
$$\left(\Phi^{\widehat{X}^{\{u^*\}}}_{(t_2,t_1)}\right)_*
\widehat{K}^{\pm}_{t_1} \subset \widehat{K}^{\pm}_{t_2}.$$

Let us consider the limit cone as follows
$$\widehat{K}^{\pm}_{b}=\overline{
\bigcup_{\substack{a<\tau\leq b \\ \tau \textrm{ is a Lebesgue
time}}} \left(\Phi^{\widehat{X}^{\{u\}}}_{(b,\tau)}\right)_*
\widehat{K}^{\pm}_{\tau}}.$$  Observe that it is a closed cone and
it is convex because it is the union of an increasing family of
convex cones. Let us show that
$(-1,\mathbf{0})_{\widehat{\gamma}^*(b)}$ is not interior to
$\widehat{K}^{\pm}_{b}$. Indeed, suppose that
$(-1,\mathbf{0})_{\widehat{\gamma}^*(b)}$ is interior to the limit
cone, then it will be interior to \[\bigcup_{\substack{a<\tau < b
\\ \tau \textrm{ is a Lebesgue time}}}
(\Phi^{\widehat{X}^{\{u\}}}_{(b,\tau)})_* \widehat{K}^{\pm}_{\tau}\]
by Proposition \ref{convex}, item (d). As we have an increasing
family of cones, there exists a time $\tau$ such that
$(-1,\mathbf{0})_{\widehat{\gamma}^*(b)}$ is interior to
$(\Phi^{\widehat{X}^{\{u\}}}_{(b,\tau)})_*
\widehat{K}^{\pm}_{\tau}$.
Let us see that this is not possible.

If $(-1,\mathbf{0})_{\widehat{\gamma}^*(b)}$ is interior to
$(\Phi^{\widehat{X}^{\{u\}}}_{(b,\tau)})_*
\widehat{K}^{\pm}_{\tau}$, then, by Proposition \ref{lema2time},
there exists $\epsilon >0$ such that, for every $s\in(0,\epsilon)$,
there exist $s'>0$ and a perturbation of the control
$u[\pi^s_{w_0}]$ such that
$$\widehat{\gamma}[\pi^s_{w_0}](b+s\delta
\tau)=\widehat{\gamma}^*(b)+s'(-1,\mathbf{0}).$$ Hence
$$\gamma^0[\pi^s_{w_0}](b+s\delta \tau
)<\gamma^{*^0}(b) \quad {\rm and}\quad \gamma[\pi^s_{w_0}](b+s\delta
\tau )=\gamma^*(b).$$ That is, the trajectory $\gamma[\pi^s_{w_0}]$
arrives at the same endpoint as $\gamma^*$ but with less cost. Then
$\widehat{\gamma}^*$ cannot be optimal as assumed. Thus
$(-1,\mathbf{0})_{\widehat{\gamma}^*(b)}$ is not interior to
$\widehat{K}^{\pm}_{b}$.

As $(-1,\mathbf{0})_{\widehat{\gamma}^*(b)}$ is not in the interior
of $\widehat{K}_{b}^{\pm}$, by Proposition \ref{propseparated} there
exists a covector $\widehat{\sigma}_{b} \in
T^*_{\widehat{\gamma}^*(b)} \widehat{Q}$ such that
\begin{eqnarray*}
\langle \widehat{\sigma}_{b},(-1,\mathbf{0}) \rangle &\geq& 0,\\
\langle \widehat{\sigma}_{b}, \widehat{v}_{b} \rangle &\leq& 0 \quad
\forall \; \widehat{v}_{b} \in \widehat{K}^{\pm}_{b}.
\end{eqnarray*}
The initial condition for the covector must not only satisfy the
previous inequalities, but also the transversality conditions. In
order to prove this, it is necessary to have the separability of two
new cones.

\textbf{(3e)} Hence, the initial condition in the fibers of
$T^*\widehat{Q}$ may be chosen satisfying the \textbf{transversality
conditions}. We consider the manifold with boundary given by
\[M_f=\{(x^0,x) \; | \; x\in S_f, \; x^0 \leq \gamma^{*^0}(b)\}.\] The
set of tangent vectors to $M_f$ at $\widehat{\gamma}^*(b)$ is the
convex set whose generators are
$(-1,\mathbf{0})_{\widehat{\gamma}^*(b)}$ and $T_f=\{0\}\times
T_{\gamma^*(b)}S_f$.

Given $\tau\in[a,b]$, consider the following closed convex sets
$$\begin{array}{ll}{\cal K}_{\tau}= \overline{{\rm
conv}(\widehat{K}^{\pm}_{\tau} \bigcup
(\Phi^{\widehat{X}^{\{u^*\}}}_{(\tau,a)})_*(T_a))}, &\quad {\rm
where} \ \ T_a=\{0\}\times T_{\gamma^*(a)}S_a, \\ &\\
{\cal J}_{\tau}= \overline{{\rm
conv}((-1,\mathbf{0})_{\widehat{\gamma}^*(\tau)} \bigcup
(\Phi^{\widehat{X}^{\{u^*\}}}_{(b,\tau)})^{-1}_*(T_f))}, &\quad {\rm
where} \ \ T_f=\{0\}\times T_{\gamma^*(b)}S_f,
\end{array}$$
and the manifold $M_{\tau}$ obtained transporting $M_f$ from $b$ to
$\tau$ using the flow of $\widehat{X}^{\{u^*\}}$. Observe that
${\cal J}_{\tau}$ is the closure of the set of tangent vectors to
$M_{\tau}$ at the point $\widehat{\gamma}^*(\tau)$. We are going to
show that the cones ${\cal K}_b$ and ${\cal J}_b$ are separated,
using Proposition \ref{initialtimelemma2}.

Observe that ${\cal J}_b$ is a half\textendash{}plane tangent to
$M_f$ and $\widehat{\gamma}^*(b)$ is on the boundary of $M_f$ by
construction. Hence, if ${\cal K}_b$ and ${\cal J}_b$ were not
separated, by Proposition \ref{initialtimelemma2} there would exist
a perturbation of the control $u[\pi^s_{w_0}]$ and $x_a\in S_a$ such
that the integral curve $\gamma_{x_a}[\pi^s_{w_0}]$ with initial
condition $(a,x_a)$ meets $M_f$ at a point in the relative interior
of $M_f$.
Hence we have found a trajectory with less cost than the optimal one
because of the definition of $M_f$ and this is not possible because
of the optimality of $\widehat{\gamma}^*$. Thus ${\cal K}_b$ and
${\cal J}_b$ are separated. So, by Proposition \ref{propseparated},
there exists a
 covector $\widehat{\sigma}_{b} \in
T^*_{\widehat{\gamma}^*(b)} \widehat{Q}$ such that
\begin{eqnarray}
\langle \widehat{\sigma}_{b}, \widehat{v}_{b} \rangle &\leq& 0 \quad
\forall \ \widehat{v}_{b} \in {\cal
K}_{b}, \label{ineq1trans}\\
\langle \widehat{\sigma}_{b}, \widehat{w}_{b} \rangle &\geq& 0 \quad
\forall \ \widehat{w}_{b} \in {\cal J}_{b}. \label{ineq2trans}
\end{eqnarray}
This covector separates the vector
$(-1,\mathbf{0})_{\widehat{\gamma}^*(b)}\in {\cal J}_b$ and the cone
$\widehat{K}^{\pm}_{b}\subset {\cal K}^{\pm}_b$. Let
$\widehat{\sigma}^*$ be the integral curve of
$(\widehat{X}^{T^*})^{\{u^*\}}$ with initial condition
$\widehat{\sigma}_{b}\in T_{\widehat{\gamma}^*(b)}\widehat{Q}$ at
$b$.

As $T_f$ is contained in ${\cal J}_{b}$, we have $\langle
\widehat{\sigma}_{b}, \widehat{v} \rangle \geq 0$ for every
$\widehat{v} \in T_f$. As $T_f$ is a vector space, if
$\widehat{v}\in T_f$, then $-\widehat{v}\in T_f$. Hence, we have
$$\langle \widehat{\sigma}_{b}, \widehat{v}
\rangle =0 \quad  {\rm for} \;\; {\rm every} \;\; \widehat{v}\in
T_f.$$ That is,
$$\langle \widehat{\sigma}_{b}, (0,v) \rangle =0 \quad  {\rm for} \;\; {\rm
every} \; \;
 v \in T_{\gamma^*(b)} S_f.$$
This is equivalent to $\langle \sigma_{b}, v\rangle=0$ for every $v
\in T_{\gamma^*(b)}S_f$; that is, $\sigma_{b}=\sigma^*(b)$ is in the
annihilator of $T_{\gamma^*(b)} S_f$ as wanted.

For every $\widehat{w}_b\in {\cal J}_b$, if $\widehat{W}\colon I
\rightarrow T\widehat{Q}$ is the integral curve of
$(\widehat{X}^T)^{\{u^*\}}$ with initial condition $\widehat{w}_b$
at time $b$, then by Proposition \ref{constant} the pairing
continuous natural function $\langle \widehat{\sigma}^*,\widehat{W}
\rangle \colon I \rightarrow \mathbb{R}$ is constant everywhere and
$\langle \widehat{\sigma}^*(a),\widehat{W}(a) \rangle \geq 0$ by
Equation (\ref{ineq2trans}). As
$(\Phi^{\widehat{X}^{\{u^*\}}}_{(b,a)})^{-1}_*({\cal J}_b)={\cal
J}_a$ by the continuity and the linearity of the flow, the
transversality condition at $a$ is proved
analogously as the transversality condition at $b$ proved above.\\

Since $(\widehat{\gamma}^*,u^*)$ is a solution of the
$\widehat{FOCP}$, it is also a solution of $\widehat{OCP}$ with time
and endpoints fixed and given by the curve. Hence, we can apply
Pontryagin's Maximum Principle for time and endpoints fixed, Theorem
\ref{PMP}. If the curve $(\widehat{\gamma}^*,u^*)$ is a solution of
$\widehat{OCP}$ with $I=[a,b]$ and endpoints $\widehat{\gamma}^*(a)$
and $\widehat{\gamma}^*(b)$, $(\widehat{\sigma}^*,u^*)\colon [a,b]
\rightarrow T^*\widehat{Q} \times U$ is a solution of the $HP$, such
that $ \widehat{\gamma}^*=\pi_{\widehat{Q}}\circ
\widehat{\sigma}^*$, and moreover $\widehat{\sigma}^*$ satisfies
that
\begin{itemize}
\item[(3a)] $H(\widehat{\sigma}^*(t),u^*(t))= \sup_{u \in U} H(\widehat{\sigma}^*(t),
u)$ almost everywhere.
\item[(3b)] $\sup_{u \in U} H(\widehat{\sigma}^*(t),
u)$ is constant everywhere.
\item[(3c)] $\widehat{\sigma}^*(t)\neq 0 \in
T^*_{\widehat{\gamma}^*(t)}\widehat{Q}$ for every $ t \in
[a,b]$.
\item[(3d)] $\sigma^*_0(t)$ is constant, $\sigma^*_0(t) \leq 0$.
\end{itemize}

Observe that it only remains to prove \textbf{(3b)} of the Free
Pontryagin's  Maximum Principle, since \textbf{(3a)}, \textbf{(3c)}
and \textbf{(3d)} are the
same in both Theorems \ref{PMP}, \ref{PMfreevariable}.\\

\textbf{(3b)} Due to (3a) we already know that the supremum of the
Hamiltonian is constant everywhere along $(\widehat{\sigma}^*,
u^*)$. Now, let us prove that the supremum can be taken to be zero
everywhere.

Take  $\widehat{v}_b={\pm}\widehat{X}(\widehat{\gamma}^*(b),
u^*(b))\in \widehat{K}^{\pm}_b$, let $\widehat{V}\colon I
\rightarrow T\widehat{Q}$ be the integral curve of
$(\widehat{X}^T)^{\{u^*\}}$ with initial condition
$(b,\widehat{\gamma}(b),\widehat{v}_b)$, then the continuous
function $\langle \widehat{\sigma}^*,\widehat{V} \rangle \colon I
\rightarrow \mathbb{R}$ is constant everywhere by Proposition
\ref{constant}. Thus,
$$\langle \widehat{\sigma}^*(t), \widehat{V}(t)\rangle=\langle \widehat{\sigma}^*(t),
{\pm}\widehat{X}(\widehat{\gamma}^*(t), u^*(t))\rangle \leq 0 \quad
{\rm for} \;\; {\rm every} \;\; t\in I$$ by Equation
(\ref{ineq1trans}), and this implies that
$$\langle
\widehat{\sigma}^*(t),\widehat{X}(\widehat{\gamma}^*(t),
u^*(t))\rangle=0.$$ As $\langle
\widehat{\sigma}^*(t),\widehat{X}(\widehat{\gamma}^*(t),
u^*(t))\rangle = H(\widehat{\sigma}^*(t), u^*(t))$, the Hamiltonian
function is zero everywhere and the supremum of the Hamiltonian
function is zero everywhere by Theorem \ref{PMP}.
\end{proof}

Observe that the initial condition for the covector in this proof
has been chosen such that the tangent spaces to the initial and
final submanifolds are contained in the separating hyperplane
defined by the covector. In this statement of the Maximum Principle
the initial condition for the covector must satisfy more conditions
than in Theorem \ref{PMP} (namely the transversality conditions).


\appendix

\section*{Appendices}

This last part of the report is mainly devoted to state and
prove some of the results used in the proof of the Maximum
Principle and to give more understanding to some key
points.

\section{Results on real functions}\label{analytic}

In this Appendix we focus on some necessary technicalities for the
proof of Pontryagin's Maximum Principle. These are related with
results from analysis and the notion of a Lebesgue point for a real
function. In this paper the notion of a Lebesgue point is applied to
vector fields. For more details about all this, see
\cite{68Royden,65Varberg,89Zaanen}.

\begin{defin} Let $(X,d_X)$ and $(Y,d_Y)$ be metric spaces. A function
$f \colon X \rightarrow Y$ is \textbf{Lipschitz} if there
exists $K\in \mathbb{R}$ such that $d_Y(f(x_1),f(x_2))\leq
K\,d_X(x_1,x_2)$ for all $x_1,x_2\in X$.

A function $f \colon X \rightarrow Y$ is \textbf{locally Lipschitz}
if, for every $x\in X$ there exists an open neighbourhood $V$ of $x$
and $K\in \mathbb{R}^+$ such that $ d_Y(f(x_1),f(x_2))\leq
K\,d_X(x_1,x_2)$ for all $x_1$ and $x_2$ in $V$.
\end{defin}
If $M$ is a differentiable manifold, $g$ is a Riemannian metric on
$M$ and $d_g\colon M \times M \rightarrow \mathbb{R}$ is the induced
distance; then $(M,d_g)$ is a metric space where the notion of
Lipschitz on $M$ can be defined. A real\textendash{}valued function
$F\colon M\rightarrow \mathbb{R}$ is locally Lipschitz if, for every
$p\in M$ we take the local chart $(V,\phi)$ such that $\phi(p)=0$,
$\phi(V)=B(0,r)$ is the open ball centered at the origin with radius
$r>0$ in the standard Euclidean space, and $F\circ \phi^{-1}\colon
B(0,r) \rightarrow \mathbb{R}$ is Lipschitz. That is, there exists
$K\in \mathbb{R}^+$ with
$$|F(p_1)-F(p_2)|=|(F\circ \phi^{-1})(\phi(p_1))-(F\circ
\phi^{-1})(\phi(p_2))| \leq Kd(\phi(p_1),\phi(p_2)), \quad \forall
p_1,p_2\in V.$$ Hence, given the local chart $(V,\phi)$, we define a
distance $d_{\phi}\colon V \times V \rightarrow \mathbb{R}$ on $V$,
$d_{\phi}(p_1,p_2)=d(\phi(p_1),\phi(p_2))$. Consequently, $(V,\phi)$
is a metric space with the topology induced by the open set $V$ in
$M$. This distance is equivalent to the distance induced by the
Riemannian metric on $V$.
Observe that the notion of locally Lipschitz for functions on
manifolds depends on the local chart, but ${\mathcal C}^1$ functions
are always locally Lipschitz.

\begin{defin} A function $f\colon [a,b] \rightarrow
\mathbb{R}$ is \textbf{uniformly continuous on $[a,b]$} if, for
every $\epsilon>0$, there exists $\delta >0$ such that for any $t,s
\in [a,b]$ with $|t-s|<\delta$, we have $|f(t)-f(s)|< \epsilon$.
\end{defin}

\begin{defin} A function $f\colon [a,b] \rightarrow
\mathbb{R}$ is \textbf{absolutely continuous on $[a,b]$}
if, for every $\epsilon
> 0$, there exists $\delta
> 0$ such that for every finite number of nonoverlapping
subintervals $(a_i, b_i)$ of $[a,b]$ with $\sum_{i=1}^n |b_i-a_i| <
\delta $, we have $\sum_{i=1}^n | f(b_i)-f(a_i)|< \epsilon$.
\end{defin}

We consider an interval $I=[a,b]$ in $\mathbb{R}$ with the usual
Lebesgue measure. A statement is said to be satisfied \textit{almost
everywhere} if it is fulfilled in $I$ except on a zero measure set.
A measurable subset $A\subset I$ is said of full measure if $I-A$
has measure zero. Recall that if $A, B \subset I$ and $I-A$, $I-B$
have measure zero, then $A \cap B$ is not empty.

Results in \cite{68Royden}, pp. 96, 100, 105 allow one to prove the
following result.
\begin{prop}\label{aederivative} If $f$ is absolutely continuous, then $f$ has
a derivative almost everywhere.
\end{prop}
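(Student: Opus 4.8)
The plan is to reduce the statement to Lebesgue's classical theorem asserting that a monotone real function on a compact interval is differentiable almost everywhere; this is the genuinely hard analytic input (it rests on a Vitali covering argument and is exactly what the cited pages of \cite{68Royden} establish). Granting that theorem, the rest of the argument is purely structural.

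First I would show that an absolutely continuous $f\colon[a,b]\to\mathbb{R}$ has bounded variation. Apply the definition of absolute continuity with $\epsilon=1$ to obtain a $\delta>0$, and choose a partition $a=c_0<c_1<\dots<c_N=b$ into subintervals each of length strictly less than $\delta$, so $N$ can be taken to be $\lceil (b-a)/\delta\rceil$. For any finite partition of a single $[c_{k-1},c_k]$ the lengths sum to less than $\delta$, hence the corresponding sum of absolute increments of $f$ is at most $1$; taking the supremum gives $V_{c_{k-1}}^{c_k}(f)\le 1$, and summing over $k$ yields $V_a^b(f)\le N<\infty$.

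Next I would invoke the Jordan decomposition: writing $g(x)=V_a^x(f)$ for the total-variation function, both $g$ and $g-f$ are nondecreasing on $[a,b]$, and $f=g-(g-f)$. By Lebesgue's theorem each of $g$ and $g-f$ is differentiable, with finite derivative, at every point of a set of full measure; since a finite intersection of full-measure sets has full measure (as recalled in Appendix~\ref{analytic}), $f=g-(g-f)$ is differentiable almost everywhere.

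The only nonroutine step is the appeal to Lebesgue's differentiation theorem for monotone functions; the reduction AC $\Rightarrow$ BV $\Rightarrow$ difference of monotone functions is elementary. One could instead bypass bounded variation and argue directly, via a Vitali covering estimate, that the upper and lower Dini derivates of an absolutely continuous function coincide and are finite almost everywhere, but routing through monotonicity is shorter given that the monotone case is already available from \cite{68Royden}.
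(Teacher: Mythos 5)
Your proof is correct, and it is essentially the argument that the paper delegates to its citation: the pages of Royden referenced (pp.~96, 100, 105) are precisely the ones establishing differentiation of monotone functions, the Jordan decomposition of a function of bounded variation, and the fact that absolute continuity implies bounded variation. The paper offers no further detail, so your reduction AC $\Rightarrow$ BV $\Rightarrow$ difference of two nondecreasing functions $\Rightarrow$ Lebesgue's monotone differentiation theorem is exactly the intended route.
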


\begin{teo}\label{constew}[\cite{68Royden}, pp.105 and \cite{65Varberg}, pp.836]
If $f$ is absolutely continuous and $f'(t)=0$ almost everywhere on
$[a,b]$, then $f$ is a constant function.
\end{teo}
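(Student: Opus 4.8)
The plan is to reduce the statement to showing that $f(c)=f(a)$ for every $c\in(a,b]$, which forces $f$ to be constant. Fix such a $c$ and an arbitrary $\epsilon>0$, and let $\delta>0$ be the number furnished by the absolute continuity of $f$ for this $\epsilon$. Let $E\subset(a,c)$ be the set of points $t$ at which $f'(t)$ exists and equals $0$; by hypothesis $(a,c)\setminus E$ has measure zero, so $E$ has measure $c-a$.

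The heart of the argument is a Vitali covering construction. For each $t\in E$, since $f'(t)=0$, there are arbitrarily small $h>0$ with $[t,t+h]\subset(a,c)$ and $|f(t+h)-f(t)|\le\epsilon h$; the collection of all such closed intervals is a Vitali cover of $E$. By the Vitali covering lemma (developed in both \cite{68Royden} and \cite{65Varberg}), there is a finite pairwise disjoint subcollection, which we label $[a_1,b_1],\dots,[a_n,b_n]$ and order so that $a\le a_1<b_1\le a_2<\cdots\le a_n<b_n\le c$, such that $E\setminus\bigcup_k[a_k,b_k]$ has measure less than $\delta$. Because $E$ differs from $(a,c)$ only by a null set and the chosen intervals are disjoint, the complementary ``gap'' intervals $[a,a_1],[b_1,a_2],\dots,[b_{n-1},a_n],[b_n,c]$ are nonoverlapping with total length less than $\delta$.

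Now split $f(c)-f(a)$ as a telescoping sum along the points $a,a_1,b_1,a_2,b_2,\dots,a_n,b_n,c$ and estimate:
\[
|f(c)-f(a)|\le\sum_{k=1}^{n}|f(b_k)-f(a_k)|+\sum_{\text{gaps}}|f(\text{right endpoint})-f(\text{left endpoint})|.
\]
The first sum is at most $\sum_{k=1}^{n}\epsilon(b_k-a_k)\le\epsilon(c-a)$ by the choice of the Vitali intervals, and the second sum is at most $\epsilon$ by absolute continuity, since the gaps are nonoverlapping with total length below $\delta$. Hence $|f(c)-f(a)|\le\epsilon(c-a)+\epsilon$, and letting $\epsilon\to0$ gives $f(c)=f(a)$, so $f$ is constant on $[a,b]$.

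The only genuine obstacle is the Vitali covering lemma itself; the rest is bookkeeping. The points requiring a little care are that the covering intervals can be taken inside $(a,c)$, and that ``covering $E$ up to measure $\delta$'' translates into ``the gap intervals have total length less than $\delta$'', which uses disjointness of the chosen intervals together with $E$ being co-null in $(a,c)$. Since both cited references build the Vitali lemma, one may alternatively simply invoke the conclusion from there, as the authors likely do.
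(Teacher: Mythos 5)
Your argument is correct and is precisely the standard Vitali-covering proof that appears in \cite{68Royden}, one of the two references the paper cites for this theorem; the paper itself states the result without proof, deferring entirely to those references.
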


\begin{defin} A real\textendash{}valued function $f$ on a metric
space
$(X,d)$ is called \textbf{lower semicontinuous at $x_0\in X$} if,
for every $\epsilon
>0$, there exists $\delta(\epsilon, x_0)>0$ such that
$f(x)\geq f(x_0)-\epsilon$ whenever $d(x,x_0)\leq
\delta(\epsilon,t_0)$.

If $f$ is lower semicontinuous at
every
point of $(X,d)$, it is said to be \textbf{lower
semicontinuous on $(X,d)$}.
\end{defin}

The following result is stated by Pontryagin et al. in \cite{P62},
page 102, but it is neither proved nor stated as a proposition. We
believe it is appropriate to write it with more detail because it is
used in \S \ref{proofPMPfixed}.

\begin{prop}\label{lowerew} Let $f$ and $g$ be real functions,
$f,g \colon [a,b] \rightarrow \mathbb{R}$. If $f$ is con\-ti\-nuous,
$g$ is lower semicontinuous, $f\leq g$ and $f=g$ almost everywhere
then $f=g$ everywhere.
\end{prop}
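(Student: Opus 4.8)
The plan is to prove the contrapositive-flavoured pointwise statement directly: fix an arbitrary $t_0\in[a,b]$ and show $f(t_0)=g(t_0)$, using the three hypotheses $f\le g$ everywhere, $f=g$ a.e., $f$ continuous and $g$ lower semicontinuous. Since $f\le g$, it suffices to rule out $f(t_0)<g(t_0)$. So I would suppose $g(t_0)-f(t_0)=2\eta>0$ and derive a contradiction with the fact that $f=g$ on a set of full measure (in particular on a set that meets every neighbourhood of $t_0$).

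First I would use lower semicontinuity of $g$ at $t_0$: for $\epsilon=\eta$ there is $\delta_1>0$ with $g(t)\ge g(t_0)-\eta$ for all $t\in[a,b]$ with $|t-t_0|\le\delta_1$. Next I would use continuity of $f$ at $t_0$: for the same $\epsilon=\eta$ there is $\delta_2>0$ with $f(t)\le f(t_0)+\eta$ for all $t\in[a,b]$ with $|t-t_0|\le\delta_2$. Put $\delta=\min\{\delta_1,\delta_2\}$. Then for every $t\in[a,b]$ with $|t-t_0|\le\delta$,
\[
g(t)-f(t)\ \ge\ \bigl(g(t_0)-\eta\bigr)-\bigl(f(t_0)+\eta\bigr)\ =\ \bigl(g(t_0)-f(t_0)\bigr)-2\eta\ =\ 0 .
\]
That only re-proves $f\le g$, so the key refinement is to sharpen one of the two bounds strictly: since $g(t_0)-f(t_0)=2\eta$, choosing $\epsilon=\eta/2$ in both estimates gives, for $|t-t_0|\le\delta'$,
\[
g(t)-f(t)\ \ge\ \bigl(g(t_0)-\tfrac{\eta}{2}\bigr)-\bigl(f(t_0)+\tfrac{\eta}{2}\bigr)\ =\ 2\eta-\eta\ =\ \eta\ >\ 0 .
\]
Thus $g(t)>f(t)$ for every $t$ in the neighbourhood $[t_0-\delta',t_0+\delta']\cap[a,b]$, which is a set of positive Lebesgue measure. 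This contradicts the hypothesis that $f=g$ almost everywhere, because the set $\{t : f(t)\ne g(t)\}$ would then contain a set of positive measure. Hence $f(t_0)=g(t_0)$, and since $t_0$ was arbitrary, $f=g$ everywhere.

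The argument is essentially routine; the only point that needs a little care is the bookkeeping of $\epsilon$'s — one must pick $\epsilon$ strictly smaller than half the putative gap so that the two one-sided estimates combine into a \emph{strict} inequality on a whole neighbourhood, rather than the vacuous $g\ge f$. A second small subtlety is the boundary case $t_0\in\{a,b\}$, where the neighbourhood intersected with $[a,b]$ is a half-interval; this is still of positive measure, so the contradiction with "$f=g$ a.e." goes through unchanged. No deeper measure-theoretic input (e.g. Lebesgue points) is needed: only that a nonempty open subinterval of $[a,b]$ has positive measure.
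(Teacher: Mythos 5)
Your proof is correct and rests on exactly the same ingredients as the paper's: lower semicontinuity of $g$ and continuity of $f$ to control the gap $g-f$ on a neighbourhood of $t_0$, combined with the full-measure hypothesis. The paper phrases it as a direct $\epsilon$-argument picking a single witness $t_1$ where $f(t_1)=g(t_1)$, whereas you argue by contradiction on a positive-measure neighbourhood, but these are equivalent formulations of the same idea.
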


\begin{proof}
Let $t_0\in[a,b]$. As $g$ is lower semicontinuous on $[a,b]$, for
every $\epsilon>0$ there exists $\delta(\epsilon,t_0)=\delta>0$ such
that
$$g(t)\geq g(t_0)-\epsilon$$
whenever $|t-t_0|<\delta(\epsilon, t_0)$.

Since $f$ and $g$ coincide almost everywhere on $[a,b]$,
there exists $t_1 \in (t_0-\delta,t_0+\delta)$ such that
$f(t_1)=g(t_1)$. Moreover, $f\leq g$, so
\begin{equation}\label{fg}
f(t_0)\leq g(t_0)\leq g(t_1)+\epsilon =f(t_1)+\epsilon.
\end{equation}
The continuity of $f$ guarantees that for every $\epsilon'>0$, there
exists $\delta'>0$ such that if $|t_1-t_0|< \delta'$, then
$f(t_1)-\epsilon'<f(t_0)<f(t_1)+\epsilon'$. Hence Equation
(\ref{fg}) is rewritten as follows:
$$f(t_0)\leq g(t_0) \leq f(t_0)+\epsilon'+\epsilon.$$

As this inequality is valid for every $\epsilon,\epsilon'>0$,
$g(t_0)=f(t_0)$ for every $t_0\in [a,b]$. Thus $f=g$ everywhere.
\end{proof}


\subsection{Lebesgue points for a real
function}

After introducing the concept of measurable function and some
properties of such functions, we state Lebesgue's differentiation
theorem, which enables us to distinguish certain points for a
measurable function. In the entire paper we consider the Lebesgue
measure in $\mathbb{R}$. See the book by Zaanen \cite{89Zaanen} for
more details.

\begin{defin} A function $f \colon [a,b]\subset \mathbb{R} \rightarrow
\mathbb{R}$ is \textbf{measurable} if the set $\{t \in [a,b]\,
\colon \, f(t)> \alpha\}$ is measurable for every $\alpha \in
\mathbb{R}$.
\end{defin}


\begin{defin} A function $f\colon
[a,b]\rightarrow \mathbb{R}$ is \textbf{Lebesgue integrable} over
each Lebesgue measurable set of finite measure if $\nu(x) = \int_a^x
f d\mu$ is well defined for every $x\in[a,b]$.
\end{defin}

\begin{teo} \label{diffLebesgue}\textbf{(Lebesgue's Differentiation Theorem \cite{89Zaanen})}
Let $\mu$ be the Lebesgue measure. If $f\colon [a,b]\rightarrow
\mathbb{R}$ is a Lebesgue integrable function over every Lebesgue
measurable set of finite measure, then for $\nu(x) = \int_a^x f
d\mu$,
$$D \nu (x_+)=D \nu (x_-)= f(x)$$
holds for $\mu$\textendash{}almost every $x\in [a,b]$,
where $D \nu (x_+)$, $D \nu (x_-)$ are the right and left
derivatives of $\nu$ respectively.
\end{teo}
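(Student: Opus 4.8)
The plan is to follow the classical route via the Hardy--Littlewood maximal function and a one--dimensional covering lemma, reducing first to the case of a continuous integrand, for which the statement is just the Fundamental Theorem of Calculus.

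First I would observe that it suffices to prove the assertion for the right derivative $D\nu(x_+)$: the left derivative is obtained by applying the result to the reflected integrand $\tilde f(t)=f(a+b-t)$, since $\tilde\nu(x)=\nu(b)-\nu(a+b-x)$ turns right difference quotients of $\tilde\nu$ into left difference quotients of $\nu$. For $x\in[a,b)$ and $h>0$ small,
$$\frac{\nu(x+h)-\nu(x)}{h}=\frac1h\int_x^{x+h}f\,d\mu,$$
so the goal is $\lim_{h\to0^+}\frac1h\int_x^{x+h}f\,d\mu=f(x)$ for $\mu$--almost every $x$. If $g$ is continuous this limit equals $g(x)$ at \emph{every} point, by continuity of $g$. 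For a general Lebesgue integrable $f$ and a given $\epsilon>0$, I would choose, by density of $C([a,b])$ in $L^1([a,b])$, a continuous $g$ with $\int_a^b|f-g|\,d\mu<\epsilon$, and set $\phi=f-g$.

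Next I would introduce the one--sided maximal function $M\phi(x)=\sup_{h>0}\frac1h\int_x^{x+h}|\phi|\,d\mu$ and prove the weak type bound $\mu\{x\in[a,b]:M\phi(x)>\lambda\}\le\lambda^{-1}\int_a^b|\phi|\,d\mu$ for every $\lambda>0$. This is the analytic core, and I expect it to be the main obstacle: in dimension one it follows cleanly from F.\ Riesz's ``rising sun'' lemma applied to the continuous function $F(x)=\int_a^x|\phi|\,d\mu-\lambda x$ --- the open set where $F$ is later exceeded decomposes into intervals $(a_k,b_k)$ with $\int_{a_k}^{b_k}|\phi|\,d\mu\ge\lambda(b_k-a_k)$, and $\{M\phi>\lambda\}$ is contained in their union; in higher dimensions one would instead invoke the Vitali covering lemma.

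Finally I would assemble the pieces. Put $E_\lambda=\{x:\limsup_{h\to0^+}|\tfrac1h\int_x^{x+h}f\,d\mu-f(x)|>2\lambda\}$. Writing $f=g+\phi$ and using that the analogous set for the continuous $g$ is empty, one gets the inclusion $E_\lambda\subset\{M\phi>\lambda\}\cup\{|\phi|>\lambda\}$, whence by the weak type inequality and Chebyshev's inequality $\mu(E_\lambda)\le\lambda^{-1}\epsilon+\lambda^{-1}\epsilon$. Since $\epsilon>0$ was arbitrary, $\mu(E_\lambda)=0$ for each $\lambda>0$; taking the union over $\lambda=1/n$ shows the $\limsup$ is zero $\mu$--a.e., i.e.\ $D\nu(x_+)=f(x)$ for $\mu$--almost every $x$. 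Applying this to the reflected integrand yields $D\nu(x_-)=f(x)$ $\mu$--a.e.\ as well, which completes the proof.
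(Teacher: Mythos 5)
The paper states Theorem~\ref{diffLebesgue} with a citation to \cite{89Zaanen} but does not reproduce a proof, so there is no in\textendash{}paper argument to compare yours against; I will assess your proof on its own.

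Your argument is correct and is the standard modern proof of the one\textendash{}dimensional Lebesgue differentiation theorem. The reduction of the left derivative to the right one via the reflection $\tilde f(t)=f(a+b-t)$ is sound: a direct change of variables gives $\tilde\nu(x)=\nu(b)-\nu(a+b-x)$, and a right difference quotient of $\tilde\nu$ at $x$ is exactly a left difference quotient of $\nu$ at $y=a+b-x$, with $\tilde f(x)=f(y)$. The weak\textendash{}type $(1,1)$ estimate for the one\textendash{}sided maximal function follows, as you say, from Riesz's rising\textendash{}sun lemma applied to $F(x)=\int_a^x|\phi|\,d\mu-\lambda x$: the set $\{M\phi>\lambda\}$ is precisely the set of shadow points of the continuous function $F$, it decomposes into disjoint intervals $(a_k,b_k)$ with $F(a_k)\le F(b_k)$, and summing $\int_{a_k}^{b_k}|\phi|\,d\mu\ge\lambda(b_k-a_k)$ over $k$ gives the bound with constant $1$. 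With $g$ continuous and $\phi=f-g$ of small $L^1$ norm, the pointwise estimate $\limsup_{h\to0^+}\bigl|\tfrac1h\int_x^{x+h}f\,d\mu-f(x)\bigr|\le M\phi(x)+|\phi(x)|$ gives the inclusion $E_\lambda\subset\{M\phi>\lambda\}\cup\{|\phi|>\lambda\}$, and the weak\textendash{}type bound together with Chebyshev yields $\mu(E_\lambda)\le 2\epsilon/\lambda$; letting $\epsilon\to0$ and then taking the union over $\lambda=1/n$ finishes the argument. All steps hold. The only cosmetic point worth flagging is that the right\textendash{}derivative statement is meaningful only on $[a,b)$ (and the left on $(a,b]$), but these omissions are single points and irrelevant for an a.e.\ conclusion.
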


The equality $D \nu (x_-)= f(x)$ almost everywhere may be
rewritten as follows for $h>0$
$$\lim_{h\rightarrow
0}\frac{\nu(x-h)-\nu(x)}{-h}=f(x) \quad {\rm a.e.}
\Leftrightarrow \lim_{h \rightarrow 0} \frac{\int_a^{x-h}
f(t)dt -\int_a^xf(t)dt}{-h}=f(x) \quad {\rm a.e.}
\Leftrightarrow $$
\begin{equation*}\label{Lebesguetime} \Leftrightarrow \lim_{h \rightarrow 0}
\frac{\int^x_{x-h}f(t)dt}{h}=f(x) \quad {\rm a.e.}
\Leftrightarrow \int^x_{x-h}f(t)dt=hf(x)+ o(h) \quad {\rm
a.e.}
\end{equation*}
\begin{defin}
If $f\colon [a,b]\rightarrow \mathbb{R}$ is a measurable function,
$x\in(a,b)$ is a \textbf{Lebesgue point for $\mathbf{f}$} if,
$$\lim_{h\rightarrow 0} \int^x_{x-h} \frac{f(t)-f(x)}{h}\,dt=0.$$
\end{defin}

\begin{remark}
As Theorem \ref{diffLebesgue} is true almost everywhere,
the set of Lebesgue points for a measurable function has
full measure.
\end{remark}
\begin{remark}
Observe that if $u\colon I \rightarrow U$ is measurable and bounded,
then it is integrable and the set of Lebesgue points for $u$ has
full measure. If $f\colon U\rightarrow \mathbb{R}$ is continuous,
then $f\circ u \colon I \rightarrow \mathbb{R}$ is integrable, and
the intersection of Lebesgue points for $u$ and $f\circ u$ has full
measure.
\end{remark}
 \note Assume we have a manifold $Q$, an open set $U\subset \mathbb{R}^k$ and a continuous vector field $X$ along the
projection $\pi \colon Q \times U \rightarrow Q$. If
$(\gamma,u)\colon I=[a,b] \rightarrow Q\times U$, where $\gamma$ is
absolutely continuous and $u$ is measurable and bounded, then
$X\circ (\gamma,u)\colon I \rightarrow TQ$ is a measurable vector
field along $(\gamma,u)$, in the sense that in any coordinate system
its coordinate functions are measurable. A point $t\in(a,b)$ is a
\textit{Lebesgue point for $u$} if
\begin{equation}\label{eqLebesgue}
\int_{t -h}^t X(\gamma(s),u(s))ds=hX(\gamma(t),u(t))+ o(h).
\end{equation}
The Lebesgue points for a vector field are useful in \S
\ref{perturbedconesfixed}, \S \ref{perturbedconesnonfixed} and in
the following appendix to guarantee the differentiability of some
curves, that is, the existence of its tangent vector. See
\cite{2004Canizo,55Coddington} for more details about differential
equations and measurability.

\section{Time\textendash{}dependent variational equations}
\label{variationalsection}

The variational equations give us an approach to how the
integral curves of vector fields vary when the initial
condition varies along a curve. These equations have a
formulation on the tangent and the cotangent bundle. Here
we are interested in studying the variational equations
associated to time\textendash{}dependent vector fields, and
in proving some relationship between the solutions of
variational equations on the tangent bundle and the ones on
the cotangent bundle. See \cite{93Michor} for more details
about these concepts.

\subsection{Time\textendash{}dependent vector fields}\label{ApTimevf}
As seen in \S \ref{general}, control systems are associated to a
time\textendash{}dependent vector field through a vector field along
a projection. For $I\subset \mathbb{R}$, a \textit{differentiable
time\textendash{}dependent vector field $X$} is a mapping $X\colon I
\times M \rightarrow TM$ such that each $(t,x)\in I \times M$ is
assigned to a tangent vector $X(t,x)$ in $T_xM$. For every $(s,x)\in
I\times M$, \textit{the integral curve of $X$ with initial condition
$(s,x)$} is denoted by $\Phi^X_{(s,x)} \colon J_{(s,x)} \subset
I\rightarrow M$ and satisfies
\begin{enumerate}
\item $\Phi^X_{(s,x)}(s)=x$.
\item $\ds{\left.\frac{d}{dt}\right|_t \Phi^X_{(s,x)}=X(t,
\Phi^X_{(s,x)}(t))}$, $t\in J$.
\end{enumerate}
The domain of $\Phi^X_{(s,x)}$ is denoted by $J_{(s,x)}\subset I$
because it depends on the initial condition for the integral curves.

The \textit{time dependent flow} or \textit{evolution operator of
$X$} is the mapping
\begin{equation}\label{timeflow}\begin{array}{rcl}
\Phi^X \colon  & I\times I\times M &\longrightarrow M\\
&(t,s,x)&\longmapsto \Phi^X(t,s,x)=\Phi^X_{(s,x)}(t)
\end{array}\end{equation}
defined in a maximal open neighborhood $V\times M$ of $\Delta_I
\times M$, where $\Delta_I$ is the diagonal of $I\times I$, and
$\Phi^X$ satisfies
\begin{enumerate}
\item $\Phi^X(s,s,x)=x$.
\item $\ds{\left.\frac{d}{dt}\right|_t \ \left(
\Phi^X(t,s,x)\right)=X(t, \Phi^X(t,s,x))}$.
\end{enumerate}
To obtain the original vector field through the evolution operator,
the expression in the second assertion must be evaluated at $s=t$,
$$\left.\left[\left.\frac{d}{dt}\right|_t \ \left(
\Phi^X(t,s,x)\right)\right]\right|_{s=t}=X(t,x).$$ There is
a time\textendash{}independent vector field on the manifold
$I\times M$ associated to $X$ and given by
$\widetilde{X}(t,x)=\partial/\partial t_{\left| (t,x)
\right.}+X(t,x)$.  For $(t,s,x)\in V\times M$,  the flow of
$\widetilde{X}$ is $\Phi^{\widetilde{X}}\colon I\times
I\times M\rightarrow I\times M$ such that
$\Phi^{\widetilde{X}}_{(s,x)}$ is the integral curve of
$\widetilde{X}$ with initial condition $(s,x)$ at time $0$
and
$\Phi^{\widetilde{X}}(t,(s,x))=(s+t,\Phi^X(s+t,(s,x)))$.
The theorems in differential equations about the existence
and uniqueness of solutions guarantee the existence and
uniqueness of the evolution operator $\Phi^X$ defined
maximally.

For $(t,s)\in V \subset I \times I$,
\[
\begin{array}{rl}
\Phi^X_{(t,s)} \colon  M &\longrightarrow M\\
x&\longmapsto \Phi^X_{(t,s)}(x)=\Phi^X_{(s,x)}(t)
\end{array}
\]
is a diffeomorphism on $M$ satisfying
$\Phi^X_{(t,s)}=\Phi^X_{(t,r)}\circ \Phi^X_{(r,s)}$ for $r\in I$,
such that $(r,s)$, $(t,r) \in V$.


\subsection{Complete lift}\label{completelift}

Fromt the evolution operator of time\textendash{}dependent vector
fields in Equation $(\ref{timeflow})$, it is determined the
evolution operator of a particular vector field on $TM$.

Let $X_t\colon M \rightarrow TM$ be a vector field on $M$ such that
$X_t(x)=X(t,x)$ for every $t\in I$. The \textit{complete or tangent
lift of $X_t$ to $TM$} is the time\textendash{}dependent vector
field $X_t^T$ on $TM$ satisfying
$$X^T_t=\kappa_M \circ TX_t,$$
where $\kappa_M$ is the canonical involution of $TTM$; that is, a
mapping $\kappa_M\colon TTM \rightarrow TTM$ such that
$\kappa_M^2={\rm Id}$ and $\tau_{TM}\circ \kappa_M=T\tau_M$. See
\cite{93Michor} for more details in the definition. Moreover,
observe that $X_t$ is a vector field that makes the following
diagram commutative:
$$\bfig\xymatrix{TTM\ar[rr]^{\txt{\small{$\tau_{TM}$}}}
\ar@/_1pc/[dd]_{\txt{\small{$T\tau_M$}}} && TM
\ar@/_1pc/[dd]_{\txt{\small{$\tau_M$}}}
\\ && \\
TM\ar@/_1pc/[uu]_{\txt{\small{$TX_t$}}}
\ar[rr]^{\txt{\small{$\tau_M$}}}&& M
\ar@/_1pc/[uu]_{\txt{\small{$X_t$}}}}\efig$$ If $(x,v)\in TM$, then
$TX_t(x,v)=(x,X_t(x),T_xX_t(v)) \in T_{(x,X_t(x))}(TM)$.

Let $(W,x^i)$ be a local chart at $x$ in $M$ such that $X_t=X^i_t \,
\partial/\partial x^i$ where $X^i_t(x)=X^i(t,x)$ and $X^i\in {\mathcal C}^{\infty}(I\times W)$. If $(x^i,v^i)$
are the induced local coordinates in $TM$, then locally
$$X^T(t,x,v)= X^i(t,x) \left.\frac{\partial}{\partial x^i}\right|_{(x,v)} +
\frac{\partial X^i}{\partial x^j}(t,x)\,v^j
\left.\frac{\partial}{\partial v^i}\right|_{(x,v)}.$$

The equations satisfied by the integral curves of $X^T$ are
called \textit{variational equations}.

\begin{prop}\label{XT}
If $X$ is a time\textendash{}dependent vector field on $M$ and
$\Phi^X$ is the evolution operator of $X$, then the map $\Psi \colon
I \times I \times TM \rightarrow TM$ defined by
$$\Psi(t,s,(x,v))=\left(\Phi^X(t,s,x), T_x\Phi^X_{(t,s)}(v)\right)$$
is the evolution operator of $X^T$.
\end{prop}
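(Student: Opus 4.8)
The plan is to verify the two defining properties of an evolution operator directly for $\Psi$, using the fact that $\Phi^X$ satisfies them for $X$ and that $X^T$ is built from $X$ by differentiation. First I would check the initial condition: since $\Phi^X(s,s,x)=x$, the map $\Phi^X_{(s,s)}$ is the identity on $M$, hence $T_x\Phi^X_{(s,s)}=\mathrm{Id}_{T_xM}$, so $\Psi(s,s,(x,v))=(x,v)$ as required. The substance of the proof lies in the second property, namely that $t\mapsto\Psi(t,s,(x,v))$ is an integral curve of $X^T$; that is,
\[
\left.\frac{d}{dt}\right|_t \Psi(t,s,(x,v)) = X^T\!\left(t,\Psi(t,s,(x,v))\right).
\]

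For the base-point component this is immediate: the first slot of $\Psi$ is $\Phi^X(t,s,x)$, whose $t$-derivative is $X(t,\Phi^X(t,s,x))$, matching the first block of the local expression for $X^T$. For the fibre component I would work in a local chart $(W,x^i)$ with induced coordinates $(x^i,v^i)$ on $TM$. Write $x(t)=\Phi^X(t,s,x)$ and $w(t)=T_x\Phi^X_{(t,s)}(v)$, so $w^i(t)=\frac{\partial (\Phi^X)^i}{\partial x^j}(t,s,x)\,v^j$. Differentiating in $t$ and interchanging the $t$- and $x$-derivatives (legitimate since $\Phi^X$ is $C^1$ jointly, indeed smooth, in its arguments, so mixed partials commute),
\[
\dot w^i(t) = \frac{\partial}{\partial x^j}\!\left(\frac{\partial (\Phi^X)^i}{\partial t}(t,s,x)\right) v^j
= \frac{\partial}{\partial x^j}\!\left(X^i(t,\Phi^X(t,s,x))\right) v^j
= \frac{\partial X^i}{\partial x^k}(t,x(t))\,\frac{\partial (\Phi^X)^k}{\partial x^j}(t,s,x)\,v^j,
\]
which is exactly $\frac{\partial X^i}{\partial x^k}(t,x(t))\,w^k(t)$, the fibre block of $X^T$ evaluated at $(x(t),w(t))=\Psi(t,s,(x,v))$. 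Thus $\Psi(\cdot,s,(x,v))$ solves the variational equations with the correct initial value, so by uniqueness of solutions to the ODE defining the flow of $X^T$ it coincides with $\Phi^{X^T}$. I would also remark that $\Psi$ is defined on the same maximal neighbourhood of the diagonal as $\Phi^X$ (the variational equation is linear in the fibre variable, hence its solutions exist as long as the base solution does), so the domains match.

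The only delicate point is the interchange of the order of differentiation used above; this is the step I would state carefully, invoking the smoothness of the flow in its joint arguments (standard for smooth time-dependent vector fields, cf. the existence–uniqueness results cited in \S\ref{general} and \cite{93Michor}) so that Schwarz's theorem applies. Everything else is a routine unwinding of definitions together with the uniqueness of integral curves, and an intrinsic (chart-free) version of the argument follows by noting that $\kappa_M\circ TX_t$ is precisely the vector field whose integral curves are obtained by applying $T$ to those of $X_t$, which is the content of Proposition~\ref{propgeomeaning} referenced later.
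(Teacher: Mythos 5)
Your argument is correct and follows essentially the same route as the paper's: verify the initial condition, differentiate $\Psi$ in $t$, interchange the $t$-derivative with the spatial derivative (your Schwarz step in coordinates is exactly the paper's intrinsic interchange of $\frac{d}{dt}$ with $T_x$), then apply the flow equation and the chain rule to identify the result with $X^T\circ\Psi$. The only difference is presentational — you work in a chart while the paper stays intrinsic — so the two proofs are the same in substance.
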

\begin{proof}We have to prove that
$$ \left\{ \begin{array} {l} \Psi(s,s,(x,v))=(x,v); \\ \\ \ds{\left.\frac{d}{dt}\right|_t \ \left(
\Psi(t,s,(x,v)) \right)=X^T(t, \Psi(t,s,(x,v))).}
\end{array} \right. $$

The first item is proved easily,
$$\Psi(s,s,(x,v))=\left(\Phi^X(s,s,x), T_x\left(\Phi^X_{(s,s)}\right)(v)\right)=(x,v).$$

As for the second assertion, we use that $\Phi^X_{(t,s)}\colon M
\rightarrow M$ is a ${\cal C}^{\infty}$ diffeomorphism satisfying
\begin{eqnarray*}T_t\left(T_x\Phi^X_{(t,s)}(v)\right)
1&=&\left.\frac{d}{dt}\right|_t \left(T_x\Phi^X_{(t,s)}(v)\right) 1
= \left(
T_x\left(\left.\frac{d}{dt}\right|_t\left(\Phi^X_{(t,s)}\right) 1
\right)\right)(v)\\ &=&T_x\left(T_t\Phi^X_{(s,x)}
1\right)(v),\end{eqnarray*}  and we obtain
\begin{equation*}\begin{array}{l} \ds{\left.\frac{d}{dt}\right|_t\left(\Psi(t,s,x,v)\right)=\left(\left.\frac{d}{dt}\right|_t
\left(\Phi^X(t,s,x)\right), \left.\frac{d}{dt}\right|_t
\left(T_x\Phi^X_{(t,s)}(v)\right)\right)}\\ \\= \ds{\left(X(t,
\Phi^X(t,s,x)),\left(
T_x\left(\left.\frac{d}{dt}\right|_t\left(\Phi^X_{(t,s)}\right)\right)\right)(v)\right)}
\\ \\ = \left(X(t, \Phi^X(t,s,x)),
\left(T_x\left(X_t(\Phi^X(t,s,x))\right)\right)(v)\right)\\
\\= \left(X(t, \Phi^X(t,s,x)),
\left(T_{\Phi^X(t,s,x)}\left(X_t\right)\circ
T_x(\Phi^X(t,s,x))\right)(v)\right)\\ \\=\left(X(t, \Phi^X(t,s,x)),
T_{\Phi^X(t,s,x)}\left(X_t\right)\left(T_x(\Phi^X_{(t,s)})(v)\right)\right)=X^T(t,\Psi(t,s,x,v)).
\end{array}
\end{equation*}
Hence, the evolution operator of $X^T$ is the complete lift of the
evolution operator of $X$. The integral curves of $X^T$ are vector
fields along the integral curves of $X$.
\end{proof}

\subsubsection{About the geometric meaning of the complete
lift}\label{geometric meaning} The integral curves of $X^T$ must be
understood as the linear approximation of the integral curves of $X$
when the initial condition varies along a curve in $M$. This idea
appears in \S \ref{perturbedconesfixed} and \S \ref{proofPMPfixed}.

Let us explain the next figure. Given an integral curve of $X$ with
initial condition $(s,x)$, we consider a curve $\sigma$ starting at
the point $x$ of the integral curve. Every point of $\sigma$ can be
considered as the initial condition at time $s$ for an integral
curve of $X$. Thus the flow of $X$ transports the curve $\sigma$ at
a different curve $\delta_t$ point by point. The resultant curve is
related with the complete lift of $X$ as the following results
prove.
\begin{center}
\includegraphics*[bb=0 0 360 250,scale=0.6]{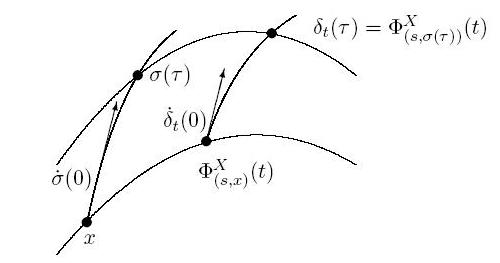}
\end{center}
\begin{prop}\label{propgeomeaning} Let $X\colon I\times M \rightarrow TM$ be a
time\textendash{}dependent vector field with evolution operator
$\Phi^X$ and $(s,x)\in I\times M$. For $\epsilon>0$, let $\sigma
\colon (-\epsilon,\epsilon) \subset \mathbb{R} \rightarrow M$ be a
${\mathcal C}^{\infty}$ curve such that $\sigma(0)=x\in M$. For
every $t\in I$, we define a curve $\delta_t \colon
(-\epsilon,\epsilon) \rightarrow M$ such that
\begin{enumerate} \item
$\delta_t(\tau)=\Phi^X_{(s,\sigma(\tau))}(t)$,
\item $\delta_s(\tau)=\sigma(\tau)$, and
\item $\delta_t(0)=\Phi^X_{(s,x)}(t)$.
\end{enumerate}
Then $\dot{\delta}_t(0)=T_x \Phi^X_{(t,s)}(\dot{\sigma}(0))$.
\end{prop}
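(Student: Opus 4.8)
The plan is to reduce the statement to the chain rule by recognising $\delta_t$ as the composition of a single fixed diffeomorphism with the curve $\sigma$. First I would recall from \S\ref{ApTimevf} that the evolution operator yields, for each admissible pair $(t,s)\in V\subset I\times I$, the diffeomorphism $\Phi^X_{(t,s)}\colon M\to M$, $y\mapsto\Phi^X_{(s,y)}(t)$, together with $\Phi^X_{(s,s)}=\mathrm{Id}_M$. In this notation, the defining property (1) of the proposition reads $\delta_t(\tau)=\Phi^X_{(s,\sigma(\tau))}(t)=\Phi^X_{(t,s)}(\sigma(\tau))$, so that, after shrinking $\epsilon$ so that $(-\epsilon,\epsilon)$ stays inside the domain of definition of the relevant integral curves, we simply have $\delta_t=\Phi^X_{(t,s)}\circ\sigma$ as maps $(-\epsilon,\epsilon)\to M$. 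The auxiliary normalisations (2) and (3) are then immediate: (2) follows from $\Phi^X_{(s,s)}=\mathrm{Id}_M$ and (3) from $\sigma(0)=x$.

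The conclusion is obtained by differentiating the identity $\delta_t=\Phi^X_{(t,s)}\circ\sigma$ at $\tau=0$. Since $\Phi^X_{(t,s)}$ is ${\mathcal C}^{\infty}$ and $\sigma$ is ${\mathcal C}^{\infty}$, the chain rule gives
\[
\dot{\delta}_t(0)=T_{\sigma(0)}\Phi^X_{(t,s)}\bigl(\dot{\sigma}(0)\bigr)=T_x\Phi^X_{(t,s)}\bigl(\dot{\sigma}(0)\bigr),
\]
which is exactly the asserted formula. If one prefers a coordinate computation, one may instead write everything componentwise in a chart around $x$ and differentiate the expression for $\Phi^X$ with respect to the initial condition, using the theorems on differentiable dependence of solutions of ordinary differential equations on initial data; but the coordinate-free argument above is cleaner and uses nothing beyond \S\ref{ApTimevf}.

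The computation is essentially routine, so there is no serious obstacle; the one point I would make explicit is the smoothness of the map $y\mapsto\Phi^X_{(t,s)}(y)$. This is not a Carath\'eodory issue in the present setting, because the proposition hypothesises a differentiable time\textendash{}dependent vector field $X$, and hence the classical existence\textendash{}uniqueness\textendash{}regularity theorems for ordinary differential equations invoked in \S\ref{general} apply and guarantee that $\Phi^X_{(t,s)}$ is a ${\mathcal C}^{\infty}$ diffeomorphism of $M$, as already recorded in \S\ref{ApTimevf}.
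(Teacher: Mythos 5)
Your proof is correct and takes essentially the same approach as the paper: both recognize the identity $\delta_t=\Phi^X_{(t,s)}\circ\sigma$ and then apply the chain rule. The extra remark about smoothness of $\Phi^X_{(t,s)}$ is a sensible clarification but not a departure from the paper's argument.
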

\begin{proof}
\begin{eqnarray*}\dot{\delta}_t(0)&=&\left( T_0\delta_t(\tau)\right)
\left.\frac{d}{d\tau}\right|_0=\left(
T_0\left(\Phi^X_{(s,\sigma(\tau))}(t)\right)\right)
\left.\frac{d}{d\tau}\right|_0= \left(
T_0\left(\Phi^X_{(t,s)}(\sigma(\tau))\right)\right)
\left.\frac{d}{d\tau}\right|_0\\&& \\
&=&T_{\sigma(0)}\Phi^X_{(t,s)}\left(T_0(\sigma(\tau))
\left.\frac{d}{d\tau}\right|_0\right)=T_{\sigma(0)}\Phi^X_{(t,s)}\left(\dot{\sigma}(0)\right)=
T_x\Phi^X_{(t,s)}\left(\dot{\sigma}(0)\right).
\end{eqnarray*}
\end{proof}

\begin{corol} Let $X$ be a time\textendash{}dependent vector field on $M$. For $x\in M$, $x\in T_xM$ and
for a small enough $\epsilon>0$, let $\sigma \colon
(-\epsilon,\epsilon) \subset \mathbb{R} \rightarrow M$ be a
${\mathcal C}^{\infty}$ curve such that $\sigma(0)=x\in M$ and
$\dot{\sigma}(0)=v$. If $\delta_t$ is the curve defined in
Proposition \ref{propgeomeaning}, then $
\dot{\delta}_{(\cdot)}(\tau)\colon I \rightarrow TM$, $t\mapsto
\dot{\delta}_t(\tau)$ is the integral curve of $X^T$ with initial
condition $(s,\dot{\sigma}(\tau))$.
\end{corol}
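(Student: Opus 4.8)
The plan is to reduce the statement to the two facts already in hand: the pointwise formula of Proposition~\ref{propgeomeaning} and the explicit description of the evolution operator of $X^T$ given by Proposition~\ref{XT}.

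First I would fix $\tau\in(-\epsilon,\epsilon)$ and upgrade Proposition~\ref{propgeomeaning}, which only records the value of the derivative at the distinguished parameter $0$, to an arbitrary $\tau$. For this, consider the translated curve $\sigma_\tau\colon\rho\mapsto\sigma(\tau+\rho)$, defined on a small interval around $0$; it is ${\mathcal C}^\infty$, satisfies $\sigma_\tau(0)=\sigma(\tau)$ and $\dot\sigma_\tau(0)=\dot\sigma(\tau)$, and the curves it produces through the construction of Proposition~\ref{propgeomeaning} are exactly $\rho\mapsto\Phi^X_{(s,\sigma(\tau+\rho))}(t)=\delta_t(\tau+\rho)$. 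Applying Proposition~\ref{propgeomeaning} to $\sigma_\tau$ and differentiating at $\rho=0$ then gives $\dot\delta_t(\tau)=T_{\sigma(\tau)}\Phi^X_{(t,s)}(\dot\sigma(\tau))$ for every $t\in I$.

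Next I would invoke Proposition~\ref{XT}: the evolution operator of $X^T$ is $\Psi(t,s,(y,w))=\bigl(\Phi^X(t,s,y),\,T_y\Phi^X_{(t,s)}(w)\bigr)$. Evaluating at $(y,w)=(\sigma(\tau),\dot\sigma(\tau))$, the first component is $\Phi^X_{(t,s)}(\sigma(\tau))=\delta_t(\tau)$ and, by the previous step, the second component is $\dot\delta_t(\tau)$. Hence $t\mapsto\bigl(\delta_t(\tau),\dot\delta_t(\tau)\bigr)=\Psi(t,s,(\sigma(\tau),\dot\sigma(\tau)))$, so $t\mapsto\dot\delta_t(\tau)$ is precisely the integral curve of $X^T$ whose value at time $s$ is $\dot\delta_s(\tau)=\dot\sigma(\tau)\in T_{\sigma(\tau)}M$, which is the claim.

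There is no genuine obstacle beyond careful bookkeeping: the only point requiring a line of care is that Proposition~\ref{propgeomeaning} is phrased for the tangent vector at parameter $0$, so one must justify the translation trick (equivalently, repeat verbatim the chain-rule computation in its proof with base point $\sigma(\tau)$ in place of $x$). Everything else is a direct substitution into the formula for $\Psi$ furnished by Proposition~\ref{XT}.
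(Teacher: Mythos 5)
Your proposal is correct and follows the same route the paper uses, namely combining Proposition~\ref{propgeomeaning} with the explicit evolution operator $\Psi$ of Proposition~\ref{XT}; the paper merely compresses the whole argument into a one\textendash{}line remark. Your only genuine addition is making explicit the translation trick needed to pass from the base parameter $0$ (the only case Proposition~\ref{propgeomeaning} actually records) to a general $\tau$, which is a reasonable piece of bookkeeping that the paper leaves silent.
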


\begin{proof} The proof just comes from Propositions \ref{XT} and
\ref{propgeomeaning} and the definition of the curve $\delta_t$.
\end{proof}

\subsection{Cotangent lift}\label{colift} Given $(t,s)\in I\times I$, the
evolution operator $\Phi^{X^T}_{(t,s)}$ is a diffeomorphism
on $TM$ and a linear isomorphism on the fibers on $TM$, so
it makes sense to consider its transpose and inverse,
$(\;^{\tau}\Phi^{X^T}_{(t,s)})^{-1}=\Lambda_{(t,s)}$. It is
a linear isomorphism on the fibers on $T^*M$ and satisfies
$\Lambda_{(t,s)}=\Lambda_{(t,r)}\circ \Lambda_{(r,s)}$ for
$r\in I$. Hence $\Lambda\colon I\times I \times T^*M
\rightarrow T^*M$ is the evolution operator of a
time\textendash{}dependent vector field on $T^*M$, called
the \textit{cotangent lift $X^{T^*}$ of $X$ to $T^*M$}.

 Another vector field on $T^*M$ may be associated to $X$
using the concepts in Hamiltonian formalism. For every $t\in I$, the
Hamiltonian system $\ds{\left(M,\omega, \widehat{X_t}\right)}$ given
by a symplectic manifold $(M,\omega)$ and the Hamiltonian function
$\widehat{X_t}\colon T^*M \rightarrow \mathbb{R}$,
$\widehat{X_t}(p_x)=i_{X(t,x)}p_x$ where $p_x\in T^*_xM$; has
associated a Hamiltonian vector field $Z_t$ that satisfies
Hamilton's equations $i_{Z_t}\omega= d \widehat{X_t}$.

In local coordinates $(x,p)$ for $T^*M$, $Z\colon I\times T^*M
\rightarrow TT^*M$ is given by
$$Z(t,x,p)= X^i(t,x) \left.\frac{\partial}{\partial x^i}\right|_{(x,p)} -
\frac{\partial X^j}{\partial x^i}(t,x)\,p_j
\left.\frac{\partial}{\partial p_i}\right|_{(x,p)}.$$

The equations satisfied by the integral curves of $Z$ in
the fibers are the \textit{adjoint variational equations on
the cotangent bundle}. In the literature, they are
sometimes called \textit{adjoint equations}.

Let us prove that both vector fields $Z$ and $X^{T^*}$
associated to $X$ are the same.
\begin{prop}\label{cotangent}
If $X$ is a time\textendash{}dependent vector field on $M$
and $\Phi^X$ is the evolution operator of $X$, then
$\Lambda \colon I \times I \times T^*M \rightarrow T^*M$
such that
$$\Lambda(t,s,(x,p))=(\Phi^X(t,s,x), \left(\;^{\tau}T_x \Phi^X_{(t,s)}\right)^{-1}(p))$$
is the evolution operator of $Z$. Thus $Z=X^{T^*}$.
\end{prop}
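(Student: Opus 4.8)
The plan is to check that the map $\Lambda$ of the statement satisfies the two defining properties of the evolution operator of $Z$, namely $\Lambda(s,s,(x,p))=(x,p)$ and $\ds{\left.\frac{d}{dt}\right|_t\Lambda(t,s,(x,p))=Z(t,\Lambda(t,s,(x,p)))}$, and then to conclude by uniqueness: since $\Lambda$ is, by the construction recalled just before the statement, the evolution operator of $X^{T^*}$, and since an evolution operator determines its generating time\textendash{}dependent vector field (evaluate $\ds{\left.\frac{d}{dt}\right|_t\Lambda(t,s,\cdot)}$ at $s=t$), it will follow that $Z=X^{T^*}$. The points that $\Lambda$ covers $\Phi^X$ and that $\Lambda(s,s,(x,p))=(x,p)$ are immediate: $\Phi^X_{(s,s)}=\mathrm{Id}_M$, hence $T_x\Phi^X_{(s,s)}=\mathrm{Id}_{T_xM}$ and $(\;^{\tau}T_x\Phi^X_{(s,s)})^{-1}=\mathrm{Id}_{T^*_xM}$.

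The substance of the argument is a pairing identity with the complete lift. Let $\Psi$ be the evolution operator of $X^T$, which by Proposition \ref{XT} is $\Psi(t,s,(x,v))=(\Phi^X(t,s,x),\,T_x\Phi^X_{(t,s)}(v))$. By the very definition of the transpose of $T_x\Phi^X_{(t,s)}\colon T_xM\to T_{\Phi^X(t,s,x)}M$, for every $v\in T_xM$ one has
$$\langle\Lambda(t,s,(x,p)),\Psi(t,s,(x,v))\rangle=\big\langle(\;^{\tau}T_x\Phi^X_{(t,s)})^{-1}(p),\,T_x\Phi^X_{(t,s)}(v)\big\rangle=\langle p,v\rangle,$$
so this number is independent of $t$. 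Differentiating it in $t$ and using that $\Psi$ is an integral curve of $X^T$, one obtains, in a chart $(W,x^i)$ and with the coordinate expression of $X^T$ recalled above, that for $\lambda(t)=\Lambda(t,s,(x,p))$, $w(t)=\Psi(t,s,(x,v))$ and $y(t)=\Phi^X(t,s,x)$,
$$\dot\lambda_i(t)\,w^i(t)+\lambda_i(t)\,\frac{\partial X^i}{\partial x^j}(t,y(t))\,w^j(t)=0.$$
Because $T_x\Phi^X_{(t,s)}$ is a linear isomorphism, $w(t)$ runs over all of $T_{y(t)}M$ as $v$ runs over $T_xM$, so the coefficient of each $w^j$ vanishes, giving $\dot\lambda_j(t)=-\frac{\partial X^i}{\partial x^j}(t,y(t))\,\lambda_i(t)$. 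Together with $\dot y^i(t)=X^i(t,y(t))$ this is exactly $\ds{\left.\frac{d}{dt}\right|_t\Lambda(t,s,(x,p))=Z(t,\Lambda(t,s,(x,p)))}$, in view of the local expression of $Z$ displayed before the proposition. Equivalently, one can argue purely in coordinates: if $A(t)=T_x\Phi^X_{(t,s)}$, the variational equation reads $\dot A=BA$ with $B^i_{\ j}=\partial X^i/\partial x^j$ evaluated along $y(t)$, whence $\frac{d}{dt}(\;^{\tau}A^{-1})=-\;^{\tau}B\,\;^{\tau}A^{-1}$, which is the adjoint equation applied to $p$.

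Hence $\Lambda$ is the evolution operator of $Z$, and the uniqueness remark above yields $Z=X^{T^*}$. The only mildly delicate step will be keeping the directions of $T_x\Phi^X_{(t,s)}$, its transpose and its inverse consistent — in particular that $(\;^{\tau}T_x\Phi^X_{(t,s)})^{-1}$ maps $T^*_xM$ into $T^*_{\Phi^X(t,s,x)}M$ — and justifying carefully that a single scalar identity, valid for all $v$, genuinely forces the full adjoint system on the fibre coordinates of $\lambda$.
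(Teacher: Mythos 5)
Your proof is correct, but it takes a genuinely different route from the paper's. The paper verifies the second evolution-operator property by a direct, intrinsic computation of $\ds{\left.\frac{d}{dt}\right|_t\Lambda(t,s,(x,p))}$: it differentiates the inverse transpose using the formula $\frac{d}{dt}(A^{-1})=-A^{-1}\dot A\,A^{-1}$, commutes transpose and time derivative, and massages the result into the coordinate-free expression of $Z$. You instead observe that, by the algebraic definition of transpose, the natural pairing $\langle\Lambda(t,s,(x,p)),\Psi(t,s,(x,v))\rangle=\langle p,v\rangle$ is automatically constant in $t$; you then differentiate this constant, substitute the known fibre-component of $X^{T}$ (Proposition \ref{XT}, which precedes the statement, so there is no circularity), and use the fact that $T_x\Phi^{X}_{(t,s)}$ is a linear isomorphism so that the test vector $w(t)$ sweeps out all of $T_{\Phi^X(t,s,x)}M$ — a single scalar identity then forces the full adjoint system on the fibre coordinates. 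The two arguments have complementary virtues: the paper's is self-contained and never invokes the complete lift, while yours makes the duality between $X^{T}$ and $X^{T^*}$ the engine of the proof and, as a by-product, essentially yields Proposition \ref{constant} (the invariance of the pairing) for free rather than deriving it afterwards from Propositions \ref{XT} and \ref{cotangent}. Your remark at the end, reinterpreting the argument as $\dot A=BA\Rightarrow\frac{d}{dt}(\;^{\tau}A^{-1})=-\;^{\tau}B\,\;^{\tau}A^{-1}$, is in fact the coordinate shadow of the paper's intrinsic computation, so you have effectively supplied both proofs.
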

\begin{proof}
We have to prove that
$$ \left\{ \begin{array} {l} \Lambda(s,s,(x,p))=(x,p),
 \\ \\\ds{\left.\frac{d}{dt}\right|_t \left(
\Lambda(t,s,(x,p))\right)=Z(t, \Lambda(t,s,(x,p))).}
\end{array} \right. $$

The first item is proved easily,
$$\Lambda(s,s,(x,p))=\left(\Phi^X(s,s,x), \left(\;^{\tau}T_x \Phi^X_{(s,s)}\right)^{-1}(p)\right)
=(x,{\rm Id}\ (p))=(x,p).$$ As $\Phi^X_s\colon I \times M
\rightarrow M$ is ${\cal C}^{\infty}$, in local coordinates we have
$$\left.\frac{d}{dt}\right|_t \left(\;^{\tau}T_x
\Phi^X_{(t,s)}\right) =\;^{\tau}T_x\left(\left.\frac{d}{dt}\right|_t
\Phi^X_{(t,s)}\right),$$ where both mappings go from $T_x^*M$ to
$T^*_{\Phi^X(t,(s,x))}M$. Now let us prove the second assertion:
\begin{equation*}
\begin{array}{l}
\ds{\left.\frac{d}{dt}\right|_t\left(
\Lambda(t,s,x,p)\right)=\left(\left.\frac{d}{dt}\right|_t
\left(\Phi^X(t,s,x)\right), \left.\frac{d}{dt}\right|_t
\left(\left(\;^{\tau}T_x \Phi^X_{(t,s)}\right)^{-1}(p)
\right)\right)}\\
= \left(X(t, \Phi^X(t,s,x)),\left(-\left(\;^{\tau}T_x
\Phi^X_{(t,s)}\right)^{-1}\circ \left(\left.\frac{d}{dt}\right|_t
\left(\;^{\tau}T_x \Phi^X_{(t,s)}\right)\right)
\circ\left(\;^{\tau}T_x
\Phi^X_{(t,s)}\right)^{-1}\right)(p)\right)\\
=\left(X(t,\Phi^X(t,s,x)), \left(-\left(\;^{\tau}T_x
\Phi^X_{(t,s)}\right)^{-1}\circ
\left(\;^{\tau}T_x\left(\left.\frac{d}{dt}\right|_t
\Phi^X_{(t,s)}\right)\right)\right)\left(\left(\;^{\tau}T_x
\Phi^X_{(t,s)}\right)^{-1}(p)\right)\right)\\
=\left(X(t,
\Phi^X(t,s,x)),-\left(\;^{\tau}T_{\Phi^X_{(t,s)}(x)}\left(X_t\circ
\Phi^X_{(t,s)}\circ \left(\Phi^X_{(t,s)}\right)^{-1}
\right)\right)\left(\left(\;^{\tau}T_x
\Phi^X_{(t,s)}\right)^{-1}(p)\right)\right)\\
=\left(X(t,
\Phi^X(t,s,x)),-\left(\;^{\tau}T_{\Phi^X_{(t,s)}(x)}\left(X_t\right)\right)\left(\left(\;^{\tau}T_x
\Phi^X_{(t,s)}\right)^{-1}(p)\right)\right)=Z(t,\Lambda(t,s,x,p)).\end{array}
\end{equation*}
Hence, the evolution operator of $Z$ is the cotangent lift of the
evolution operator of $X$. Thus $Z=X^{T^*}$.
\end{proof}

\subsection{A property for the complete and cotangent lift}

The previous propositions allow us to determine an invariant
function along integral curves of $X$.

\begin{prop}\label{constant}
Let $X\colon I \times M \rightarrow TM$ be a
time\textendash{}dependent vector field and let $X^T\colon I \times
TM \rightarrow TTM$ and $X^{T^*}\colon I \times T^*M \rightarrow
TT^*M$ be the complete lift and cotangent lift of $X$, respectively.
If $\gamma \colon I\rightarrow M$ is an integral curve of $X$ with
initial condition $(s,x)$, $V\colon I \rightarrow TM$ is the
integral curve of $X^T$ with initial condition $(s,(x,v))$ where
$v\in T_{\gamma(s)}M$, and $\Lambda\colon I \rightarrow T^*M$ is the
integral curve of $X^{T^*}$ with initial condition $(s,(x,p))$ where
$p \in T^*_{\gamma(s)}M$, then
$$\begin{array}{rcl} \langle \Lambda, V \rangle \colon I&
\rightarrow &\mathbb{R} \\
t &\mapsto & \langle \Lambda(t), V(t) \rangle
\end{array}$$
is constant along $\gamma$.
\end{prop}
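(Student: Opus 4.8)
The plan is to reduce everything to the explicit descriptions of the flows of $X^T$ and $X^{T^*}$ obtained in Propositions \ref{XT} and \ref{cotangent}. Since $\gamma$ is the integral curve of $X$ with initial condition $(s,x)$, we have $\gamma(t)=\Phi^X(t,s,x)=\Phi^X_{(t,s)}(x)$, so both $V(t)$ and $\Lambda(t)$ lie over the point $\gamma(t)$ and the pairing $\langle\Lambda(t),V(t)\rangle$ is well defined for every $t\in I$. By Proposition \ref{XT}, the integral curve of $X^T$ through $(s,(x,v))$ is $V(t)=\bigl(\gamma(t),\,T_x\Phi^X_{(t,s)}(v)\bigr)$, and by Proposition \ref{cotangent}, the integral curve of $X^{T^*}=Z$ through $(s,(x,p))$ is $\Lambda(t)=\bigl(\gamma(t),\,({}^{\tau}T_x\Phi^X_{(t,s)})^{-1}(p)\bigr)$.

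First I would fix $t\in I$ and set $A=T_x\Phi^X_{(t,s)}\colon T_xM\to T_{\gamma(t)}M$, which is a linear isomorphism. Then $V(t)$ is given by the vector $A(v)$ and $\Lambda(t)$ by the covector $({}^{\tau}A)^{-1}(p)$. Writing $\beta=({}^{\tau}A)^{-1}(p)\in T^*_{\gamma(t)}M$, so that ${}^{\tau}A(\beta)=p$, the defining property of the transpose yields
\[\langle\Lambda(t),V(t)\rangle=\langle\beta,A(v)\rangle=\langle\,{}^{\tau}A(\beta),v\rangle=\langle p,v\rangle .\]
Since the right-hand side does not depend on $t$, the function $t\mapsto\langle\Lambda(t),V(t)\rangle$ equals the constant $\langle p,v\rangle$ along $\gamma$, which is exactly the assertion.

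As an alternative, purely computational proof one can work in induced charts $(x^i,v^i)$ on $TM$ and $(x^i,p_i)$ on $T^*M$ along $\gamma$: by the local expressions of $X^T$ and of $Z=X^{T^*}$ recalled in Appendices \ref{completelift} and \ref{colift}, $V=(\gamma^i(t),v^i(t))$ satisfies $\dot v^i=\tfrac{\partial X^i}{\partial x^j}(t,\gamma(t))\,v^j$ while $\Lambda=(\gamma^i(t),p_i(t))$ satisfies $\dot p_i=-\tfrac{\partial X^j}{\partial x^i}(t,\gamma(t))\,p_j$; hence $\tfrac{d}{dt}\langle\Lambda,V\rangle=\tfrac{d}{dt}(p_iv^i)=\dot p_i v^i+p_i\dot v^i=0$ after relabelling the summation indices, so $\langle\Lambda,V\rangle$ is constant.

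There is no serious obstacle here; the only point requiring care is the bookkeeping of base points and of the transpose/inverse, i.e. checking that $\Lambda(t)$ and $V(t)$ sit over the same point $\gamma(t)$ so that the pairing makes sense, and that $({}^{\tau}A)^{-1}$ is applied to the correct fibre. If one needs the statement for a Carath\'eodory (merely measurable in $t$) vector field, the intrinsic argument above still applies verbatim, since it uses only the fibrewise linear-algebraic identity; alternatively the computational version produces an absolutely continuous function with a.e.\ vanishing derivative, hence constant by Theorem \ref{constew}.
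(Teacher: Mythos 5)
Your proof is correct and follows essentially the same route as the paper: both invoke Propositions \ref{XT} and \ref{cotangent} to write $V(t)=T_x\Phi^X_{(t,s)}(v)$ and $\Lambda(t)=({}^{\tau}T_x\Phi^X_{(t,s)})^{-1}(p)$ over $\gamma(t)$ and then cancel the flow against its transpose-inverse to get $\langle\Lambda(t),V(t)\rangle=\langle p,v\rangle$. The only superficial difference is bookkeeping: the paper first uses $({}^{\tau}B)^{-1}={}^{\tau}(B^{-1})$ and then the transpose identity, whereas you apply the defining property of ${}^{\tau}A$ to $\beta=({}^{\tau}A)^{-1}(p)$ directly; your added coordinate computation and the remark on the Carath\'eodory case are fine but not part of the paper's argument.
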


\begin{proof} If $\Phi^X$ is the evolution operator of $X$, the
evolution operators of $X^T$ and $X^{T^*}$ are
$$\Phi^{X^T}(t,s,(x,v))=\left(\Phi^X(t,s,x), T_x\Phi^X_{(t,s)} (v)\right),$$
$$\Phi^{X^{T^*}}(t,s,(x,p))=\left(\Phi^X(t,s,x), \left(\;^{\tau}T_x \Phi^X_{(t,s)}\right)^{-1}(p)\right),$$
respectively, because of Propositions \ref{XT} and \ref{cotangent}.
Hence
\begin{eqnarray*}\langle \Lambda(t), V(t) \rangle &=&
\left<\left(\;^{\tau}T_x \Phi^X_{(t,s)}\right)^{-1}(p),
T_x\Phi^X_{(t,s)} (v)\right>\\ &=& \left<\;^{\tau}\left(\left(T_x
\Phi^X_{(t,s)}\right)^{-1}\right)(p), T_x\Phi^X_{(t,s)} (v)\right>\\
& =& \left< p, \left(\left(T_x \Phi^X_{(t,s)}\right)^{-1}\circ
\left(T_x\Phi^X_{(t,s)}\right)\right) (v) \right>=\langle p, v
\rangle = {\rm constant}.
\end{eqnarray*}
\end{proof}

\section{The tangent perturbation cone as an approximation of the reachable
set}\label{approxreachable}

In control systems, the reachable sets are useful to determine the
accessibility and the controllability of the systems. In optimal
control, the reachable set has a great importance for distinguishing
the abnormal optimal curves from the normal ones
\cite{2005BulloLewis,2003Bavo}. A key point in the proof of the
Maximum Principle depends on the understanding of that linear
approximation of the reachable set in a neighborhood of a point in
the optimal curve. This interpretation of the tangent perturbation
cone has been studied in \cite{2004Agrachev}, but we will study it
in a great and clear detail in this appendix.

In the sequel, we explain why this interpretation of the
tangent perturbation cone is feasible. Remember from \S
\ref{ApTimevf} that a time\textendash{}dependent vector
field on $M$ has associated the evolution operator
$\Phi^X\colon I\times I \times M\rightarrow M$, $(t,s,x)
\mapsto \Phi^X(t,s,x)$ as defined in Equation
(\ref{timeflow}).

\begin{prop}\label{sumflows} Let $X$, $Y$ be time\textendash{}dependent vector fields on $M$,
then there exists a time\textendash{}dependent vector field $Z$ such
that
\[\Phi^{X+Y}_{(t,s)}(x)=(\Phi^X_{(t,s)}\circ
\Phi_{(t,s)}^{Z})(x)\] and
$Z=(\Phi^X_{(t,s)*})^{-1}Y=(\Phi^X_{(t,s)})^*Y$.
\end{prop}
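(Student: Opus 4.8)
The plan is to \emph{define} $Z$ by the stated formula and then verify directly that the composition $\Phi^X_{(t,s)}\circ\Phi^Z_{(t,s)}$ solves the same time-dependent ODE as $\Phi^{X+Y}_{(t,s)}$, with the same initial condition, so that uniqueness of integral curves forces the two to coincide. Concretely, for each pair $(t,s)$ set
\[
Z(t,x)=\bigl(T_x\Phi^X_{(t,s)}\bigr)^{-1}\,Y\bigl(t,\Phi^X_{(t,s)}(x)\bigr),
\]
which is the pullback $(\Phi^X_{(t,s)})^*Y$ evaluated at $(t,x)$; one first checks this is a genuine time-dependent vector field on $M$ (smooth, and assigning to $(t,x)$ a vector in $T_xM$), using that $\Phi^X_{(t,s)}$ is a diffeomorphism with smooth dependence on all arguments, as recalled in Appendix~\ref{ApTimevf}. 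Note the base point $s$ is fixed throughout; $t$ is the evolution parameter.

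Next I would fix $x$ and $s$ and write $c(t)=\bigl(\Phi^X_{(t,s)}\circ\Phi^Z_{(t,s)}\bigr)(x)=\Phi^X_{(t,s)}\bigl(\Phi^Z_{(t,s)}(x)\bigr)$. At $t=s$ both $\Phi^X_{(s,s)}$ and $\Phi^Z_{(s,s)}$ are the identity, so $c(s)=x$, matching the initial condition of $\Phi^{X+Y}_{(\cdot,s)}(x)$. Then I differentiate $c$ in $t$ by the chain rule: writing $y(t)=\Phi^Z_{(t,s)}(x)$, one has $\dot y(t)=Z(t,y(t))$, and
\[
\dot c(t)=\Bigl.\tfrac{d}{d\tau}\Bigr|_{\tau=t}\Phi^X_{(\tau,s)}(y(t))\;+\;T_{y(t)}\Phi^X_{(t,s)}\bigl(\dot y(t)\bigr).
\]
The first term is $X\bigl(t,\Phi^X_{(t,s)}(y(t))\bigr)=X(t,c(t))$ by the defining property of the evolution operator. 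For the second term, substitute $\dot y(t)=Z(t,y(t))=\bigl(T_{y(t)}\Phi^X_{(t,s)}\bigr)^{-1}Y\bigl(t,\Phi^X_{(t,s)}(y(t))\bigr)$; the tangent maps cancel, leaving $Y\bigl(t,c(t)\bigr)$. Hence $\dot c(t)=X(t,c(t))+Y(t,c(t))=(X+Y)(t,c(t))$, so $c$ is the integral curve of $X+Y$ starting at $x$ at time $s$. By uniqueness of solutions (Appendix~\ref{ApTimevf}), $c(t)=\Phi^{X+Y}_{(t,s)}(x)$ for all $t$ in the common domain, which is the claimed identity. Finally, the equality $(\Phi^X_{(t,s)*})^{-1}Y=(\Phi^X_{(t,s)})^*Y$ is just the standard fact that pullback by a diffeomorphism is the inverse of pushforward, applied fiberwise.

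The main obstacle is not conceptual but bookkeeping: one must be careful that the term $\left.\tfrac{d}{d\tau}\right|_{\tau=t}\Phi^X_{(\tau,s)}(y(t))$ is evaluated holding the point $y(t)$ frozen, so that it really equals $X(t,\Phi^X_{(t,s)}(y(t)))$ and not something involving a time-derivative of $y$ — this is precisely the splitting of a total derivative of a two-variable map into its two partials, and it is the only place where care about which variable is varied is essential. A secondary technical point is the domain of validity: all evolution operators are only defined on maximal open neighbourhoods of the diagonal, so the identity holds on the intersection of the relevant domains; since the statement is local in nature (as used in Proposition~\ref{inclusion}) this causes no difficulty, but it should be mentioned. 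Everything else — smoothness of $Z$, the $t=s$ initial condition, the cancellation of tangent maps — is routine given the properties of flows collected in Appendix~\ref{variationalsection}.
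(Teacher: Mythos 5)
Your proof is correct and uses the same core computation as the paper: differentiate $t\mapsto\Phi^X_{(t,s)}\circ\Phi^Z_{(t,s)}(x)$ by the chain rule, split it into the $X$-term and the pushed-forward $Z$-term, and match against $X+Y$. The only difference is direction of logic (you define $Z=(\Phi^X_{(t,s)})^*Y$ up front and verify, whereas the paper posits the factorization and solves for $Z$), which is a presentational choice, not a different argument.
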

\begin{proof} 
For any initial time $s$, we define the diffeomorphism
$\tilde{\Phi}^X_s\colon I \times M \rightarrow I \times M$,
$(t,x)\rightarrow (t,\Phi^X_s(t,x))$ such that
$\tilde{\Phi}^X_s(s,x)=(s,x)$. We look for a
time\textendash{}dependent vector field $Z$ on $M$ such that
\begin{equation}\label{multflow}\tilde{\Phi}^{X+Y}_s(t,x)=(\tilde{\Phi}^X_s
\circ\tilde{\Phi}^Z_s)(t,x) \; .
\end{equation}
This expression has been assumed true in
\cite{2004Agrachev,2000AndrewBulloCDC} for $s=0$, but it has not
been carefully proved. On the left\textendash{}hand side of Equation
(\ref{multflow}) we have
\[\tilde{\Phi}^{X+Y}_s(t,x)=(t,\Phi^{X+Y}_s(t,x)) \]
and the right\textendash{}hand side is
\[(\tilde{\Phi}^X_s
\circ\tilde{\Phi}^Z_s)(t,x)=\tilde{\Phi}^X_s(t,\Phi^Z_s(t,x))=(t,\Phi^X_s(t,\Phi^Z_s(t,x)))
\; .
\]
Thus Equation (\ref{multflow}) is satisfied if and only if
\begin{eqnarray}\Phi^{X+Y}_s(t,x)=\Phi^X_s(t,\Phi^Z_s(t,x))&=&(\Phi^X_s\circ
\tilde{\Phi}^Z_s)(t,x)\ , \label{multflow2} \end{eqnarray} or
equivalently,
\begin{eqnarray}
\Phi^{X+Y}_{(t,s)}&=&\Phi^X_{(t,s)}\circ \Phi^Z_{(t,s)} \ .
\label{multflow3}
\end{eqnarray}
Let us differentiate with respect to $t$ the left\textendash{}hand
side of Equation (\ref{multflow2}),
\begin{equation}\label{dleft}\ds{\frac{d}{dt}\Phi^{X+Y}_{(s,x)}(t)= (X+Y)(t,\Phi^{X+Y}_{(s,x)}(t))=
(X+Y)(t,\Phi^X_s(t,\Phi^Z_s(t,x)))} \; .
\end{equation}

The differentiation with respect to time of the
right\textendash{}hand side of Equation (\ref{multflow2}), for $f$
in ${\cal C}^{\infty}(M)$, is
\[\begin{array}{l} \ds{\frac{d}{d t}
(\Phi^X_s(t,\Phi^Z_s(t,x))) f=\lim_{h\rightarrow 0}
\frac{f((\Phi^X_s(t+h,
\Phi^Z_s(t+h,x))))-f((\Phi^X_s(t,\Phi^Z_s(t,x))))}{h}}\\
\\
\ds{=\lim_{h\rightarrow 0} \left\{\frac{(f\circ \Phi^X_{(t+h,s)})
(\Phi^Z_{(t+h,s)}(x))-(f\circ
\Phi^X_{(t+h,s)})(\Phi^Z_{(t,s)}(x))}{h}\right.}\\
\\\ds{+\left.\frac{(f\circ \Phi^X_s)(t+h,\Phi^Z_s(t,x))-(f\circ
\Phi^X_s)(t,\Phi^Z_s(t,x))}{h}\right\}}\\
\\\ds{ =Z(t,\Phi^Z_s(t,x))(f\circ \Phi^X_{(t,s)})+X(t,\Phi^X_s(t,\Phi^Z_s(t,x))) f}\\ \\\ds{=
{\rm T}_{\Phi^Z_s(t,x)} \Phi^X_{(t,s)} Z(t,\Phi^Z_s(t,x))f
+X(t,\Phi^X_s(t,\Phi^Z_s(t,x))) f}\; .
\end{array} \]
Hence
\[ \ds{\frac{d}{d t}
(\Phi^X_s(t,\Phi^Z_s(t,x)))={\rm T}_{\Phi^Z_s(t,x)} \Phi^X_{(t,s)}
Z(t,\Phi^Z_s(t,x))+X(t,\Phi^X_s(t,\Phi^Z_s(t,x)))} \; . \] From
Equation (\ref{dleft}) we have
\begin{eqnarray*}X(t,\Phi^X_s(t,\Phi^Z_s(t,x)))+Y(t,\Phi^X_s(t,\Phi^Z_s(t,x)))&=&{\rm
T}_{\Phi^Z_s(t,x)} \Phi^X_{(t,s)} Z(t,\Phi^Z_s(t,x))\\
&+&X(t,\Phi^X_s(t,\Phi^Z_s(t,x))) \; ,\end{eqnarray*} that is,
\[ Y((\tilde{\Phi}^X_s\circ
\tilde{\Phi}^Z_s)(t,x))={\rm T}_{\Phi^Z_s(t,x)} \Phi^X_{(t,s)}
Z(t,\Phi^Z_s(t,x))\; . \] Remember that the pushforward of a
time\textendash{}dependent vector field $Z$ is another
time\textendash{}dependent vector field given by
\[ (\Phi^X_{(t,s)*}Z)(t,x)={\rm T}_{(\Phi^X_{(t,s)})^{-1}(x)}\Phi^X_{(t,s)}(Z(t,(\Phi^X_{(t,s)})^{-1}(x))) \; .\]
Then
\[\begin{array}{l} (Y\circ \tilde{\Phi}^X_s\circ
\tilde{\Phi}^Z_s)
(t,x)=(\Phi^X_{(t,s)*}Z)(t,\Phi^X_{(t,s)}(\Phi^Z_s(t,x)))\\ \\
=(\Phi^X_{(t,s)*}Z)(\tilde{\Phi}^X_s(t,\Phi^Z_s(t,x)))=(\Phi^X_{(t,s)*}Z)(\tilde{\Phi}^X_s\circ
\tilde{\Phi}^Z_s)(t,x)\; ,\end{array}
\]
or equivalently,
\[Y\circ \tilde{\Phi}^X_s\circ
\tilde{\Phi}^Z_s=(\Phi^X_{(t,s)*}Z)\circ \tilde{\Phi}^X_s\circ
\tilde{\Phi}^Z_s \, , \] that is, $Y=\Phi^X_{(t,s)*}Z$.

Hence $Z=(\Phi^X_{(t,s)*})^{-1}Y=(\Phi^X_{(t,s)})^*Y$.
Now, going back to Equation (\ref{multflow3}) we have
\begin{equation}\label{flow}
\begin{array}{l}\Phi^{X+Y}_{(t,s)}(x)=(\Phi^X_{(t,s)}\circ
\Phi^{(\Phi^X_{(t,s)})^*Y}_{(t,s)})(x)\; .\end{array}
\end{equation}
\end{proof}

\begin{defin}\label{ReachSet} Let $M$ be a manifold, $U$ be a set in
$\mathbb{R}^k$ and $X$ be a vector field along the projection
$\pi\colon M\times U \rightarrow M$. The \textbf{reachable set
 from $x_0\in M$ at time $T\in I$} is the set
of points described by \begin{eqnarray*} {\mathcal
R}(x_0,T)=\left\{x\in M  \right. & | & \textrm{ there exists }
(\gamma,u)\colon [a,b] \rightarrow M\times U \textrm{ such that } \\
& & \left. \dot{\gamma}(t)=X(\gamma(t),u(t)), \; \gamma(a)=x_0, \;
\gamma(T)=x \right\}.\end{eqnarray*}
\end{defin}

Once we know how to express the flow of a sum of vector fields as a
composition of flows of different vector fields, we are going to
show that all the integral curves used to construct the reachable
set in Definition \ref{ReachSet} can be written as composition of
flows associated with vector fields given by vectors in the tangent
perturbation cone in Definition \ref{tangent}.

Each control system $X\in \mathfrak{X}(\pi)$ with the projection
$\pi\colon M\times U \rightarrow M$ is a time\textendash{}dependent
vector field $X^{\{u\}}$ when the control is given. Consider the
reference trajectory $(\gamma,u)$ to be an integral curve of
$X^{\{u\}}$ with initial condition $x_0$ at $a$. Take $\gamma(t_1)$
to be a reachable point from $x_0$ at time $t_1$. Let us consider
another control $\tilde{u}\colon I \rightarrow U$ and the integral
curve of $X^{\{\tilde{u}\}}$ with initial condition $x_0$ at $a$
denoted by $\tilde{\gamma}$. Then $\tilde{\gamma}(t_1)$ is another
reachable point from $x_0$ at time $t_1$.

 Let us see how to reach the
point $\tilde{\gamma}(t_1)$ using Equation (\ref{flow}),
\begin{equation}
\begin{array}{rcl}\ds{\tilde{\gamma}(t_1)}&=&\ds{\Phi^{X^{\{\tilde{u}\}}}_{(t_1,a)}(x_0)=
\Phi^{X^{\{u\}}+\left(X^{\{\tilde{u}\}}-X^{\{u\}}\right)}_{(t_1,a)}(x_0)}\\
&=&\ds{\left(\Phi^{X^{\{u\}}}_{(t_1,a)}\circ
\Phi^{\left(\Phi^{X^{\{u\}}}_{(t_1,a)}\right)^*{\left(X^{\{\tilde{u}\}}-X^{\{u\}}\right)}}_{(t_1,a)}\right)(x_0)}
\\&=& \ds{\left(\Phi^{X^{\{u\}}}_{(t_1,a)}\circ
\Phi^{\left(\Phi^{X^{\{u\}}}_{(t_1,a)}\right)^*{\left(X^{\{\tilde{u}\}}-X^{\{u\}}\right)}}_{(t_1,a)}\circ
\left(\Phi^{X^{\{u\}}}_{(t_1,a)}\right)^{-1}\circ \Phi^{X^{\{u\}}}_{(t_1,a)}\right)(x_0)}\\
&=&\ds{\left(\Phi^{X^{\{u\}}}_{(t_1,a)}\circ
\Phi^{\left(\Phi^{X^{\{u\}}}_{(t_1,a)}\right)^*{\left(X^{\{\tilde{u}\}}-X^{\{u\}}\right)}}_{(t_1,a)}\circ
\left(\Phi^{X^{\{u\}}}_{(t_1,a)}\right)^{-1}\right)(\gamma(t_1)) \;
. \label{flowt1} }
\end{array}\end{equation}
Hence, from $\gamma(t_1)$ we can get every reachable point from
$x_0$ at time $t_1$ through Equation (\ref{flowt1}) composing
integral curves of the vector fields $X^{\{u\}}$ and
$(\Phi^{X^{\{u\}}}_{(t_1,a)})^*{(X^{\{\tilde{u}\}}-X^{\{u\}})}\colon
I \times M \rightarrow TM$, this latter with initial condition $x_0$
at $a$.

In fact this is true for any time $\tau$ in $[a,t_1]$, that is,
\[\ds{\tilde{\gamma}(\tau)=\left(\Phi^{X^{\{u\}}}_{(\tau,a)}\circ
\Phi^{\left(\Phi^{X^{\{u\}}}_{(\tau,a)}\right)^*{(X^{\{\tilde{u}\}}-X^{\{u\}})}}_{(\tau,a)}\circ
\left(\Phi^{X^{\{u\}}}_{(\tau,a)}\right)^{-1}\right)(\gamma(\tau))
\; .}
\]
If we compose with the flow of $X^{\{u\}}$, we get a reachable point
from $x_0$ at time $t_1$ because it is a concatenation of integral
curves of the dynamical system,
\begin{equation}\label{flowtau}\begin{array}{rcl}\ds{\Phi^{X^{\{u\}}}_{(t_1,\tau)}
(\tilde{\gamma}(\tau))}&=&\ds{\left(\Phi^{X^{\{u\}}}_{(t_1,\tau)}\circ
\Phi^{X^{\{u\}}}_{(\tau,a)}\circ
\Phi^{\left(\Phi^{X^{\{u\}}}_{(\tau,a)}\right)^*{\left(X^{\{\tilde{u}\}}-X^{\{u\}}\right)}}_{(\tau,a)}\circ
\left(\Phi^{X^{\{u\}}}_{(\tau,a)}\right)^{-1}\right)(\gamma(\tau))}
\\ &=&\ds{\left(\Phi^{X^{\{u\}}}_{(t_1,a)}\circ
\Phi^{\left(\Phi^{X^{\{u\}}}_{(\tau,a)}\right)^*{\left(X^{\{\tilde{u}\}}-X^{\{u\}}\right)}}_{(\tau,a)}\circ
\left(\Phi^{X^{\{u\}}}_{(t_1,a)}\right)^{-1}\right)(\gamma(t_1))\; .
}
\end{array}
\end{equation}
Hence, from $\gamma(t_1)$ we can also get reachable points from
$x_0$ at time $t_1$ through composition of integral curves of the
vector fields $X^{\{u\}}$ and
$\left(\Phi^{X^{\{u\}}}_{(\tau,a)}\right)^*{\left(X^{\{\tilde{u}\}}-X^{\{u\}}\right)}$,
the latter with initial condition $\gamma(a)$ at time $a$.

On the other hand, the tangent perturbation cone at $\gamma(t_1)$ is
given by the closure of the convex hull of all the tangent vectors
$\left(\Phi^{X^{\{u\}}}_{(t_1,\tau)}\right)_*\left(X^{\{\tilde{u}\}}(\tau,\gamma(\tau))-X^{\{u\}}(\tau,
\gamma(\tau))\right)$ for every Lebesgue time $\tau$ in $[a,t_1]$.
These vectors are related with the vector fields $X^{\{u\}}$ through
Equations (\ref{flowt1}) and (\ref{flowtau}).

In this sense, we say that the tangent perturbation cone at
$\gamma(t_1)$ is an approximation of the reachable set in a
neighborhood of $\gamma(t_1)$.

\section{Convex sets, cones and hyperplanes}\label{hyperplane}

We study some properties satisfied by convex sets and cones; see
\cite{2001Bertsekas,98Rockafellar} for details. Unless otherwise
stated, we suppose that all the sets are in a
$n$\textendash{}dimensional vector space $E$. We need to define the
different kinds of cones and linear combinations used in this
report.

\begin{defin} A \textbf{cone $C$ with vertex at $0 \in
E$} satisfies that if $v\in C$, then $\lambda \, v \in C$ for every
$\lambda \geq 0$.
\end{defin}

\begin{defin}\label{conicconvex} Given a family of vectors $V\subset E$.
\begin{enumerate}
\item A \textbf{conic non\textendash{}negative
combination} of elements in $V$ is a vector of the form $\lambda_1
v_1 + \cdots + \lambda_r v_r$, with $\lambda_i \geq 0$ and $v_i\in
V$ for all $i \in \{1, \ldots , r\}$.
\item The \textbf{convex cone} generated by
$V$ is the set of all conic non\textendash{}negative
combinations of vectors in $V$.
\item An \textbf{affine combination} of elements in $V$ is a vector of the
form $\lambda_1 v_1 + \cdots + \lambda_r v_r$, with $v_i\in V$,
$\lambda_i \in \mathbb{R}$ for all $i \in \{1, \ldots , r\}$ and
$\sum_{i=1}^{r} \lambda_i = 1$.
\item A \textbf{convex combination} of elements in $V$ is a vector of the
form $\lambda_1 v_1 + \cdots + \lambda_r v_r$, with $v_i\in V$, $0
\leq \lambda_i \leq 1$ for all $i \in \{1, \ldots , r\}$ and
$\sum_{i=1}^{r} \lambda_i = 1$.
\end{enumerate}
\end{defin}
Remember that a set $A\subset E$ is \textit{convex} if, given two
different elements in $A$, then any convex combination of them is
contained in $A$. Thus, all the convex combination of elements in
$A$ are in $A$.
\begin{defin}
The \textbf{convex hull} of a set $A\subset E$, ${\rm
conv}(A)$, is the smallest convex subset containing $A$.
\end{defin}

Let us prove a characterization of the convex hull that will be
useful.
\begin{prop}\label{convexhull} The convex hull of a set $A$ is the set of the convex combinations of elements in $A$.
\end{prop}
\begin{proof}
Let us denote by $C$ the set of all convex combinations of
elements in $A$. First, we prove that $C$ is a convex set.
If $x$, $y$ are in $C$, then they are convex combinations
of elements in $A$; that is, $x=\sum^l_{i=1}\lambda_iv_i$,
$y=\sum^r_{i=1}\mu_iw_i$, with $\sum^l_{i=1}\lambda_i=1$,
$\sum^r_{i=1}\mu_i=1$. For $s\in (0,1)$, consider
$$sx+(1-s)y=s
\left(\sum^l_{i=1}\lambda_iv_i\right)+(1-s)\left(\sum^r_{i=1}\mu_iw_i
\right),$$ that will be in $C$ if the sum of the
coefficients is equal to $1$ and each of the coefficients
lies in $[0,1]$. Observe that
$s\sum^l_{i=1}\lambda_i+(1-s)\sum^r_{i=1}\mu_i=s+(1-s)=1$
and the other condition is satisfied trivially. As $C$ is
convex and contains $A$, the convex hull of $A$ is a subset
of $C$.

Second, we prove that $C\subset {\rm conv}(A)$ by induction on the
number of vectors in the convex combinations of elements in $A$.
Trivially, when the convex combination is given by an element in
$A$, it lies in the convex hull of $A$.

Now, suppose that a convex combination of $l-1$ elements of $A$ is
in ${\rm conv}(A)$, and we prove that a convex combination of $l$
elements of $A$ is in ${\rm conv}(A)$. Let
\[x=\sum_{i=1}^l\mu_iv_i=\sum_{i=1}^{l-1}\mu_iv_i+\mu_l v_l.\] If
$\sum_{i=1}^{l-1}\mu_i=0$, then $\mu_l=1$. By the first step of the
induction, $x$ is in ${\rm conv}(A)$.
If $\sum_{i=1}^{l-1}\mu_i\in(0,1]$, then $\mu_l\in [0,1)$
and we can rewrite $x$ as
$$x=(1-\mu_l)\left(\sum_{i=1}^{l-1}\mu_i(1-\mu_l)^{-1}v_i\right)+\mu_l v_l.$$
Observe that
$\sum_{i=1}^{l-1}\mu_i(1-\mu_l)^{-1}=(1-\mu_l)(1-\mu_l)^{-1}=1$, and
so $\sum_{i=1}^{l-1}\mu_i(1-\mu_l)^{-1}v_i$ is in ${\rm conv}(A)$.
By the first step of induction, $v_l$ is in ${\rm conv}(A)$. As
$(1-\mu_l)+\mu_l=1$, $x$ is in ${\rm conv}(A)$ because of the
convexity of ${\rm conv}(A)$. Thus $C\subset {\rm conv}(A)$ and so
$C={\rm conv}(A)$.
\end{proof}
\begin{prop} \label{convex}
Let $C$ be a convex set. If $\overline{C}$ and ${\rm int \
} C$ are the topological closure and the interior of $C$,
respectively, we have:
\begin{itemize}
\item[(a)]for every $x\in {\rm int \ } C$, if $y\in \overline{C}$, then
$(1-\lambda)x+\lambda y \in  {\rm int \ } C$ for all $\lambda\in
[0,1)$;
\item[(b)] $\overline{C}=\overline{ {\rm int \ }C}$;
\item[(c)] the interior of $C$ is empty if and only if the interior
of $\overline{C}$ is empty;
\item[(d)] $ {\rm int \ } C=  {\rm int \ } {\overline{C}}$.
\end{itemize}
\end{prop}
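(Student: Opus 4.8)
The plan is to prove the four items in the order (a), (c), (b), (d). Item (a) is the basic ``line segment principle'' and everything else will be deduced from it; item (c) is cleanest to prove directly rather than as a corollary of (d). Throughout, $B(x,\epsilon)$ denotes the open ball of radius $\epsilon$ about $x$ in $E$, and $n=\dim E$.

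First I would prove (a). Fix $x\in{\rm int}\,C$ with $B(x,\epsilon)\subset C$, fix $y\in\overline{C}$, fix $\lambda\in[0,1)$, and set $z=(1-\lambda)x+\lambda y$. The case $\lambda=0$ is trivial, so assume $\lambda\in(0,1)$. Given $\delta>0$, choose $y'\in C$ with $\|y'-y\|<\delta$ (possible since $y\in\overline{C}$) and put $z'=(1-\lambda)x+\lambda y'$. The key observation is that $B(z',(1-\lambda)\epsilon)\subset C$: for $\|w\|<(1-\lambda)\epsilon$ one writes $z'+w=(1-\lambda)(x+\tfrac{w}{1-\lambda})+\lambda y'$, and since $x+\tfrac{w}{1-\lambda}\in B(x,\epsilon)\subset C$ and $y'\in C$, convexity of $C$ gives $z'+w\in C$. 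Now choose $\delta<(1-\lambda)\epsilon/\lambda$; then $\|z-z'\|=\lambda\|y-y'\|<(1-\lambda)\epsilon$, so $z$ lies in the open set $B(z',(1-\lambda)\epsilon)\subset C$, hence $z\in{\rm int}\,C$.

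Next I would handle (c). The implication ${\rm int}\,C\neq\emptyset\Rightarrow{\rm int}\,\overline{C}\neq\emptyset$ is immediate from $C\subset\overline{C}$. Conversely, if ${\rm int}\,\overline{C}\neq\emptyset$ then $\overline{C}$ contains a ball, hence $n+1$ affinely independent points $v_0,\dots,v_n$; approximating each $v_i$ by a point $v_i'\in C$ and using that affine independence is an open condition, one obtains affinely independent $v_0',\dots,v_n'\in C$, whose convex hull is a non-degenerate simplex contained in $C$ by convexity, so ${\rm int}\,C\neq\emptyset$. Then (b) follows from (a): the inclusion $\overline{{\rm int}\,C}\subset\overline{C}$ is trivial, and for the reverse, assuming ${\rm int}\,C\neq\emptyset$, fix $x_0\in{\rm int}\,C$ and $y\in\overline{C}$; by (a) the points $(1-\lambda)x_0+\lambda y$ belong to ${\rm int}\,C$ for every $\lambda\in[0,1)$ and converge to $y$ as $\lambda\to1^-$, so $y\in\overline{{\rm int}\,C}$. (When ${\rm int}\,C=\emptyset$ the statement must be read in the relative topology of the affine hull of $C$, where the same argument applies since a nonempty convex set has nonempty relative interior — this is consistent with the way Proposition~\ref{convex} is used elsewhere in the paper.) Finally for (d), ${\rm int}\,C\subset{\rm int}\,\overline{C}$ is trivial; for the reverse, if ${\rm int}\,C=\emptyset$ then ${\rm int}\,\overline{C}=\emptyset$ by (c) and there is nothing to prove, and otherwise fix $x_0\in{\rm int}\,C$ and let $z\in{\rm int}\,\overline{C}$ with $B(z,\rho)\subset\overline{C}$. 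Choosing $\mu>1$ close enough to $1$ that $(\mu-1)\|z-x_0\|<\rho$, the point $y=z+(\mu-1)(z-x_0)$ lies in $B(z,\rho)\subset\overline{C}$, and $z=(1-\tfrac1\mu)x_0+\tfrac1\mu y$ with $\tfrac1\mu\in(0,1)$, so (a) gives $z\in{\rm int}\,C$.

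I expect no deep obstacle here: the only real bookkeeping is the ball-radius estimate in the proof of (a), and the one genuine subtlety is item (b) (and the trivial case of (d)) when ${\rm int}\,C=\emptyset$, which forces passing to the relative interior inside the affine hull of $C$; I would flag this explicitly rather than pretend it is covered by the literal statement.
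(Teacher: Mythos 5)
Your proof is correct and takes a route that is genuinely different from — and, in one respect, more careful than — the paper's. For (a), your argument (approximate $y$ by $y'\in C$, show the ball $B(z',(1-\lambda)\epsilon)$ lands in $C$ by convexity, then place $z$ inside it for small $\delta$) is essentially equivalent to the paper's, which works with Minkowski sums $C+\epsilon B(0,1)$ and computes an explicit radius $\epsilon_\lambda=(1-\lambda)\epsilon_x-\lambda\epsilon$; both are the standard line segment principle. For (d) your argument is the same as the paper's up to the reparametrization $\mu=1+\delta$. The real divergence is in (b) and (c). The paper proves (b) first and then deduces the nontrivial direction of (c) from it: ``if ${\rm int}\,C=\emptyset$, then by (b) $\overline{C}=\overline{{\rm int}\,C}=\emptyset$.'' But that deduction does not stand as written: item (b), read literally, is false when ${\rm int}\,C=\emptyset$ and $C\neq\emptyset$ (a segment in the plane is a counterexample, since then $\overline{{\rm int}\,C}=\emptyset\neq\overline{C}$), so the step in the paper's proof of (c) that invokes it is a gap. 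You correctly sidestep this: you prove (c) first by an independent argument — approximate $n+1$ affinely independent points of $\overline{C}$ by points of $C$, use that affine independence is an open condition, and observe that the resulting nondegenerate simplex lies in $C$ by convexity — and you explicitly flag that (b), as literally stated, requires either ${\rm int}\,C\neq\emptyset$ or a passage to the relative interior inside the affine hull. Your version is the more robust one; the paper would do well either to restate (b) with the hypothesis ${\rm int}\,C\neq\emptyset$, or to prove (c) directly as you do and only then derive (b).
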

\begin{proof}
\textbf{(a)} If $x\in  {\rm int \ }\ C$, then there exists
$\epsilon_x>0$ such that $B(x,\epsilon_x)\subset C$, where
$B(x,\epsilon_x)$ denotes the open ball centered at $x$ of radius
$\epsilon_x$.

Observe that if $y\in \overline{C}$, for any $\epsilon>0$, $y\in
C+\epsilon B(0,1)=\{x+\epsilon z \, | \, x\in C, \, z\in B(0,1)\}$.

For every $\lambda\in [0,1)$, we consider
$x_{\lambda}=(1-\lambda)x+\lambda y$. Let us compute the value of
$\epsilon_{\lambda}$ such that $x_{\lambda} + \epsilon_{\lambda}
B(0,1)\subset C$.
$$x_{\lambda} + \epsilon_{\lambda} B(0,1)=(1-\lambda) x + \lambda y + \epsilon_{\lambda}
B(0,1)$$ $$\subset (1-\lambda)x+\lambda C +\lambda \epsilon
B(0,1)+\epsilon_{\lambda} B(0,1) =(1-\lambda) x+(\lambda \epsilon
+\epsilon_{\lambda} )B(0,1)+\lambda C.$$ If
$\epsilon_{\lambda}=(1-\lambda)\epsilon_x-\lambda \epsilon$, then
$(1-\lambda) x+(\lambda \epsilon +\epsilon_{\lambda} )B(0,1)\subset
(1-\lambda)C$ and $x_{\lambda} + \epsilon_{\lambda} B(0,1)\subset
C$. For $\epsilon>0$ small enough, $\epsilon_{\lambda}$ is positive.
Here we use the sum operation of convex sets, which is
well\textendash{}defined if the coefficients are positive (if $C_1$
and $C_2$ are convex sets, $\mu_1 C_1+\mu_2 C_2$ is a convex
set for all $\mu_1,\mu_2\geq 0$).\\

\textbf{(b)} As $ {\rm int \ }C\subset C$, $\overline{ {\rm int \
}C} \subset \overline{C}$.

On the other hand, each point in the closure of $C$ can be
approached along a line segment by points in the interior of $C$ by
$(a)$. Thus $\overline{C}\subset \overline{ {\rm
int \ }C}$.\\

\textbf{(c)} As $ {\rm int \ }C \subset  {\rm int \
}\overline{C}$, if $ {\rm int \ }\overline{C}$ is empty, then $
{\rm int \ }C$ is empty.

Conversely, if $ {\rm int \ }C$ is empty, then by $(b)$
$\overline{C}$ is empty. So $C$ is empty  and $ {\rm int \ }C$ is
also
empty.\\

\textbf{(d)} Trivially $ {\rm int \ }C \subset  {\rm int \
}\overline{C}$.

As the equality of the sets is true when they are empty
because of $(c)$, let us suppose that $ {\rm int \ }C$ is
not empty. If $z\in  {\rm int \ }\overline{C}$ and take
$x\in  {\rm int \ }C$, then there exists a small enough
positive number $\delta$ such that $y=z+\delta(z-x)\in {\rm
int \ }\overline{C} \subset \overline{C}$.

 Hence,
$$z=\frac{1}{1+\delta}y +\frac{\delta}{1+\delta}x.$$ Note
that $$ 0<\frac{1}{1+\delta}<1, \quad
0<\frac{\delta}{1+\delta}<1, \quad
\frac{1}{1+\delta}+\frac{\delta}{1+\delta}=1.
$$
As $y\in \overline{C}$, $x\in {\rm int \ }C$ and $1/(1+\delta)$ lies
in $(0,1)$. By $(a)$, $z\in {\rm int \ }C$.
\end{proof}
\begin{remark}
Consequently, if $C$ is convex and dense, then $C$ is the
whole space.
\end{remark}
The following paragraphs introduce elements playing an
important role in the proof of Pontryagin's Maximum
Principle in \S \ref{proofPMPfixed} and \S
\ref{SproofFPMP}.
\begin{defin} Let $C$ be a cone with
vertex at $0\in E$. A \textbf{supporting hyperplane to $C$ at $0$}
is a hyperplane such that $C$ is contained in one of the
half\textendash{}spaces defined by the hyperplane.
\end{defin}

\begin{remark} In a geometric framework, we will define a
hyperplane in $E$ as the kernel of a nonzero
$1$\textendash{}form $\alpha$ in the dual space $E^*$. Then
the hyperplane $P_{\alpha}$ associated to $\alpha$ is $\ker
\alpha$. Hence the supporting hyperplane to $C$ at $0$ is a
hyperplane $P_{\alpha}$ such that $\alpha (v) \leq 0$ for
all $v \in C$. A supporting hyperplane to $C$ at $0$ is not
necessarily unique.
\end{remark}

From now on, we consider that all the cones have vertex at
$0$.

\begin{defin} Let $C$ be a cone,
the \textbf{polar of $C$} is \[C^*= \{ \alpha \in E^* \; | \;
\alpha(v)\leq 0\ , \; \forall \; v \in C \}.
\]\end{defin}

Note that the polar of a cone is a closed and convex cone
in $E^*$.

\begin{defin}
Let $C$ be a cone, the set \[C^{\ast \ast}=\{ w \in E \; | \;
\alpha(w)\leq 0 \ , \; \forall \, \alpha \in C^* \}\] is called the
\textbf{polar of the polar of $C$}.
\end{defin}

Observe that $C\subset C^{\ast \ast}$. The following lemma
is used in the proof of the existence of a supporting
hyperplane to a cone with vertex at $0$.

\begin{lemma} \label{C**} The cone $C$ is closed and convex if
and only if $C^{\ast \ast}=C$.
\end{lemma}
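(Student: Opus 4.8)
The statement is the bipolar theorem specialized to cones with vertex at $0$, so the plan is to prove the two implications separately, the easy one by a structural observation and the hard one by a separation argument.

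First I would dispose of the implication ``$C^{\ast\ast}=C\ \Rightarrow\ C$ closed and convex''. The point here is that $C^{\ast\ast}$ is \emph{always} closed and convex, irrespective of $C$: by its very definition
\[
C^{\ast\ast}=\bigcap_{\alpha\in C^{\ast}}\{\,w\in E\ |\ \alpha(w)\le 0\,\},
\]
an intersection of closed half--spaces (each $\{w:\alpha(w)\le 0\}$ is closed and convex, and each is a cone through $0$). An arbitrary intersection of closed convex sets is closed and convex, so if $C$ happens to equal $C^{\ast\ast}$ it inherits these properties. This paragraph needs nothing beyond the definitions of $C^{\ast}$ and $C^{\ast\ast}$ already given.

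For the converse, assume $C$ is closed and convex (and, by the standing convention, a cone with vertex at $0$, so $0\in C$). The inclusion $C\subseteq C^{\ast\ast}$ was already noted in the excerpt, so it remains to show $C^{\ast\ast}\subseteq C$, which I would prove by contraposition: take $w\notin C$ and produce $\alpha\in C^{\ast}$ with $\alpha(w)>0$, so that $w\notin C^{\ast\ast}$. Since $C$ is a nonempty closed convex set and $\{w\}$ is a compact convex set disjoint from it, a strict separation theorem (available from the convexity material referenced in Appendix \ref{hyperplane}, in the same circle of ideas as Proposition \ref{propseparated}) yields $\alpha\in E^{\ast}\setminus\{0\}$ and $c\in\mathbb{R}$ with $\alpha(v)\le c<\alpha(w)$ for every $v\in C$. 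Because $0\in C$ we get $c\ge 0$, hence $\alpha(w)>0$. It remains to check $\alpha\in C^{\ast}$, i.e. $\alpha(v)\le 0$ for all $v\in C$: if some $v\in C$ had $\alpha(v)>0$, then homogeneity of the cone gives $\lambda v\in C$ for all $\lambda\ge 0$, so $\alpha(\lambda v)=\lambda\,\alpha(v)\le c$ for all $\lambda\ge 0$, which is impossible as $\lambda\to\infty$. Thus $\alpha\in C^{\ast}$ and $\alpha(w)>0$, so $w\notin C^{\ast\ast}$; contraposing gives $C^{\ast\ast}\subseteq C$, and combined with $C\subseteq C^{\ast\ast}$ we conclude $C^{\ast\ast}=C$.

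\textbf{Main obstacle.} The only non--formal ingredient is the strict separation of the point $w$ from the closed convex cone $C$; everything else is bookkeeping with homogeneity and the standing assumption $0\in C$. I would therefore make sure to invoke the precise separation statement proved in Appendix \ref{hyperplane} (closed convex set versus a disjoint point), and in the write--up be slightly careful about the degenerate cases, e.g. $C=\{0\}$ or $C=E$, where the separation is trivial, so that the argument is uniformly valid.
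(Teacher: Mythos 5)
Your proof is correct, and it takes a genuinely different route from the paper's. The paper disposes of the lemma in two lines: it writes $C^{\ast\ast}=\bigcap_{\alpha\in C^{\ast}}\{w:\alpha(w)\le 0\}$ and then cites Theorem 6.20 of Rockafellar--Wets (the closure of the convex hull of a set equals the intersection of all closed half-spaces containing it) to conclude $C^{\ast\ast}=\overline{\mathrm{conv}(C)}$, from which the equivalence is immediate. You instead prove the bipolar identity directly: the ``always closed and convex'' observation handles one implication, and for the other you run the strict separation argument yourself, using the homogeneity of the cone and $0\in C$ to convert a separating affine functional $\alpha(v)\le c<\alpha(w)$ into a linear one $\alpha\in C^{\ast}$ with $\alpha(w)>0$. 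Your version is more self-contained, and it also makes explicit a small step the paper's citation glosses over: Rockafellar's theorem intersects over \emph{all} closed half-spaces containing $C$, whereas $C^{\ast\ast}$ only intersects over half-spaces through the origin; it is precisely the cone homogeneity (your $\lambda\to\infty$ argument) that shows these two intersections coincide for a cone with vertex at $0$. The paper's route is shorter at the cost of delegating the real work to the cited theorem; yours is longer but closes that gap and keeps the argument internal.
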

\begin{proof}
Observe that $$C^{**}=\{w\in E \; | \; \alpha(w)\leq 0 \ , \;
\forall \, \alpha \in C^*\}=\bigcap_{\alpha \in C^*} \{w\in E \; |
\; \alpha(w)\leq 0 \}.$$ Then $C^{**}=\overline{{\rm conv}(C)}$,
because of Theorem 6.20 in Rockafellar \cite{98Rockafellar}: the
closure of the convex hull of a set is the intersection of all the
closed half\textendash{}spaces containing the set. Now, the result
is immediate.
%
%
\end{proof}

The following proposition guarantees the existence of a
supporting hyperplane to a cone with vertex at $0$. This
result is used throughout the proof of Pontryagin's Maximum
Principle in \S \ref{proofPMPfixed} and \S
\ref{SproofFPMP}.

\begin{prop} \label{existsupporting} If $C$ is a convex and closed
cone that is not the whole space, then there exists a
supporting hyperplane to $C$ at $0$.
\end{prop}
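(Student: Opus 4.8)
The plan is to deduce the existence of a supporting hyperplane to $C$ at $0$ directly from Lemma \ref{C**}, by exhibiting a nonzero element of the polar cone $C^*$. First I would dispose of the trivial situation: if $C = \{0\}$, then $C^* = E^*$, and any hyperplane through $0$ is supporting, so we may assume $C$ contains a nonzero vector.

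The key step is the following. Since $C$ is a convex and closed cone that is not all of $E$, pick a point $z \in E \setminus C$. By Lemma \ref{C**} we have $C^{\ast\ast} = C$, so $z \notin C^{\ast\ast}$, which by the definition of $C^{\ast\ast}$ means precisely that there exists $\alpha \in C^*$ with $\alpha(z) > 0$. In particular $\alpha \neq 0$, so $P_\alpha = \ker \alpha$ is a genuine hyperplane in $E$. By the definition of the polar, $\alpha(v) \le 0$ for every $v \in C$, i.e. $C$ is contained in the closed half-space $\{w \in E \mid \alpha(w) \le 0\}$ determined by $P_\alpha$. By the remark following the definition of a supporting hyperplane, this is exactly what it means for $P_\alpha$ to be a supporting hyperplane to $C$ at $0$. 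This completes the argument.

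The only point requiring a little care — and the natural place a reader might want more detail — is the inference ``$z \notin C^{\ast\ast}$ implies there is $\alpha \in C^*$ with $\alpha(z) > 0$.'' This is immediate by contraposition from the definition $C^{\ast\ast} = \{w \in E \mid \alpha(w) \le 0 \text{ for all } \alpha \in C^*\}$: a point fails to lie in $C^{\ast\ast}$ exactly when some $\alpha \in C^*$ violates $\alpha(w) \le 0$, i.e. $\alpha(w) > 0$. So no separation theorem needs to be invoked afresh here; all the real work (the identity $C^{\ast\ast} = \overline{\mathrm{conv}(C)}$, hence $C^{\ast\ast}=C$ for closed convex cones) has already been absorbed into Lemma \ref{C**} via Rockafellar's Theorem 6.20. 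The hardest conceptual content of the proposition is therefore entirely upstream; the proof itself is a short unwinding of definitions, and I would present it in essentially the three sentences above.
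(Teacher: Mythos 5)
Your proof is correct and is essentially the contrapositive of the paper's argument: the paper assumes no supporting hyperplane exists, deduces $C^*=\{0\}$, hence $C^{**}=E$, and invokes Lemma \ref{C**} to get $C=E$, a contradiction, whereas you run the same chain forward from $C\neq E$ through $C=C^{**}$ to produce a nonzero $\alpha\in C^*$. Both hinge on exactly the same ingredient, Lemma \ref{C**}, so this is the same approach, just organized directly rather than by contradiction.
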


\begin{proof}
If there is no supporting hyperplane containing the cone in one of
the two half\textendash{}spaces, then for all $\alpha\in E^*$ there
exist $v_1, \, v_2 \in C$ with $\alpha(v_1)\leq 0$ and
$\alpha(v_2)\geq 0$. Thus $C^*=\{0\}$ and $C^{\ast \ast} =E$. Then,
by Lemma \ref{C**}, $C=C^{\ast \ast}=E$ in contradiction with the
hypothesis on $C$.
\end{proof}

\begin{corol}\label{convexsupporting}
If $C$ is a convex cone that is not the whole space, then
there exists a supporting hyperplane to $C$ at $0$.
\end{corol}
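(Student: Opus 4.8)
The plan is to reduce the statement to Proposition \ref{existsupporting}, which already settles the case of a \emph{closed} convex cone, by passing to the closure $\overline{C}$. First I would check that $\overline{C}$ is again a cone with vertex at $0$: if $v\in\overline{C}$ and $v_n\to v$ with $v_n\in C$, then for $\lambda\ge 0$ one has $\lambda v_n\in C$ and $\lambda v_n\to\lambda v$, so $\lambda v\in\overline{C}$. Likewise $\overline{C}$ is convex, being the closure of a convex set (the standard fact; it can also be read off from Proposition \ref{convex}(a), which shows segments emanating from interior points stay in the set, or simply from continuity of $(x,y)\mapsto(1-t)x+ty$).

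The one place the hypothesis is actually used is to guarantee $\overline{C}\neq E$. Here I would invoke the Remark following Proposition \ref{convex}: a convex dense subset of $E$ is all of $E$. Since by assumption $C\neq E$, the convex set $C$ cannot be dense, hence $\overline{C}\neq E$. This is the crux of the reduction — without it $\overline{C}$ could a priori fill up $E$ and Proposition \ref{existsupporting} would not apply — and it is the only step I would call the "main obstacle," though it is immediate once the quoted Remark is in hand.

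With $\overline{C}$ a closed convex cone not equal to $E$, Proposition \ref{existsupporting} provides a supporting hyperplane $P_\alpha=\ker\alpha$ to $\overline{C}$ at $0$, i.e.\ $\alpha(v)\le 0$ for every $v\in\overline{C}$. Since $C\subset\overline{C}$, we get $\alpha(v)\le 0$ for every $v\in C$ as well, so $P_\alpha$ is a supporting hyperplane to $C$ at $0$, which is exactly the assertion. I do not expect any genuine difficulty beyond the above: the argument is a short closure computation plus the cited Remark; the only thing to be careful about is not to assert uniqueness, since supporting hyperplanes need not be unique, as already noted in the text.
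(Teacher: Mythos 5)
Your proposal is correct and follows essentially the same route as the paper: pass to $\overline{C}$, use Proposition~\ref{convex}(d) (the paper cites (d) directly; you cite the Remark that follows from it, which encodes the same fact) to conclude $\overline{C}\neq E$, apply Proposition~\ref{existsupporting} to $\overline{C}$, and observe that a supporting hyperplane to $\overline{C}$ supports $C$. Your explicit verification that $\overline{C}$ is again a convex cone is a harmless bit of extra bookkeeping the paper leaves implicit.
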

\begin{proof}
If $C\neq E$, then $\overline{C}\neq E$ by Proposition \ref{convex}
(d). Hence, by Proposition \ref{existsupporting}, there exists a
supporting hyperplane to $\overline{C}$ which is also a supporting
hyperplane to $C$.
\end{proof}

\begin{defin}
Let $C_1$ and $C_2$ be cones with common vertex $0$. They are
\textbf{separated} if there exists a hyperplane $P$ such that each
cone lies in a different closed half\textendash{}space defined by
$P$. This $P$ is called a \textbf{separating hyperplane of
$\mathbf{C_1}$ and $\mathbf{C_2}$}.
\end{defin}

A point $x$ is a \textit{relative interior point of a set $C$}, if
$x\in C$ and there exists a neighbourhood $V$ of $x$ such that
$V\cap {\rm aff}(V)\subseteq V$. Then, a useful characterization of
separated convex cones is the following:
\begin{prop} \label{propseparated} The convex cones $C_1$ and $C_2$,
with common vertex $0$, are separated if and only if one of
the two following conditions are satisfied:
\begin{itemize}
\item[(1)] there exists a hyperplane containing both $C_1$
and $C_2$,
\item[(2)] there is no point that is a relative interior
point of both $C_1$ and $C_2$.
\end{itemize}
\end{prop}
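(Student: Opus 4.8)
The plan is to reduce the statement to one auxiliary equivalence:
\emph{$C_1$ and $C_2$ are separated if and only if the set $C_1-C_2=\{v-w\mid v\in C_1,\ w\in C_2\}$, which is again a convex cone with vertex $0$, is not the whole space $E$.} If $\alpha\in E^*$ separates them, say $\alpha\le 0$ on $C_1$ and $\alpha\ge 0$ on $C_2$, then $\alpha(v-w)=\alpha(v)-\alpha(w)\le 0$ for all $v\in C_1$, $w\in C_2$, so $C_1-C_2$ lies in the closed half-space $\{z\mid\alpha(z)\le 0\}\neq E$. Conversely, if $C_1-C_2\neq E$, then Corollary \ref{convexsupporting} yields a supporting hyperplane $\ker\alpha$ with $\alpha\le 0$ on $C_1-C_2$; since a nonempty cone with vertex $0$ contains $0$, evaluating on the representatives $v-0$ and $0-w$ gives $\alpha\le 0$ on $C_1$ and $\alpha\ge 0$ on $C_2$, so $\ker\alpha$ separates.

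With this reduction, I would establish the proposition in three steps. First, $(1)\Rightarrow$ separated is immediate: if a hyperplane $P=\ker\alpha$ contains both cones then $\alpha$ vanishes on $C_1\cup C_2$, so trivially $\alpha\le 0$ on $C_1$ and $\alpha\ge 0$ on $C_2$, that is, $C_1$ and $C_2$ lie in the two closed half-spaces of $P$. Second, $(2)\Rightarrow$ separated I prove by contradiction: if the cones are not separated, the auxiliary equivalence gives $C_1-C_2=E$, hence $\mathrm{ri}(C_1)-\mathrm{ri}(C_2)=\mathrm{ri}(C_1-C_2)=\mathrm{ri}(E)=E$ by the identity $\mathrm{ri}(A+B)=\mathrm{ri}(A)+\mathrm{ri}(B)$ for convex sets (see \cite{98Rockafellar}). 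In particular $0\in\mathrm{ri}(C_1)-\mathrm{ri}(C_2)$, so there is an $x$ with $x\in\mathrm{ri}(C_1)$ and $x\in\mathrm{ri}(C_2)$, contradicting $(2)$.

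Third, for separated $\Rightarrow (1)$ or $(2)$, I assume the cones are separated and that $(2)$ fails, fix $x\in\mathrm{ri}(C_1)\cap\mathrm{ri}(C_2)$, and deduce $(1)$. A separating $\alpha$ with $\alpha\le 0$ on $C_1$ and $\alpha\ge 0$ on $C_2$ forces $\alpha(x)=0$. The decisive observation is that such an $\alpha$ is constant on each cone: if $\alpha(y)<0$ for some $y\in C_1$, then for small $\mu>0$ the affine combination $(1+\mu)x-\mu y$ is close to $x$ and lies in $\mathrm{aff}(C_1)$, hence in $C_1$ because $x\in\mathrm{ri}(C_1)$, yet $\alpha\bigl((1+\mu)x-\mu y\bigr)=-\mu\,\alpha(y)>0$, contradicting $\alpha\le 0$ on $C_1$; the same argument applies to $C_2$. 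Since $0\in C_1\cap C_2$ the common value is $0$, so $C_1,C_2\subseteq\ker\alpha$ and $P=\ker\alpha$ realizes $(1)$.

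The only input from outside the excerpt is the relative-interior identity for Minkowski sums, which I would cite from Rockafellar rather than reprove; it is precisely this fact---not convexity of $C_1-C_2$ alone---that converts ``no common relative interior point'' into ``$C_1-C_2\neq E$'', so it is the main obstacle. The remaining points are routine and would be dispatched in a line: nonemptiness of the cones (so that $0$ belongs to them), the degenerate case where a relative interior is an ordinary interior, and the precise reading of the definition of separated when both cones happen to sit in the same closed half-space.
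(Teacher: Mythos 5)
Your proposal is correct and takes a genuinely different route from the paper's. Both proofs revolve around the difference cone $C_1-C_2$ and the supporting-hyperplane Corollary \ref{convexsupporting}, but they diverge in how they control the relative-interior hypothesis. For $(2)\Rightarrow$ separated, the paper asserts that ``there is no common relative interior point, hence $0\notin C_1-C_2$'' and then invokes Corollary \ref{convexsupporting}. That literal claim is wrong (both cones contain their common vertex, so $0=0-0\in C_1-C_2$ always); what is actually needed is $C_1-C_2\neq E$, and the paper does not justify it. You instead run the implication in the contrapositive and close the gap by citing Rockafellar's identity $\mathrm{ri}(A+B)=\mathrm{ri}(A)+\mathrm{ri}(B)$, which is exactly the right tool: it turns $C_1-C_2=E$ into a common relative interior point cleanly. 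For the forward direction the paper splits cases on whether the separating $\alpha$ vanishes identically, with a somewhat hand-wavy appeal to the boundary of the cones in the second case; you instead assume ``separated and not $(2)$'' and deduce $(1)$ directly by showing $\alpha$ must be constant on each cone, via a small affine perturbation $(1+\mu)x-\mu y$ around a common relative interior point $x$. This is a standard and tight argument that avoids the paper's vaguer language. The price of your approach is the import of the Minkowski-sum relative-interior identity, which is not elementary, but it is precisely the fact the paper needs and tacitly assumes; on net your proof is both closer to being airtight and clearer about where the real content lies.
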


\begin{proof}
$\Rightarrow$ If $C_1$ and $C_2$ are separated, then there exists a
separating hyperplane $P_{\alpha}$ such that
\begin{equation*}
\alpha(v_1)\leq 0 \quad \forall \, v_1\in C_1, \;
\alpha(v_2)\geq 0 \quad \forall \, v_2\in C_2.
\end{equation*}
If $\alpha(v_i)=0$ for all $v_i\in C_i$ and $i=1,2$, then
we are in the first case.

If some $v_i \in C_i$ satisfies the strict inequality, then both
sets do not lie in the hyperplane $P_{\alpha}$. They lie in a
different closed half\textendash{}space. If the convex cones
intersect, the intersection lies in the boundary of the cones and in
the hyperplane. Hence, there is no point that is a relative interior
point of both $C_1$ and $C_2$.

$\Leftarrow$ First, we are going to prove that if $(1)$ is true,
then $C_1$ and $C_2$ are separated. As there exists a hyperplane
determined by $\alpha$ such that $\alpha(v_i)=0$ for all $v_i \in
C_i$,  $\alpha$ determines a separating hyperplane of $C_1$ and
$C_2$.

Now, we are going to prove that if $(2)$ is true, then $C_1$ and
$C_2$ are separated. As $C_1$ and $C_2$ are convex cones,
\[C_1-C_2=\{u\in E \; | \; u=v_1-v_2, \; v_1\in C_1, \; v_2\in
C_2\}\] is a convex cone. Since there is no relative interior point
of both $C_1$ and $C_2$, $0$ does not lie in $C_1-C_2$. By Corollary
\ref{convexsupporting} there exists a supporting hyperplane
$P_{\alpha}$ to $C_1-C_2$ such that $\alpha(v_1-v_2)\leq 0$, that
is, $\alpha(v_1)\leq \alpha(v_2)$, for all $v_1 \in C_1$, $v_2\in
C_2$.

Observe that a supporting hyperplane to $C_1-C_2$ is a supporting
hyperplane to $C_1$, because, taking $v_2=0$,
$\alpha(v_1)\leq\alpha(v_2)= 0$ for all $v_1\in C_1$.

As $\partial(C_1-C_2)\cap C_1 \subset \partial C_1$, we
consider a supporting hyperplane $P_{\alpha}$ to $C_1-C_2$
such that $\alpha(v_1)=0$ for some $v_1\in \partial C_1$.
Hence $\alpha(v_2)\geq\alpha(v_1)=0$ for all $v_2 \in C_2$.
As $\alpha(v_1)\leq 0$ for all $v_1\in C_1$, $\alpha$
determines a separating hyperplane of $C_1$ and $C_2$.
\end{proof}

This proposition gives us necessary and sufficient
conditions for the existence of a separating hyperplane of
two convex cones with common vertex. Observe that a
separating hyperplane of two cones with common vertex is
also a supporting hyperplane to each cone at the vertex.

\begin{corol}\label{splitSussmann}
If the convex cones $C_1$ and $C_2$ with common vertex $0$
are not separated, then $E=C_1-C_2$.
\end{corol}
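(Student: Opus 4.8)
The statement to prove is Corollary \ref{splitSussmann}: if the convex cones $C_1$ and $C_2$ with common vertex $0$ are not separated, then $E = C_1 - C_2$. The plan is to argue by contrapositive, exploiting Proposition \ref{propseparated}, whose condition (2) tells us that being non-separated is equivalent to the existence of a point lying in the relative interior of both $C_1$ and $C_2$ (once we also rule out condition (1), which here is vacuous since $C_1$ and $C_2$ span $E$ in the non-separated case). First I would note that $C_1 - C_2 = \{v_1 - v_2 \mid v_1 \in C_1, v_2 \in C_2\}$ is a convex cone, as already observed in the proof of Proposition \ref{propseparated}.

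The key step is the following dichotomy: either $C_1 - C_2 = E$, and we are done, or $C_1 - C_2 \neq E$. In the second case, $C_1 - C_2$ is a convex cone that is not the whole space, so by Corollary \ref{convexsupporting} there exists a supporting hyperplane $P_\alpha = \ker\alpha$ to $C_1 - C_2$ at $0$; that is, there is a nonzero $\alpha \in E^*$ with $\alpha(v_1 - v_2) \leq 0$ for all $v_1 \in C_1$, $v_2 \in C_2$. Setting $v_2 = 0$ gives $\alpha(v_1) \leq 0$ for all $v_1 \in C_1$, and setting $v_1 = 0$ gives $\alpha(v_2) \geq 0$ for all $v_2 \in C_2$. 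Thus $P_\alpha$ is a separating hyperplane of $C_1$ and $C_2$ in the sense of the definition preceding Proposition \ref{propseparated}, so $C_1$ and $C_2$ are separated. This is precisely the contrapositive of what we want: if $C_1$ and $C_2$ are not separated, then $C_1 - C_2 = E$.

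I expect the only real subtlety — and the point worth a careful sentence — is making sure that the notion of ``supporting hyperplane to a cone at $0$'' delivered by Corollary \ref{convexsupporting} is exactly the data needed to invoke the definition of ``separated'': namely that $\alpha(v) \leq 0$ for all $v$ in the cone $C_1 - C_2$, with $\alpha \neq 0$, which by the remark after the definition of supporting hyperplane is built into the formulation. Everything else is the short linear computation of splitting $\alpha$ against the two summands by specializing one of the two vectors to $0$, using that $0 \in C_1$ and $0 \in C_2$ since both are cones with vertex $0$. So the proof is essentially two lines once the right earlier result (Corollary \ref{convexsupporting}) is invoked; the ``hard part'' is really just recognizing that the contrapositive reduces the claim to the already-established existence of a supporting hyperplane for the difference cone.
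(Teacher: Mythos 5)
Your proof is correct and follows essentially the same route as the paper: argue that if $C_1 - C_2 \neq E$, Corollary~\ref{convexsupporting} provides a supporting hyperplane to the convex cone $C_1 - C_2$, and specializing $v_1=0$ or $v_2=0$ splits the functional into a separating hyperplane for $C_1$ and $C_2$. The paper phrases it as a proof by contradiction rather than a contrapositive and uses the opposite sign convention for the supporting functional, but these are cosmetic differences.
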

\begin{proof}
If the cones are not separated, by Proposition \ref{propseparated}
there exists no any hyperplane containing both and the intersection
of their relative interior is not empty.

Let us suppose that the convex cone $C_1-C_2\neq E$. Then, by
Corollary \ref{convexsupporting}, there exists a supporting
hyperplane determined by $\lambda$ at the vertex such that
$\lambda(v)\geq 0$ for every $v$ in $C_1-C_2$.

Because of the definition of cones, if $v_1\in C_1$, then
$v_1\in C_1-C_2$ and $\lambda(v_1)\geq 0$. Analogously, if
$v_2\in C_2$, then $-v_2\in C_1-C_2$ and $\lambda(-v_2)\geq
0$, that is, $\lambda(v_2)\leq 0$.
\end{proof}

\section{One corollary of Brouwer Fixed\textendash{}Point
Theorem}\label{Brouwer}

From the statement of Brouwer Fixed\textendash{}point
Theorem, it is possible to prove a corollary in
\cite{67LeeMarkus} useful for the proof of Proposition
\ref{lemma2}.

\begin{teo}\label{fixedpoint} \textbf{(Brouwer Fixed\textendash{}point
Theorem)} Let $B_1^n$ be the closed unit ball in
$\mathbb{R}^n$. Any continuous function $G \colon B_1^{n}
\rightarrow B_1^n$ has a fixed point.
\end{teo}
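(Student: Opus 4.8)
The plan is to derive the statement from the \emph{non-retraction theorem}: there is no continuous map $r\colon B_1^n\to S^{n-1}$ with $r|_{S^{n-1}}=\mathrm{id}$, where $S^{n-1}=\partial B_1^n$. First I would show that a fixed-point-free $G$ produces such a retraction. Assume $G\colon B_1^n\to B_1^n$ is continuous with $G(x)\neq x$ for all $x$. For each $x$ the ray starting at $G(x)$ and passing through $x$ meets $S^{n-1}$ in a unique point $r(x)=x+t(x)\,(x-G(x))$, where $t(x)$ is the larger root of $\|x+t(x-G(x))\|^2=1$; the leading coefficient $\|x-G(x)\|^2$ is positive (since $G(x)\neq x$) and the product of the roots equals $\|x\|^2-1\le0$, so that larger root is nonnegative and depends continuously on $x$. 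Hence $r$ is a continuous retraction, and the rest of the proof is to rule this out.

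For the non-retraction theorem I would use the analytic (Milnor--Hirsch) argument, which fits the measure-theoretic style of the paper. One first reduces to a $\mathcal C^{\infty}$ retraction: approximate a continuous $r$ uniformly on $B_1^n$ by a polynomial map (Stone--Weierstrass), renormalize onto $S^{n-1}$, and interpolate back to the identity on a thin collar near $S^{n-1}$, which yields a smooth retraction once the approximation is close enough. Assuming $r$ smooth, set $r_\varepsilon(x)=x+\varepsilon\,(r(x)-x)$ and $V(\varepsilon)=\int_{B_1^n}\det\bigl(Dr_\varepsilon(x)\bigr)\,dx$; since $\det(I+\varepsilon\,D(r-\mathrm{id})(x))$ is a polynomial in $\varepsilon$ with bounded continuous coefficients, $V$ is a polynomial in $\varepsilon$. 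For small $\varepsilon$, $r_\varepsilon$ is injective with $\det Dr_\varepsilon>0$ and, by an open/closed/connected argument on the image together with the inverse function theorem, is a diffeomorphism of $B_1^n$ onto itself; the change-of-variables formula then gives $V(\varepsilon)=\mathrm{vol}(B_1^n)>0$ on an interval, hence identically. But $V(1)=\int_{B_1^n}\det(Dr(x))\,dx=0$, because differentiating $\|r(x)\|^2\equiv1$ yields $r(x)^{\top}Dr(x)=0$ with $r(x)\neq0$, so $\det Dr(x)=0$ pointwise. The contradiction $\mathrm{vol}(B_1^n)=0$ proves the non-retraction theorem, and Brouwer's theorem follows.

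I expect the genuine obstacle to be the claim that $r_\varepsilon$ is \emph{onto} $B_1^n$ for small $\varepsilon$ --- equivalently, that $V(\varepsilon)$ equals the full volume and not merely a positive number --- which rests on an invariance-of-domain-flavoured argument (the image of the open ball is open, path-connected, and disjoint from $S^{n-1}$, hence equal to the open ball) combined with the inverse function theorem; the pointwise vanishing of $\det Dr$ and the polynomial dependence of $V$ are comparatively routine, as is the smoothing reduction. If one wishes to avoid smoothing altogether, an alternative plan is purely combinatorial: prove Sperner's lemma for simplicial subdivisions of the standard $n$-simplex, extract a fixed point of a continuous self-map of the simplex as a limit of barycentres of fully-labelled subsimplices with mesh tending to zero, and transport the result to $B_1^n$ via a homeomorphism with the simplex. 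I would present the analytic proof in the body and mention the combinatorial one as a remark.
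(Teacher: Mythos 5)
The paper states Brouwer's Fixed--Point Theorem without proof, treating it as a classical result (the surrounding discussion cites \cite{67LeeMarkus}), so there is no in-paper argument to compare against. Assessed on its own, your outline is correct: the reduction to the non-retraction theorem via the ray from $G(x)$ through $x$ is standard, and the quadratic-root computation showing $t(x)\ge 0$ (with $t(x)=0$ exactly when $\|x\|=1$, since then one root is $0$ and the other is $-2(1-\langle x,G(x)\rangle)/\|x-G(x)\|^2<0$) gives both continuity of $r$ and $r|_{S^{n-1}}=\mathrm{id}$. The Milnor--Hirsch volume argument that follows is the right mechanism, and you have put your finger on its genuine subtlety: surjectivity of $r_\varepsilon$ for small $\varepsilon$, which needs the injectivity estimate from the Lipschitz bound on $r-\mathrm{id}$, the inverse function theorem, and the open--closed--connected argument on the interior of the ball (with the observation that $r_\varepsilon$ maps the closed ball into itself because it is a convex combination, so no interior point can be sent to the sphere). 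The pointwise identity $r^{\top}Dr=0$ from $\|r\|^2\equiv 1$ giving $\det Dr\equiv 0$, and the polynomial dependence of $V(\varepsilon)$, are indeed routine.

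The one step you should be prepared to flesh out is the smoothing reduction: after normalizing a polynomial approximant $p$ of $r$ to $q=p/\|p\|$, the interpolation back to the identity on a collar must itself be renormalized, i.e.\ take $\tilde r(x)=\bigl[(1-\psi(\|x\|))\,q(x)+\psi(\|x\|)\,x/\|x\|\bigr]/\bigl\|(1-\psi(\|x\|))\,q(x)+\psi(\|x\|)\,x/\|x\|\bigr\|$ with $\psi$ a radial bump that is $1$ near $\|x\|=1$ and $0$ below the collar. The denominator is bounded away from zero exactly because $q$ and $x/\|x\|$ are uniformly close on the collar (which in turn needs $r$ to be uniformly close to $x/\|x\|$ there; this follows from $r|_{S^{n-1}}=\mathrm{id}$ and uniform continuity of $r$, provided the collar is thin relative to the modulus of continuity). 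With that made explicit the proof is complete. Your parenthetical remark that Sperner's lemma gives a combinatorial alternative is accurate; either route is acceptable, and the analytic one is more consonant with the measure-theoretic tools already used throughout the paper.
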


\begin{corol} \label{scholium} Let $g\colon B^n_1\rightarrow \mathbb{R}^n$ be a
continuous map. Let $P$ be an interior point of $B^n_1$. If
$\|g(x)-x\|<\|x-P\|$ for every $x$ in the boundary
$\partial B^n_1$, then the image $g(B^n_1)$ covers $P$.
\end{corol}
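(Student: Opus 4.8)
The plan is to reduce the statement to the Brouwer Fixed-Point Theorem (Theorem \ref{fixedpoint}) by contradiction. Suppose that $P$ is not in the image $g(B^n_1)$. Then the map $x \mapsto g(x) - P$ never vanishes on $B^n_1$, so we may construct a continuous retraction of $B^n_1$ onto its boundary $\partial B^n_1$, which is impossible; this is the classical no-retraction consequence of Brouwer's theorem. The cleanest way to package the argument, however, is not to invoke no-retraction directly but to define an auxiliary self-map of the ball whose fixed point is forced to lie on the boundary, and then to show that the hypothesis $\|g(x)-x\| < \|x-P\|$ on $\partial B^n_1$ rules that out.

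Concretely, after normalizing we may assume $P = 0$ (translate the whole picture, or simply observe that the statement is translation-equivariant in $P$), so the hypothesis becomes $\|g(x) - x\| < \|x\|$ for all $x \in \partial B^n_1$, i.e. for $\|x\| = 1$ we have $\|g(x) - x\| < 1$. First I would note that on $\partial B^n_1$ this inequality forces $\langle g(x), x\rangle > 0$: indeed $\|g(x)-x\|^2 = \|g(x)\|^2 - 2\langle g(x),x\rangle + \|x\|^2 < \|x\|^2$ gives $2\langle g(x),x\rangle > \|g(x)\|^2 \ge 0$, and in fact $\langle g(x), x \rangle > \tfrac12\|g(x)\|^2 \ge 0$; in particular $g(x) \neq 0$ on the boundary, consistent with the assumption $0 \notin g(B^n_1)$. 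Now assume for contradiction that $0 \notin g(B^n_1)$, so $g(x) \neq 0$ for every $x \in B^n_1$. Define $G \colon B^n_1 \to B^n_1$ by
\[
G(x) = -\frac{g(x)}{\|g(x)\|},
\]
which is continuous on $B^n_1$ because $g$ is continuous and $g$ never vanishes, and which takes values in $\partial B^n_1 \subset B^n_1$. By Theorem \ref{fixedpoint}, $G$ has a fixed point $x_0$; since $\|G(x_0)\| = 1$, necessarily $x_0 \in \partial B^n_1$, and $x_0 = -g(x_0)/\|g(x_0)\|$ means $g(x_0) = -\|g(x_0)\|\, x_0$, so $\langle g(x_0), x_0 \rangle = -\|g(x_0)\| < 0$. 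This contradicts $\langle g(x), x \rangle > 0$ for $x \in \partial B^n_1$ established above. Hence $0 \in g(B^n_1)$, i.e. $g(B^n_1)$ covers $P$.

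The only point requiring care — and the one I expect to be the main obstacle to a fully rigorous write-up — is the passage from the metric hypothesis $\|g(x)-x\| < \|x-P\|$ on the boundary to a usable sign/positivity condition that is visibly incompatible with the fixed point produced by Brouwer; the computation above ($2\langle g(x), x-P\rangle > \|g(x)-P\|^2 \ge 0$ after translating $P$ to the origin) is elementary but must be stated precisely, including the observation that it simultaneously guarantees $g$ does not vanish on $\partial B^n_1$ so that the contradiction hypothesis $0 \notin g(B^n_1)$ is only really being used on the open ball. Everything else is a direct quotation of Theorem \ref{fixedpoint} and routine continuity bookkeeping, so no further estimates are needed.
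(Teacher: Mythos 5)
Your proof is correct but uses Brouwer in a genuinely different way from the paper. Both arguments normalize $P=0$ and derive the same positivity condition $\langle g(x),x\rangle > \tfrac12\|g(x)\|^2 \ge 0$ on $\partial B^n_1$ from the identity $\|g(x)-x\|^2 = \|g(x)\|^2 - 2\langle g(x),x\rangle + \|x\|^2$. From there the paper proceeds directly: it takes $\alpha>0$ small and applies Theorem \ref{fixedpoint} to $x\mapsto x-\alpha g(x)$, whose fixed point $x_0$ gives $g(x_0)=0$ outright. The delicate step it glosses over is the uniform choice of $\alpha$: near the boundary $2\langle g(x),x\rangle - \alpha\|g(x)\|^2>0$ keeps the image inside the ball, while away from the boundary one needs $\alpha$ small compared to $\max_{B^n_1}\|g\|$, and these two regimes have to be patched via continuity and compactness. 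Your argument sidesteps this entirely by working by contradiction: if $g$ never vanishes, $G(x)=-g(x)/\|g(x)\|$ is a continuous self-map of $B^n_1$ valued in $\partial B^n_1$, its Brouwer fixed point $x_0$ is forced onto the boundary, and the antipodal relation $g(x_0)=-\|g(x_0)\|x_0$ yields $\langle g(x_0),x_0\rangle=-\|g(x_0)\|<0$, contradicting the positivity. Your route is cleaner in that no quantitative tuning is required; the paper's route buys a little extra, namely an explicit zero of $g$ in the closed ball (hence the paper's closing remark that a whole neighbourhood of the origin is covered), which your contradiction argument does not produce directly, though it is not needed for the corollary as stated.
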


\begin{proof}
Without loss of generality, we assume that $P$ is the origin of
$\mathbb{R}^n$. Consider the mapping $g$ as a continuous vector
field on the unit ball $B_1^n$.

As $\|g(x)-x\|<\|x\|$, we are going to show that $g(x)$
makes an acute angle with the outward ray from the origin
through $x$ for every $x \in
\partial B^n$. Let us consider the equality
$$ \|y-z\|^2+\|z-x\|^2=\|y-x\|^2+2\langle y-z,x-z \rangle,$$
and take $y=g(x)$ and $z=0$. Then
$$2\langle g(x),x \rangle=\|g(x)\|^2+\|x\|^2-\|g(x)-x\|^2 >
\|g(x)\|^2+\|x\|^2-\|x\|^2=\|g(x)\|^2\geq 0.$$ Thus
$g(x)$ makes an acute angle with $x$. So $g(x)$ has an
outward radial component at every point $x \in
\partial B_1^n$. The vector $-g(x)$ has a
negative radial component.
For a sufficiently small positive number $\alpha$ the
function $x \rightarrow x-\alpha \, g(x)$ goes from $B_1^n$
to $B_1^n$. By Theorem \ref{fixedpoint} there exists a
fixed point $x_0$ such that $x_0=x_0-\alpha \, g(x_0)$,
then $\alpha \, g(x_0)=0$ and $g(x_0)=0$ since $\alpha \in
\mathbb{R}^+$. As $g$ is continuous and $g(x_0)=0$, the
image of a neighbourhood of $x_0$ covers the origin.
\end{proof}

\section*{Acknowledgements}
We acknowledge the financial support of \emph{Ministerio de
Educaci\'on y Ciencia}, Project
MTM\-2005\textendash{}\-04947 and the Network Project
MTM2006\textendash{}27467\textendash{}E/. MBL also
acknowledges the financial support of the FPU grant
AP20040096.

We are grateful to Professor Andrew D. Lewis for
encouraging us to work on a deep understanding of this
subject. We thank Marina Delgado for collaborating with us
on this report during her postdoctoral stay in our
Department in 2005.

{\small

}
\end{document}